\DeclareFontFamily{OT1}{rsfs}{}
\DeclareFontShape{OT1}{rsfs}{n}{it}{<-> rsfs10}{}
\DeclareMathAlphabet{\mathscr}{OT1}{rsfs}{n}{it}
\renewcommand*{\setminus}{-}
\DeclareMathOperator{\Gal}{Gal}
\DeclareMathOperator{\Spec}{Spec}
\DeclareMathOperator{\Spf}{Spf}
\def\whka{{\widehat{k^a}}}
\def\tilD{{\widetilde D}}
\def\tilT{{\widetilde T}}
\def\wHx{{\wt{\calH(x)}}}
\def\wHy{{\wt{\calH(y)}}}
\def\..{{,\dots,}}
\def\:{{\colon}}
\def\trdeg{{\rm trdeg}}
\def\Frac{{\rm Frac}}
\def\Int{{\rm Int}}
\def\bir{{\rm bir}}
\def\Im{{\rm Im}}
\def\wt{\widetilde}
\def\into{\hookrightarrow}
\def\wtimes{\widehat{\otimes}}
\def\oA{{\overline{A}}}
\def\hatX{{\widehat{X}}}
\def\hatU{{\widehat{U}}}
\def\hatV{{\widehat{V}}}
\def\hatW{{\widehat{W}}}
\def\hatcalF{{\widehat{\calF}}}
\def\tilf{{\wt f}}
\def\tilg{{\wt g}}
\def\tilh{{\wt h}}
\def\bfR{{\bf R}}
\def\tilk{{\wt k}}
\def\tilF{{\wt F}}
\def\tilK{{\wt K}}
\def\tilL{{\wt L}}
\def\tilU{{\wt U}}
\def\tilV{{\wt V}}
\def\tilW{{\wt W}}
\def\tilX{{\wt X}}
\def\tilY{{\wt Y}}
\def\tilx{{\wt x}}
\def\tily{{\wt y}}
\def\tila{{\wt a}}
\def\tilpsi{{\wt\psi}}
\def\tilphi{{\wt\phi}}
\def\bfP{{\mathbf P}}
\def\calO{{\mathcal O}}
\def\tilcalA{{\wt{\calA}}}
\def\tilcalC{{\wt{\calC}}}
\def\tilbfP{{\wt{\bfP}}}
\def\varcalX{{\mathcal X}}
\def\varcalY{{\mathcal Y}}
\def\vcalX{{\mathcal X}}
\def\vcalY{{\mathcal Y}}
\def\vcalZ{{\mathcal Z}}
\newcommand*{\R}{\ensuremath{\mathbf{R}}}                        
\newcommand*{\Z}{\ensuremath{\mathbf{Z}}}                        
\newcommand*{\Q}{\ensuremath{\mathbf{Q}}}                        
\newcommand*{\A}{\ensuremath{\mathbf{A}}}                        
\renewcommand*{\P}{\ensuremath{\mathbf{P}}}                        
\newcommand*{\calA}{\mathscr{A}}
\newcommand*{\calB}{\mathscr{B}}
\newcommand*{\calF}{\mathscr{F}}                               
\newcommand*{\calH}{\mathscr{H}}                               
\newcommand*{\calX}{\mathscr{X}}
\newcommand*{\calY}{\mathscr{Y}}
\theoremstyle{plain}
  \newtheorem{theorem}[subsection]{Theorem}
  \newtheorem{lemma}[subsection]{Lemma}
  \newtheorem{corollary}[subsection]{Corollary}
\theoremstyle{definition}
  \newtheorem{definition}[subsection]{Definition}
\theoremstyle{remark}
  \newtheorem{example}[subsection]{Example}
  \newtheorem{remark}[subsection]{Remark}
\numberwithin{equation}{section}
   \DeclareMathOperator{\calC}{\mathscr C}
   \DeclareMathOperator{\calM}{\mathscr M}
\title{Descent for non-archimedean analytic spaces}
\author{Brian Conrad}
\address{Department of Mathematics\\
Stanford University\\
Building 380, Sloan Hall\\
Stanford, CA 94305\\
U. S. A.}
\email{conrad@math.stanford.edu}
\author{Michael Temkin}
\address{Einstein Institute of Mathematics\\
               The Hebrew University of Jerusalem\\
                Edmond J. Safra Campus, Giv'at Ram, Jerusalem, 91904, Israel}
\email{michael.temkin@mail.huji.ac.il}
\date{\today}
\subjclass{Primary 14G22; Secondary 14D15}
\thanks{The work of B.C. was partially supported by NSF grant DMS-0600919.
Also, B.C. is very grateful to Columbia University for its generous hospitality during a sabbatical visit. The first part of this work was done when M.T. was visiting the Max Planck Institute for Mathematics, and he wishes to thank the Institute for its hospitality. The work on the revised version was supported by the Israel Science Foundation (grant No. 1159/15).}
\begin{document}

\maketitle

\begin{abstract}
In this paper we study two types of descent in the category of Berkovich analytic spaces: flat descent and descent with respect to an extension of the ground field. Quite surprisingly, the deepest results in this direction seem to be of the second type, including the descent of properties of being a good analytic space and being a morphism without boundary.
\end{abstract}

\section{Introduction}

\subsection*{Motivation}
In the theory of schemes, faithfully flat descent is a very powerful tool. One wants a descent theory not only for quasi-coherent sheaves and morphisms of schemes (which is rather elementary), but also for geometric objects and properties of morphisms between them.   In rigid-analytic geometry, descent theory for coherent sheaves was worked out by Bosch and G\"{o}rtz \cite[3.1]{bosch} under some quasi-compactness hypotheses by using Raynaud's theory of formal models, and their result can be generalized \cite[4.2.8]{relamp} to avoid quasi-compactness assumptions (as is necessary to include analytifications of faithfully flat maps arising from algebraic geometry \cite[\S2.1]{ct}).  Similarly, faithfully flat descent for morphisms, admissible open sets, and standard properties of morphisms works out nicely in the rigid-analytic category \cite[\S4.2]{relamp}.

In Berkovich's theory of $k$-analytic spaces, one can ask if there are similar results. The theory of flatness in $k$-analytic geometry is more subtle than in the case of schemes or complex-analytic spaces, ultimately because morphisms of $k$-affinoid spaces generally have non-empty relative boundary (in the sense of \cite[2.5.7]{berbook}).  In the case of quasi-finite morphisms \cite[\S3.1]{berihes}, which are maps that are finite locally on the source and target, it is not difficult to set up a satisfactory theory of flatness \cite[\S3.2]{berihes}. A general theory of flatness was recently developed by Ducros in a 270-page-long manuscript \cite{flat}. (In fact, the original plan of his project was to write a short appendix to the current paper on flat morphisms without boundary, and we decided to postpone submitting our paper until the project of Ducros is completed.)

Let $\mathbf{P}$ be a property of morphisms that are preserved by base changes. In this paper we study twenty such properties listed on the next page, and a very important case is the property of a morphism $f\:X \rightarrow Y$ being {\em without boundary} in the sense that  for any $k$-affinoid $W$ and morphism $W \rightarrow Y$, the base change $X \times_Y W$ is a good $k$-analytic space (i.e., each point has a $k$-affinoid neighborhood) and the morphism of good spaces $X \times_Y W \rightarrow W$ has empty relative boundary in the sense of \cite[p.~49]{berbook}. We say that $\bfP$ is {\em local for a topology $\tau$} if for any morphism $g\:X\to Y$ and  a $\tau$-covering $Y'\to Y$ such that the base change $Y'\times_YX\to Y'$ satisfies $\bfP$, one also has that $g$ satisfies $\bfP$. A simple example with non-admissible covers shows that being without boundary is not local for the flat topology, but we will prove that one does have locality for a slightly weaker Tate-flat topology.

Similarly, if $Y$ is a $k$-analytic space and it has a cover $Y' \rightarrow Y$ without boundary such that $Y'$ is good then it is natural to expect that $Y$ is good but this does not seem to follow easily from the definitions since the target of a finite surjective morphism with affinoid source can be non-affinoid \cite{liu}.

Finally, one can also ask for analogous descent results with respect to extension of the ground field.  That is, if $f\:X \rightarrow Y$ is a map of $k$-analytic spaces and if $K/k$ is an arbitrary analytic field extension then we ask if $f$ satisfies a property $\mathbf{P}$ precisely when $f_K\:X_K \rightarrow Y_K$ satisfies this same property.   Likewise, if $Y_K$ is good then is $Y$ good (the converse being obvious)? This latter question seems to be very non-trivial, and in general the problem of descent through a field extension is much harder than descent through Tate-flat covers. The purpose of this paper is to apply the theory of reduction of germs (as developed in \cite{temkin2}) to provide affirmative answers to all of  the above descent questions.

\begin{remark}\label{goodrem}
What we are calling a morphism {\em without boundary} is called a {\em closed} morphism in \cite[1.5.3(ii)]{berihes} and \cite{temkin2}. It is a non-trivial fact \cite[5.6]{temkin2} that whether or not $f\:X \rightarrow Y$ is without boundary can be checked locally for the $G$-topology on $Y$. This locality is very useful when checking that an abstractly constructed map is without boundary, and it is the reason that for any ground field extension $K/k$ the map $f_K$ is without boundary when $f$ is without boundary. In contrast, the relative notion of  good morphism \cite[1.5.3(i)]{berihes} is not local for the $G$-topology on the base (one can construct a counterexample using \cite[Rem.~1.6,~Thm.~3.1]{temkin1}), so we do not consider it to be an interesting concept for its own sake (although morphisms that are either proper or without boundary are good by definition).
\end{remark}

\subsection*{Main results}\label{intromainsec}
{In this paper we study descent of properties of morphisms from the following list: (i) surjective, (ii) has zero-dimensional fibers, (iii) unramified, (iv) monomorphism, (v) $G$-surjective, (vi) locally separated, (vii) boundaryless, (viii) quasi-finite, (ix) topologically separated, (x) topologically proper, (xi) separated, (xii) proper, (xiii) finite, (xiv) closed immersion, (xv) flat, (xvi) quasi-smooth, (xvii) quasi-\'etale, (xviii) \'etale, (xix) open immersion, (xx) isomorphism, and we will use this numeration in the sequel. See Definitions \ref{Gsurdef}, \ref{compsurdef} and \ref{flatdef}.

Any property $\mathbf{P}$ in the above list is stable under compositions, base changes and extensions of the ground field. Let $f\:Y'\to Y$ be a morphism of $k$-analytic spaces, and let $K/k$ be a ground field extension. We say that $\mathbf{P}$ {\em descends} or {\em is local} with respect to $f$ (resp. $K/k$) if for any morphism $g\:X\to Y$ such that the base change $g'\:X'\to Y'$ (resp. the ground field extension $g_K\:X_K\to Y_K$) satisfies $\mathbf{P}$, one also has that $g$ satisfies $\mathbf{P}$. Our first main result is combined from Theorems~\ref{Tateflatth} and \ref{finited}, and it establishes descent of all twenty properties with respect to arbitrary Tate-flat covers and ground field extensions:

\begin{theorem}\label{mainth1}
The above properties (i)--(xx) are local with respect to arbitrary Tate-flat covers $Y\to Y'$ and ground field extensions $K/k$. More concretely, a morphism $g\:X\to Y$ satisfies any property $\mathbf{P}$ from the list if and only if the base change $g'\:X'\to Y'$ satisfies $\mathbf{P}$ if and only if the ground field extension $g_K\:X_K\to Y_K$ satisfies $\mathbf{P}$.
\end{theorem}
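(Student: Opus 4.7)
Since Theorem~\ref{mainth1} is a compilation of Theorems~\ref{Tateflatth} and~\ref{finited}, I would prove the Tate-flat locality and the ground-field-extension locality in parallel, exploiting the common feature that a Tate-flat cover $Y'\to Y$ and an extension $Y_K\to Y$ both induce surjective, well-behaved maps on underlying topological spaces, on structure sheaves (via faithfully flat descent of affinoid algebras), and on the reductions of germs at each point. My strategy is to stratify the twenty properties by difficulty and reduce each property to earlier ones via fiberwise criteria at points of $Y$, so that the descent question is ultimately controlled by extensions of complete residue fields $\calH(y)\to\calH(y')$ and by the corresponding maps of germ reductions $\wt{\calH(y')}\to \wt{\calH(y)}$ in the sense of \cite{temkin2}.

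The purely topological properties (i), (ii), (v), (ix), (x) reduce to descent statements for continuous maps of topological spaces, using that both $|Y'|\to|Y|$ and $|Y_K|\to|Y|$ are surjective and quotient on underlying sets. The algebraic or sheaf-theoretic properties (iii), (iv), (xiii), (xiv), (xv), (xvi), (xvii), (xviii), (xix), (xx) reduce to faithfully flat descent of coherent sheaves and morphisms, as worked out by Bosch--G\"ortz \cite{bosch} for Tate-flat covers, together with the analogous fact for extensions of the ground field; the recipe is to prove (xv) first and then propagate it through the remaining sheaf-theoretic properties using pointwise criteria on the fibers $X_{\calH(y)}$ (e.g., flat plus \'etale fibers implies \'etale, and similarly for quasi-smooth, closed immersion, isomorphism).

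The boundary-related properties (vi), (vii), (viii), (xi), (xii) are the substantive core. Here the key tool is the germ reduction functor $(Y,y)\mapsto \mathrm{red}(Y,y)$ from \cite{temkin2}, which translates local boundary phenomena into birational geometry over $\wt{\calH(y)}$. A Tate-flat cover or a ground-field extension induces at each point a faithful extension of residue fields and of germ reductions, so these properties descend as soon as the analogous birational property descends, which is elementary. The crucial input that makes the Tate-flat (rather than merely flat) topology the correct one, and that makes ground-field extension descent even possible, is the $G$-locality of boundarylessness on the target (Remark~\ref{goodrem}, \cite[5.6]{temkin2}); this locality allows one to pass from a global Tate-flat cover to the germ-level picture where the reduction machinery applies.

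\textbf{Main obstacle.} I expect the hardest case to be descent of goodness of a space through a ground-field extension $K/k$: given $y\in Y$ with $Y_K$ good at every preimage $y'$, one must produce a $k$-affinoid neighborhood of $y$ itself, and Liu's example \cite{liu} blocks the naive pushforward. The plan is again to reduce to germs: goodness of $(Y,y)$ translates into a concrete presentability condition on $\mathrm{red}(Y,y)$, and this descends from the analogous condition on $\mathrm{red}(Y_K,y')$ via a birational descent statement over the residue-field extension $\wt{\calH(y)}\to \wt{\calH(y')}$. Once this goodness descent is in hand, every remaining property in the list that involves the word ``good''---most directly (vii), and implicitly the full-strength formulations of (viii), (xi), (xii)---is obtained by combining goodness descent with the boundary descent of the previous paragraph.
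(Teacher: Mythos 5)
Your overall stratification of the twenty properties matches the paper's architecture, but the proposal rests on a premise that fails and which the paper identifies as the central difficulty: you assert that a Tate-flat cover or a ground-field extension ``induces at each point a faithful extension of residue fields and of germ reductions,'' so that the boundary-related properties descend because ``the analogous birational property descends, which is elementary.'' For a ground-field extension $K/k$ this is false at an arbitrary preimage: if $y'\in Y_K$ lies over $y\in Y$, the induced map $\wt{(Y_K)}_{y'}\to\tilY_y$ of germ reductions need not be surjective (see Example~\ref{fiberexam1}), so it is not even clear a priori that $Y_K\to Y$ is $G$-surjective, let alone that properness of reductions descends. The missing ingredient is the production of a single \emph{test point} $y'$ over $y$ for which $\wt{\calH(y)}$ and $\tilK$ are in general position inside $\wt{\calH(y')}$; this rests on Theorem~\ref{surjlem} (surjectivity of $\calM(K\wtimes_kL)\to\Spec_G(\tilK\otimes_\tilk\tilL)$, proved by a five-step reduction through Gauss points and completed algebraic closures) together with Theorem~\ref{psilem}(1). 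The same lifting theorem is what makes the Tate-flat case work: to descend boundarylessness one covers $\tilY_y$ by images of finitely many $\tilY'_{y_i}$ and then lifts pairs of points to $\calM(\calH(x)\wtimes_{\calH(y)}\calH(y_i))$ to obtain the quasi-Cartesian diagrams required by Lemma~\ref{birlem2}. None of this is ``elementary,'' and your proposal offers no substitute for it.

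A second, structural issue: you route the descent of property (vii) through descent of goodness of spaces. The paper deliberately avoids this. Goodness is an absolute property, not among (i)--(xx); its descent (Theorems~\ref{goodcase} and~\ref{strictK}) requires the affineness-descent Theorem~\ref{birdescent} and hence all the graded commutative algebra of \S\ref{gradsec}--\S\ref{descsec}, whereas boundarylessness is characterized pointwise by properness of the germ-reduction map (\cite[5.2]{temkin2}), so its descent needs only the much lighter Lemma~\ref{birlem2} (or Lemma~\ref{composlem}) plus the test points above. Even if you proved goodness descent first, you would still need the test-point machinery to descend the empty-relative-boundary condition, so the detour gains nothing and substantially raises the cost. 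Your treatment of (v) as ``purely topological'' is a related slip: $G$-surjectivity lives in the $G$-topology, where $|Y_G|=\coprod_y\tilY_y$, and surjectivity or quotient-ness of $|Y_K|\to|Y|$ on underlying Berkovich points says nothing about it; likewise, descending (x) requires proper surjectivity (quasi-compact domains of $Y$ are images of quasi-compact domains), not merely that the cover is a topological quotient.
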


In both cases, our proof heavily uses that certain properties can be expressed in terms of others. The Tate-flat descent is (surprisingly) easier, and its main steps are as follows: (1) by essentially set-theoretic methods we prove that monomorphisms are local for the class of all surjective morphisms, (2) by $G$-set-theoretic methods (e.g., reduction of germs and birational descent) we prove that being without boundary descends for the smaller class of all $G$-surjective morphisms, (3) using topological methods we prove that topological properness descends for an even smaller class of properly surjective morphisms, (4) naturally, flatness is local for faithfully flat morphisms. Notice also that a Tate-flat covering is a $G$-surjective flat covering, and by Theorem~\ref{Tateflatth} this is the same as a properly surjective flat covering. Furthermore, in Theorems \ref{surjth}, \ref{Gsurth}, \ref{compsurth}, \ref{flatth}, and Examples \ref{surexam}, \ref{Gsurexam} and \ref{compsurexam} we completely describe properties satisfying descent of each type and here is the summary:

\begin{theorem}\label{mainth2}
Consider the properties (i)--(xx) above, then

(1) (i)--(iv) are precisely the properties local with respect to all surjective morphisms.

(2) (i)--(viii) are precisely the properties local with respect to all $G$-surjective morphisms.

(3) (i)--(xiv) are precisely the properties local with respect to all properly surjective morphisms.

(4)(i)--(iv) and (xv)--(xvii) are precisely the properties local with respect to all faithfully flat morphisms.
\end{theorem}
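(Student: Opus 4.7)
The plan is to prove each of the four characterizations by verifying both the positive direction---the listed properties descend---and the negative direction---no other property on the list does. The positive halves of (1)--(4) will be the content of Theorems~\ref{surjth}, \ref{Gsurth}, \ref{compsurth}, and \ref{flatth}, and the negative halves will follow from the cited Examples~\ref{surexam}, \ref{Gsurexam}, \ref{compsurexam} (with the negative side for (4) absorbed into Theorem~\ref{flatth}). I would proceed in order of increasing strength of cover, each stage feeding into the next.

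For the positive direction in part (1), all of (i)--(iv) are essentially set-theoretic or fiberwise: (i) is preserved under base change and composition of surjections; (ii) and (iii) are fiberwise conditions that descend by the faithfulness of residue field extensions; and (iv) reduces to (i) applied to the diagonal $\Delta_g$, which is always a locally closed immersion and hence an isomorphism iff surjective. In part (2), the new content is descent of (v)--(viii); (v) is essentially tautological, and (vii) boundarylessness is the decisive and hardest input, proved via the reduction of germs machinery from \cite{temkin2} together with birational descent. Once (vii) is in hand, (vi) and (viii) follow by combining with (i)--(iv). In part (3), properly surjective covers provide the additional topological control needed to descend (x) topological properness; then (ix), (xi)--(xiv) fall by combination with earlier parts---for example, (xii) is (x) plus (vii), (xiii) is (xii) plus (ii), and (xiv) is (xiii) plus (iv). In part (4), (i)--(iv) descend automatically since faithful flatness implies surjectivity; (xv) descends by the classical faithfully flat descent for coherent data adapted to the Berkovich setting, and (xvi)--(xvii) follow from (xv) combined with the already-descended fiberwise conditions.

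For the negative direction in each part, the plan is to construct, for each property $\mathbf{P}$ outside the listed range, a cover of the specified type together with a morphism $g$ whose base change satisfies $\mathbf{P}$ but such that $g$ itself does not. The construction exploits the gap between the classes of covers: a surjective cover need not be $G$-surjective (giving counterexamples to descent of (v)--(viii) through surjections), a $G$-surjective cover need not be properly surjective (giving counterexamples for (ix)--(xiv)), and a properly surjective cover need not be flat, giving counterexamples for (xv)--(xx) as well as the necessary counterexamples on the faithfully flat side for (v)--(xiv) and (xviii)--(xx).

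The hardest step will be the positive direction of part (2), namely the descent of (vii) boundarylessness through $G$-surjective covers. This is the deepest input and rests on the full strength of the reduction of germs machinery, because the boundary of a morphism mixes the $G$-topological and non-$G$-topological structure of Berkovich spaces in a subtle way. The descent of (x) topological properness in part (3) is a second substantive difficulty, requiring careful topological analysis of proper surjections of Berkovich spaces.
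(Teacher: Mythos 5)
Your overall plan coincides with the paper's: Theorem \ref{mainth2} is assembled exactly from Theorems \ref{surjth}, \ref{Gsurth}, \ref{compsurth} and \ref{flatth} for the positive halves and from Examples \ref{surexam}, \ref{Gsurexam} and \ref{compsurexam} for the negative halves, with the same internal reductions ((vi) and (ix) via diagonals, (viii) $=$ (vii) $+$ (ii), (xii) $=$ (x) $+$ (ix) $+$ (vii), (xvi)--(xvii) from (xv) plus fiberwise conditions) and with descent of boundarylessness through $G$-surjective covers as the deepest step. There is, however, one genuine error in your sketch of (iv): you assert that the diagonal $\delta_g$, being a ($G$-)locally closed immersion, ``is an isomorphism iff surjective.'' This is false --- the bijective closed immersion $\calM(k)\into\calM(k[\varepsilon]/(\varepsilon^2))$ of Example \ref{compsurexam} is surjective but not an isomorphism --- so descending surjectivity of the diagonal alone does not prove descent of the monomorphism property. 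The paper closes this gap by first descending unramifiedness: since $\delta_{g'}$ is an isomorphism one gets $\Omega_{X'_G/Y'_G}=0$, hence by (iii) the ideal $I$ of the surjective closed immersion $\delta_g$ satisfies $I/I^2=\Omega_{X_G/Y_G}=0$; because $I$ is locally nilpotent this forces $I=0$, and only then is $\delta_g$ an isomorphism. Without routing through (iii) in this way, your reduction of (iv) to (i) does not go through.

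A second, more minor, slip is in the bookkeeping of counterexamples for part (4): the failures of (v)--(xiv) and (xviii)--(xx) for faithfully flat covers cannot be produced by ``a properly surjective cover that is not flat,'' since a counterexample for descent along faithfully flat covers must itself be a faithfully flat cover. The relevant example is the non-admissible cover by subdomains of Example \ref{surexam}, which is flat and surjective but not $G$-surjective; it is this single example that rules out enlarging the list in Theorem \ref{flatth}. This does not affect the structure of the argument, but the gap between cover classes that you exploit must be matched to the correct part of the statement.
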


The proof of descent with respect to ground field extensions is very similar up to one significant exception: $G$-surjective descent, including descent of $G$-surjectivity and of the boundaryless property, is much more difficult and involves a deeper descent result for graded birational spaces. Finally, in Theorems \ref{goodcase} and \ref{strictK} we use the same results on birational spaces to also study descent of two absolute properties of analytic spaces that have no relative analogs: being good and being strictly $k$-analytic (or, more generally, $H$-strict as defined in Section~\ref{Hstrsec}). Here we only formulate the main result for goodness.

\begin{theorem}\label{mainth3}
Let $f\:Y\to X$ be morphism of $k$-analytic spaces with surjective interior and let $K/k$ be an extension of the ground field, then

(1) If $Y$ is good then $X$ is good, and the converse is true if $f$ has no boundary.

(2) $X$ is good if and only if $X_K$ is good.
\end{theorem}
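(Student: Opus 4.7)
My plan is to attack both parts by reducing each to a descent statement for affineness of graded birational spaces via the reduction-of-germs functor of \cite{temkin2}. Goodness is pointwise ($X$ is good iff every germ $(X,x)$ is good), so I will work one point at a time, using the dictionary that $(X,x)$ is good iff its associated graded birational space $\wt X_x$ is affine. Under this dictionary, Part~(1) becomes a descent statement for affineness along a surjective morphism of graded birational spaces, and Part~(2) becomes a descent statement along a graded ground-field extension; in both cases the requisite birational descent is essentially the same machinery used elsewhere in this paper for $G$-surjective and boundaryless descent.

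For Part~(1) the converse direction is near-tautological from the definition of ``without boundary'' recalled in the introduction: if $X$ is good and $f$ is boundaryless, then for any $y \in Y$ I pick an affinoid neighborhood $W$ of $f(y)$ and observe that $f^{-1}(W) = Y\times_X W$ is good, so $y$ has an affinoid neighborhood. For the main direction, given $x \in X$ I will choose $y \in \Int(Y/X)$ over $x$ using the surjective-interior hypothesis, shrink $Y$ to an open neighborhood of $y$ on which $f$ is boundaryless, and pass to reductions. The inclusion $y \in \Int(Y/X)$ should translate into surjectivity of $\wt Y_y \to \wt X_x$ in the graded birational sense, while goodness of $Y$ gives affineness of $\wt Y_y$. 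Invoking descent of affineness along such surjections will then force $\wt X_x$ to be affine, so $x$ has an affinoid neighborhood.

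For Part~(2), the direction $X$ good $\Rightarrow$ $X_K$ good is immediate, since affinoid domains base-change to affinoid domains. For the converse, I pick $x_K \in X_K$ over a given $x \in X$ (possible since $X_K \to X$ is surjective), note that $\wt{(X_K)}_{x_K}$ is affine by assumption, and observe that the germ base-change morphism descends at the birational level to the graded ground-field extension from $\wt{\calH(x)}/\wt k$ to $\wt{\calH(x_K)}/\wt K$, which is faithfully flat in the graded-birational sense. Descent of affineness along this extension then yields affineness of $\wt X_x$ and hence goodness at $x$.

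The main obstacle in both parts is the underlying birational descent: goodness has no local-on-target relative analog, so one cannot descend it directly from a cover of $Y$ by naive affinoid arguments, and the whole proof is forced through the reduction functor. Establishing and applying the requisite descent of affineness for graded birational spaces, under surjective morphisms for (1) and under graded ground-field extensions for (2), is where the real work will lie; it is the same engine that drives the deeper $G$-surjective and ground-field descent results earlier in the paper.
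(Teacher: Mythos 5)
Your overall strategy --- reduce goodness pointwise to affineness of the reduction $\tilX_x$ in $\bir_{\tilk}$ and invoke a birational descent theorem --- is exactly the paper's, and your treatment of the tautological directions is fine. But there are two places where the property you feed into the descent engine is not the one that actually holds, and the second is a genuine gap. In Part (1) you say that $y\in{\rm{Int}}(Y/X)$ translates into \emph{surjectivity} of $\tilY_y\to\tilX_x$ and that affineness descends along such surjections. Affineness does not descend along merely surjective maps of birational spaces: the disjoint union of an admissible affinoid cover of a non-good compact space is a surjective (indeed properly surjective) map from a good space onto a non-good one, so no purely surjective descent statement can be true. What $y\in{\rm{Int}}(Y/X)$ actually gives, via \cite[5.2]{temkin2}, is \emph{properness} of $\tilY_y\to\tilX_x$ in $\bir_{\tilk}$ (i.e.\ $\tilY_y$ is identified with the full pullback $\tilX_x\times_{\bfP_{\wt{\calH(x)}/\tilk}}\bfP_{\wt{\calH(y)}/\tilk}$ and the map of Riemann--Zariski spaces is onto), and Theorem \ref{birdescent} is stated and proved only for proper morphisms; its proof uses the full-preimage description essentially. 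This part is repairable by replacing ``surjective'' with ``proper'' throughout.

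The serious gap is in Part (2). You take an \emph{arbitrary} lift $x_K$ of $x$ and assert that the induced map of reductions is well behaved (``faithfully flat in the graded-birational sense''). This is false for a general lift: for $y$ over $x$ the map $\tilY_y\to\tilX_x$ is proper only when $\psi_{\wt{\calH(y)}/\tilK,\,\wt{\calH(x)}/\tilk}$ is surjective, which by Theorem \ref{psilem}(1) requires $\wt{\calH(x)}$ and $\tilK$ to be in general position in $\wt{\calH(y)}$ over $\tilk$; Example \ref{fiberexam1} exhibits lifts $y$ for which $\tilY_y$ is affine (even proper) while $\tilX_x$ is not, so descent fails at such points. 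The heart of the matter --- which your proposal skips entirely --- is to prove that a sufficiently generic test point exists: one must lift a generic point of $\Spec_G(\wt{\calH(x)}\otimes_{\tilk}\tilK)$ to a point of the fiber $\calM(\calH(x)\wtimes_k K)$, which is the content of Theorem \ref{surjlem} (a nontrivial result, since $\calH(x)\wtimes_k K$ need not be $K$-affinoid) combined with Lemma \ref{genpointcor2}. Without constructing such a test point, the reduction to birational descent does not get off the ground, so Part (2) is not proved.
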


\subsection*{Methods and outline of the paper}
In Sections \ref{birsec1}--\ref{app} we work out the easier instances of descent, culminating in the Tate-flat one. First, we establish in \S\ref{birsec1} simple cases of  descent of graded birational spaces, and apply this to solve a question from \cite{ct}. Then we study in \S\ref{setsec} set-theoretic and topological descents. The most subtle result is descent of being without boundary, which is based on the following facts: a difficult theorem \cite[5.2]{temkin2} reduces this property to properness of the reduction map of birational spaces (which is a purely $G$-set-theoretic condition), Lemma~\ref{birlem2} establishes descent for birational spaces and Theorem~\ref{surjlem} produces enough test points of $X\times_YY'$ over a compatible pair $x\in X,y'\in Y'$. Finally, in \S\ref{app} we recall some basic facts about flat morphisms introduced by Ducros, study flat topologies and related descent issues, and accomplish the proof of Tate-flat descent. We want to stress that there are various technicalities specific for analytic geometry. In particular, unlike algebraic geometry, most properties are not local for the flat topology, and one has to weaken it to the Tate-flat one. On the one hand, the latter is specific for analytic geometry, but on the other hand it has certain similarities with the classical fpqc topology, see Remark~\ref{Tateflatrem} and Theorem~\ref{Tateflat}.
}

Rather more difficult is descent for the absolute properties of goodness and strict-analyticity of analytic spaces, which we treat (under some separatedness hypotheses) in Theorem~\ref{goodcase}. These harder results rest on a difficult descent theorem for graded birational spaces (Theorem \ref{birdescent}) that we prove as a consequence of a lot of work in ``graded commutative algebra'' in \S\ref{gradsec}--\S\ref{birgsec}. For example, goodness for strictly $k$-analytic germs (when $|k^{\times}| \ne 1$) is closely related to affineness for ungraded birational spaces over the residue field $\tilk$, so to study descent for goodness one is led to seek an analogue of affineness in classical birational geometry.  As a warm-up, in \S\ref{ungradsec} we digress to show that affineness for a birational space over $\tilk$ is equivalent to a certain auxiliary integral $\tilk$-scheme of finite type having normalization that is proper over an affine algebraic $\tilk$-scheme. This latter property is a substitute for affineness in birational geometry, and it is a delicate fact that this property descends through modifications:  such birational invariance fails if the normalizations are omitted, as shown by an elegant example of de Jong (Example \ref{deJongexample}). De Jong's counterexample gave us new insight into Q. Liu's surprising examples of non-affinoid rigid spaces with affinoid normalization (see Example \ref{alaLiuexample}), and it also inspired the intervention of integral closures in both \S\ref{gradsec} and the proof of our main descent theorem for birational spaces (Theorem \ref{birdescent}). Only while working on the revision of this paper we discovered that varieties admitting a proper morphism to an affine variety were called semiaffine and studied in \cite{semiaffine}. In particular, descent under properness and necessity of normalization rediscovered by us are originally due to Goodman and Landman.

Though we expect our results to be of general interest in $k$-analytic geometry, perhaps of greater interest is the techniques of proof.  For example, to descend properties after a field extension $K/k$ the essential difficulty is that {if $y\in Y=X_K$ is a preimage of a point $x$ on a $k$-analytic space $X$, it may freely happen that the reduction map $\tilY_y\to\tilX_x$ is not surjective. In particular, a priori it is even not clear if the natural map $h\:X_K\to X$ is $G$-surjective. An additional technical complication is that the topological fibers $h^{-1}(x)$} are of the form $\calM(\calH(x)\wtimes_k K)$, and these can have a rather complicated structure since the $K$-Banach algebra $\calH(x) \wtimes_k K$ can fail to be $K$-affinoid. We will not try to describe such fibers in their entirety, nor their Shilov boundary (see \cite[p.~36]{berbook}) which may be infinite, but we will prove in Theorem~\ref{surjlem} that any point of the homogeneous spectrum of $\wHx\otimes_\tilk\tilK$ can be lifted to a point of $\calM(\calH(x)\wtimes_k K)$. Lifting a generic point of the homogeneous spectrum provides a generic enough point $y=x_K$ over $x$ for which the map $\tilY_y\to\tilX_x$ is surjective, see Lemma~\ref{genpointcor2}.

The latter property of $x_K$ is critical for using it as a test point: many local properties $\bfP$ hold at $x\in X$ if and only if they hold at $x_K$. In particular, $x_K$ is used to prove descent of goodness in Theorem \ref{mainth3}(2) (see Theorem~\ref{strictK}). Another absolute property for which descent through a ground field extension $K/k$ is non-trivial and interesting is the strict analyticity property (under the necessary assumption $\surd{|K^{\times}|} = \surd{|k^{\times}|}$). We treat this in Theorem \ref{strictK} subject to some separatedness hypotheses. Again, the argument essentially uses a test point $x_K$. The same strategy is also used to prove the second claim of Theorem~\ref{mainth1} (see Theorem~\ref{finited}), with the main step being descent of the property to be without boundary through ground field extensions. Though for this application existence of a single test point is not crucial, as one could also argue similarly to the proof of Theorem~\ref{Gsurth}(vii).

As an interesting application of the invariance of the closed immersion property with respect to arbitrary analytic ground field extensions we conclude the paper by carrying over to $k$-analytic geometry the basic results for relatively ample line bundles in the rigid-analytic case \cite[\S3]{relamp}. We do not know how to adapt the rigid-analytic arguments of \cite[\S3]{relamp} to work in the $k$-analytic case (especially without goodness hypotheses), so we do not obtain a new approach to relative ampleness in rigid geometry.

Since our proofs rely extensively on the theory of reduction of germs as developed in \cite{temkin2},  we assume that the reader is familiar with this work and we will use its terminology and notation, including the theory of birational spaces over the $\R^{\times}_{>0}$-graded field $\widetilde{k}$ and the ``graded commutative algebra'' in \cite[\S1]{temkin2}. As usual in the theory of $k$-analytic spaces, we permit the possibility that the absolute value on $k$ may be trivial.

\subsection*{A comparison with the preliminary version}
A preliminary version of this paper is available since 2008 and was cited in quite a few papers, so we want to indicate the changes made in the final version. The main change is that following a request of Tony Yue Yu we study descent for the Tate-flat topology rather than for the flat boundaryless one. This resulted in complete rewriting of Sections~\ref{birsec1}--\ref{app} devoted to the flat descent. In particular, descent of few new properties was included and we analyse in detail set-theoretic and topological aspects of the descent. In addition, some material in other sections was rearranged for expository reasons, so numeration shifted substantially.

\subsection*{Terminology and notation}
For an abelian group $G$ and a $G$-graded field $\widetilde{k}$, by a {\em $G$-graded birational space over $\widetilde{k}$} we mean an object $\mathcal X$ of the category $\bir_{\widetilde{k}}$ introduced in \cite{temkin2}. Such an $\varcalX$ consists of a $G$-graded field $\widetilde{K}$ over $\widetilde{k}$ and a local homeomorphism $X\to\bfP_{\widetilde{K}/\widetilde{k}}$ where $X$ is a non-empty, connected, quasi-compact, and quasi-separated topological space and $\bfP_{\widetilde{K}/\widetilde{k}}$ is the naturally topologized set of $G$-graded valuation rings of $\widetilde{K}$ containing $\widetilde{k}$. In particular, by taking $X = \P_{\widetilde{K}/\widetilde{k}}$ we can view $\P_{\widetilde{K}/\widetilde{k}}$ as a $G$-graded birational space over $\widetilde{k}$.  When the group $G$ is understood from context, we will not mention it explicitly (and will simply speak of ``graded'' objects). Note also that $\P_{\widetilde{K}/\widetilde{k}}$ is irreducible and sober (that is, the topology is $T_0$ and irreducible closed sets have unique generic points), hence the same necessarily holds for $X$.

For any extension $\widetilde{L}/\widetilde{K}$ of graded fields over $\widetilde{k}$, restriction of graded valuation rings from $\widetilde{L}$ to $\widetilde{K}$ induces a continuous map $\psi_{\widetilde{L}/\widetilde{K}/\widetilde{k}}\: \bfP_{\widetilde{L}/\widetilde{k}}\to\bfP_{\widetilde{K}/\widetilde{k}}$. A {\em morphism} $\varcalY\to\varcalX$ from the birational space $\varcalY = (Y\to\bfP_{\widetilde{L}/\widetilde{k}})$ to the birational space $\varcalX = (X\to\bfP_{\widetilde{K}/\widetilde{k}})$ is a pair consisting of a graded embedding $\widetilde{K}\into \widetilde{L}$ over $\widetilde{k}$ and a continuous map $Y\to X$ compatible with $\psi_{\widetilde{L}/\widetilde{K}/\widetilde{k}}$.

An {\em analytic extension} $K/k$ is a field extension in which $K$ is endowed with an absolute value that extends the given one on $k$ and with respect to which $K$ is complete.  If $X$ is a $k$-analytic space then $X_K$ denotes the $K$-analytic space $X \widehat{\otimes}_k K$. If $x$ is a point in a $k$-analytic space $X$ then we write $(X,x)$ to denote the associated germ (denoted $X_x$ in \cite{temkin2}). A $k$-analytic space $X$ is {\em locally separated} if each $x \in X$ admits a neighborhood that is a separated $k$-analytic domain. By \cite[4.8(iii)]{temkin2}, this happens if and only if each $\R^{\times}_{>0}$-graded birational space $\widetilde{X}_x$ over the $\R^{\times}_{>0}$-graded reduction $\widetilde{k}$ is separated, that is, the map $\tilX_x\to\bfP_{\wHx/\tilk}$ is injective.

A $k$-analytic map $f\:X \rightarrow Y$ is {\em without boundary} (or {\em has no boundary}) if, for any $k$-affinoid $Y'$ and morphism $Y' \rightarrow Y$, the pullback $X' = X \times_Y Y'$ is a good space and the morphism of good spaces $X' \rightarrow Y'$ has empty relative boundary.  This concept is called {\em closed} in \cite{berbook}, \cite{berihes}, and \cite{temkin2}, but we prefer the change in terminology to avoid confusion with the unrelated topological notion of a closed map and because the open unit disc is without boundary over $\calM(k)$ whereas the closed unit disc is not.   The {\em relative interior} ${\rm{Int}}(X/Y)$ is the open locus of points $x \in X$ admitting an open neighborhood $V \subseteq X$ such that $V \rightarrow Y$ is without boundary; the complement $\partial(X/Y) = X - {\rm{Int}}(X/Y)$ is the {\em relative boundary} (so $X \rightarrow Y$ is without boundary if and only if $\partial(X/Y)$ is empty).

{\sc Acknowledgements}. We would like to thank A. Ducros for answering our questions and useful discussions. Also, the second author enjoyed discussions with T. Yue Yu that urged us to extend flat descent from the boundaryless case to general Tate-flat covers. Finally, the paper owes a lot to the anonymous referee for pointing our a mistake in our first version of Lemma~\ref{birlem2} and for valuable suggestions and simplifications. In particular, Lemma~\ref{tatered}, a simple proof of Lemma~\ref{oint}, and a few simplifications in the proof of Theorem~\ref{psilem} are due to the referee.

\section{First results on $\bir_\tilk$ and applications}\label{birsec1}

\subsection*{Birational spaces}
The following result is not surprising since maps in ${\rm{bir}}_{\widetilde{k}}$ are analogous to dominant morphisms of varieties.

\begin{lemma}\label{birlem1}
Assume that $f\:\mathcal{X} \rightarrow \mathcal{Y}$ and $g\:\mathcal{Y} \rightarrow \mathcal{Z}$ are maps in ${\rm{bir}}_{\widetilde{k}}$ with $g \circ f$ proper, and assume that either $g$ is separated or $f$ is proper. Then both $f$ and $g$ are proper.
\end{lemma}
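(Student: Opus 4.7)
My plan is to adapt the standard scheme-theoretic template: a graph-trick argument handles the first claim, and a surjectivity-based descent takes care of the second.

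First I would handle the case where $g$ is separated (the other hypothesis already gives $f$ proper). Form the graph morphism $\Gamma_f\colon\mathcal{X}\to\mathcal{X}\times_{\mathcal{Z}}\mathcal{Y}$ and verify that the square
\[
\begin{array}{ccc}
\mathcal{X} & \xrightarrow{\Gamma_f} & \mathcal{X}\times_{\mathcal{Z}}\mathcal{Y}\\
{\scriptstyle f}\downarrow & & \downarrow{\scriptstyle f\times\mathrm{id}_{\mathcal{Y}}}\\
\mathcal{Y} & \xrightarrow{\Delta_g} & \mathcal{Y}\times_{\mathcal{Z}}\mathcal{Y}
\end{array}
\]
is cartesian in $\bir_{\widetilde{k}}$ (direct check on $T$-valued points). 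Hence $\Gamma_f$ is a base change of $\Delta_g$; by separatedness of $g$ the latter is a closed immersion, so $\Gamma_f$ is a closed immersion, in particular proper. The second projection $\mathrm{pr}_2\colon\mathcal{X}\times_{\mathcal{Z}}\mathcal{Y}\to\mathcal{Y}$ is a base change of $g\circ f$ along $g$, hence proper. Composing, $f=\mathrm{pr}_2\circ\Gamma_f$ is proper. In either hypothesis of the lemma we may therefore assume $f$ is proper.

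It remains to conclude that $g$ is proper, and the key auxiliary input is that any proper morphism in $\bir_{\widetilde{k}}$ is surjective on underlying topological spaces. This stems from the Riemann--Zariski nature of these spaces together with the (graded version of the) classical fact that restriction of valuation rings $\psi_{\widetilde{L}/\widetilde{K}/\widetilde{k}}\colon\bfP_{\widetilde{L}/\widetilde{k}}\to\bfP_{\widetilde{K}/\widetilde{k}}$ is surjective for every graded field extension: this dominance combined with closedness of the image (forced by properness) gives topological surjectivity of $f$. Granting this, for any closed subset $C$ of the topological space underlying $\mathcal{Y}$ one has $g(C)=(g\circ f)(f^{-1}(C))$, which is closed because $g\circ f$ is proper; hence $g$ is a closed map. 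Running the same computation after an arbitrary base change $\mathcal{Z}'\to\mathcal{Z}$ (using that both $f$ and $g\circ f$ have their properness preserved by base change) yields universal closedness of $g$. Any remaining separatedness or quasi-compactness ingredients in the definition of properness for $\bir_{\widetilde{k}}$ transfer from $g\circ f$ along the surjection $f$.

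I expect the main obstacle to be verifying surjectivity of proper morphisms in $\bir_{\widetilde{k}}$; depending on precisely how ``proper'' is packaged in \cite{temkin2}, this may require a small extra reduction, for instance trivializing the local homeomorphisms so as to reduce to the Riemann--Zariski surjectivity of $\psi_{\widetilde{L}/\widetilde{K}/\widetilde{k}}$. Everything else is formal from stability of properness and closed immersions under base change and composition.
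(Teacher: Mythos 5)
There is a genuine gap: your argument is built on fiber products that do not exist in this category. As the paper notes at the start of \S 2, $\bir_{\widetilde{k}}$ does not have fiber products, because tensor products of graded fields need not be graded fields; this is precisely why the paper introduces \emph{quasi-Cartesian diagrams} as a substitute. Consequently the objects $\mathcal{X}\times_{\mathcal{Z}}\mathcal{Y}$ and $\mathcal{Y}\times_{\mathcal{Z}}\mathcal{Y}$, the graph $\Gamma_f$, and the diagonal $\Delta_g$ on which your first step rests are simply not available (already $\widetilde{K}''\otimes_{\widetilde{K}}\widetilde{K}'$ fails to be a graded field in general), and ``closed immersion'' is not a defined notion in $\bir_{\widetilde{k}}$. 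A second, related, problem is that properness and separatedness of a morphism $(U'\to\mathbf{P}_{\widetilde{K}'/\widetilde{k}})\to(U\to\mathbf{P}_{\widetilde{K}/\widetilde{k}})$ are \emph{defined} as bijectivity, resp.\ injectivity, of the comparison map $\beta\colon U'\to U\times_{\mathbf{P}_{\widetilde{K}/\widetilde{k}}}\mathbf{P}_{\widetilde{K}'/\widetilde{k}}$ (a fiber product of topological spaces, not of birational spaces), not as ``universally closed, separated and quasi-compact''. So your second step --- deducing that $g$ is universally closed by pushing closed sets through the surjection $f$ --- does not establish the property that is actually required, and the closing sentence about transferring the ``remaining separatedness or quasi-compactness ingredients'' along $f$ has no content in this setting.

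The one sound instinct in your sketch is the role of surjectivity of $\psi_{\widetilde{L}/\widetilde{K}/\widetilde{k}}$, and that is indeed the only non-formal input in the paper's proof: every graded local subring of a graded field is dominated by a graded valuation ring with the same graded fraction field (Zorn's Lemma), so the restriction map $\lambda\colon\mathbf{P}_{\widetilde{K}''/\widetilde{k}}\to\mathbf{P}_{\widetilde{K}'/\widetilde{k}}$ is surjective. Once this is known the lemma is elementary set theory: the comparison map for $g\circ f$ factors as $U''\xrightarrow{\ \alpha\ }U'\times_{\mathbf{P}_{\widetilde{K}'/\widetilde{k}}}\mathbf{P}_{\widetilde{K}''/\widetilde{k}}\xrightarrow{\ \lambda^{*}\beta\ }U\times_{\mathbf{P}_{\widetilde{K}/\widetilde{k}}}\mathbf{P}_{\widetilde{K}''/\widetilde{k}}$, where $\alpha$ detects properness of $f$ and $\lambda^{*}\beta$ is the base change of $\beta$ along the surjection $\lambda$, so that $\lambda^{*}\beta$ is injective (resp.\ bijective) if and only if $\beta$ is; it then remains to observe that if a composite of two maps is bijective and either the second is injective or the first is bijective, then both are bijective. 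You should redo the proof along these lines, working directly with the comparison maps rather than importing the scheme-theoretic graph trick.
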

\begin{proof}
By the definition of a birational space over $\widetilde{k}$, $\mathcal{Z}$ corresponds to a local homeomorphism $U \rightarrow \mathbf{P}_{\widetilde{K}/\widetilde{k}}$ where $U$ is a non-empty, connected, quasi-compact, and quasi-separated topological space and $\widetilde{K}/\widetilde{k}$ is an extension of $\R^{\times}_{>0}$-graded fields. We similarly have that $\mathcal{Y}$ and $\mathcal{X}$ respectively correspond to local homeomorphisms $U' \rightarrow \mathbf{P}_{\widetilde{K}'/\widetilde{k}}$ and $U'' \rightarrow \mathbf{P}_{\widetilde{K}''/\widetilde{k}}$, and the maps $f$ and $g$ respectively correspond to the left and right squares in a commutative diagram of topological spaces
$$\xymatrix{
{U''} \ar[r] \ar[d] & {U'} \ar[d] \ar[r] & {U} \ar[d] \\
{\mathbf{P}_{\widetilde{K}''/\widetilde{k}}} \ar[r]^\lambda & {\mathbf{P}_{\widetilde{K}'/\widetilde{k}}} \ar[r] &
{\mathbf{P}_{\widetilde{K}/\widetilde{k}}}}$$
in which the maps along the bottom row are the natural pullback maps induced by maps of graded $\widetilde{k}$-algebras $\widetilde{K} \rightarrow \widetilde{K}'$ and $\widetilde{K}' \rightarrow \widetilde{K}''$. Properness (resp. separatedness) of $g$ (if it holds) means that the natural map $\beta\:U' \rightarrow U \times_{\mathbf{P}_{\widetilde{K}/\widetilde{k}}} \mathbf{P}_{\widetilde{K}'/\widetilde{k}}$ is bijective (resp. injective), and the properness of $g \circ f$ says that the natural map $U'' \rightarrow U \times_{\mathbf{P}_{\widetilde{K}/\widetilde{k}}} \mathbf{P}_{\widetilde{K}''/\widetilde{k}}$ is bijective. This latter map factors as the composition of natural maps
$$
U'' \stackrel\alpha\longrightarrow U' \times_{\mathbf{P}_{\widetilde{K}'/\widetilde{k}}} {\mathbf{P}}_{\widetilde{K}''/\widetilde{k}}
\stackrel{\lambda^*\beta}\longrightarrow U \times_{\mathbf{P}_{\widetilde{K}/\widetilde{k}}} \mathbf{P}_{\widetilde{K}''/\widetilde{k}}
$$
in which the second map is the topological (or set-theoretic) base change of the map $\beta$ by the map $\lambda$.

The natural map $\lambda$ along which we form the base change of $\beta$ is surjective, due to the easy consequence of Zorn's Lemma that every graded local ring in a graded field $F$ is dominated by a graded valuation ring in $F$ having graded fraction field $F$. In particular, $\lambda^*\beta$ is bijective (resp. injective) if and only if $\beta$ bijective (resp. injective). Since $f$ is proper if and only if $\alpha$ is bijective, the lemma reduces to the following obvious set-theoretic assertion: assume that the composition $\lambda^*\beta\circ\alpha$ is bijective and either $\lambda^*\beta$ is injective or $\alpha$ is bijective, then both $\lambda^*\beta$ and $\alpha$ are bijective.
\end{proof}

The category $\bir_\tilk$ does not have all fiber products because tensor products of fields do not have to be fields. We will use the following replacement, which is also motivated by \cite[Proposition~4.6]{temkin2}.

\begin{definition}
Let $\varcalY=(Y\to\bfP_{L/\tilk})$ and $\varcalX_i=(X_i\to\bfP_{K_i/\tilk})$ with $i=0,1,2$ be four birational spaces in $\bir_\tilk$. A square diagram
$$
\xymatrix{
\varcalY \ar[r] \ar[d] & \varcalX_1 \ar[d] \\
\varcalX_2 \ar[r]  & \varcalX_0
}
$$
is called {\em quasi-Cartesian} if $Y=X'_2\times_{X'_0}X'_1$, where $X'_i=X_i\times_{\bfP_{K_i/\tilk}}\bfP_{L/\tilk}$.
\end{definition}

\begin{remark}
(1) Clearly, the quasi-Cartesian diagram above is determined by $\vcalX_i$ for $i=0,1,2$ and the graded field $L$ with the embeddings $K_i\into L$. More concretely, using that limits commute with limits it is easy to see that a diagram is quasi-Cartesian if and only if $Y$ is the limit of the diagram obtained from the cube
$$
\xymatrix{
&\bfP_{L/\tilk}\ar[dd]\ar[rr]&&\bfP_{K_1/\tilk}\ar[dd]\\
Y \ar@{-->}[rr] \ar@{-->}[dd]\ar@{-->}[ur] & & X_1 \ar[dd]\ar[ur] \\
&\bfP_{K_2/\tilk}\ar[rr]&&\bfP_{K_0/\tilk}\\
X_2 \ar[rr]\ar[ur] & & X_0\ar[ur] \\
}
$$
by removing $Y$ and the dashed arrows.

(2) Morally, the non-existing fiber product $\varcalX_1\times_{\vcalX_0}\vcalX_2$ should be the disjoint union of the quasi-Cartesian diagrams with the fields $L_t=k(t)$ parameterized by the points $t\in\Spec_G(A)$, where $A=K_1\otimes_{K_0}K_2$ (recall that the latter are the prime homogeneous ideals $t\subset A$, and $k(t)=\Frac_G(A/t)$). This can be made rigorous by considering the category of all disjoint unions of birational spaces, but we will not need this.
\end{remark}

The following descent result is the main motivation for considering quasi-Cartesian diagrams.

\begin{lemma}\label{birlem2}
Let $\vcalX=(X\to\bfP_{K/\tilk})$, $\vcalY=(Y\to\bfP_{L/\tilk})$ and $\vcalX_i=(X_i\to\bfP_{K_i/\tilk})$ with $i\in I$ be birational spaces in $\bir_\tilk$ and let $f\:\varcalY\to\varcalX$ and $\vcalX_i\to\vcalX$ be morphisms such that $X$ is covered by the images of $X_i$. In addition, for each $i\in I$ and $t\in\Spec_G(L\otimes_KK_i)$ let
$$
\xymatrix{
\varcalY_{i,t} \ar[r] \ar[d]_{f_{i,t}} & \varcalY \ar[d]^f \\
\varcalX_i \ar[r]  & \varcalX
}
$$
be a quasi-Cartesian diagram such that $t$ is the image of $\Spec_G(L_{i,t})$, where $\vcalY_{i,t}=(Y_{i,t}\to\bfP_{L_{i,t}/\tilk})$. Then $f$ is proper (resp. separated) if and only if each $f_i$ is proper (resp. separated).
\end{lemma}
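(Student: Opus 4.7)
The plan is to mimic the strategy of the proof of Lemma~\ref{birlem1}, reducing the statement to a set-theoretic comparison of the canonical maps
\[
\beta_f\:Y\longrightarrow X\times_{\bfP_{K/\tilk}}\bfP_{L/\tilk},\qquad
\beta_{f_{i,t}}\:Y_{i,t}\longrightarrow X_i\times_{\bfP_{K_i/\tilk}}\bfP_{L_{i,t}/\tilk},
\]
since each of $f$ and $f_{i,t}$ is proper (resp.\ separated) precisely when the corresponding $\beta$ is bijective (resp.\ injective). Unwinding the quasi-Cartesian description, a point of $Y_{i,t}$ is a triple $(x_i,y,v)$ with $x_i\in X_i$, $y\in Y$, and $v\in\bfP_{L_{i,t}/\tilk}$ satisfying $f(y)=\phi_i(x_i)$ (where $\phi_i\:X_i\to X$ is the structural morphism), with $v|_{K_i}$ the image of $x_i$ in $\bfP_{K_i/\tilk}$ and $v|_L$ the image of $y$ in $\bfP_{L/\tilk}$; the map $\beta_{f_{i,t}}$ is then the projection $(x_i,y,v)\mapsto(x_i,v)$. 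Consequently, the fiber of $\beta_{f_{i,t}}$ over any $(x_i,v)$ is in canonical bijection with the fiber of $\beta_f$ over $(\phi_i(x_i),v|_L)$.

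This fiber identification gives the implication ``$f$ proper/separated $\Rightarrow$ each $f_{i,t}$ proper/separated'' at once, since singletons (resp.\ at most singletons) restrict to each fiber. For the converse, given $(x,v_L)\in X\times_{\bfP_{K/\tilk}}\bfP_{L/\tilk}$, I would use the cover hypothesis to pick $i$ and $x_i\in X_i$ with $\phi_i(x_i)=x$, and set $v_{K_i}\in\bfP_{K_i/\tilk}$ to be the image of $x_i$, so that $v_{K_i}|_K=v_L|_K$. It then suffices to produce $t\in\Spec_G(L\otimes_K K_i)$ and $v\in\bfP_{L_{i,t}/\tilk}$ with $v|_L=v_L$ and $v|_{K_i}=v_{K_i}$, since the fiber identification converts the bijectivity/injectivity hypothesis on $\beta_{f_{i,t}}$ over $(x_i,v)$ into the required information about $\beta_f$ over $(x,v_L)$.

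The principal obstacle is therefore the existence of this simultaneous extension of graded valuations, a graded analogue of the classical joint-extension theorem for valuations. I would prove it by the usual dominance argument, carried out in the graded framework of \cite[\S1]{temkin2}: let $R\subset L\otimes_K K_i$ be the image of $\mathcal{O}_{v_L}\otimes_{\mathcal{O}_{v_K}}\mathcal{O}_{v_{K_i}}$, and observe that the graded ideal $J:=\mathfrak{m}_{v_L}R+\mathfrak{m}_{v_{K_i}}R$ is proper because $R/J$ surjects onto the nonzero graded tensor product $\widetilde L\otimes_{\widetilde K}\widetilde{K_i}$ of graded residue fields; thus $J$ lies in some maximal graded ideal $\mathfrak{m}\subset R$. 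Choosing a suitable minimal graded prime $\mathfrak{q}\subseteq\mathfrak{m}$ yields an integral graded quotient whose graded fraction field is of the form $L_{i,t}=\Frac_G((L\otimes_K K_i)/t)$ and contains a graded local subring dominating both $\mathcal{O}_{v_L}$ and $\mathcal{O}_{v_{K_i}}$. Graded Zorn then dominates this local subring by a graded valuation ring $\mathcal{O}\subset L_{i,t}$ whose restrictions to $L$ and $K_i$ are, by construction, the given $\mathcal{O}_{v_L}$ and $\mathcal{O}_{v_{K_i}}$, producing the desired $v\in\bfP_{L_{i,t}/\tilk}$.
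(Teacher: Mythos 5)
Your proof is correct and follows essentially the same route as the paper's: reduce properness/separatedness to the set-theoretic maps $\beta$, identify the fiber of $\beta_{f_{i,t}}$ over $(x_i,v)$ with the fiber of $\beta_f$ over $(\phi_i(x_i),v|_L)$ via the quasi-Cartesian property, and produce the needed $t$ and $v$ by dominating a graded local subring of $k(t)$ and applying graded Zorn. Two small points to tighten: the surjection in your justification that $J$ is proper runs the wrong way unless you first note (via Lemma~\ref{flat}, flatness of torsion-free modules over graded valuation rings) that $\calO_{v_L}\otimes_{\calO_{v_K}}\calO_{v_{K_i}}$ injects into $L\otimes_K K_i$, so that $R/J$ is literally the nonzero residue-field tensor product; and $L_{i,t}$ is only required to receive $k(t)$, not equal it, so you should add that $\bfP_{L_{i,t}/\tilk}\to\bfP_{k(t)/\tilk}$ is surjective in order to lift $v$ at the last step.
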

\begin{proof}
Only the descent statement needs a proof, so assume that each $f_{i,t}$ is proper (resp. separated). We should prove that for each valuation $v\in\bfP_{L/\tilk}$ with restriction $u\in\bfP_{K/\tilk}$ and a point $x\in X$ over $u$ there exists a unique (resp. at most one) point $y\in Y$ mapped to $x$ and $v$.

By our assumptions, there exists $j\in I$ and $x'\in X_j$ sitting over $x$, and let $u'\in\bfP_{K_j/\tilk}$ be the image of $x'$. Now comes the main point: the pair $(u',v)$ can be lifted to a valuation $v'$ on some $L_{j,t}$ (in other words, the map $\coprod_t\bfP_{L_{j,t}/\tilk}\to\bfP_{K_j/\tilk}\times_{\bfP_{K/\tilk}}\bfP_{L/\tilk}$ is onto). The maps $\bfP_{L_{j,t}/\tilk}\to\bfP_{k(t)/\tilk}$ are surjective, hence it suffices to lift $(u',v)$ to some $k(t)$, that is, to find $t$ such that the graded valuation rings $\calO_{u'}$ and $\calO_v$, viewed as subrings of $k(t)$ via the embeddings $K_j\into k(t)$ and $L\into k(t)$, are dominated by a graded valuation ring of $k(t)$. Moreover, it suffices to show that they are dominated by a local graded subring $\calO\subset k(t)$, because any such $\calO$ is dominated by a graded valuation ring of $k(t)$.

Choose a point  $w\in W=\Spec_G(\calO_{u'}\otimes_{\calO_u}\calO_v)$ over the closed points of $\Spec_G(\calO_{u'})$ and $\Spec_G(\calO_v)$ -- the fiber over this pair coincides with $\Spec_G(k(u')\otimes_{k(u)}k(v))$ and hence is non-empty. It is easy to see that all generic points of $W$ are contained in $W_0=\Spec_G(K_j\otimes_KL)$ (for example, one can use that any torsion free module over a graded valuation ring is flat, see Lemma~\ref{flat}). So, $w$ possesses a generization $t\in W_0$ and then the image $\calO\subset k(t)$ of $\calO_w$ is a local ring dominating $\calO_{u'}$ and $\calO_v$.

Using that the diagram with $\vcalY_{j,t}$ is quasi-Cartesian we obtain that the lifts of $(x,v)$ to a point of $Y$ are in a one-to-one correspondence with the lifts of the pair $(x',v')$ to a point of $Y_{j,t}$. Since $f_{j,t}$ is proper (resp. separated) there is precisely one (resp. at most one) such a lift.
\end{proof}

\begin{remark}\label{genrem0}
In fact, it suffices to consider only finite sets $I$ since the maps $X_i\to X$ are open and $X$ is quasi-compact. Also, one can show (though we will not use this) that it suffices to consider only generic points $t$. In the second part of the paper, in such a case we will say that $K_i$ and $L$ are in general position in $k(t)$ over $K$, see Remark~\ref{genrem}(2).
\end{remark}

\subsection*{Fiber spaces}
Let $k[T]$ be the polynomial algebra in a set of indeterminates $T=(T_i)_{i\in I}$. Any family $r=(r_i)_{i\in I}$ of positive real numbers induces a generalized Gauss norm $|\ |_r$ whose restriction to each $k[T_{i_1}\..T_{i_n}]$ is the usual generalized Gauss norm with radii $r_{i_1}\..r_{i_n}$. In particular, $|\ |_r$ is multiplicative and hence extends to $k(T)$. The completions of $k[T]$ and $k(T)$ with respect to $|\ |_r$ will be denoted $k\{r^{-1}T\}$ and $k_r$. For a finite $I$ these are just the generalized Tate algebra and the completed residue field of the maximal point of a $k$-polydisc of radii $r_1\..r_n$. The following result and its proof were suggested by the referee, it allows to avoid finiteness assumptions in Theorem~\ref{surjlem} below.

\begin{lemma}\label{tatered}
The graded reduction of $\calA=k\{r^{-1}T\}$ is $\tilcalA=\tilk[r^{-1}\tilT]$ and the reduction map $\pi\:\calM(\calA)\to\Spec_G(\tilcalA)$ is surjective, where $G=\bfR^\times_{>0}$.
\end{lemma}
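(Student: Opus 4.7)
The plan is to treat the two claims separately. For the identification of the graded reduction, I would compute directly from the description of $\calA$ as the completion of $k[T]$ under $|\ |_r$. Every $f = \sum_I a_I T^I \in \calA$ has the property that for each $\rho > 0$ only finitely many multi-indices satisfy $|a_I| r^I \ge \rho$, and $|f|_r = \sup_I |a_I| r^I$. Setting $\rho := |f|_r$, the image of $f$ in the degree-$\rho$ piece of $\tilcalA$ is the \emph{finite} polynomial $\sum_{|a_I| r^I = \rho} \wt{a_I}\, \wt{T}^I$. This identifies $\tilcalA$ with the graded polynomial algebra $\tilk[r^{-1}\tilT]$, in which $\tilT_i$ is homogeneous of degree $r_i$.

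For surjectivity of $\pi$, my plan is to reduce to a finite index set via a projective-limit argument. For each finite $J \subset I$ set $\calA_J = k\{r_J^{-1} T_J\}$; each $\calA_J$ is $k$-affinoid in Berkovich's generalized sense, and $\calA$ is the completion of $\bigcup_J k[T_J] = k[T]$ with respect to $|\ |_r$. Restriction of bounded multiplicative seminorms identifies $\calM(\calA)$ with $\varprojlim_J \calM(\calA_J)$ as sets: any compatible family $(x_J)$ glues to a multiplicative seminorm on $k[T]$ bounded by $|\ |_r$, and such a seminorm extends uniquely by continuity to $\calA$. Likewise $\tilcalA = \varinjlim_J \tilcalA_J$, so $\Spec_G(\tilcalA) = \varprojlim_J \Spec_G(\tilcalA_J)$, and naturality of the reduction map gives $\pi_J(x|_{\calA_J}) = \pi(x) \cap \tilcalA_J$.

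Now, given $\tilde{\p} \in \Spec_G(\tilcalA)$, I would set $\tilde{\p}_J := \tilde{\p} \cap \tilcalA_J$. Surjectivity of the reduction map for $k$-affinoid algebras, available from \cite{temkin2}, yields that $F_J := \pi_J^{-1}(\tilde{\p}_J)$ is a non-empty closed (hence compact) subset of $\calM(\calA_J)$. The continuous transition maps $\calM(\calA_{J'}) \to \calM(\calA_J)$ for $J \subseteq J'$ carry $F_{J'}$ into $F_J$, because $\tilde{\p}_{J'} \cap \tilcalA_J = \tilde{\p}_J$. Hence $\varprojlim_J F_J$ is a cofiltered projective limit of non-empty compact Hausdorff spaces along continuous transition maps, and so is non-empty by Tychonoff. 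Any element of this limit is a point $x \in \calM(\calA)$ with $\pi(x) = \tilde{\p}$.

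The main obstacle is the infinite-$I$ case: one has to check carefully that $\calM(\calA) = \varprojlim_J \calM(\calA_J)$ as sets, that the reduction maps are compatible with this projective description, and that the fibres $F_J$ behave well under transition. Once this book-keeping is in place, the finite case (a direct application of the cited affinoid surjectivity) combined with Tychonoff compactness finishes the proof.
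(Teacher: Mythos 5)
Your computation of $\tilcalA$ is fine and agrees with what the paper gets by writing $\calA$ as the completed filtered union of the finite-variable subalgebras $\calA_J$ and invoking \cite[Proposition~3.1(i)]{temkin2}. The surjectivity argument, however, has a genuine gap. The fibres $F_J=\pi_J^{-1}(\tilde{\p}_J)$ are in general \emph{not} closed in $\calM(\calA_J)$, so the Tychonoff step collapses. The reduction map is anticontinuous in the wrong direction for your purposes: the preimage of the basic open set $D(\tilde f)$ is the closed set $\{x:|f(x)|=\rho(f)\}$, so the preimage of the single point $\tilde{\p}_J$ is the intersection of such closed sets with the \emph{open} set $\pi_J^{-1}(V(\tilde{\p}_J))$, hence only locally closed. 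Concretely, for $\calA_J=k\{T\}$ the fibre over the homogeneous prime $(\tilde T)\subset\tilk[\tilde T]$ is the open unit disc $\{x:|T(x)|<1\}$, which is neither closed nor compact. Without compactness of the $F_J$ a cofiltered inverse limit of non-empty sets can be empty (for uncountable $I$ even surjectivity of all transition maps would not save you), so the limit argument does not go through as written. A second, independent problem is the base case: \cite{temkin2} computes the graded reduction of $k\{r^{-1}T\}$ but does not, as far as I can see, assert surjectivity of $\calM(\calA)\to\Spec_G(\tilcalA)$ for general (non-strict) affinoid $\calA$, so even after the reduction to finite $J$ you would still owe a proof of exactly the statement at issue.

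The paper sidesteps both problems at once by extending the ground field, with no limit argument and no finiteness reduction: given $\tilx\in\Spec_G(\tilcalA)$, choose an analytic extension $k'/k$ admitting a graded embedding of $\tilk$-fields $k(\tilx)\into\tilk'$; since $\widetilde{k'\{r^{-1}T\}}=\tilk'[r^{-1}T]$ by the first claim, the fibre of $\Spec_G(\tilk'[r^{-1}T])\to\Spec_G(\tilcalA)$ over $\tilx$ contains a $\tilk'$-rational point $\tilx'$, cut out by equations $\tilde T_i=\tila_i$ with $\tila_i\in\tilk'$ homogeneous of grading $r_i$; lifting each $\tila_i$ to an element of $k'$ produces a $k'$-point of $\calM(k'\{r^{-1}T\})$ reducing to $\tilx'$, and its image in $\calM(\calA)$ is the desired lift of $\tilx$. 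If you want to salvage your projective-limit scheme you would need, at minimum, to replace the fibres $F_J$ by compact subsets that still map onto one another, which is not straightforward; the ground field extension route is the cleaner fix.
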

\begin{proof}
By definition, $\calA$ is the completed filtered union of its isometric subalgebras $\calA_J=k\{r_i^{-1}T_i\}_{i\in J}$ with finite subsets $J\subseteq I$. Therefore, $\tilcalA$ is the filtered union of the graded reductions $\tilcalA_J$, which coincide with $\tilk[r_i^{-1}T_i]_{i\in J}$ by \cite[Proposition~3.1(i)]{temkin2}. The first claim follows.

Let us show that a point $\tilx\in\tilX:=\Spec_G(\tilcalA)$ can be lifted to a point $x\in X:=\calM(\calA)$. Choose an extension $k'/k$ such that there is an embedding of graded $\tilk$-fields $k(\tilx)\into\tilk'$, and set $\calA'=k'\{r^{-1}T\}$. Then $\tilcalA'=\tilk'[r^{-1}T]$ by the first claim, hence $\tilX':=\Spec_G(\tilcalA')=\tilX\otimes_\tilk\tilk'$ and the fiber of $\tilX'\to\tilX$ over $\tilx$ contains a $k'$-point $\tilx'$. This point is given by equalities $\tilT_i=\tila_i$ for some homogeneous $\tila_i\in\tilk'$ of grading $r_i$ and lifting them to elements of $k'$ one obtains a $k$-point $x'\in X':=\calM(\calA')$ mapped by the reduction $\pi'\:X'\to\tilX'$ to $\tilx'$. It follows that the image $x\in X$ of $x'$ is a lift of $\tilx$.
\end{proof}

In order to apply Lemma \ref{birlem2} to non-archimedean spaces we will need some control on the spaces $\calM(K\wtimes_kL)$ and their reductions, where $K,L$ are non-archimedean extensions of $k$. Such spaces, for example, appear as fibers of the maps $X_L=X\wtimes_kL\to X$ and they can be fairly complicated, see Examples~\ref{fiberexam1} and \ref{fiberexam2} (these examples also discuss some properties of birational spaces, so we postpone them to \S\ref{lastsec}). Nevertheless, we have:

\begin{theorem}\label{surjlem}
Let $k$ be a non-archimedean field with non-archimedean extensions $K/k$ and $L/k$. Then reduction induces a surjective map $\psi\:\calM(K\wtimes_kL)\to\Spec_G(\tilK\otimes_\tilk\tilL)$, where $G=\bfR^\times_{>0}$.
\end{theorem}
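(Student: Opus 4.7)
Given $\tilx \in \Spec_G(\tilK \otimes_\tilk \tilL)$ with graded residue field $\tilF := k(\tilx)$, the plan is to construct a bounded $k$-algebra character $K \wtimes_k L \to F$ into some analytic field $F/k$ whose graded reduction has kernel exactly $\tilx$; such a character yields the sought point $x \in \calM(K \wtimes_k L)$ with $\psi(x) = \tilx$. I would produce this character by applying Lemma~\ref{tatered} to a suitably engineered generalized Tate algebra, then passing to a completed fiber.

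First I would let $\Gamma$ enumerate all pairs of nonzero homogeneous elements $(\tila_\gamma,\wt{b}_\gamma) \in \tilK^\times \times \tilL^\times$ and choose lifts $a_\gamma \in K^\times$, $b_\gamma \in L^\times$ of matching absolute values. Setting $r_\gamma := |a_\gamma|\cdot|b_\gamma|$, the assignment $T_\gamma \mapsto a_\gamma \otimes b_\gamma$ defines a bounded $k$-algebra homomorphism $\phi\:\calC := k\{r^{-1}T\} \to K \wtimes_k L$, and its graded reduction $\tilphi\:\tilcalC = \tilk[r^{-1}\tilT] \to \tilcalA := \tilK \otimes_\tilk \tilL$ is surjective since every nonzero homogeneous elementary tensor lies in its image. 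Put $\wt z := \tilphi^{-1}(\tilx)$; Lemma~\ref{tatered} yields $z \in \calM(\calC)$ with $\pi(z) = \wt z$, and I write $F_z := \calH(z)$, whose graded reduction $\tilF_z$ is a graded field containing $\tilcalC/\wt z$. Surjectivity of $\tilphi$ together with $\tilphi(\wt z) = \tilx$ provides a canonical isomorphism $\tilcalC/\wt z \cong \tilcalA/\tilx = \tilF$, so $\tilF_z \supseteq \tilF$.

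Next I would form the completed fiber $\calB := (K \wtimes_k L) \wtimes_\calC F_z$, viewing $F_z$ as a $\calC$-algebra via $z$; this is the fiber over $z$ of the map $\calM(K \wtimes_k L) \to \calM(\calC)$ induced by $\phi$. Granting that graded reduction commutes with this completed tensor product, one has $\tilcalB = \tilcalA \otimes_\tilcalC \tilF_z$. The relation $\tilphi(c) \otimes f = 1 \otimes (c\cdot f)$ together with the vanishing of $c \in \wt z$ in $\tilF_z$ forces $a \otimes f = 0$ whenever $a \in \tilx = \tilphi(\wt z)$, so
$$
\tilcalB = (\tilcalA/\tilx) \otimes_\tilF \tilF_z = \tilF \otimes_\tilF \tilF_z = \tilF_z \ne 0.
$$
In particular $\calB \ne 0$, hence $\calM(\calB)$ is non-empty, and for any $y \in \calM(\calB)$ the composite $\calA \to \calB \to \calH(y)$ defines a point $x \in \calM(\calA)$ whose graded reduction factors as $\tilcalA \to \tilF_z \hookrightarrow \tilF(\calH(y))$: the first arrow has kernel $\tilx$ by the computation, and the second is a nonzero character out of a graded field, hence grade-by-grade injective. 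Therefore $\psi(x) = \tilx$.

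The main technical obstacle is the identification $\tilcalB = \tilcalA \otimes_\tilcalC \tilF_z$: compatibility of graded reduction with the completed tensor product is standard when the base is $k$ (it underlies the very formula $\widetilde{K \wtimes_k L} = \tilK \otimes_\tilk \tilL$), but here the base is the generalized Tate algebra $\calC$. Verifying it in this setting should reduce to the absolute case via the filtered-union description of $\calC$ exploited in Lemma~\ref{tatered} (i.e., reducing to finite subsets of $\Gamma$, for which $\calC$ is a genuine affinoid and standard base-change theory applies), but the bookkeeping in the graded framework of \cite{temkin2} requires care.
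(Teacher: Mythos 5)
Your overall strategy --- realize $K\wtimes_k L$ as an algebra over a huge generalized Tate algebra $\calC=k\{r^{-1}T\}$, lift the prime downstairs via Lemma~\ref{tatered}, and then pass to the completed fiber --- is genuinely different from the paper's argument (a five-step d\'evissage on $L$: enlarge $L$; treat $L=\whka$ by Galois-torsor considerations; treat $L=k_r$ by identifying $\calM(K\wtimes_k k_r)$ with an explicitly cut-out subset of $\calM(K\{r^{-1}T\})$ and invoking Lemma~\ref{tatered}; then combine). Unfortunately your version has a fatal gap at the completed-fiber step. First, the ``standard fact'' you lean on, $\widetilde{K\wtimes_k L}=\tilK\otimes_\tilk\tilL$, is false: the paper's Example~\ref{fiberexam2} exhibits $K=L=\calH(x)$ with $\tilK=\tilL=\tilk$ but $K\wtimes_k L\simeq K\{s^{-1}S\}$, whose graded reduction is a polynomial ring over $\tilk$ rather than $\tilk\otimes_\tilk\tilk=\tilk$; the remark following Theorem~\ref{surjlem} stresses that the natural map $\Spec_G(\widetilde{K\wtimes_kL})\to\Spec_G(\tilK\otimes_\tilk\tilL)$ need not even be integral. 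So the identity $\tilcalB=\tilcalA\otimes_{\tilcalC}\tilF_z$ has no foundation, and with it the conclusion $\calB\neq 0$.

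Second, and more decisively, $\calB$ really can vanish: Lemma~\ref{tatered} hands you \emph{some} lift $z$ of $\wt{z}$, with no guarantee that $z$ lies in the image of $\calM(K\wtimes_k L)\to\calM(\calC)$. For instance, whenever two indices $\gamma\neq\gamma'$ satisfy $a_\gamma\otimes b_\gamma=a_{\gamma'}\otimes b_{\gamma'}=c$ in $K\wtimes_kL$ (already unavoidable for $K=L=k$), a lift $z$ sending $T_\gamma\mapsto c$ and $T_{\gamma'}\mapsto c'$ for some $c'$ with $\wt{c'}=\wt{c}$ but $c'\neq c$ still reduces to $\wt{z}$, yet it forces the unit $c-c'$ to die in $\calB$, so $\calB=0$. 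Note that the rest of your argument is sound and does not actually need the computation of $\tilcalB$: for \emph{any} $y\in\calM(K\wtimes_kL)$ lying over $z$, the kernel of $\tilcalC\to\wt{\calH(y)}$ equals $\wt{z}$ (it factors through the graded field $\tilF_z$, and a nonzero graded map out of a graded field is injective), and surjectivity of $\tilphi$ then forces the kernel of $\tilK\otimes_\tilk\tilL\to\wt{\calH(y)}$ to be $\tilx$. Thus your whole proof reduces to producing a point of $\calM(K\wtimes_kL)$ lying over a lift of $\wt{z}$ --- which is essentially the surjectivity statement being proved, so the approach is circular exactly where it needs content. This is the difficulty the paper's d\'evissage (ultimately resting on the explicit description of $\calM(K\wtimes_k k_r)$ inside a polydisc over $K$) is designed to circumvent.
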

\begin{proof}
The proof runs by establishing a series of particular cases. As a rule, we fix $K$ and play with $L$, so set $X=\calM(K)$, $X_L=X\wtimes_kL=\calM(K\wtimes_kL)$ and $S_\tilL=\Spec_G(\tilK\otimes_\tilk\tilL)$. We should prove that the map $\psi_L\:X_L\to S_\tilL$ composed from the reduction map $X_L\to\tilX_L=\Spec_G(\widetilde{K\otimes_kL})$ and the morphism $\tilX_L\to S_\tilL$ is surjective.

Step 1. {\it If suffices to prove that $\psi_{L'}$ is surjective, where $L'/L$ is an extension of non-archimedean fields.} The maps $\psi_L$ and $\psi_{L'}$ are compatible with the projections $X_{L'}=X_L\wtimes_LL'\to X_L$ and $S_{\tilL'}=S_\tilL\otimes_\tilL\tilL'\to S_\tilL$, which are clearly surjective. In particular, the surjectivity of $\psi_{L'}$ implies the surjectivity of $\psi_L$.

Step 2. {\it The assertion of the theorem holds when $L=\whka$.} This is the only step, in which we play with $K$: by Step 1 and the symmetry between $K$ and $L$ we can assume that $\whka\subseteq K$. Now, the points of $X_L$ correspond to $k$-embeddings $\whka\into K$, hence $X_L$ is a torsor under $G=\Gal(k^s/k)$. Similarly, $S_\tilL$ is a torsor under $H=\Gal(\tilk^s/\tilk)$. It is easy to see that for any graded field $F$ the separable closure $F^s$ is obtained by taking the extension $F\otimes_{F_1}F_1^s$ and extracting all roots of orders invertible in $\tilK$ from its homogeneous elements. It follows that the homomorphism $G\to H$ identifies $H$ with the tame Galois group of $k$, in particular, it is surjective. Therefore, the map $X_L\to S_\tilL$ is surjective.

Step 3. {\it The assertion holds when $L=k_r$ is the completed fraction field of $\calA=k\{r^{-1}T\}$, where $r=(r_i)_{i\in I}$.}
Note that $\calM(L)$ is the point of $D=\calM(\calA)$ cut off by the conditions $|f(x)|=\rho_{\calA}(f)$ for any $f\in\calA\setminus\{0\}$. Similarly, $\Spec_G(\tilL)$ is the point of $\tilD=\Spec_G(\tilk[r^{-1}T])$ cutoff by the conditions $\tilf(x)\neq 0$. Therefore, the same conditions cut off $X_L$ from $D\wtimes_kK$ and $S_\tilL$ from $\tilD\otimes_\tilk\tilK$. It follows that $\psi_L$ is the base change of the reduction map $\calM(K\{r^{-1}T\})\to\Spec_G(\tilK[r^{-1}T])$, which is surjective by Lemma~\ref{tatered}.



Step 4. {\it The assertion holds when $L$ is the completed algebraic closure of some $k_r$.}
Since $\phi_{k_r}$ is surjective by Step 3 it suffices to show that for any $x\in X_{k_r}$ with image $w\in S_{\tilk_r}$ the fiber over $x$ in $X_L$ is mapped surjectively onto the fiber over $w$ in $S_\tilL$. The map of this fibers is the composition $$\calM(\calH(x)\wtimes_{k_r}L)\to\Spec_G(\wHx\otimes_{\tilk_r}\tilL)\to\Spec_G(k(w)\otimes_{\tilk_r}\tilL).$$ The first map is surjective by Step 2 and the second map is surjective in the obvious way.

Step 5. {\it The assertion holds true when $L$ is arbitrary.} By Step 1 we can assume that $L$ is algebraically closed. We claim that $L$ contains a subfield isomorphic to $k_r$ whose completed algebraic closure $F$ satisfies $\tilF=\tilL$. Choose any homogeneous transcendence basis $\tilT=(\tilT_i)_{i\in I}$ of $\tilL/\tilk$ and choose any lifting $T=(T_i)_{i\in I}$ in $L$. Using the algebraic independence of $\tilT_i$ over $\tilk$ it is easy to see that the induced norm on $k[T]$ is the generalized Gauss norm, and hence the closure of $k(T)$ in $L$ is $k_r$. Since $L$ is algebraically closed, $F=\widehat{k_r^a}$ also embeds in $L$ and clearly both $\tilF$ and $\tilL$ coincide with the algebraic closure of $\tilk_r$. Now we have that $S_\tilF=S_\tilL$ and the map $\psi_L$ is composed from the surjective map $X_L\to X_F$ and the map $\psi_F$, which is surjective by Step 4.
\end{proof}

\begin{remark}
It seems plausible that the theorem can be strengthened to the claim that both the reduction map $X_L\to\tilX_L$ and the morphism $f\:\tilX_L\to S_\tilL$ are surjective. However, unlike the situation with products of $k$-analytic spaces, $f$ does not have to be integral, as follows from Example \ref{fiberexam2}.
\end{remark}

\subsection*{Properties of relative interior}
The following result was recorded (under a mild restriction) in \cite[5.7]{temkin2}. Since the proof was omitted there and we now need a more general version, we give the proof in its entirety.

\begin{lemma}\label{57lemma} Let $\psi\:X \rightarrow Y$ and $\varphi\:Y \rightarrow Z$ be morphisms of $k$-analytic spaces.  The relative interiors satisfy $${\rm{Int}}(X/Y) \cap \psi^{-1}({\rm{Int}}(Y/Z)) \subseteq {\rm{Int}}(X/Z),$$ and a point $x \in {\rm{Int}}(X/Z)$ lies in ${\rm{Int}}(X/Y) \cap \psi^{-1}({\rm{Int}}(Y/Z))$ if either $x \in {\rm{Int}}(X/Y)$ or if $\varphi$ is separated on an open neighborhood of $\psi(x)$. In particular, the inclusion is an equality whenever ${\rm{Int}}(X/Y) = X$ or $\varphi$ is locally separated.
\end{lemma}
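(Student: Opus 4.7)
The plan is to reduce both directions to a purely birational-space statement via Temkin's germ reduction theory and then invoke Lemma~\ref{birlem1}. For the first inclusion ${\rm{Int}}(X/Y) \cap \psi^{-1}({\rm{Int}}(Y/Z)) \subseteq {\rm{Int}}(X/Z)$ I would argue directly, without reductions: given $x$ in the intersection, pick an open neighborhood $V \subseteq X$ of $x$ with $V \to Y$ without boundary, and an open neighborhood $W \subseteq Y$ of $\psi(x)$ with $W \to Z$ without boundary. The restriction $V \cap \psi^{-1}(W) \to W$ is without boundary (being the base change of $V \to Y$ along the open immersion $W \hookrightarrow Y$), and composition with the boundaryless $W \to Z$ yields a boundaryless map $V \cap \psi^{-1}(W) \to Z$ (stability of ``without boundary'' under composition is standard, e.g.\ \cite[3.1.5]{berihes}). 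Hence $x \in {\rm{Int}}(X/Z)$.

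For the reverse direction the key input is \cite[5.2]{temkin2}: for any morphism $f\:A \to B$ of $k$-analytic spaces and any $a \in A$, one has $a \in {\rm{Int}}(A/B)$ if and only if the reduction map of germs $\widetilde{f}_a\:\widetilde{A}_a \to \widetilde{B}_{f(a)}$ is proper in $\bir_{\widetilde{k}}$. Functoriality of germ reduction gives $\widetilde{\varphi\circ\psi}_x = \widetilde{\varphi}_{\psi(x)} \circ \widetilde{\psi}_x$, so the hypothesis $x \in {\rm{Int}}(X/Z)$ supplies properness of this composition.

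Now I split into the two cases. In Case~1, $x \in {\rm{Int}}(X/Y)$ means $\widetilde{\psi}_x$ is proper, and Lemma~\ref{birlem1} (applied in the form ``composition proper and first factor proper'') yields properness of $\widetilde{\varphi}_{\psi(x)}$, i.e.\ $\psi(x) \in {\rm{Int}}(Y/Z)$. In Case~2, $\varphi$ is separated on an open neighborhood of $\psi(x)$; the germ translation of separatedness (analogous to \cite[4.8(iii)]{temkin2}) ensures that $\widetilde{\varphi}_{\psi(x)}$ is separated in $\bir_{\widetilde{k}}$. Lemma~\ref{birlem1} (in the form ``composition proper and second factor separated'') then forces both $\widetilde{\psi}_x$ and $\widetilde{\varphi}_{\psi(x)}$ to be proper, giving simultaneously $x \in {\rm{Int}}(X/Y)$ and $\psi(x) \in {\rm{Int}}(Y/Z)$. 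The final ``In particular'' assertion is immediate, since ${\rm{Int}}(X/Y)=X$ makes every point qualify for Case~1, while local separatedness of $\varphi$ makes every point qualify for Case~2.

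The only conceptual step is the appeal to \cite[5.2]{temkin2} converting an analytic statement about relative interiors into the combinatorial statement of properness of reduction morphisms in $\bir_{\widetilde{k}}$; once this translation is available, the two set-theoretic alternatives in Lemma~\ref{birlem1} match exactly the two hypotheses in the lemma to be proved, and no further geometric work is required.
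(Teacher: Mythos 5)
Your proof is correct and follows essentially the same route as the paper: the substantive reverse direction is handled exactly as in the paper's proof, by translating via \cite[5.2]{temkin2} and \cite[4.8(iii)]{temkin2} into properness/separatedness of the germ reduction maps and then invoking the two alternatives of Lemma~\ref{birlem1}. The only cosmetic difference is that you prove the forward inclusion directly from the definition of the relative interior (composition and base change of boundaryless maps), whereas the paper simply cites \cite[1.5.5(ii)]{berihes}.
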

\begin{proof}
The inclusion is \cite[1.5.5({\rm{ii}})]{berihes}, and for the reverse statement at a point $x \in {\rm{Int}}(X/Z)$ such that $\varphi$ is separated near $\psi(x)$ we can replace our spaces with suitable open subspaces so that $\varphi$ is separated.  In this case the equality is \cite[5.7]{temkin2}, but since  we also want to treat the case $x \in {\rm{Int}}(X/Y)$ without separatedness conditions on $\varphi$ we give the argument here for the convenience of the reader.    We choose $x \in {\rm{Int}}(X/Z)$ and let $y = \psi(x) \in Y$ and $z = \varphi(y) \in Z$.  Consider the induced maps $\widetilde{\psi}\:\widetilde{X}_x \rightarrow \widetilde{Y}_y$ and $\widetilde{\varphi}\:\widetilde{Y}_y \rightarrow \widetilde{Z}_z$ of reductions of germs in the category ${\rm{bir}}_{\widetilde{k}}$ of birational spaces over the $\R^{\times}_{>0}$-graded field $\widetilde{k}$.   By \cite[5.2]{temkin2}, a morphism of $k$-analytic spaces has empty relative boundary near a point of the source if and only if the induced map of reductions of germs at that point and its image is a proper map in the category ${\rm{bir}}_{\widetilde{k}}$ (in the sense of properness defined in \cite[\S2]{temkin2}). The condition $x \in {\rm{Int}}(X/Z)$ therefore says exactly that $\widetilde{\varphi} \circ \widetilde{\psi} = \widetilde{\varphi \circ \psi}$ is a proper map in ${\rm{bir}}_{\widetilde{k}}$, and the condition $x \in {\rm{Int}}(X/Y)$ says exactly that $\widetilde{\varphi}$ is proper. By \cite[4.8(iii)]{temkin2}, separatedness for $\varphi$ near $\psi(x)$ says exactly that $\widetilde{\varphi}$ is a separated map in the category ${\rm{bir}}_{\widetilde{k}}$ (in the sense defined in \cite[\S2]{temkin2}). It remains to use Lemma~\ref{birlem1}.
\end{proof}

As an immediate consequence of this lemma we obtain a result on \'etale equivalence relations that answers a question that naturally arose in \cite{ct} (and was referenced to there as 2.2).

\begin{theorem}\label{deltaclosed}
Let $p_1, p_2\:R \rightrightarrows U$ be a pair of quasi-finite maps of $k$-analytic spaces such that $$\delta = (p_1, p_2)\:R \rightarrow U \times U$$ is a quasi-compact monomorphism. If $U$ is locally separated in the sense that each $u \in U$ has a separated open neighborhood then $\delta$ is a closed immersion.
\end{theorem}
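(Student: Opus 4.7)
The aim will be to show that $\delta$ is a finite monomorphism, hence a closed immersion. The argument I would give has three phases: first use Lemma~\ref{57lemma} to deduce that $\delta$ is boundaryless; second, combine quasi-finiteness of the $p_i$ with local separatedness of $U$ and a standard graph construction to show $\delta$ is finite near each point of $R$; and third, use quasi-compactness of $\delta$ together with Hausdorffness of separated products in $U\times U$ to pin down the image and globalize.

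For the first phase I would apply Lemma~\ref{57lemma} with $\psi=\delta\:R\to U\times U$, $\varphi=\pi_1\:U\times U\to U$, and $\varphi\circ\psi=p_1$. Any point of $U\times U$ has a neighborhood of the form $W_1\times W_2$ with the $W_i$ separated opens in $U$, and such a product is separated, so $\pi_1$ is locally separated. Since $p_1$ is quasi-finite it is boundaryless, so $\Int(R/U)=R$, and Lemma~\ref{57lemma} then yields $R=\Int(R/U\times U)\cap\delta^{-1}(\Int(U\times U/U))$; in particular $\delta$ is boundaryless.

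For the second phase I fix $x\in R$ with $u_i:=p_i(x)$. By quasi-finiteness of $p_1$ I choose a separated open $W_1\ni u_1$ and an open $V\ni x$ with $p_1|_V\:V\to W_1$ finite (after replacing $V$ by $V\cap p_1^{-1}(W_1)$), and also pick a separated open $W_2\ni u_2$. The fiber $V\cap p_1^{-1}(u_1)=\{x,y_1,\dots,y_n\}$ is finite, and the monomorphism property of $\delta$ forces $p_2(y_i)\ne u_2$ for all $i$, so Hausdorffness of the separated $W_2$ allows shrinking it to exclude the $p_2(y_i)$. Setting $V':=V\cap p_2^{-1}(W_2)$ then makes $V'\cap p_1^{-1}(u_1)=\{x\}$, and since $p_1|_V$ is finite (hence closed), $W_1'':=W_1\setminus p_1(V\setminus V')$ is an open neighborhood of $u_1$ in $W_1$. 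Then $V'':=V\cap p_1^{-1}(W_1'')$ lies in $V'$, so $p_2(V'')\subseteq W_2$, and $V''\to W_1''$ is finite by base change; the graph of $p_2|_{V''}$ is a closed immersion $V''\to V''\times W_2$ (since $W_2$ is separated), so composing with the finite map $V''\times W_2\to W_1''\times W_2$ (base change of $V''\to W_1''$) shows $\delta|_{V''}\:V''\to W_1''\times W_2$ is finite, hence, being a monomorphism, a closed immersion.

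For the third phase I choose $W_1''\times W_2$ to be quasi-compact, so that $R^*:=\delta^{-1}(W_1''\times W_2)$ is quasi-compact by hypothesis on $\delta$. Then $R^*\setminus V''$ is quasi-compact and closed in $R^*$, and its $\delta$-image is a compact subset of the Hausdorff $W_1''\times W_2$, hence closed; by the monomorphism property it avoids $\delta(x)$. Thus there is an open $\Omega\ni\delta(x)$ in $W_1''\times W_2$ disjoint from $\delta(R^*\setminus V'')$, whence $\delta^{-1}(\Omega)\subseteq V''$ and so $\delta^{-1}(\Omega)\to\Omega$ is a closed immersion as a restriction of $V''\to W_1''\times W_2$. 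The same compactness-in-Hausdorff argument in any separated quasi-compact product in $U\times U$ shows that $\delta(R)$ is locally closed, hence closed in $U\times U$, so gluing the local closed immersions at points of $\delta(R)$ with opens disjoint from $\delta(R)$ in its complement gives a cover of $U\times U$ on which $\delta$ restricts to a closed immersion. The hard part is this third phase: coupling the local finiteness from phase two with the quasi-compactness of $\delta$ to get closed immersions over neighborhoods of $\delta(x)$ in $U\times U$, rather than just over neighborhoods of $x$ in $R$.
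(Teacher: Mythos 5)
Your first phase coincides with the paper's: applying Lemma~\ref{57lemma} to $\psi=\delta$, $\varphi=\pi_1$ and $\varphi\circ\psi=p_1$, using that $p_1$ is boundaryless and $\pi_1$ is locally separated, gives ${\rm{Int}}(R/U\times U)=R$. After that you diverge, and your second phase contains a genuine error. You set $V'=V\cap p_2^{-1}(W_2)$, correctly observe that $V'\cap p_1^{-1}(u_1)=\{x\}$, and then claim that $W_1''=W_1\setminus p_1(V\setminus V')$ is an open neighborhood of $u_1$. It is not, as soon as the fiber $V\cap p_1^{-1}(u_1)=\{x,y_1,\dots,y_n\}$ has $n\ge 1$: each $y_i$ satisfies $p_2(y_i)\notin W_2$ precisely because you shrank $W_2$ to exclude these points, hence $y_i\in V\setminus V'$, hence $u_1=p_1(y_i)\in p_1(V\setminus V')$ and $u_1\notin W_1''$. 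The construction can be repaired by the standard two-step shrinking (first separate $x$ from the $y_i$ inside the Hausdorff space $V$ and cut down the base so that $x$ becomes the entire fiber of a finite map over a smaller $W_1$, and only then impose the condition $p_2^{-1}(W_2)$ and push the complement down), but as written the step fails. A secondary problem of the same flavor occurs in your third phase: an open set $W_1''\times W_2$ is essentially never quasi-compact in the Berkovich topology (already an open disc is not), so to invoke the quasi-compactness hypothesis on $\delta$ you must pass to a compact analytic domain neighborhood of $(u_1,u_2)$ and then return to its topological interior.

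It is also worth comparing your phases two and three with the paper's argument, which makes all of this unnecessary. A monomorphism is separated, hence topologically separated; together with quasi-compactness this makes $\delta$ compact; a compact morphism without boundary is proper by definition \cite[1.5.3,~1.5.4]{berihes}; and a proper morphism with discrete fibers is finite by \cite[1.5.3(iii)]{berihes}. One then only needs the general fact that a finite monomorphism of $k$-analytic spaces is a closed immersion, which reduces to the affinoid case and then to the corresponding statement for schemes. Your graph construction and the compactness-in-Hausdorff globalization are in effect re-proving special cases of these cited results by hand, and it is exactly in that hand-rolled shrinking that the gap appears.
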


The quasi-compactness of $\delta$ means that the inverse image of a quasi-compact domain is quasi-compact (see also Lemma~\ref{qcmap} below). The case of most interest is when the $p_i$ are \'etale. Recall that \'etale maps are quasi-finite; cf. \cite[3.1.1,~3.3.4]{berihes}.
{
\begin{proof}
Let us first show that $\delta$ has no boundary. By definition of quasi-finiteness in $k$-analytic geometry \cite[3.1.1]{berihes}, the quasi-finite maps $p_1$ and $p_2$ are without boundary.  The first projection $U \times U \rightarrow U$ is locally separated since $U$ is locally separated, so by Lemma \ref{57lemma} we have that ${\rm{Int}}(R/U \times U)$ contains ${\rm{Int}}(R/U)$ (taken with respect to $p_1\:R \rightarrow U$), and this latter interior is $R$.

Next, we claim that $\delta$ is a finite morphism of $k$-analytic spaces. Being a monomorphism $\delta$ is separated, hence topologically separated and hence $\delta$ is actually compact. Recall that a proper morphism is nothing else but a compact morphism without boundary (\cite[1.5.3,~1.5.4]{berihes}), so $\delta$ is proper. The fibers of $\delta$ are discrete, hence it is finite by \cite[1.5.3(iii)]{berihes}.

Finally, we claim that a finite monomorphism of $k$-analytic spaces $X \rightarrow Y$ is a closed immersion.  Indeed, by \cite[1.3.7]{berihes} we can assume $X = \mathscr{M}(\mathscr{A})$ and $Y = \mathscr{M}(\mathscr{B})$ are $k$-affinoid, and since completed tensor products coincide with ordinary tensor products for finite admissible morphisms of $k$-affinoid algebras it follows that the corresponding map $\Spec(\mathscr{A}) \rightarrow \Spec(\mathscr{B})$ is a finite monomorphism of schemes. Hence, it is a closed immersion of schemes, so the finite admissible map of $k$-affinoid algebras $\mathscr{B} \rightarrow \mathscr{A}$ is surjective, as required.
\end{proof}

\section{Set-theoretic and topological descent}\label{setsec}
As another application of the theory of birational spaces, we shall now establish some easier descent results.  In particular, results in graded commutative algebra in \cite{temkin2} cover our needs in this section.

\subsection*{Counterexamples}
In this subsection we consider three examples where certain properties do not descend along a morphism $f$. This will be later used to show that our descent theorems are tight. All three examples are constructed along the same line: one takes $f$ to be a bijective monomorphism with some nasty properties. The base change of $f$ along itself is an isomorphism, and this suffices to obtain almost all no-descent results.

Our first example is specific for the analytic category. The idea is to take a non-admissible covering by subdomains, and here is the simplest but typical example.

\begin{example}\label{surexam}
Let $Y=\calM(k\{t\})$ be the closed unit disc, $Y_0=Y\{|t|\ge 1\}$ the unit annulus in $Y$ and $Y_1=Y-Y_0$ the open unit disc. Then $f\:Y'=Y_0\coprod Y_1\to Y$ is a non-admissible covering and a bijective monomorphism. The following properties are not local with respect to $f$: (v)--(xiv), and (xviii)--(xx). Indeed, the base change of $g=f$ with respect to $f$ is an isomorphism, and this covers all above properties except various forms of separatedness. To show that separatedness and local separatedness do not descent with respect to $f$, consider the space $X$ obtained from two copies of $Y$ by gluing along $Y_0$ (this is $Y$ with doubled open disc). Then $g:X\to Y$ is not locally separated at the maximal point of $X$, but $Y'=X\times_YY'$ is a disjoint union of an annulus and two open discs, hence $g'\:Y'\to X'$ is even separated. Finally, the case of topological separatedness is dealt with similarly, but this time we double the annulus $Y_0$, that is, we define $X$ by gluing two copies of $Y$ along $Y_1$.
\end{example}

The other two examples have algebraic analogs and can arise from analytifications of algebraic $k$-schemes as well. Our second example is obtained by ``decomposing'' a space into a closed subspace and its complement.

\begin{example}\label{Gsurexam}
Let $D$ be the closed unit disk, $D^{\ast} = D - \{0\}$, and $D'$ the disjoint union $D^{\ast} \coprod \{0\}$.  The canonical bijective monomorphism $f\:D' \rightarrow D$ is a disjoint union of an open immersion and a closed immersion. In particular, $f$ is boundaryless. Properties (ix)--(xx) are not local with respect to $f$. For all properties except separatedness and topological separatedness, it suffices to observe that $f$ does not satisfy them, while its base change along itself is an isomorphism. For the latter two, let $X$ denote the non-separated gluing of $D$ to itself along the identity on $D^{\ast}$ then the canonical map $g\:X \rightarrow D$ is not topologically separated but its base change along $f$ is separated because $g$ has separated pullback over $D^{\ast}$ and over the origin.
\end{example}

Our last example is a classical nilpotent thickening, and, again, we consider the simplest possible case.

\begin{example}\label{compsurexam}
Consider the bijective closed immersion $f\:X=\calM(k)\into Y=\calM(k[\varepsilon]/(\varepsilon^2))$. The base change of $f$ along itself is an isomorphism, hence properties (xv)--(xx) are not local with respect to $f$.
\end{example}

\subsection*{Surjective descent}
First, we study which properties of morphisms satisfy purely set-theoretic descent.

\begin{theorem}\label{surjth}
The following properties of morphisms $g\:X\to Y$ are local with respect to the class of surjective base change morphisms $Y'\to Y$: (i) surjective, (ii) has zero-dimensional fibers, (iii) unramified, (iv) monomorphism.
\end{theorem}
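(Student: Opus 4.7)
The plan begins with a single point-lifting lemma that powers all four descent statements. If $g'\:X'\to Y'$ denotes the base change of $g\:X\to Y$ by the surjective $f\:Y'\to Y$, I claim that for every $x\in X$ with $y=g(x)$ and every $y'\in f^{-1}(y)$ there exists $x'\in X'$ lying over both $x$ and $y'$. The locus of such $x'$ in $X'$ is the Berkovich spectrum $\calM(\calH(x)\wtimes_{\calH(y)}\calH(y'))$, which is non-empty because the completed tensor product of two analytic field extensions of a common analytic field is a non-zero Banach algebra. Combined with surjectivity of $f$, this supplies, for every $x\in X$, a compatible preimage $x'\in X'$ together with the induced diagram of completed residue fields.

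For (i), I lift any $y\in Y$ to some $y'\in Y'$ via $f$ and apply surjectivity of $g'$ to obtain $x'\in X'$ over $y'$; its image in $X$ lies over $y$. For (ii), the base-change identity $g'^{-1}(y')=g^{-1}(y)\wtimes_{\calH(y)}\calH(y')$ combined with the invariance of $k$-analytic dimension under analytic ground-field extension forces every fiber $g^{-1}(y)$ to be zero-dimensional once $g'^{-1}(y')$ is, while surjectivity of $f$ covers every $y\in Y$. For (iii), unramifiedness is a pointwise condition (expressible, for instance, through the vanishing of $\Omega^1_{X/Y,x}$, or equivalently through the reduction-of-germs description) that commutes with completed tensor product along the faithfully flat analytic field extension $\calH(y')/\calH(y)$; for $x'$ chosen over $x$ by the lifting principle, faithful descent transfers unramifiedness at $x'$ back to $x$.

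The genuinely interesting case is (iv). Here I plan to use the criterion that $g$ is a monomorphism precisely when $\Delta_g\:X\to X\times_YX$ is an isomorphism, equivalently when $g$ is injective on points and every non-empty fiber $g^{-1}(y)$ is isomorphic to $\calM(\calH(y))$ as an $\calH(y)$-analytic space (so in particular $\calH(g(x))\to\calH(x)$ is an isomorphism). Injectivity descends via the lifting lemma: two distinct $x_1,x_2\in g^{-1}(y)$ yield distinct lifts $x_i'\in g'^{-1}(y')$ for any $y'\in f^{-1}(y)$. For the fiber condition, given $x\in X$ over $y$ I pick $y'\in f^{-1}(y)$ and $x'\in X'$ over $(x,y')$; the monomorphism property of $g'$ then forces $g'^{-1}(y')=\calM(\calH(y'))$, and the base-change identity $g'^{-1}(y')=g^{-1}(y)\wtimes_{\calH(y)}\calH(y')$ rewrites this as $g^{-1}(y)\wtimes_{\calH(y)}\calH(y')=\calM(\calH(y'))$. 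The main obstacle is the remaining descent step --- deducing $g^{-1}(y)=\calM(\calH(y))$ from this base-change equality --- which I plan to settle by invoking faithful flatness of $\calH(y')/\calH(y)$ in the category of $\calH(y)$-Banach modules: such faithful flatness preserves the appropriate notion of $\calH(y)$-size and so forbids a strictly larger $g^{-1}(y)$ from collapsing to $\calM(\calH(y'))$ after base change, thereby yielding $\calH(x)=\calH(y)$ and completing (iv).
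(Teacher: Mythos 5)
Your parts (i)--(iii) are correct and follow essentially the paper's route: (i) and (ii) are immediate from surjectivity of $X'\to X$ over each fiber, and for (iii) the paper likewise argues contrapositively that a nonvanishing fiber $\Omega_{X_G/Y_G}\wtimes\calH(x)$ stays nonvanishing at any $x'$ over $x$ (your point-lifting lemma, which the paper leaves implicit, is exactly what guarantees such an $x'$ exists). One small slip: the field extension along which the fiber of the differentials is extended is $\calH(x')/\calH(x)$, not $\calH(y')/\calH(y)$, though this changes nothing.

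For (iv) you take a genuinely different route, and it has a real gap. You rest the whole case on the criterion ``$g$ is a monomorphism iff $g$ is injective and every non-empty fiber is $\calM(\calH(y))$.'' The forward direction is easy, but the reverse direction is not a citable fact in $k$-analytic geometry: it is a theorem whose proof is, in substance, the entire content of the paper's argument for (iv). Indeed, to show that trivial fibers force $\delta_g\:X\to X\times_YX$ to be an isomorphism one must observe that $\delta_g$ is a surjective $G$-locally closed immersion, hence a closed immersion with locally nilpotent ideal $I=\ker(\calO_{(X\times_YX)_G}\to\calO_{X_G})$, and that $I/I^2=\Omega_{X_G/Y_G}$ vanishes because the fibers are reduced points, whence $I=0$. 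The paper runs exactly this argument, but applied directly to $\delta_g$ as the base change of $\delta_{g'}$ along the surjective map $X'\times_{Y'}X'\to X\times_YX$, so that surjectivity of $\delta_g$ comes from (i) and vanishing of $\Omega_{X_G/Y_G}$ from (iii); no fiberwise criterion and no further descent step are needed. By contrast, your route both hides this argument inside the unproven criterion and then requires an additional nontrivial descent: deducing $g^{-1}(y)\cong\calM(\calH(y))$ from $g^{-1}(y)\wtimes_{\calH(y)}\calH(y')\cong\calM(\calH(y'))$. Your appeal to ``faithful flatness preserving $\calH(y)$-size'' is not an argument as stated. It can be repaired --- the fiber has one point, hence is affinoid $\calM(\calA)$, and the isometric embedding $V\hookrightarrow V\wtimes_k K$ for $k$-Banach spaces shows $\calA$ cannot contain two $k$-linearly independent elements if $\calA\wtimes_kK=K$ --- but as written both halves of (iv) are assertions rather than proofs. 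I would recommend replacing (iv) by the diagonal argument, which reduces it to the cases (i)--(iii) you have already established.
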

\begin{proof}
Let $g'\:X'\to Y'$ be the base change of $g$. Cases (i) and (ii) are obvious: if $g'$ is has non-empty (resp. zero-dimensional) fibers then so does $g$.

(iii) If $g$ is not unramified then the coherent $\calO_{X_G}$-module $\Omega_{X_G/Y_G}$ does not vanish, and we can find a point $x\in X$ with a non-zero fiber $\Omega_{X_G/Y_G}(x):=\Omega_{X_G/Y_G}\wtimes\calH(x)$. Since differentials are compatible with base changes, this implies that for any point $x'\in X'$ above $x$ the fiber $\Omega_{X'_G/Y'_G}(x')$ does not vanish too. In particular, $g'$ is not unramified.

(iv) Assume that $g'$ is a monomorphism. Set $Z=X\times_YX$ and $Z'=X'\times_{Y'}X'$ and note that $Z'=Z\times_XX'$, that is, the diagonal $\delta_{g'}$ of $g'$ is the base change of the diagonal $\delta_g$ of $g$ with respect to the surjective morphism $Z'\to Z$. A morphism is a monomorphism if and only if its diagonal is an isomorphism. In particular, $\delta_{g'}$ is an isomorphism and $I'=\ker(\calO_{Z'_G}\to\calO_{X'_G})$ vanishes. Thus $\Omega_{X'_G/Y'_G}=I'/I'^2=0$, and using (iii) we obtain that $g$ is unramified. In addition, $\delta_g$ is surjective by (ii). Being a diagonal of a morphism, $\delta_g$ is a $G$-locally closed immersion, that is, it is a composition of a closed immersion and embedding of a subdomain. Therefore, $\delta_g$ is a surjective closed immersion, and hence the corresponding ideal $I=\ker(\calO_{Z_G}\to\calO_{X_G})$ is locally nilpotent. We already  know that $I/I^2=\Omega_{X_G/Y_G}=0$, and since $I$ is locally nilpotent this implies that $I=0$. So, $\delta_g$ is an isomorphism and we are done.
\end{proof}

The list of properties in Theorem \ref{surjth} cannot be increased due to Examples \ref{surexam} and \ref{Gsurexam}.

\subsection*{$G$-surjectivity}
Example \ref{surexam} indicates that in order to descent more properties we should better take the $G$-topology into account, so we will now impose a surjectivity assumption for the $G$-topology. In the case of covers by subdomains, this is precisely the condition that makes a covering admissible, ruling out Example~\ref{surexam}. In general, this condition can be defined purely topos-theoretically, but we prefer to stick to a down-to-earth approach with points because not only $G$-topological spaces in analytic geometry have enough points, but these points also possess a simple informative description.

\begin{definition}\label{Gsurdef}
For an analytic space $X$ let $X_G$ denote the corresponding $G$-topological space considered as a site and let $|X_G|$ denote the set of (isomorphism classes of) points of $X_G$. We say that a morphism $f\:X\to Y$ is {\em $G$-surjective} if the map $|X_G|\to|Y_G|$ is surjective.
\end{definition}

\begin{remark}
We will prove in Theorem~\ref{Htop} that $|X_G|=\coprod_{x\in X}\tilX_x$. That proof is self-contained and we are going to use this fact below without causing to a circular reasoning. The reason for postponing our study of spaces $|X_G|$ to the end of \S\ref{Hstrsec} is expository: we will introduce $H$-strict $G$-topologies $X_H$ in \S\ref{Hstrsec}, and Theorem~\ref{Htop} is formulated in that generality. The case we need here is obtained by taking $H=\bfR^\times_{>0}$, it corresponds to Kedlaya's reified adic spaces, see Remark~\ref{adicrem}.
\end{remark}

Recall that for any morphism $\phi\:(X\to\bfP_{L/\tilk})\to(Y\to\bfP_{K/\tilk})$ of graded birational spaces over $\tilk$ the underlying topological map $f\:X\to Y$ is an open map by \cite[Theorem~7.2.5]{flat}. In particular, $(f(X)\to\bfP_{K/\tilk})$ is a birational subspace of $(Y\to\bfP_{K/\tilk})$ that we call the {\em image of $\phi$}.

\begin{lemma}\label{Gsurlem}
Let $f\:X\to Y$ be a morphism of $k$-analytic spaces, then the following conditions are equivalent:

(1) $f$ is $G$-surjective,

(2) $\cup_{x\in f^{-1}(y)}\Im(\tilf_x\:\tilX_x\to\tilY_y)=\tilY_y$ for any point $y\in Y$,

(3) for any point $y\in Y$, there exist finitely many points $x_i\in f^{-1}(y)$ such that $\tilY_y=\cup_i\Im(\tilf_{x_i})$.
\end{lemma}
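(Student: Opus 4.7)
The plan is to reduce everything to the explicit description $|X_G| = \coprod_{x \in X} \tilX_x$ furnished by Theorem~\ref{Htop}, which (per the remark after Definition~\ref{Gsurdef}) we may invoke here without circularity. Under this identification, and by the functoriality of the reduction-of-germs construction, the map $|X_G| \to |Y_G|$ induced by $f$ carries the summand $\tilX_x$ into $\tilY_{f(x)}$ via the reduction morphism $\tilf_x$; I would begin by spelling this compatibility out.

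Granted this, the equivalence (1) $\Leftrightarrow$ (2) is a formal disjoint-union argument. Surjectivity of $|X_G| \to |Y_G|$ may be checked on each summand $\tilY_y$ of the target, and the summand $\tilY_y$ is hit only by those summands $\tilX_x$ with $f(x)=y$, via the collection of reduction maps $\tilf_x$. This makes the hypothesis of $G$-surjectivity literally equivalent to the covering condition stated in (2).

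For (3) $\Rightarrow$ (2) nothing needs to be said. For (2) $\Rightarrow$ (3), I would fix a point $y \in Y$ and recall (from the paragraph immediately preceding the lemma) that every morphism in $\bir_\tilk$ has open underlying topological map, by \cite[Theorem~7.2.5]{flat}. Consequently each image $\Im(\tilf_x)$ is open in $\tilY_y$, and condition (2) exhibits $\tilY_y$ as the union of this family of open subsets. Since the underlying topological space of any graded birational space over $\tilk$ is quasi-compact (built into the definition recalled in the Terminology and notation section), finitely many of these open sets already cover $\tilY_y$, furnishing the finite collection of points $x_i$ required by (3).

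The only potential obstacle is justifying the fiberwise decomposition of $|X_G| \to |Y_G|$ as the collection of reduction morphisms $\tilf_x$; this is really a statement about Theorem~\ref{Htop} and the functoriality of reductions of germs, both of which are established elsewhere in the paper. Once that compatibility is in hand, the real mathematical content of the lemma is just openness of morphisms in $\bir_\tilk$ combined with quasi-compactness of graded birational spaces.
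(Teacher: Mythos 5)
Your proposal is correct and follows exactly the paper's argument: the equivalence (1)$\Leftrightarrow$(2) comes from the identification $|X_G|=\coprod_{x\in X}\tilX_x$ of Theorem~\ref{Htop}, and (2)$\Leftrightarrow$(3) from quasi-compactness of $\tilY_y$ together with the openness of the images $\Im(\tilf_x)$ recorded just before the lemma. You have merely made explicit the functoriality and openness points that the paper's one-line proof leaves implicit.
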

\begin{proof}
(1)$\Longleftrightarrow$(2) since $|X_G|=\coprod_{x\in X}\tilX_x$, and (2)$\Longleftrightarrow$(3) since each $\tilY_y$ is quasi-compact.
\end{proof}

\subsection*{$G$-surjective descent}
We next wish to discuss descent of properties of morphisms with respect to base change along $G$-surjective morphisms $f\:Y' \rightarrow Y$. We only list properties not covered by the surjective descent. The main case is being without boundary, which is a sort of local properness (for the same reason the analogous notion is called partial properness in adic geometry). We will see that descent of being without boundary can be reduced to set-theoretic issues by germ reductions (in other words, it can be dealt with by set-theoretic methods in the $G$-topology).

\begin{theorem}\label{Gsurth}
The following properties of morphisms $g\:X\to Y$ are local with respect to $G$-surjective morphisms $f\:Y'\to Y$: (v) $G$-surjective, (vi) locally separated, (vii) boundaryless, (viii) quasi-finite.
\end{theorem}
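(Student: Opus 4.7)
The plan is to treat the four properties in turn; throughout, let $g'\:X'\to Y'$ be the base change of $g\:X\to Y$ along $f$, assume $g'$ satisfies the property in question, fix $x\in X$ with $y=g(x)$, and verify the property for $g$ at $x$.

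For (v) I would chase points using $|X_G|=\coprod_{x\in X}\tilX_x$ and Lemma~\ref{Gsurlem}. Given $\tily\in\tilY_y$, $G$-surjectivity of $f$ yields $y'\in f^{-1}(y)$ and $\tily'\in\wt{Y'}_{y'}$ over $\tily$, and then $G$-surjectivity of $g'$ yields $x'\in g'^{-1}(y')$ and $\tilx'\in\wt{X'}_{x'}$ over $\tily'$. Writing $\check x\in X$ for the image of $x'$ and using the functorial reduction map $\wt{X'}_{x'}\to\tilX_{\check x}$ induced by the projection $X'\to X$, the image of $\tilx'$ in $\tilX_{\check x}$ maps to $\tily$, so Lemma~\ref{Gsurlem} gives $G$-surjectivity of $g$.

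For (vi) and (vii), I would deduce separatedness (resp. properness) of $\tilg_x\:\tilX_x\to\tilY_y$ in $\bir_{\tilk}$ from Lemma~\ref{birlem2} and then translate back via the reduction criteria \cite[4.8(iii)]{temkin2} and \cite[5.2]{temkin2}. Apply Lemma~\ref{birlem2} with $\vcalX=\tilY_y$, $\vcalY=\tilX_x$, and test morphisms $\vcalX_{y'}=\wt{Y'}_{y'}\to\tilY_y$ indexed by $y'\in f^{-1}(y)$; the covering hypothesis is exactly Lemma~\ref{Gsurlem}(2) for $f$ at $y$. The fiber of $X'\to X\times Y'$ over $(x,y')$ is $\calM(\calH(x)\wtimes_{\calH(y)}\calH(y'))$, and Theorem~\ref{surjlem} applied over the analytic base $\calH(y)$ with $K=\calH(x)$, $L=\calH(y')$ provides, for every $t\in\Spec_G(\wHx\otimes_{\wHy}\wt{\calH(y')})$, a point $x'$ in that fiber whose reduction realizes $t$. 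Via \cite[Proposition~4.6]{temkin2} the reduction $\wt{X'}_{x'}$ realizes the quasi-Cartesian diagram in $\bir_{\tilk}$ demanded by Lemma~\ref{birlem2}, with the morphism $\vcalY_{y',t}\to\vcalX_{y'}$ equal to $\wt{g'}_{x'}\:\wt{X'}_{x'}\to\wt{Y'}_{y'}$; the latter is separated (resp. proper) by the hypothesis on $g'$ and the same reduction criteria, and Lemma~\ref{birlem2} then yields the conclusion.

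For (viii), I would combine (vii) with Theorem~\ref{surjth}(ii): in the Berkovich setting, quasi-finiteness at $x$ is equivalent to boundarylessness at $x$ together with $x$ being isolated in its fiber, so boundarylessness of $g$ comes from (vii), while zero-dimensionality of fibers descends under ordinary surjectivity by Theorem~\ref{surjth}(ii) ($G$-surjectivity of $f$ clearly implies topological surjectivity since every $\tilY_y$ is non-empty). The main obstacle I expect is the bookkeeping in (vi) and (vii): verifying that the reductions $\wt{X'}_{x'}$ produced by Theorem~\ref{surjlem} genuinely realize the quasi-Cartesian pieces required by Lemma~\ref{birlem2}, and that enough $t\in\Spec_G(\wHx\otimes_{\wHy}\wt{\calH(y')})$ are attained as $y'$ ranges over $f^{-1}(y)$, so that both the covering and base-change hypotheses of Lemma~\ref{birlem2} are simultaneously satisfied. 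Once this matching is in place, the remaining cases follow formally.
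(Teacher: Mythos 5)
Your proposal is correct and follows essentially the same route as the paper: (vii) via Lemma~\ref{Gsurlem}, Theorem~\ref{surjlem}, the quasi-Cartesian diagrams of \cite[4.6]{temkin2} and Lemma~\ref{birlem2}, translated through \cite[5.2]{temkin2}; (v) by composing $G$-surjections; and (viii) from (vii) plus Theorem~\ref{surjth}(ii). For (vi) you take the variant the paper explicitly notes as valid (repeating the argument of (vii) with \cite[4.8(iii)]{temkin2} in place of \cite[5.2]{temkin2}), whereas the paper prefers the shorter formal reduction to (vii) via the diagonal; both work.
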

\begin{proof}
Let $g'\:X'\to Y'$ be the base change of $g$. Case (v) is a general property of Grothendieck topologies: if $g'$ is $G$-surjective then the composition $X'\to Y'\to Y$ is $G$-surjective and hence $X\to Y$ is $G$-surjective.

(vii) Assume that $g'$ is without boundary. It suffices to prove that $g$ has no boundary at a point $x\in X$. Set $y=g(x)$ and choose $y_1\..y_n\in f^{-1}(y)$ such that $\tilY_y$ is covered by the images of $\tilY'_{y_i}$, see Lemma~\ref{Gsurlem}. Furthermore, by Theorem~\ref{surjlem} for any $t$ in the homogeneous spectrum $W=\Spec_G(\wHx\otimes_\wHy\widetilde{\calH(y_i)})$, where $G=\bfR_{>0}^\times$, the fiber $f'^{-1}(x)\cap g'^{-1}(y_i)=\calM(\calH(x)\otimes_{\calH(y)}\calH(y_i))$ contains a point $x_{i,t}$ such that $t$ is the image of $\Spec_G(\widetilde{\calH(x_{i,t})})$. On the level of reductions we obtain the following diagrams
$$
\xymatrix{
\tilX'_{x_{i,t}} \ar[r] \ar[d]_{\tilg'_{x_{i,t}}} & \tilX_x \ar[d]^{\tilg_x} \\
\tilY'_{y_i} \ar[r]  & \tilY_y
}
$$
which are quasi-Cartesian by \cite[4.6]{temkin2}. By \cite[5.2]{temkin2} the maps of birational spaces $\tilg'_{x_{i,t}}$ are proper, and using Lemma~\ref{birlem2} we obtain that $\tilg_x$ is proper. It remains to apply \cite[5.2]{temkin2} once again.

(vi) One way is to copy the argument for (vii) and replace the references to \cite[5.2]{temkin2} by references to \cite[4.8(iii)]{temkin2}. Instead of this, we prefer a formal argument with diagonals which also applies to other contexts (e.g. topological properness and separatedness). Assume that $g'$ is locally separated. Note that a morphism is locally separated if and only if its diagonal has no boundary. In particular, $\delta_{g'}$ has no boundary. Recall that $\delta_{g'}$ is the base change of $\delta_g$ along $X'\times_{Y'}X'\to X\times_YX$. The latter morphism is $G$-surjective because it is the base change of the $G$-surjective morphism $Y'\to Y$ along $X\times_YX\to Y$. Thus, $\delta_g$ has no boundary by (vii), and hence $g$ is locally separated.

(viii) By \cite[3.1.10]{berihes}, quasi-finiteness is equivalent to being without boundary and having zero-dimensional fibers. So, this case follows from (vii) and Theorem~\ref{surjth}(ii).
\end{proof}

The list of properties in Theorem \ref{Gsurth} cannot be increased due to Example \ref{Gsurexam}.


\subsection*{Properly surjective descent}
In analytic geometry, properness is a combination of two properties: a morphism is without boundary and the underlying topological map is compact. In addition, in the non-separated case it also makes sense to look at quasi-compact morphisms. These notions turn out to be a bit subtle and the terminology is not fixed in the literature, so let us discuss them briefly. We say that a morphism $f\:X\to Y$ of $k$-analytic spaces is {\em compact} (resp. {\em quasi-compact}) if for any (quasi-)compact analytic domain $V\subseteq Y$ the inverse image $f^{-1}(V)$ is (quasi-)compact. In particular, a compact morphism is topologically separated. If $Y$ possesses an admissible covering by affinoid domains $Y_i$, then any quasi-compact domain $Y'\subseteq Y$ is a finite union of affinoid domains contained in one of the $Y_i$, hence $f$ is (quasi-)compact if and only if each $f^{-1}(Y_i)$ is (quasi-)compact. In other words, these notions are local on the base.

Now let us discuss purely topological notions. We say that $f$ is {\em topologically (quasi-)compact} if for each (quasi-)compact subset $S\subseteq Y$, the preimage $f^{-1}(S)$ is (quasi-)compact. Recall also that a continuous map is called {\em topologically proper} (or proper if no ambiguity is possible) if it is universally closed. Finally, we say that a topological space $X$ is locally compact if any point of $X$ has a compact neighborhood. In particular $X$ is locally Hausdorff, but not necessarily Hausdorff, and any $k$-analytic space is locally compact.

\begin{lemma}\label{qcmap}
Let $f\:X\to Y$ be a morphism of $k$-analytic spaces.

(i) The following conditions are equivalent: (a) $f$ is topologically proper, (b) $f$ is topologically quasi-compact, (c) $f$ is quasi-compact.

(ii) The following conditions are equivalent: (a) $f$ is topologically proper and separated, (b) $f$ is topologically compact, (c) $f$ is compact.
\end{lemma}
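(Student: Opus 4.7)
The plan is to prove each chain of equivalences by combining Bourbaki's topological criterion for proper maps (a continuous map is topologically proper if and only if it is closed with quasi-compact fibers; equivalently, a universally closed map onto a quasi-compact base has quasi-compact source) with the basic local structure of $k$-analytic spaces, namely local compactness and local Hausdorffness together with the admissible cover by compact Hausdorff affinoid domains.

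For part (i), I would show (a)$\Rightarrow$(b)$\Rightarrow$(c)$\Rightarrow$(a). The implication (a)$\Rightarrow$(b) follows by base-changing $f$ along the inclusion of a quasi-compact $S\subseteq Y$: the resulting map $f^{-1}(S)\to S$ inherits universal closedness, so its source is quasi-compact by Bourbaki. The implication (b)$\Rightarrow$(c) is immediate because a quasi-compact analytic domain is a quasi-compact topological subspace of $Y$ (finite union of compact affinoids). For the substantive step (c)$\Rightarrow$(a), I verify Bourbaki's criterion. Each fiber is quasi-compact: any $y\in Y$ lies in some affinoid domain $V$, which is Hausdorff, so $\{y\}$ is closed in $V$ and hence $f^{-1}(y)$ is a closed subset of the quasi-compact $f^{-1}(V)$. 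For closedness of $f$, given closed $C\subseteq X$ and $y\in Y\setminus f(C)$, I pick a $Y$-open neighborhood $U$ of $y$ contained in a Hausdorff quasi-compact analytic domain $V\subseteq Y$ (using local compactness and local Hausdorffness of $Y$); then $f^{-1}(V)$ is quasi-compact by (c), so $f(C)\cap V=f(C\cap f^{-1}(V))$ is quasi-compact in the Hausdorff space $V$ and hence closed in $V$, making $U\setminus(f(C)\cap V)$ a $Y$-open neighborhood of $y$ disjoint from $f(C)$.

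Part (ii) follows the same pattern but with compact (quasi-compact Hausdorff) analytic domains. The additional separatedness in (a) is precisely what upgrades quasi-compactness of $f^{-1}(S)$ (from part (i)) to compactness for $S\subseteq Y$ compact: since separatedness is preserved by base change, $f_S\:f^{-1}(S)\to S$ is separated, and a separated map to a Hausdorff target has Hausdorff source (as $f^{-1}(S)\times_S f^{-1}(S)\hookrightarrow f^{-1}(S)\times f^{-1}(S)$ is a closed embedding). Conversely, for (c)$\Rightarrow$(a), compactness of $f$ yields quasi-compactness of $f$ (hence topological properness by part (i)); and preimages of affinoids (which are compact Hausdorff) are Hausdorff, so the diagonal of $f$ is closed along an admissible affinoid cover, giving $f$ separated.

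The primary obstacle is the closedness step in (c)$\Rightarrow$(a) for part (i) when $Y$ is not a good space: a priori a point of $Y$ need not lie in the topological interior of any affinoid domain, so one cannot directly take $V$ to be an affinoid containing an open neighborhood of $y$. This is resolved by using that the underlying topological space of any $k$-analytic space is locally compact and locally Hausdorff (each point has a Hausdorff open neighborhood, and within it a compact neighborhood coming from the analytic structure), which supplies the Hausdorff quasi-compact analytic-domain neighborhood $V\supseteq U$ of $y$ required by the argument.
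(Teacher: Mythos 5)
Your proof is correct and follows essentially the same route as the paper: both reduce part (i) to Bourbaki's criterion (topologically proper $=$ closed with quasi-compact fibers, cited there as Stacks Tag 005R), verify closedness and fiber quasi-compactness using local compactness of $k$-analytic spaces and the fact that compact analytic domains are quasi-compact Hausdorff, and deduce part (ii) from part (i) via the standard ``separated over Hausdorff implies Hausdorff'' argument. The only cosmetic difference is that you check closedness pointwise by producing a neighborhood of $y$ disjoint from $f(C)$, whereas the paper checks it on a cover by compact analytic domains.
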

\begin{proof}
The second claim follows from the first one working locally on $Y$ and using that if $Y$ is Hausdorff, then $f$ is topologically separated if and only if $X$ is Hausdorff. To prove (i) we note that (a) implies (b) by \cite[Tag:005R]{stacks}, and (b) obviously implies (c). Assume, now that $f$ is quasi-compact. By \cite[Tag:005R]{stacks} to prove that $f$ is topologically proper it suffices to show that it is a closed map with quasi-compact fibers.

If $y\in Y$ is a point, then shrinking $Y$ we can make it compact without affecting the fiber. In this case $X$ is quasi-compact, hence a finite union of compact (even affinoid) domains $X_i$. Each compact map $X_i\to Y$ has a compact fiber over $y$, hence the union $f^{-1}(y)$ of these fibers is quasi-compact.

Let us prove that the image of a closed set $V\subseteq X$ is closed. It suffices to show that if $Y'\subseteq Y$ is a compact analytic domain, then $f(V)\cap Y'$ is closed in $Y'$. Being a quasi-compact domain, $X'=f^{-1}(Y')$ is a union of finitely many compact domains $Y'_i$. Each intersection $V_i=V\cap Y'_i$ is closed in $Y'_i$, hence compact. Thus, each $f(V_i)$ is compact and we obtain that their union $f(V)\cap Y'$ is a compact, hence closed, subset of $Y'$.
\end{proof}

\begin{remark}
In the definition of proper $k$-analytic morphisms one requires $f$ to be topologically compact. Perhaps, it is not immediate that this definition is local on the base when the base is not Hausdorff, but this is so due to Lemma~\ref{qcmap}. In fact, we will now establish a much stronger descent claim.
\end{remark}

Our next aim is to also ensure descent of topological properties, and as a main application we will achieve descent of properness and finiteness. This time we want to also rule out Example \ref{Gsurexam}, and naturally we will have to require that a base change morphism $f\:Y'\to Y$ is ``topologically surjective'' in an appropriate sense. The most straightforward choice is to require that $f$ is quasi-compact, but it is natural to consider, the topology generated by such maps. This leads to the following definition:

\begin{definition}\label{compsurdef}
A morphism of $k$-analytic spaces $f\:X \rightarrow Y$ is {\em properly surjective} if every quasi-compact domain $Y'\subseteq Y$ is the image of a quasi-compact domain $X'\subseteq X$.
\end{definition}

For example, any surjective quasi-compact map is properly surjective. In addition, if $Y$ possesses an admissible covering by affinoids $Y_i$ it suffices to check that each $Y_i$ is the image of a quasi-compact domain. In particular, it follows easily that if $f$ is surjective and topologically open then it is properly surjective. Properly surjective maps are closed under composition, and if $f\circ g$ is properly surjective then $f$ is properly surjective. It is easy to see that any admissible covering $\coprod_i X_i\to X$ of a $k$-analytic space $X$ by $k$-analytic domains is properly surjective.

Furthermore, if $f\:X\to Y$ is a properly surjective $k$-analytic morphism, then for any morphism $g\:Y'\to Y$ the base change $f\:X'\to Y'$ is properly surjective. Indeed, it suffices to check that for a sufficiently fine admissible affinoid covering $Y'=\cup_iY'_i$ each $Y'_i$ is the image of a quasi-compact domain. Thus, we can assume that $g(Y'_i)$ is contained in an affinoid domain $Y_i\subseteq Y$. By assumption, $Y_i$ is the image of a quasi-compact domain, hence $Y_i=\cup_{j=1}^nf(X_{ij})$ for affinoid domains $X_{i1}\..X_{in}$ in $X$, and we obtain that $Y'_i$ is the image of the quasi-compact domain $\cup_{j=1}^nX_{ij}\times_{Y_i}Y'_i$. By a similar argument, any ground field extension functor $Z \rightsquigarrow Z_K$ carries properly surjective $k$-analytic maps to properly surjective $K$-analytic maps.

Finally, despite an analogy between Examples \ref{surexam} and \ref{Gsurexam}, the relation between $G$-surjectivity and compact surjectivity is completely asymmetric: on the one hand, the cover in Example~\ref{Gsurexam} is $G$-surjective but not properly surjective, but on the other hand we have the following result.

\begin{lemma}\label{surlem}
Any properly surjective morphism $f\:X\to Y$ is $G$-surjective.
\end{lemma}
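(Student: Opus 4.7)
The plan is to verify the criterion of Lemma~\ref{Gsurlem}(2): for each $y \in Y$ and each $\tily \in \tilY_y$, I must exhibit $x \in f^{-1}(y)$ and $\tilx \in \tilX_x$ with $\tilf_x(\tilx) = \tily$. My strategy is to reduce the statement to $G$-surjectivity for a surjective morphism of $k$-affinoid spaces and then to invoke the surjectivity of the reduction map $\calM(\calA) \to \Spec_G(\tilcalA)$ from \cite[\S3]{temkin2}.

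First I localize. Using that $\tilY_y$ is the filtered union of $\tilV_y$ over affinoid domains $V \subseteq Y$ with $y \in V$, choose such a $V = \calM(\calB)$ with $\tily \in \tilV_y$; then $\tily$ is represented by a graded valuation ring $\calO_y$ of $\wHy/\tilk$ containing the image of $\tilcalB$. Proper surjectivity applied to $V$ yields a quasi-compact domain $U \subseteq X$ with $f(U) = V$; cover $U$ by finitely many affinoid domains $U_1, \dots, U_n$. The disjoint union $U' = U_1 \sqcup \cdots \sqcup U_n = \calM(\prod_i \calA_i)$ is $k$-affinoid, and the induced morphism $g\:U' \to V$ is surjective. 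Since each $U_i \hookrightarrow X$ is an analytic domain, any lift of $(y, \tily)$ to $|U'_G|$ produces a point of $|X_G|$ above $(y, \tily)$, so it suffices to prove $G$-surjectivity for the surjective $k$-affinoid morphism $g$.

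For this, set $\q' = \ker(\tilcalB \to k(\calO_y))$, the graded prime of $\tilcalB$ corresponding to $\calO_y$. Surjectivity of $g$ combined with surjectivity of reduction maps forces $\Spec_G(\tilcalA) \to \Spec_G(\tilcalB)$ (with $\calA = \prod_i \calA_i$) to be surjective, so a graded prime $\p'$ of $\tilcalA$ lies over $\q'$. I then work with the $\calH(y)$-affinoid fibre algebra $A = \calA \wtimes_\calB \calH(y)$: its graded reduction $\wt{A}$ receives a natural map from $\tilcalA \otimes_{\tilcalB} \wHy$, and the surjectivity of its reduction map $\calM(A) \to \Spec_G(\wt{A})$ produces a point $x \in \calM(A) = g^{-1}(y)$ whose reduction is compatible with $\p'$ and with the maximal graded ideal of $\calO_y$. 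The graded-valuation extension theorem (the Zorn's-lemma argument cited in the proof of Lemma~\ref{birlem1}) then yields a graded valuation ring $\calO_x$ of $\wHx$ containing the image of $\tilcalA$ and restricting to $\calO_y$ on $\wHy$, which is the required $\calO_x \in \tilU_x$ with $\tilf_x(\calO_x) = \calO_y$.

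I anticipate the affine lifting to be the main obstacle: one must simultaneously arrange that the chosen graded prime of $\wt{A}$ lifts $\p'$, lies in the fiber over $y$ (not over some other $y' \in V$ with $\pi(y') = \q'$), and is compatible with $\m_{\calO_y}$. The required bookkeeping amounts to a careful analysis of the map $\tilcalA \otimes_{\tilcalB} \wHy \to \wt{A}$ and the graded primes above $\p' \otimes_{\tilcalB} \wHy$, in the spirit of the graded commutative algebra of \cite[\S1]{temkin2}.
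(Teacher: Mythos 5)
Your reduction to the affinoid case (localizing to $V=\calM(\calB)$ with $\tily\in\tilV_y$, extracting a quasi-compact $U$ with $f(U)=V$, and passing to $U'=\coprod_i U_i\to V$) is sound, but the affinoid case itself — which is the entire content of the lemma in your setup — has a genuine gap at the lifting step. You write that "the surjectivity of the reduction map $\calM(A)\to\Spec_G(\wt{A})$ produces a point $x\in\calM(A)=g^{-1}(y)$ whose reduction is compatible with $\p'$ and with the maximal graded ideal of $\calO_y$." This conflates $\Spec_G(\wt{A})$ with $\Spec_G(\tilcalA\otimes_{\tilcalB}\wHy)$: what you actually need is that a suitably chosen graded prime of $\tilcalA\otimes_{\tilcalB}\wHy$ lifts to a point of $\calM(\calA\wtimes_\calB\calH(y))$, i.e.\ a \emph{relative} version of Theorem~\ref{surjlem}. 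That theorem is proved in the paper only for $K\wtimes_kL$ with $K,L$ analytic field extensions of $k$, and even there it costs a delicate five-step argument; the surjectivity of the reduction map of the single algebra $A$ says nothing about the map $\Spec_G(\wt{A})\to\Spec_G(\tilcalA\otimes_{\tilcalB}\wHy)$, which is exactly where the difficulty sits. A second, related problem: lifting the center $\q'$ (or $\p'$) is strictly weaker than what you need. To get $\calO_x$ restricting to $\calO_y$ you must \emph{dominate} the valuation ring $\calO_y$, i.e.\ you need the graded subring of $\wHx$ generated by $\calO_y$ and the image of $\tilcalA$ to admit a graded prime over $\m_{\calO_y}$; for a badly chosen $x$ in the fiber this simply fails (an element of $\tilcalA$ can map to the inverse of an element of $\m_{\calO_y}$ in $\wHx$). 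Choosing $x$ "generically enough" to avoid this is precisely the mechanism of Lemma~\ref{birlem2} and Corollary~\ref{genpointcor2}, and it is the step your sketch defers as "bookkeeping" while lacking the tool (the relative lifting statement) needed to carry it out.

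For contrast, the paper's proof avoids graded commutative algebra and individual lifting entirely: it is a soft topological argument. After reducing to $Y$ compact and $X$ quasi-compact and surjective over $Y$, one uses that each image $\Im(\tilf_x)\subseteq\tilY_y$ is \emph{open}, hence of the form $\tilU_y$ for a subdomain $U(x)\ni y$; then $V(x)=f^{-1}(U(x))$ is a neighborhood of $x$ (its reduction at $x$ is all of $\tilX_x$), quasi-compactness of the fiber gives finitely many $x_i$ with $f^{-1}(y)\subseteq\bigcup V(x_i)$, and closedness of the quasi-compact map $f$ lets one shrink $Y$ so that $X=\bigcup V(x_i)$; surjectivity then forces $Y=\bigcup U(x_i)$, so the $\tilU(x_i)_y=\Im(\tilf_{x_i})$ cover $\tilY_y$. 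If you want to keep your affinoid reduction, the viable completions are either this covering argument or a prime-filter/Zorn argument on $|U'_G|\to|V_G|$ using quasi-compactness; the route through fiber algebras and their graded reductions does not close without substantial new input.
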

\begin{proof}
By Lemma \ref{Gsurlem} it suffices to prove that for any point $y\in Y$ the reduction $\tilY_y$ is covered by the images of the maps $\tilf_x\:\tilX_x\to\tilY_y$ with $x\in f^{-1}(y)$. Shrinking $Y$ around $y$ we can assume that $Y$ is compact. Then we can also replace $X$ by a quasi-compact subdomain that maps surjectively onto $Y$. For any point $x\in f^{-1}(y)$ the image of $\tilf_x$ is open, hence we can fix a subdomain $U=U(x)$ in $Y$ such that $y\in U$ and $\tilU_y$ is the image of $\tilf_x$. Set $V=V(x)=f^{-1}(U(x))$. Then $\tilV_x=\tilf_x^{-1}(\tilf_x(\tilX_x))=\tilX_x$, and we obtain that $V(x)$ is a neighborhood of $x$. Since $f^{-1}(y)$ is quasi-compact, there exist finitely many points $x_1\..x_n$ such that $V(x_i)$ cover $f^{-1}(y)$.

Using that $f$ is quasi-compact and hence closed, it follows that for a small enough neighborhood $Y'$ of $y$ one has that $f^{-1}(Y')\subseteq\cup_{i=1}^n V(x_i)$, and hence we can replace $Y$ and the spaces $X$, $V(x_i)$ by $Y'$ and its preimages. Thus, we are reduced to the case when $X=\cup_{i=1}^n V(x_i)$, and then the assertion is clear. Indeed, $Y=f(X)=\cup_i f(V(x_i))=\cup_i U(x_i)$, hence the reductions of $U(x_i)$ at $y$ cover $\tilY_y$, and it remains to recall that the latter are the images of $\tilf_{x_i}$.
\end{proof}

Now, we can work out descent with respect to properly surjective morphisms. We only list properties that do not satisfy $G$-surjective descent.

\begin{theorem}\label{compsurth}
The following properties of morphisms $g\:X\to Y$ are local with respect to properly surjective base change morphisms $Y'\to Y$: (ix) topologically separated, (x) topologically proper, (xi) separated, (xii) proper, (xiii) finite, (xiv) closed immersion.
\end{theorem}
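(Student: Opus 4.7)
The plan is to establish case (x) first by a direct lifting argument, and then derive the remaining cases by the diagonal trick combined with the descent results already obtained in Theorems~\ref{surjth} and \ref{Gsurth}. Throughout, I will use that the base change $h\:X'\to X$ of $f$ along $g$ is again properly surjective (as remarked after Definition~\ref{compsurdef}), and similarly that the map $X'\times_{Y'}X'\to X\times_Y X$, being the base change of $f$ along $X\times_Y X\to Y$, is properly surjective.

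For (x), assume $g'$ is topologically proper and fix a quasi-compact analytic domain $V\subseteq Y$. Since $f$ is properly surjective, there is a quasi-compact analytic domain $V'\subseteq Y'$ with $f(V')=V$, and $g'^{-1}(V')$ is quasi-compact by Lemma~\ref{qcmap}(i). The key observation is that $g^{-1}(V)$ equals the continuous image of $g'^{-1}(V')$ under $h$: one inclusion is clear, and for the other, given $x\in g^{-1}(V)$ I pick $y'\in V'$ with $f(y')=g(x)$ and then any point of the non-empty affinoid $\mathscr{M}(\mathscr{H}(x)\wtimes_{\mathscr{H}(g(x))}\mathscr{H}(y'))$ provides a point of $g'^{-1}(V')$ over $x$. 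Hence $g^{-1}(V)$ is quasi-compact, and another application of Lemma~\ref{qcmap}(i) gives that $g$ is topologically proper.

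For (ix), I would use that topological separatedness of $g$ is equivalent to topological properness of the diagonal $\delta_g\:X\to X\times_Y X$ (being an injection, $\delta_g$ is topologically proper exactly when its image is closed). Since $\delta_{g'}$ is the base change of $\delta_g$ along the properly surjective map $X'\times_{Y'}X'\to X\times_Y X$, applying (x) to $\delta_g$ yields (ix). Case (xi) then follows by combining (ix) with Theorem~\ref{Gsurth}(vi): the diagonal $\delta_g$ is a closed immersion precisely when it is without boundary and has closed image, so $g$ is separated if and only if it is simultaneously locally separated and topologically separated.

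The remaining cases are routine. A morphism is proper if and only if it is boundaryless, topologically proper, and topologically separated, by Lemma~\ref{qcmap}(ii) and the definition of proper as a compact morphism without boundary; each of these three descends by Theorem~\ref{Gsurth}(vii), case (x), and case (ix) respectively, giving (xii). A morphism is finite if and only if it is proper with zero-dimensional fibers, so (xiii) follows from (xii) combined with Theorem~\ref{surjth}(ii). Finally, a closed immersion is exactly a finite monomorphism (as recalled in the proof of Theorem~\ref{deltaclosed}), so (xiv) follows from (xiii) and Theorem~\ref{surjth}(iv). The main obstacle is case (x); once that is established, everything else is a chain of standard equivalences.
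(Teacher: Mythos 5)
Your proposal is correct and follows essentially the same route as the paper: case (x) is established first by lifting a quasi-compact $V'\subseteq Y'$ over $V$ and pushing $g'^{-1}(V')$ forward onto $g^{-1}(V)$, and the remaining cases are obtained via the diagonal trick together with Theorems~\ref{surjth} and \ref{Gsurth}. The only (harmless) divergences are in (xiii), where you use ``proper with zero-dimensional fibers'' in place of the paper's ``proper and quasi-finite'' (equivalent, since properness already subsumes boundarylessness), and in (xiv), where you reuse the fact that a finite monomorphism is a closed immersion from the proof of Theorem~\ref{deltaclosed} while the paper argues directly on fibers via Nakayama's Lemma.
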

\begin{proof}
We start with (x), which is the critical new ingredient. By Lemma~\ref{qcmap} we should prove that $g$ is quasi-compact whenever the base change $g'\:X'\to Y'$ is. For any quasi-compact domain $V\subseteq Y$ find a quasi-compact $V'\subseteq Y'$ which maps onto $V$. Then $g'^{-1}(V')$ is quasi-compact and maps surjectively onto $g^{-1}(V)$. Hence the latter is quasi-compact too.

(ix) Similarly to the proof of Theorem \ref{Gsurth}(vi), stability of the properly surjective property under $k$-analytic base change implies that the morphism of the diagonals $X' \times_{Y'} X'\rightarrow X \times_Y X$ is properly surjective.  Thus, by working with the diagonal maps and descent for topological properness we see that $g$ is topologically separated when $g'$ is topologically separated.

(xi)  A morphism is separated if and only if it is topologically and locally separated. Therefore this case follows from (ix) and Theorem \ref{Gsurth}(vi).

(xii) This case follows from (ix), (x) and Theorem \ref{Gsurth}(vii).

(xiii) A finite morphism is the same as a proper quasi-finite morphism. So, this case follows from (xi) and Theorem \ref{Gsurth}(viii).

(xiv) If $g'$ is a closed immersion then $g$ is at least finite. By Nakayama's Lemma, a finite map is a closed immersion if and only if its non-empty fibers are reduced 1-point sets corresponding to a trivial field extension. Thus, any finite $k$-analytic map that becomes a closed immersion after a surjective $k$-analytic base change is clearly a closed immersion.  Hence, $g$ is a closed immersion when $g'$ is.
\end{proof}

In view of Example \ref{compsurexam}, the list of properties in Theorem \ref{compsurth} cannot be increased.

\begin{remark}
To some extent, the properly surjective topology on the category of analytic spaces is analogous to the $h$-topology on the category of schemes, and the lists of properties local or non-local for these topologies are somewhat similar. Also, it is the analogue of Sholze's $v$-topology on perfectoid spaces.
\end{remark}

\section{Flat descent}\label{app}
Example \ref{compsurexam} shows that in order to descend more properties we have to impose a flatness assumption. Flatness in the analytic category is a rather subtle notion recently developed in full generality by Ducros in \cite{flat}, so we first recall some needed material.

\subsection*{Flatness in Berkovich geometry}
We refer the reader to \cite{flat}, but note for the sake of completeness that for quasi-finite morphisms this theory was developed by Berkovich already in \cite{berihes}, and there was an unpublished work by Berkovich where some results about flat morphisms between good spaces were obtained, including the theorem that boundaryless naively flat morphisms are flat (i.e., they are preserved under base changes).

Let $f\:X\to Y$ be a morphism between $k$-analytic spaces, $\calF$ be a coherent $\calO_X$-module, $x\in X$ be a point with $y=f(x)$. If $X$ and $Y$ are good then we say that $\calF$ (resp. $f$) is {\em naively $Y$-flat} at $x$ if $\calF_x$ (resp. $\calO_{X,x}$) is a flat $\calO_{Y,y}$-module. We say that $\calF$ or $f$ is {\em naively $Y$-flat} if it is so at all points of $X$. Unfortunately, this definition does not make too much sense in general since the naive flatness can be destroyed after a base change $Y'\to Y$ with a good source $Y'$, and, even worse, one can built such examples with $Y'\to Y$ being an embedding of an analytic subdomain, see \cite[Section~4.4]{flat}. For this reason, the only reasonable notion is that of universal naive flatness which we will simply call flatness: we say that an $f$ as above is flat if each its good base change is naively flat (and similarly for coherent sheaves). By \cite[4.1.7]{flat} (see also the online erratum) flatness is of $G$-local nature, namely $\calF$ is $Y$-flat at $x$ if and only if there exists a pair of good analytic domains $U\into X$ and $V\into Y$ with $x\in U$ and $f(U)\subset V$ and such that $\calF|_U$ is naively $V$-flat at $x$, and then $\calF|_{U'}$ is naively $V'$-flat at $x$ for any choice of such a pair $U',V'$. Due to the $G$-locality, flatness globalizes to all morphisms of $k$-analytic spaces: given a morphism $f\:X\to Y$, a coherent $\calO_{Y_G}$-module $\calF_G$ is $Y$-flat at a point $x\in X$ if there exist good domains $U\into X$ and $V\into Y$ such that $x\in U$, $f(U)\subset V$ and $(\calF_G)|_U$ is $V$-flat at $x$. An important difficult theorem \cite[Theorem~8.4.3]{flat} by Ducros asserts that for overconvergent (in particular, for boundaryless) morphisms between good spaces flatness is equivalent to the naive one, and hence can be checked on stalks. Here a morphism $X\to Y$ is called {\em overconvergent} at a point $x\in X$ if it factors into a composition of embedding of an analytic domain $X\into X'$ and a morphism $X'\to Y$ with $x\in\Int(X'/Y)$. 




\subsection*{Quasi-smoothness}
Berkovich introduced in \cite{berihes} notions of \'etale and smooth morphisms. An \'etale morphism is a flat unramified quasi-finite morphism. A smooth morphism $X\to Y$ is a morphism that locally factors as an \'etale morphism $X\to\A^n_Y$ followed by the projection. Probably the latter definition was chosen because it minimizes the use of flatness, which was not developed in \cite{berihes} beyond the quasi-finite case. However, we will see that this definition also has drawbacks. Note that smooth morphisms are automatically boundaryless. Furthermore, Berkovich introduced quasi-\'etale morphisms in \cite{bervanish} by $G$-localizing the notion of \'etaleness on the source.

In the first version of this paper we used flatness to define $G$-local analogues of \'etaleness and smoothness, that were called $G$-\'etaleness and $G$-smoothness. The first one is, in fact, equivalent to the quasi-\'etaleness while the second one was a new notion. Since then Ducros introduced in \cite[Chapter~5]{flat} the notion of quasi-smooth morphisms, which is equivalent to $G$-smoothness, and checked various properties, so we prefer to switch to the terminology of \cite{flat}. Ducros defines a quasi-smooth morphism as a morphism $X\to Y$ that $G$-locally factors as a quasi-\'etale morphism $X\to\A^n_Y$ followed by the projection, and shows that this is equivalent to requiring that $f$ is flat and has geometrically regular fibers.

\begin{remark}
A morphism is \'etale if and only if it is quasi-\'etale and boundaryless. The notion of smoothness is somewhat problematic. Probably, it is not $G$-local on the base in the non-good case. In this case, there should exist quasi-smooth and boundaryless morphisms which are not smooth (see also \cite[5.4.9]{flat}). For these reasons, we will not study descent of smoothness in this paper.
\end{remark}

\subsection*{Flat topologies}
We will work with two flat topologies.

\begin{definition}\label{flatdef}
(1) As in the theory of schemes, a {\em faithfully flat} morphism means a surjective flat morphism. Such morphisms are covers of the {\em flat topology} on the category of analytic spaces.

(2) A $G$-surjective flat morphisms will be called {\em $G$-faithfully flat} or a {\em Tate-flat covering}. The corresponding topology will be called the {\em Tate-flat topology}.
\end{definition}

\begin{remark}\label{Tateflatrem}
(1) The situation is somewhat analogous to the usual theory of schemes, where one has flat topology and fpqc topology, and many descent results only hold in the fpqc topology. This similarity is strengthened by Theorem~\ref{Tateflat} below. Nevertheless, the analogy is not so close because in algebraic geometry most properties of morphisms are local for arbitrary faithfully flat morphisms.

(2) A typical example of a flat but not Tate-flat cover is a non-admissible cover by subdomains.
\end{remark}

The following result is highly non-trivial because its proof uses the theorem of Ducros on the images of flat morphisms.

\begin{theorem}\label{Tateflat}
Let $f\:X\to Y$ be a surjective flat morphism of analytic spaces. Then $f$ is a Tate-flat covering if and only if it is properly surjective. In particular, if $f$ is boundaryless then it is a Tate-flat covering.
\end{theorem}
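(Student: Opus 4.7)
\emph{Plan.} One direction is immediate: Lemma~\ref{surlem} shows any properly surjective flat morphism is $G$-surjective, hence a Tate-flat covering.

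For the converse, assume $f$ is flat, surjective, and $G$-surjective. Given a quasi-compact analytic domain $V \subseteq Y$, replacing $Y$ by $V$ and $X$ by $f^{-1}(V)$ reduces us to producing a quasi-compact domain $U \subseteq X$ with $f(U) = Y$ under the assumption that $Y$ is itself quasi-compact. Fix $y \in Y$. Lemma~\ref{Gsurlem} furnishes finitely many points $x_1,\dots,x_n \in f^{-1}(y)$ with $\bigcup_i \Im(\tilf_{x_i}) = \tilY_y$; choose a quasi-compact analytic neighborhood $U_i \subseteq X$ of $x_i$ (for instance, a finite union of affinoid domains, which certainly exists as $X$ is $k$-analytic). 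By Ducros' theorem on the image of a flat morphism from a compact source (see \cite{flat}), each $f(U_i)$ is a compact analytic domain of $Y$ containing $y$, and by functoriality of germ reduction the subset $\Im(\tilf_{x_i}) \subseteq \tilY_y$ is contained in the germ reduction of $f(U_i)$ at $y$. Hence the finite family of germ reductions of the $f(U_i)$ at $y$ together cover $\tilY_y$, and the germ-reduction dictionary of \cite[\S4]{temkin2} forces the quasi-compact analytic domain $\bigcup_i f(U_i)$ to be a neighborhood of $y$ in $Y$, so it contains a quasi-compact analytic neighborhood $V_y$ of $y$. Quasi-compactness of $Y$ then supplies finitely many $y_1,\dots,y_m$ with $Y = V_{y_1} \cup \dots \cup V_{y_m}$, and the union of all the associated $U_i$'s is the required quasi-compact domain $U \subseteq X$ with $f(U) = Y$.

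The essential obstacle is the invocation of Ducros' image theorem: without flatness there is in general no reason for the set-theoretic image $f(U_i)$ to carry any analytic structure, whereas flatness promotes it to a compact analytic domain whose germ reduction at $y$ can then be compared with $\Im(\tilf_{x_i})$. Once that is in hand, the remaining steps are purely a quasi-compactness argument together with the germ-reduction dictionary of \cite{temkin2}.

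For the last assertion, suppose further that $f$ is boundaryless. Then by \cite[5.2]{temkin2} each map $\tilf_x\:\tilX_x \to \tilY_y$ (with $y = f(x)$) is proper in $\bir_\tilk$, meaning by Lemma~\ref{birlem1} that the natural map $\tilX_x \to \tilY_y \times_{\bfP_{\wHy/\tilk}} \bfP_{\wHx/\tilk}$ is bijective. Since the second projection of this fibre product onto $\tilY_y$ is surjective (by the Zorn's-lemma extension of graded valuations recorded in the proof of Lemma~\ref{birlem1}), $\tilf_x$ itself surjects onto $\tilY_y$. Combined with surjectivity of $f$, Lemma~\ref{Gsurlem} now yields $G$-surjectivity, so $f$ is a Tate-flat covering.
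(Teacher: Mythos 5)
Your proof is correct and follows essentially the same route as the paper's: Lemma~\ref{surlem} for one implication, and for the converse the combination of Ducros' image theorem for flat maps with the germ-reduction criterion for a domain to be a neighborhood (\cite[Theorem~4.5]{temkin2}), differing only in that you take images of the $U_i$ separately rather than of their union. Your explicit argument for the boundaryless case via properness of $\tilf_x$ in $\bir_\tilk$ is also the intended one, which the paper leaves implicit.
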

\begin{proof}
The inverse implication is covered by Lemma~\ref{surlem}, so we should only prove that if $f$ is $G$-surjective then each compact subdomain $Y'$ in $Y$ is the image of a quasi-compact subdomain of $X$. Replacing $Y$ and $X$ by $Y'$ and $f^{-1}(Y')$, we can assume that $Y=Y'$ is compact. It suffices to show that for a fixed point $y\in Y$ there exists a quasi-compact domain $X'\subseteq X$ whose image is a neighborhood of $y$. By Lemma~\ref{Gsurlem} there exist $x_1\..x_n\in f^{-1}(y)$ such that $\tilY_y$ is covered by the images of $\tilX_{x_i}$. For each $i$ choose a compact domain $X_i$, which is a neighborhood of $x_i$, and set $X'=\cup_i X_i$. By \cite[Theorem~9.2.1]{flat}, $Y'=f(X')$ is an analytic domain in $Y$. Since $Y'_y$ contains the images of $\tilX_{x_i}$, the inclusion $\tilY'_y\subseteq\tilY_y$ is an equality, and hence $Y'$ is a neighborhood of $y$ by \cite[Theorem~4.5]{temkin2}.
\end{proof}

We will not use the following remark, so we skip its simple justification.

\begin{remark}\label{tateflatrem}
(1) It is easy to see that a flat covering $f\:X\to Y$ is properly surjective if and only if $f$ locally admits a compact flat quasi-section, that is, there exists an open covering $Y=\cup_i Y_i$ and subdomains $X_i\into X$ such that $f$ restricts to surjective flat compact morphisms $X_i\to Y_i$.

(2) The definition of Tate-flat topology in \cite[Definition~4.2.1]{relamp} uses property \ref{Tateflat}(3) with Berkovich topology replaced by Tate topology. It is easy to see that both choices are equivalent.

(3) In addition, it follows easily that the Tate-flat topology is equivalent to the topology generated by Berkovich topology and the topology of flat topologically proper morphisms.
\end{remark}

\subsection*{Faithfully flat descent}
Now, let us discuss descent with respect to flat base change morphisms. We only list properties not satisfied by the properly surjective descent.

\begin{theorem}\label{flatth}
The following properties of morphisms $g\:X\to Y$ are local with respect to faithfully flat morphisms $Y'\to Y$: (xv) flat, (xvi) quasi-smooth, (xvii) quasi-\'etale.
\end{theorem}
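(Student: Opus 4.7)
The plan is to descend each of the three properties to $g$, with flatness (xv) providing the main input for (xvi) and (xvii). For (xv), the strategy exploits the $G$-local nature of flatness established by Ducros: checking flatness of $g$ at a point $x\in X$ with $y=g(x)$ reduces to finding good analytic-domain neighborhoods $U\ni x$ in $X$ and $V\ni y$ in $Y$ with $g(U)\subseteq V$ such that the good-space map $U\to V$ is naively flat at $x$. Surjectivity of $f$ yields $y'\in f^{-1}(y)$, and then $X'=X\times_Y Y'$ furnishes a point $x'$ lying over both $x$ and $y'$. Choose a good neighborhood $V'\subseteq Y'$ of $y'$ with $f(V')\subseteq V$, and set $U'=U\times_V V'$, a good neighborhood of $x'$. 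Naive flatness of $U'\to V'$ at $x'$ is delivered by the $G$-local criterion applied on the primed side to the hypothesis on $g'$. The local-ring map $\mathcal{O}_{V,y}\to \mathcal{O}_{V',y'}$ is flat (because $f$ is flat at $y'$) and local, hence faithfully flat, so classical faithfully flat descent of flatness in commutative algebra transfers naive flatness of $U'\to V'$ at $x'$ back to naive flatness of $U\to V$ at $x$, proving flatness of $g$.

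For (xvi) we invoke Ducros's theorem that quasi-smoothness is equivalent to flatness together with geometrically regular fibers. Flatness of $g$ is granted by (xv), so it suffices to descend the geometric-regularity-of-fibers condition. Fix $y\in Y$ and pick $y'\in f^{-1}(y)$; then the fiber $X'_{y'}$ is the analytic ground-field extension $X_y\widehat{\otimes}_{\mathcal{H}(y)}\mathcal{H}(y')$. Since geometric regularity of an analytic space over its ground field is both preserved and reflected by arbitrary analytic ground-field extensions (a consequence of faithfully flat descent of regularity for the local rings involved), geometric regularity of $X'_{y'}$ over $\mathcal{H}(y')$ forces geometric regularity of $X_y$ over $\mathcal{H}(y)$. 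Combined with (xv), this yields quasi-smoothness of $g$.

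For (xvii) we use that a quasi-étale morphism is in particular flat and unramified (since étale morphisms are), and, conversely, that a flat unramified $k$-analytic morphism is quasi-étale: unramifiedness ($\Omega^1=0$) gives zero-dimensional fibers, so every source point admits an analytic-domain neighborhood on which the morphism is flat, unramified, and quasi-finite, and hence étale. The flatness of $g$ descends by (xv) and the unramifiedness of $g$ descends by Theorem~\ref{surjth}(iii); together these characterize $g$ as quasi-étale.

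The principal obstacle is (xv), because analytic flatness is more delicate than its scheme-theoretic counterpart: it is defined by universal naive flatness, is only $G$-local, and its preservation under base change in general requires either overconvergence or a careful choice of good domains. The plan sidesteps this by invoking Ducros's $G$-local criterion to pass from the hypothesis on $g'$ to compatibly chosen good domains for $g$, at which point classical commutative-algebraic faithfully flat descent applies.
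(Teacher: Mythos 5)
Your reduction to good domains and your treatment of (xvi) and (xvii) are sound (for (xvii) you decompose quasi-\'etale as flat plus unramified, citing Theorem~\ref{surjth}(iii), whereas the paper uses quasi-smooth plus zero-dimensional fibers via Theorem~\ref{surjth}(ii); both work). But the central descent step in (xv) has a genuine gap: you descend along the wrong map. Classical faithfully flat descent of flatness along $\calO_{V,y}\to\calO_{V',y'}$ would let you conclude that $\calO_{U,x}$ is $\calO_{V,y}$-flat from the $\calO_{V',y'}$-flatness of the \emph{base-changed module} $\calO_{U,x}\otimes_{\calO_{V,y}}\calO_{V',y'}$. What you actually know is that $\calO_{U',x'}$ is $\calO_{V',y'}$-flat, and $\calO_{U',x'}$ is \emph{not} $\calO_{U,x}\otimes_{\calO_{V,y}}\calO_{V',y'}$ — it is only the target of a natural map from that tensor product (a further analytic localization), and without knowing that map to be faithfully flat (which is essentially as hard as the original problem) the classical descent statement does not apply. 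Indeed, with no link between $\calO_{U,x}$ and $\calO_{U',x'}$ other than compatibility over the base, no conclusion about $\calO_{U,x}$ is possible.

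The correct argument descends along the map \emph{upstairs}: since $f$ is flat in Ducros's universal sense, its base change $X'\to X$ is flat, so $\calO_{X,x}\to\calO_{X',x'}$ is a flat local homomorphism, hence faithfully flat; meanwhile $\calO_{X',x'}$ is flat over $\calO_{Y,y}$ as the composite of the flat maps $\calO_{Y,y}\to\calO_{Y',y'}\to\calO_{X',x'}$. The elementary lemma that for $A\to B\to C$ with $C$ flat over $A$ and $C$ faithfully flat over $B$ one gets $B$ flat over $A$ then yields naive flatness of $g$ at $x$. Note that this is exactly where the universal nature of Ducros's flatness is indispensable — your proposal never uses that $X'\to X$ is flat, which is the one piece of information tying $\calO_{U',x'}$ to $\calO_{U,x}$. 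A second, smaller omission: naive flatness at every point of a good model does not by itself give flatness in Ducros's sense; you must observe (as the paper does) that the same argument applies verbatim to every good base change of $g$, whence $g$ is universally naively flat, i.e.\ flat.
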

\begin{proof}
All three properties are $G$-local on $X$ and $Y$, hence we can assume that $X$, $Y$ and $Y'$ are good (or even affinoid). For a point $x\in X$ choose a preimage $x'\in X'=X\times_YY'$ and let $y'\in Y'$ and $y\in Y$ be the images of $x'$. If the base change $g'\:X'\to Y'$ is flat then the composition $X'\to Y'\to Y$ is flat, and in particular $\calO_{X',x'}$ is flat over $\calO_{Y,y}$. Since $\calO_{X',x'}$ is flat over $\calO_{X,x}$ because flatness is preserved under base changes, we obtain that $\calO_{X,x}$ is flat over $\calO_{Y,y}$. This proves that $g$ is naively flat at $x$. To prove that $g$ is actually flat we must prove the same for any good base change of $g$, but that can be done precisely in the same way. Descent of quasi-smoothness follows since a morphism is quasi-smooth if and only if it is flat and has geometrically regular fibers, see \cite[5.3.4(1)]{flat}. By Theorem \ref{surjth}(ii) this also implies descent of quasi-\'etaleness, since a morphism is quasi-\'etale if and only if it is quasi-smooth and has zero-dimensional fibers.
\end{proof}

In view of Example \ref{surexam}, the list of properties in Theorem \ref{flatth} cannot be increased.

\subsection*{Tate-flat descent}
Finally, we prove that all twenty properties from \S\ref{intromainsec} satisfy Tate-flat descent. We only list properties that do not satisfy a stronger descent -- the properly surjective or the faithfully flat one.

\begin{theorem}\label{Tateflatth}
The following properties of morphisms $g\:X\to Y$ are local with respect to faithfully flat morphisms $Y'\to Y$: (xviii) \'etale, (xix) open immersion, (xx) isomorphism.
\end{theorem}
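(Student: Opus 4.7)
The plan is to express each of the properties (xviii), (xix), (xx) as a conjunction of properties whose descent has already been established in Theorems \ref{surjth}, \ref{Gsurth}, \ref{compsurth}, and \ref{flatth}. A Tate-flat covering $f\colon Y'\to Y$ is simultaneously surjective, $G$-surjective, properly surjective (by Theorem \ref{Tateflat}), and faithfully flat, so every earlier descent theorem applies to the base change $X\times_Y Y'\to Y'$ of a given $g\colon X\to Y$.

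For (xviii), I would use that a $k$-analytic morphism is \'etale if and only if it is flat, unramified, and quasi-finite (this is the definition used in \cite{berihes}, \S3.3). If $g'$ is \'etale, descent of flatness (Theorem \ref{flatth}(xv)), of unramifiedness (Theorem \ref{surjth}(iii)), and of quasi-finiteness (Theorem \ref{Gsurth}(viii)) simultaneously transfer to $g$, and assembling the three conditions shows $g$ is \'etale.

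For (xix), the key intermediate observation is that an open immersion is exactly an \'etale monomorphism. One direction is immediate from the definitions. For the converse, an \'etale monomorphism has trivial residue field extensions at every point and empty relative boundary; since an \'etale morphism locally on the source factors as an open immersion into a finite \'etale cover, the monomorphism property forces this finite \'etale cover to have degree one over the image, giving an identification with an analytic domain in the target. Granted this characterization, an \'etale monomorphism $g'$ descends to an \'etale monomorphism $g$ by (xviii) above and Theorem \ref{surjth}(iv), hence to an open immersion. Finally (xx) follows because an isomorphism is the same as a surjective open immersion, and surjectivity descends by Theorem \ref{surjth}(i).

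The main potential obstacle is the characterization \textit{open immersion $=$ \'etale monomorphism} used in (xix); if one wishes to avoid relying on it, one can instead argue directly that the diagonal $\delta_g$ of an \'etale monomorphism is simultaneously an open and closed immersion that becomes an isomorphism after the Tate-flat base change $Y'\to Y$, and then invoke descent of closed immersions (Theorem \ref{compsurth}(xiv)) together with the fact that $g$ is flat, quasi-finite, and radicial to identify $X$ with a component of a Tate-flat chart over an open subdomain of $Y$. Either route reduces the Tate-flat descent of (xviii)--(xx) to facts already in hand, so no new geometric input beyond what was developed in Sections \ref{setsec} and the earlier part of this section is required.
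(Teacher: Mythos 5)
Your proposal is correct and follows essentially the same strategy as the paper: reduce each of (xviii)--(xx) to a conjunction of properties already shown to descend in Theorems \ref{surjth}, \ref{Gsurth}, \ref{compsurth} and \ref{flatth}, using in particular the characterization of open immersions as \'etale monomorphisms, which the paper also invokes without further justification. The only (immaterial) differences are in the choice of decomposition: the paper writes \'etale as quasi-\'etale plus quasi-finite rather than flat plus unramified plus quasi-finite, and characterizes isomorphisms as finite flat morphisms whose fibers are isomorphisms rather than as surjective open immersions.
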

\begin{proof}
Recall that $g$ is properly surjective by Theorem \ref{Tateflat}. In particular, descent holds for all properties of Theorems \ref{Gsurth} and \ref{compsurth}, including finiteness and quasi-finiteness. Since descent of flatness holds by Theorem~\ref{flatth}, it remains to make the following observations: isomorphism is a finite flat morphism whose fibers are isomorphisms, \'etaleness is the same as quasi-\'etaleness and quasi-finiteness, and open immersions are the same as \'etale monomorphisms.
\end{proof}

It is more difficult to descend properties with respect to a base field extension and to descend absolute properties (e.g., to determine if, for a surjective $k$-analytic morphism $Y'\to Y$ without boundary, $Y$ is good or strictly $k$-analytic if and only if $Y'$ is). To prove these descent statements we need certain facts about graded birational spaces and graded reductions that are not covered by \cite{temkin2} and will be proved in \S\ref{birgsec}-\ref{descsec}, building on some additional graded commutative algebra that we develop in \S\ref{gradsec}. To motivate some of these later considerations, we make a digression in the next section.
}

\section{Ungraded birational descent}\label{ungradsec}

\subsection*{Descent of affineness}
By \cite[5.1]{temkin2}, a $k$-analytic germ $(X,x)$ is good if and only if the corresponding
($\R^{\times}_{>0}$-graded) birational space $\widetilde{X}_x$ in ${\rm{bir}}_{\widetilde{k}}$ is {\em affine}, i.e.,
corresponds to an open subset $\P_{\widetilde{\calH(x)}/\widetilde{k}}\{A\} \subseteq
\P_{\widetilde{\calH(x)}/\widetilde{k}}$ for a finitely generated $\R^{\times}_{>0}$-graded
$\widetilde{k}$-subalgebra $A \subseteq \widetilde{\calH(x)}$. Thus, descent of goodness is related to descent
of affineness of (graded) birational spaces over a graded field. In this section we describe an elementary
approach to this descent problem for birational spaces in the special case of the trivial grading group, so all
fields, rings, birational spaces, etc., in this section are assumed to be {\em ungraded} (i.e., we take the
grading group $G$ to be trivial). Recall from \cite[\S1]{temkin1} that any graded birational space $\varcalX =
(X\to\bfP_{K/k})$ in the category $\bir_k$ for the trivial grading group naturally ``arises'' from a $k$-map
$\eta\:\Spec(K)\to\calX$, where $\calX$ is an integral scheme of finite type over $k$ and $\eta$ is generic over
$\calX$. (Explicitly, if $\calX$ is separated then $X \subseteq \P_{K/k}$ is the open subset of valuations rings
of $K$ containing $k$ and dominating the local ring of a point on $\calX$.  The general case proceeds by gluing
over separated opens in $\calX$.) We call such $\eta\:\Spec(K) \rightarrow \calX$ a (pointed) {\em integral
scheme model} of $\varcalX$. For any two pointed integral scheme models $\eta_j\:\Spec(K) \rightarrow \calX_j$ of
$\varcalX = (X \rightarrow \P_{K/k})$ there exists a third such $\Spec(K)\to\calX$ with $\calX$ proper over
each $\calX_j$. (Actually, the collection of all such pointed $\calX$ with a fixed $K/k$ is an inverse system
and $X$ is naturally homeomorphic to the inverse limit of all such $\calX$.)

Given $\varcalX$ and $\calX$ as above and a pair of field extensions $K/k$ and $L/l$ equipped
with a map $K
\rightarrow L$ over a map $k \rightarrow l$, there is a naturally induced map
$\psi_{L/l,K/k}\:\bfP_{L/l}\to\bfP_{K/k}$ and one easily checks that a pointed integral $l$-scheme model
$\Spec(L) \rightarrow \calY$ of the birational space $\varcalY=\varcalX \times_{\bfP_{K/k}}\bfP_{L/l}$ in the
category $\bir_l$ is given by taking $\calY$ to be the Zariski closure (with reduced structure) of the image of
the natural composite map $$\eta\:\Spec(L) \rightarrow \Spec(K \otimes_k l) \rightarrow \calX_l=\calX \otimes_k l.$$ (Note
that $\calX_l$ does not have to be either irreducible or reduced, and $\eta$ does not have to
hit a generic point of $\calX_l$.)
If $\varcalX$ is affine then $\varcalY$ is affine, since $X = \bfP_{K/k}\{f_1,\dots, f_n\}$
implies that $\varcalY = \bfP_{L/l}\{f_1,\dots, f_n\}$. It turns out that the converse is true under the
additional assumption that
any algebraically independent set over $k$ in $l$ is algebraically independent over
$K$. (We will prove in Theorem \ref{psilem}(1) that this condition on
$L/l$ and $K/k$ is equivalent to the
surjectivity of $\psi_{L/l,K/k}$.)

Theorem \ref{birdescent} generalizes this converse statement for an arbitrary grading group $G$, but in the ungraded case it can be proved much more easily: it is a consequence of the following theorem that gives a criterion for affineness of birational spaces in the ungraded case in terms of pointed integral scheme models. To explain this implication, we first note that by Theorem~\ref{affcriterion} below affineness descends in three special cases: (1) $l=k(T)$ and $L=K(T)$ are purely transcendental with a transcendence basis $T= \{T_i\}_{i \in I}$, (2) $l=k$, (3) $L=K$ and $l/k$ is algebraic. Indeed, (2) is obvious because one can take $\calY=\calX_l=\calX$, and (3) is obvious because $L=K$ and $\psi_{L/l,K/k}$ is bijective. In (1), choosing a pointed integral scheme model $\calX$ we have that $\Spec(L)\to\calY=\calX\otimes_kk(T)$ is a pointed integral model, and it is easy to see that if $\calX\otimes_kk(T)$ is proper over an affine $k(T)$-scheme then $\calX$ is proper over an affine $k$-scheme. It remains to note that choosing a transcendence basis $T$ of $l$ over $k$ we obtain a factorization $$\psi_{L/l,K/k}=\psi_{K(T)/k(T),K/k}\circ\psi_{L/k(T),K(T)/k(T)}\circ\psi_{L/l,L/k(T)}$$ with factors of types (1), (2) and (3).

Finally, we note that Theorem \ref{affcriterion} should generalize to graded birational spaces if one uses graded integral schemes in the role of $\calX$, but a theory of graded schemes has not been developed.

\begin{theorem}\label{affcriterion}
Let $\varcalX = (X \rightarrow \P_{K/k})$ be a birational space over a field $k$,
and let $\Spec(K) \rightarrow \calX$ be a pointed integral scheme model for $\varcalX$. The
birational space $\varcalX$ is affine if
and only if the normalization of $\calX$ is proper over an affine $k$-scheme of finite type.
\end{theorem}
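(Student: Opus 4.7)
The plan is to prove both directions using the valuative criterion for properness in its form for an integral separated scheme $\calX'$ of finite type over a Noetherian affine base $\Spec(A)$ with $A \subseteq K = K(\calX')$: properness is equivalent to the condition that every valuation ring $R \subseteq K$ over $k$ with $A \subseteq R$ dominate a local ring on $\calX'$. Throughout, I will use that the normalization morphism $\calX' \to \calX$ is finite, hence proper and birational, so that $\calX$ and $\calX'$ share the same birational space $\varcalX$ (equivalently, dominating a local ring on $\calX$ is the same as dominating one on $\calX'$).

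For the \emph{``if''} direction, suppose $\calX' \to \Spec(A)$ is proper with $A$ a finitely generated $k$-algebra. Replacing $A$ by its image under $A \to \Gamma(\calX', \calO_{\calX'}) \hookrightarrow K$ preserves properness, since $\Spec(A/\ker) \hookrightarrow \Spec(A)$ is a closed immersion and hence separated; thus we may assume $A \hookrightarrow K$. Fix generators $f_1, \ldots, f_n$ of $A$ over $k$. For any valuation ring $R \subseteq K$ over $k$, $R \in X(\calX')$ is equivalent to $A \subseteq R$: if $R$ dominates some $\calO_{\calX', x}$ then $A \subseteq \calO_{\calX', x} \subseteq R$, and the converse is precisely the valuative criterion applied to $\calX' \to \Spec(A)$. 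Thus $X = \bfP_{K/k}\{f_1, \ldots, f_n\}$ and $\varcalX$ is affine.

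For the \emph{``only if''} direction, assume $X = \bfP_{K/k}\{f_1, \ldots, f_n\}$ and set $A = k[f_1, \ldots, f_n] \subseteq K$. Since $X \hookrightarrow \bfP_{K/k}$ is an open immersion, the map $X(\calX) \to \bfP_{K/k}$ is injective; by the valuative criterion of separatedness for integral $k$-schemes of finite type (non-separatedness would produce two distinct points of $\calX$ dominated by a common valuation ring of $K$), this forces $\calX$ to be separated, so $\calX'$ is normal, integral, separated, and of finite type over $k$. To construct the morphism $\calX' \to \Spec(A)$ it suffices to place each $f_i$ in $\Gamma(\calX', \calO_{\calX'})$, and by algebraic Hartogs' lemma for normal Noetherian integral schemes this reduces to checking membership in $\calO_{\calX', x}$ for every codimension-one point $x$; but such a local ring is a DVR lying in $X(\calX') = X = \bfP_{K/k}\{f_1, \ldots, f_n\}$ (it trivially dominates itself), so it contains $A$. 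Properness of $\calX' \to \Spec(A)$ then follows from the valuative criterion exactly as in the ``if'' direction, since any valuation ring $R \subseteq K$ over $k$ with $A \subseteq R$ lies in $X = X(\calX')$ and so dominates a local ring on $\calX'$.

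The main technical hurdle is reducing the valuative criteria for properness and separatedness to valuation rings of the function field $K$ alone, rather than of arbitrary extension fields; this is standard for integral separated schemes of finite type over a Noetherian base, where test configurations hitting non-generic points of $\calX'$ are disposed of by inducting on the dimension of closed integral subschemes. A secondary but conceptually crucial point is that normalization is indispensable in the ``only if'' direction: without it Hartogs' lemma is unavailable and the $f_i$ need not extend to global regular functions, as demonstrated by de Jong's example (Example \ref{deJongexample}) referenced in the introduction.
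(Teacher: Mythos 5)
Your proof is correct, but your \emph{``only if''} direction takes a genuinely different route from the paper's. The paper deduces that direction from Lemma \ref{propersurj} (invariance of the ``normalization proper over an affine'' property under proper surjections of integral models, proved via integrality of $\Gamma(S',\calO_{S'})$ over $\Gamma(S,\calO_S)$ and the Artin--Tate finiteness lemma) combined with the fact that any two pointed integral scheme models of $\varcalX$ admit a common refinement proper over both: since an affine $\varcalX=\P_{K/k}\{A\}$ has the tautological model $\Spec(A)$, the property is transported to the given model $\calX$. You instead build the morphism to $\Spec(A)$ directly on the normalization $\calX'$: the height-one local rings of $\calX'$ are DVRs lying in $X$, hence contain the $f_i$, so algebraic Hartogs places the $f_i$ in $\Gamma(\calX',\calO_{\calX'})$, and properness over $\Spec(A)$ then follows from the valuative criterion. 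This is shorter, avoids both the common-refinement step and Lemma \ref{propersurj}, and makes the role of normality completely transparent (it is exactly what Hartogs requires, consistent with Example \ref{deJongexample}); the price is that it leans on the reduction of the valuative criteria for separatedness and universal closedness to valuation rings of the function field $K$ alone, a classical composite-valuation argument that you correctly identify as the main technical input but only sketch. The paper's detour is not wasted effort, though: Lemma \ref{propersurj} is precisely the statement whose graded analogue (with integral closures in place of normalizations) powers the main descent theorem for birational spaces (Theorem \ref{birdescent}), which is why it is isolated there. Your \emph{``if''} direction is essentially the paper's argument with the valuative criterion made explicit.
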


\begin{proof}
The ``if'' direction easily follows from the definition of the functor
$(\Spec(K) \rightarrow \calX) \rightsquigarrow (X \rightarrow \P_{K/k})$
in
\cite[\S1]{temkin1}. Indeed, the dominant
point $\Spec(K)\to\calX$ lifts to the normalization $\widetilde{\calX}$ of
$\calX$, and since $\widetilde{\calX}$ is proper over $\calX$
the pointed integral schemes $\Spec(K)\to\widetilde{\calX}$ and $\Spec(K)\to\calX$
correspond to isomorphic objects in $\bir_k$. We are given that $\widetilde{\calX}$ is proper over an affine
$k$-scheme $\calX'$ of finite type,
so $\Spec(K)\to\widetilde{\calX}$ and the induced morphism $\Spec(K)\to\calX'$ have isomorphic images in
$\bir_k$. Thus, if $A \subseteq K$ is the finitely generated coordinate ring of $\calX'$
over $k$ then $X = \P_{K/k}\{A\} \subseteq \P_{K/k}$
is affine in ${\rm{bir}}_k$. The converse implication follows from the next lemma and the
fact that any two pointed integral scheme models for an object
in ${\rm{bir}}_k$ admit a common refinement that is proper over both of them.
\end{proof}

The non-trivial direction in the following result is \cite[Theorem 2.1]{semiaffine}, but we provide our proof for completeness. Our extensive study of graded integral closure in \S\ref{gradsec} is inspired by the role of normalizations in the lemma.  The results in \S\ref{gradsec}
are the main ingredients in the proof of the key descent theorem for graded birational spaces given in Theorem \ref{birdescent}.

\begin{lemma}\label{propersurj}
Let $S$ and $S'$ be irreducible and reduced schemes of finite type over a field $k$, and let $\pi\:S' \rightarrow
S$ be a proper surjection.  The normalization $\widetilde{S'}$ of $S'$ is proper over an affine $k$-scheme of
finite type if and only if the normalization $\widetilde{S}$ of $S$ satisfies the same property.
\end{lemma}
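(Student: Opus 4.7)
My plan is to argue the easy direction by the universal property of normalization and the hard direction by reducing, via a Stein factorization, to the case of a finite surjection of normal varieties, which I then handle by Galois-theoretic descent.

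For the ``if'' direction, suppose $\widetilde{S}$ is proper over an affine finite-type $k$-scheme $T$. The composite $\widetilde{S'} \to S' \to S$ is proper and surjective, and because $\widetilde{S'}$ is normal this map factors through the normalization $\widetilde{S}$, giving a proper surjection $\tau\:\widetilde{S'}\to\widetilde{S}$. Composing with $\widetilde{S}\to T$ gives a proper morphism $\widetilde{S'}\to T$, so $\widetilde{S'}$ is proper over an affine finite-type $k$-scheme as desired.

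For the converse, assume $\widetilde{S'}$ is proper over affine finite-type $T'$, and take $\tau\:\widetilde{S'}\to\widetilde{S}$ as above. I first replace $T'$ by $\Spec A'$ with $A':=\Gamma(\widetilde{S'},\O_{\widetilde{S'}})$, which is a finitely generated $k$-algebra since $A'$ is finite over $\Gamma(T',\O)$, and the induced $\widetilde{S'}\to\Spec A'$ is proper. Now Stein-factorize $\tau=\tau_2\circ\tau_1$ with $\tau_1\:\widetilde{S'}\to Y$ satisfying $\tau_{1*}\O_{\widetilde{S'}}=\O_Y$ and $\tau_2\:Y\to\widetilde{S}$ finite surjective. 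Since $A'=\Gamma(Y,\O_Y)$ and $\tau_1$ is proper surjective, the cancellation principle (if $g\circ f$ is proper and $f$ is proper surjective then $g$ is proper) promotes $\widetilde{S'}\to \Spec A'$ to a proper morphism $Y\to \Spec A'$, so $Y$ is semiaffine. Replacing $Y$ by its normalization $\widetilde{Y}$ (which is finite over $Y$, hence still semiaffine) reduces the problem to a finite surjection $\sigma\:\widetilde{Y}\to\widetilde{S}$ of normal irreducible varieties with $\widetilde{Y}$ semiaffine.

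For this finite case, factor $K(\widetilde{Y})/K(\widetilde{S})$ through $L$, the separable closure of $K(\widetilde{S})$ in $K(\widetilde{Y})$, obtaining a factorization $\widetilde{Y}\to Y^s\to\widetilde{S}$ where $Y^s$ is the normalization of $\widetilde{S}$ in $L$, so the first map is finite purely inseparable and the second is finite separable. For the purely inseparable piece, any $a\in A_{\widetilde{Y}}:=\Gamma(\widetilde{Y},\O)$ satisfies $a^{p^n}\in L$ for $n\gg 0$, and since $a^{p^n}$ is regular on $\widetilde{Y}$ and integral over $\O_{Y^s,y}$ at every $y\in Y^s$ (via any preimage in $\widetilde{Y}$), the normality of $Y^s$ gives $a^{p^n}\in A_{Y^s}:=\Gamma(Y^s,\O)$. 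Thus $A_{\widetilde{Y}}$ is integral --- in fact finite, since it is a finitely generated $k$-algebra --- over $A_{Y^s}$; Artin--Tate then yields finite generation of $A_{Y^s}$, and cancellation applied to the proper surjection $\widetilde{Y}\to Y^s$ gives properness of $Y^s\to\Spec A_{Y^s}$. For the remaining finite separable morphism $Y^s\to\widetilde{S}$, I pass to the normalization $W$ of $\widetilde{S}$ in the Galois closure of $L/K(\widetilde{S})$. Then $W$ is normal irreducible, finite over $Y^s$ (hence semiaffine), and the finite group $G=\Gal(K(W)/K(\widetilde{S}))$ acts on $W$ with quotient equal to $\widetilde{S}$ (by normality and birationality of $W/G\to\widetilde{S}$). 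The $G$-action on the finitely generated $k$-algebra $A_W:=\Gamma(W,\O_W)$ yields, by Noether's finiteness theorem, an affine finite-type quotient $\Spec A_W^G$, and the proper equivariant morphism $W\to\Spec A_W$ descends to a proper morphism $\widetilde{S}=W/G\to\Spec A_W^G$, exhibiting $\widetilde{S}$ as semiaffine. The main obstacle is the finite-morphism descent itself, particularly handling the purely inseparable factor uniformly in characteristic and verifying that the proper $G$-equivariant morphism really does descend to a proper morphism on quotients; the recurring technical ingredient is the cancellation principle for properness along proper surjections.
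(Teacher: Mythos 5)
Your proof is correct, but for the nontrivial direction it takes a genuinely different (and heavier) route than the paper. The paper, after replacing $S$ and $S'$ by their normalizations and noting that $S$ is separated because $\Delta_S$ has closed image, works directly with $A=\Gamma(S,\O_S)\subseteq A'=\Gamma(S',\O_{S'})$: since $\pi_{\ast}\O_{S'}$ is a torsion-free coherent $\O_S$-algebra on the \emph{normal} scheme $S$, the characteristic polynomial over $k(S)$ of any local section has coefficients in $\O_S$, so $A'$ is integral --- hence finite --- over $A$; Artin--Tate then gives finite generation of $A$, and the same cancellation principle you use, applied to the square formed by $S'\to S$ and $\Spec(A')\to\Spec(A)$, gives properness of $S\to\Spec(A)$. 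This single characteristic-polynomial argument replaces your entire Stein-factorization-plus-field-theory reduction: there is no need to separate the purely inseparable part (Frobenius powers) from the separable part (Galois closure, Noether's finiteness theorem for invariants, descent of properness through $W/G=\widetilde{S}$). Your decomposition is nonetheless sound: the cancellation principle in the form you state it (with $f$ proper surjective) does yield separatedness of $g$ via the closed image of the diagonal; the inseparable and Galois steps both ultimately exploit normality of the target exactly as the paper does; and $W/G$ exists and equals $\widetilde{S}$ because $W$ is finite over the normal $\widetilde{S}$. What the paper's approach buys is uniformity in the characteristic and the complete avoidance of invariant theory; what yours buys is a reduction of the general proper case to the more familiar finite case.
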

\begin{proof}
We can and do replace $S$ and $S'$ with their normalizations. The condition that an integral $k$-scheme $X$ be proper over
an affine algebraic $k$-scheme is equivalent to the simultaneous conditions: (1) the domain $A =
\mathscr{O}_X(X)$ is of finite type over $k$, (2) the canonical $k$-map $X \rightarrow \Spec(A)$ is proper. The nontrivial direction is descent from $S'$ to $S$, so assume that $S'$ is proper over an affine
algebraic $k$-scheme. In particular, $S'$ is separated.  Since $\pi$ is a proper surjection it follows that
$\Delta_S$ has closed image, so $S$ is separated over $k$. Let $A = \mathscr{O}_S(S)$ and $A' =
\mathscr{O}_{S'}(S')$, so $A'$ is a
$k$-algebra of finite type and $A$ is a $k$-subalgebra of $A'$.  If we
can show that $A'$ is a finite $A$-module then $A$ must be of finite type over $k$ \cite[7.8]{am}, so in the
commutative square
$$\xymatrix{{S'} \ar[r] \ar[d] & {S} \ar[d] \\ {\Spec(A')} \ar[r] & {\Spec(A)}}$$
the bottom and left sides are proper, the top is a proper surjection, and the right side is separated and finite
type.  Hence, the right side is proper, as required.

To prove that $A'$ is $A$-finite, observe that the $A$-algebra $A'$ is identified with the global sections of
the coherent sheaf of algebras $\mathscr{A}' = \pi_{\ast}(\mathscr{O}_{S'})$ on $S$.  If $\eta \in S$ is the
generic point then for any section $h$ of $\mathscr{A}'$ over a non-empty open $U$ in $S$, the characteristic
polynomial of $h_{\eta} \in \mathscr{A}'_{\eta}$ over $\mathscr{O}_{S,\eta} = k(S)$ has coefficients in
$\mathscr{O}_S(U)$ since the coherent $\mathscr{O}_S$-algebra $\mathscr{A}'$ is torsion-free and we can work
over an affine open cover of $U$ in the normal scheme $S$. Thus, $A'$ is integral over $A$.  But $A'$ is
finitely generated as an $A$-algebra (it is even of finite type over $k$), so $A'$ is finite as an $A$-module.
\end{proof}

\subsection*{Some examples}
Though the proof of descent of affineness for birational spaces with trivial grading
as explained above is much shorter than the proof required in the general graded
case in Theorem \ref{birdescent}, even in the ungraded case the argument
has hidden dangers.   One subtlety is that Lemma \ref{propersurj} is false without normalizations, as was shown in \cite[Example~2.4]{semiaffine}. Here is another counterexample along these lines that was suggested to us by de Jong.

\begin{example}\label{deJongexample}
Assume $k$ has characteristic 0 (!), and let $S$ be the
integral $k$-scheme obtained from $S' = \mathbf{P}^1 \times
\mathbf{A}^1$ by identifying the lines $\{0\} \times \mathbf{A}^1$ and $\{1\} \times \mathbf{A}^1$ via $t
\mapsto t+1$.  In other words, replace $\mathbf{A}^1 \times \mathbf{A}^1 = \Spec(k[x,t]) \subseteq S'$ by
$\Spec(A)$, where $A \subseteq k[x,t]$ is the $k$-subalgebra of $f \in k[x,t]$ such that $f(0,t+1) = f(1,t)$.
(An easy argument in the category of locally ringed spaces shows that $S$ has the expected
universal mapping property in the category of $k$-schemes.)
Since $x^2 - x \in A$ and $t + x \in A$, the extension $A \rightarrow k[x,t]$ is integral and hence
finite, so \cite[7.8]{am} ensures that $A$ is finitely generated over $k$.
Hence, $S$ is of finite type over $k$. Obviously $A_{x^2-x} =
k[x,t]_{x^2-x}$ (since $(x^2-x)k[x,t] \subseteq A$), so $S'$ is the normalization of $S$
(in particular, $S$ is separated) and $S'$ is proper over
the affine $k$-line, with $\mathscr{O}_{S'}(S') = k[t]$.
Thus, the global functions on $S$ are those
$h \in k[t]$ such that $h(t) = h(t+1)$, so since ${\rm{char}}(k) = 0$ we get $\mathscr{O}_S(S) = k$. Since $S$
is not $k$-proper it therefore
 cannot be proper over an algebraic affine $k$-scheme, though its normalization $S'$
admits such a description.
\end{example}

We will later apply descent of affineness for birational spaces (in the graded case, Theorem \ref{birdescent}) to prove that the property of being a good analytic space descends through morphisms with surjective interior and extension of the ground field. As we mentioned in the Introduction, one should be especially careful when dealing with descent of goodness because of an example (due to Q. Liu) of a 2-dimensional separated non-affinoid rigid space $X$ (over any $k$ with $|k^{\times}| \ne 1$) such that $X$ has an affinoid normalization.  This phenomenon has no analogue for schemes of finite type over fields. While reading Liu's paper \cite{liu}, we discovered a much simpler example of the same nature which is a very close relative of de Jong's example above.

Naturally, one constructs $X$ by {\em pinching} its affinoid normalization $X'$. Let us recall first, how pinching works for schemes. Assume that $\vcalX'=\Spec(A')$ is an affine scheme with a closed subscheme $\vcalY'=\Spec(B')$ and $h\:\vcalY'\to\vcalY=\Spec(B)$ is a finite morphism, then there exists a pushout $\vcalX=\vcalX'\coprod_{\vcalY'}\vcalY$, called {\em pinching} of $\calX'$ along $h$, and it satisfies the following list of nice properties:

(1) $\vcalX$ is affine and hence coincides with the pushout in the category of affine schemes, that is, $\vcalX=\Spec(A)$ for $A=A'\times_{B'}B$,

(2) The morphism $f\:\vcalX'\to\vcalX$ restricts to $h$ over $\vcalY$ and induces an isomorphism $\vcalX'-\vcalY'\simeq\vcalX-\vcalY$. In particular, the following squares are both Cartesian and co-Cartesian
$$\xymatrix{
B' & A'\ar@{->>}[l] & & & \vcalY'\ar[d]^h\ar@{^(->}[r] & \vcalX'\ar[d]^f\\
B\ar[u] & A\ar@{->>}[l]\ar[u] & & & \vcalY\ar@{^{(}->}[r] & \vcalX
}
$$
and if $\vcalX'$ is normal and $\vcalY'$ is nowhere dense, then $f$ is the normalization morphism.

(3) In addition, the construction is universal: any base change with respect to $\vcalZ\to\vcalX$ yields another pinching diagram.

Using (3) one can deduce a similar theory for the case when the schemes are not affine, but it is a bit pathological and a pinching of a projective variety can be not a scheme but only an algebraic space.

\begin{example}\label{alaLiuexample}
Let $k$ be a non-archimedean field with a non-trivial valuation and residue characteristic zero.
We will work with reductions of $k$-Banach algebras in the traditional
(rather than graded) sense, so now $\widetilde{k}$ denotes
the ordinary residue field of $k$ (rather than an $\R^{\times}_{>0}$-graded
field as in \S\ref{birsec1} and \cite{temkin2}).

Surprisingly it turns out that pinching in the affinoid category is not always possible, and the general idea
of Q. Liu is to construct a Cartesian diagram of rings with strictly $k$-affinoid $\calA',\calB',\calB$, but not affinoid $k$-Banach algebra $\calA=\calA'\times_{\calB'}\calB$. Nevertheless, in Liu's example one constructs the pushout $X$ of $X'=\calM(\calA')$ and $Y=\calM(\calB)$ along $Y'=\calM(\calB')$ by gluing affinoid pushouts of subdomains, similarly to non-affine pinchings of schemes. Then $X$ is a non-affinoid space (already $\calA=\Gamma(\calO_X)$ is not affinoid), but its normalization $X'$ is affinoid.
$$\xymatrix{
\calB' & \calA'\ar@{->>}[l] & & & Y'\ar[d]^h\ar@{^(->}[r] & X'\ar[d]^f\\
\calB\ar[u]_\phi & \calA\ar@{->>}[l]\ar[u]_\psi & & & Y\ar@{^{(}->}[r] & X
}
$$
In addition, in Liu's and our examples the finite homomorphisms $\phi,\psi$ are isometric embeddings, but the reduction homomorphism $\tilpsi\:\tilcalA\to\tilcalA'$ is not finite. Then $\calA$ is not affinoid, as otherwise the latter homomorphism has to be finite by \cite[6.3.5/1]{bgr}.
The desired property of $\tilpsi$ is achieved by a careful choice of $\phi$ with a non-surjective $\tilphi$; although the reduction functor preserves finiteness, it does not preserve surjectivity, see \cite[Example~6.3.1/2]{bgr}. In particular, for the associated affine formal models the pushout datum is already not a pinching datum.

To make our example, let $\calA'=k\{x,y\}$ and $\calB'=\calA'/(x^2-\pi x)$, where $\pi\in k$ is a non-zero element
with $r=|\pi|<1$. Thus, $\calB'\simeq k\{y_0\} \times k\{y_\pi\}$, where $(y_0,y_\pi)$ is the image of $y$. In
particular, $X'$ is a 2-dimensional closed unit polydisc and $Y'$ is a
disjoint union of two one-dimensional closed unit subdiscs $D_0 = \{x = 0\}$ and
$D_\pi = \{x = \pi\}$.  Also we take $\calB=k\{y\}$ and define $\phi$ by $\phi(y)=(y_0,y_\pi+1)$. In other words, we want to pinch $X$ along the morphism $h$, which identifies $D_0$ and $D_\pi$ via $y_0=y_{\pi}+1$.

First, let us define the pinching $X$. For $s\in(0,1]$ set $\calA'_s=\calA'\{s^{-2}(x^2-\pi x)\}$ and $X'_s=\calM(\calA'_s)$. Now, fix $s<r$ and notice that in this case $X'_s=X'_0\coprod X'_\pi$ consists of two disjoint polydiscs of radii $(s,1)$ with a choice of coordinates $(x_0=x,y_0)$ and $(x_\pi=x-\pi,y_\pi+1)$ convenient for our purposes. Identifying the closed subspaces $D_0\into X'_0$ and $D_\pi\into X'_\pi$ one obtains a pinching $X_s=\calM(\calA_s)$ of $X'_s$ along $h$, where $\calA_s=\calM(k\{s^{-1}x_0,s^{-1}x_\pi,y\}/(x_0x_\pi))$. Clearly, $X'_s$ is the normalization of $X_s$. Since $X'_s\to X_s$ is an isomorphism on the complements to $Y'$ and $Y$, we can past to both complements the space $X'\setminus Y'$ obtaining a morphism $f\:X'\to X$, which restricts to $h$ and induces an isomorphism $X'\setminus Y'\simeq X\setminus Y$. Thus, $f$ is a required pinching and $X'$ is the normalization of $X$. We will now prove that $X$ is not affinoid, and other properties will be outlined later.

Assume to the contrary that $X=\calM(\calC)$. Then the morphism of reductions $\tilh\:\tilX'=\Spec(\tilcalA')\to\tilX=\Spec(\tilcalC)$ is finite by \cite[6.3.5]{bgr}. Let us compare the morphisms of $\tilY'=\tilD_0\coprod\tilD_\pi$ to $\tilX'$ and $\tilY$. The first one maps $\tilD_0$ and $\tilD_\pi$ isomorphically onto the closed subscheme $Z=\Spec(\tilk[\tily])\into\Spec(\tilk[\tilx,\tily])$ by sending $\tily$ to $\tily_0$ and $\tily_\pi$. The other one is the disjoint union of isomorphisms but this time sending $\tily$ to $\tily_0$ and $\tily_\pi+1$. Since the compositions $\tilY'\to\tilX'\to\tilX$ and $\tilY'\to\tilY\to\tilX$ coincide, we obtain that the points of $Z$ with $\tily\in\Z$ are mapped to the same point in $\tilX$. Since $\mathrm{char}(\tilk)=0$ this implies that $\tilh$ has an infinite fiber and hence cannot be finite. This completes our construction of a non-affinoid $X$ with an affinoid normalization.

For the sake of comparison, we list a few other properties satisfied by this data, but skip detailed arguments for shortness. First, $X$ is indeed the pushout $X'\coprod_{Y'}Y$. To prove this it suffices to show that $X'_s\coprod_{Y'}Y=X_s$. By a direct inspection $\calA_s=\calA'_s\times_{\calB'}\calB$, hence the latter is true in the affinoid category. Moreover, this pushout is compatible with affinoid base changes $T\to X$ and it follows easily that $X'_s\coprod_{Y'}Y=X_s$ also in the category of all $k$-analytic spaces.

In fact, the same arguments shows that even $X_r$ is still affinoid with normalization $X'_r$ -- the unit polydisc of radii $(r,1)$. On the other hand, we proved earlier that the pushout $X=X_1$ is not affinoid and the same argument shows that this is so for any $s>r$. The conceptual explanation is very simple: the affine formal model $\Spf(\calB'^\circ)\to\Spf((\calA'_s)^\circ)$ of the closed immersion $Y'\to X_s$ is a closed immersion if and only if $s\le r$.

Finally, let $\calA = \calA' \times_{\calB'} \calB$ be the preimage of $\calB$ in $\calA'$ provided with the $k$-Banach norm induced from the
Gauss norm on $\calA'$. Working on the level of abstract rings, it is immediate that the inclusion homomorphism $\psi\:\calA\to\calA'$ is finite and $\calA'$ is the normalization of $\calA$. However, comparing the maps $\tilY'\to\tilX'\to\Spec(\tilcalA)$ and $\tilY'\to\tilY\to\Spec(\tilcalA)$ in the same way as above, one obtains that the morphism $\tilX'\to\Spec(\tilcalA)$ is not finite. Thus, $\tilpsi$ is not finite, and hence $\calA$ is not $k$-affinoid by \cite[6.3.5/1]{bgr}.
\end{example}

Example \ref{alaLiuexample} is very close to de Jong's example. In both cases a global
pushout loses a good property (being proper over an affine or being affinoid), yet it can be
constructed by restricting to a subspace ($\A^1\times\A^1\into\P^1\times\A^1$ or
$X'_r=\calM(k\{\frac{x}{\pi},y\})\into X=\calM(k\{x,y\})$) where it behaves nicely and is described by simple
explicit formulas that are the same in both examples.

\section{Graded commutative algebra}\label{gradsec}
Throughout \S\ref{gradsec}--\S\ref{descsec}, $G$ is an arbitrary commutative multiplicative group (that will be $\bfR_{>0}^\times$ in the applications) and $k$ is a $G$-graded field. We consider only $G$-gradings in the sequel, so $G$ will often be omitted from the notation. The $G$-grading on any graded ring $A = \oplus_{g \in G} A_g$ will be denoted $\rho\:\coprod (A_g - \{0\})  \to G$. For any nonzero $A$, by $A^\times$ we denote the group of invertible homogeneous elements, so there is a homomorphism $\rho\:A^{\times} \rightarrow G$ whose image in case $A = K$ is a graded field is the subgroup of $G$ that consists of all $g \in G$ such that $K_g \ne 0$.  We are going to prove some results about extensions of graded fields. We will see that the theory of graded fields is similar to the classical ungraded case, and many proofs are just mild variants of the classical proofs. Some results on graded fields were proved in \cite[\S5.3]{ct}, and the notions of a finite extension and its degree were introduced there; recall from \cite[1.2]{temkin2} that any graded module over a graded field $K$ is necessarily a free module (with a homogeneous basis), and if $K \rightarrow L$ is a map of graded fields then its {\em degree} is defined to be the $K$-rank of $L$.

A {\em graded domain} is a non-zero graded ring $A$ such that all nonzero homogeneous elements of $A$ are not zero-divisors in $A$, and a key example of a graded domain is the ring $K[g_0^{-1}T]$ for a graded field $K$ and any $g_0 \in G$; this is the ring $K[T]$ in which $K$ is endowed with its given grading and $T$ is declared to be homogeneous with grading $g_0$.   For example, if $c \in K^{\times}$ with $\rho(c) = g \in G$ then evaluation at $c$ defines a graded homomorphism of graded rings $K[g^{-1}T] \rightarrow K$.

A graded ring $A$ is {\em graded noetherian} if every homogeneous ideal in $A$ is finitely generated, and by the classical argument it is equivalent to say that the homogeneous ideals of $A$ satisfy the ascending chain condition, in which case every graded $A$-submodule of a finitely generated graded $A$-module is finitely generated.   The proof of the Hilbert basis theorem carries over, so $A[g^{-1}X]$ is graded noetherian for any $g \in G$ when $A$ is graded noetherian.  In particular, if $k$ is a graded field then any finitely generated graded $k$-algebra is graded noetherian.

\subsection*{Some graded field theory}
\begin{lemma}\label{charpol}
Let $K$ be a graded field.  For any $g \in G$, every nonzero homogeneous ideal $I \subseteq K[g^{-1}T]$ is principal with a unique monic homogeneous generator.   Moreover, $K[g^{-1}T]^{\times} = K^{\times}$ and $K[g^{-1}T]$ is graded-factorial in the sense that every monic homogeneous element in $K[g^{-1}T]$ with positive degree is uniquely $($up to rearrangement$)$ a product of monic irreducible homogeneous elements in $K[g^{-1}T]$.
\end{lemma}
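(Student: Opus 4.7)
The plan is to transport the standard proof that $K[T]$ is a PID and UFD to the graded setting, the key observation being that the leading coefficient (in $T$) of a nonzero homogeneous element of $K[g^{-1}T]$ lies in $K^{\times}$ and hence is invertible. Throughout, write $\deg_T f$ for the ordinary polynomial degree of $f$ in $T$, and note that a monic homogeneous element of $T$-degree $n$ is automatically of grading $g^n$.

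First I would set up a graded division algorithm. Given nonzero homogeneous $f, d \in K[g^{-1}T]$ with $n := \deg_T f \ge m := \deg_T d$ and leading coefficients $a, b \in K^\times$, the element $f - (a/b) T^{n-m} d$ is homogeneous of the same grading as $f$ but of strictly smaller $T$-degree, so iteration produces $f = q d + r$ with $q, r$ homogeneous and $\deg_T r < m$. Applied to any nonzero homogeneous ideal $I$, choosing a nonzero homogeneous $d \in I$ of minimal $T$-degree gives $I = (d)$, and scaling by the inverse of its leading coefficient makes $d$ monic. The equality $K[g^{-1}T]^\times = K^\times$ is then immediate: if $uv = 1$ with $u, v$ homogeneous, then $\deg_T u + \deg_T v = 0$, forcing both into $K^\times$. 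Uniqueness of the monic generator follows at once: two monic homogeneous generators $f_1, f_2$ of the same ideal divide each other, so $f_1 = u f_2$ for a homogeneous unit $u \in K^\times$, and since $\deg_T f_1 = \deg_T f_2$ (both equal the minimal $T$-degree in the ideal), comparison of leading coefficients forces $u = 1$.

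For the unique factorization statement, existence is by induction on $\deg_T f$: a reducible monic homogeneous $f$ of positive $T$-degree can be written as $gh$ with $g, h$ homogeneous non-units, hence of positive $T$-degree, and absorbing their (mutually inverse) leading coefficients gives a factorization into two monic homogeneous factors of strictly smaller $T$-degree. Uniqueness reduces by the standard induction to the claim that an irreducible monic homogeneous $p$ is prime with respect to homogeneous divisibility: given homogeneous $a, b$ with $p \mid ab$ but $p \nmid a$, the homogeneous ideal $(p, a)$ has a monic homogeneous generator $d$ with $d \mid p$; irreducibility of $p$ together with $p \nmid a$ (which would follow if $d$ and $p$ were associates) force $d \in K^\times$, so $1 = rp + sa$ for some $r, s \in K[g^{-1}T]$, and multiplying by $b$ yields $p \mid b$. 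The only step requiring any care is the graded division algorithm; everything else is a routine transcription of the classical arguments.
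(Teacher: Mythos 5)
Your proof is correct and follows essentially the same route as the paper's: the graded Euclidean division step (using that leading coefficients of nonzero homogeneous elements lie in $K^{\times}$) gives the principal ideal property, and the resulting B\'ezout identity makes the classical unique-factorization argument go through verbatim for homogeneous elements. The paper phrases the B\'ezout step via the observation that $K[g^{-1}T]/(f)$ is a graded field for $f$ monic irreducible homogeneous, but this is the same content as your direct argument with the ideal $(p,a)$.
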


\begin{proof}
If $f$ and $g$ are nonzero homogeneous elements of $A$ having respective leading terms $aT^m$ and $bT^n$ with  $m\le n$ then $a, b \in K^{\times}$ and $g-\frac ab T^{n-m}f$ is homogeneous and either vanishes or is of degree smaller than $n$, as in the usual Euclidean algorithm. In particular, it follows that $I$ is generated by a monic homogeneous polynomial, and such a generator is obviously unique.  It follows that the maximal graded ideals of $K[g^{-1}T]$ are precisely the ideals $(f)$ for a monic irreducible homogeneous $f \in K[g^{-1}T]$. Hence, $K[g^{-1}T]/(f)$ is a graded field for such $f$. In particular, if $h \in K[g^{-1}T]$ is homogeneous and a given monic homogeneous $f$ does not divide $h$ then $\alpha f + \beta h = 1$ for some homogeneous $\alpha, \beta \in K[g^{-1}T]$. We can therefore show that $K[g^{-1}T]$ is graded-factorial by copying the classical argument (working just with nonzero homogeneous elements).
\end{proof}

An important instance of this lemma occurs for an extension $L/K$ of graded fields:  if $x \in L^{\times}$ with $\rho(x) = g \in G$ then the graded $K$-algebra evaluation map $K[g^{-1}T] \rightarrow L$ at $x$ has homogeneous kernel denoted $I_x$, and $K[g^{-1}T]/I_x \rightarrow L$ is an isomorphism onto the graded domain $K[x] \subseteq L$, so $I_x$ is a graded-prime ideal of $K[g^{-1}T]$.  If $I_x = 0$ then we say $x$ is {\em transcendental} over $K$, and otherwise $I_x$ is a graded-maximal ideal with $K[x]$ therefore a graded field of finite degree over $K$. The case $I_x \ne 0$ always occurs when $L$ has finite degree over $K$ (due to $K$-freeness of graded $K$-modules), and whenever $I_x \ne 0$ the unique monic generator of $I_x$ is denoted $f_x(T)$ and is called the {\em minimal homogeneous polynomial} of $x$ over $K$.

\begin{corollary}\label{splitcor}
For any graded field $K$ and nonzero homogeneous polynomial $f(T)\in K[g^{-1}T]$, there exists a finite extension $L/K$ such that $f$ splits completely into a product of homogeneous degree-$1$ polynomials in $L[g^{-1}T]$.
\end{corollary}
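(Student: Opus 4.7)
The plan is to mimic the classical construction of a splitting field by induction on $\deg f$, using Lemma~\ref{charpol} at each step. In the base case $\deg f \le 1$, after dividing by the leading homogeneous coefficient (which is a unit in $K^\times$ by Lemma~\ref{charpol}) we already have a product of monic homogeneous linear factors, so there is nothing to do. For the inductive step, I first invoke the graded unique factorization from Lemma~\ref{charpol} to write $f$ as a product of monic homogeneous irreducible polynomials, and then it suffices to split each such irreducible factor separately over some finite extension.

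So fix a monic homogeneous irreducible $p(T)\in K[g^{-1}T]$ of degree $n\ge 2$. By Lemma~\ref{charpol} the ideal $(p)$ is graded-maximal, so the quotient $K' = K[g^{-1}T]/(p)$ is a graded field, and it is a finite extension of $K$ of degree $n$. Let $\alpha\in K'$ be the class of $T$, which is homogeneous of grading $g$ (matching the grading of $T$). Since $p$ is monic of degree $n$, all its homogeneous terms have grading $g^n$, so $T-\alpha$ is a monic homogeneous element of $K'[g^{-1}T]$. Applying the Euclidean-type division from the proof of Lemma~\ref{charpol} over the graded field $K'$, I can write $p(T) = (T-\alpha)q(T) + r$ with $q$ a monic homogeneous polynomial of degree $n-1$ and $r\in K'_{g^n}$; substituting $T=\alpha$ gives $r = p(\alpha) = 0$, so $p(T) = (T-\alpha)q(T)$ in $K'[g^{-1}T]$.

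The polynomial $q(T)\in K'[g^{-1}T]$ has degree strictly smaller than $\deg p$, so by induction (applied to $q$ over $K'$) there is a finite extension $L/K'$ in which $q$ splits into monic homogeneous linear factors. Then $p$ splits over $L$ as well, and since $[L:K] = [L:K'][K':K]$ is finite, $L/K$ is finite. Running this construction successively on the irreducible factors of $f$ (enlarging the extension each time) produces a single finite extension $L/K$ over which every irreducible factor, and hence $f$ itself, splits into a product of monic homogeneous linear polynomials. The only point at which some care is needed is to verify that the grading bookkeeping in the division algorithm is consistent (namely that $q$ can be chosen homogeneous and that $r$ must lie in the single homogeneous component $K'_{g^n}$), but this is a direct consequence of the graded Euclidean step already used in Lemma~\ref{charpol}, so there is no real obstacle.
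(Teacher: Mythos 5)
Your proof is correct and follows essentially the same route as the paper: induct on the degree, factor into monic homogeneous irreducibles via Lemma~\ref{charpol}, pass to the graded field $K'=K[g^{-1}T]/(p)$, split off the homogeneous linear factor $T-\alpha$ by the graded Euclidean division, and recurse on the quotient. The extra bookkeeping you flag (homogeneity of $q$ and of the remainder) is exactly what the paper also relies on from the Euclidean step in Lemma~\ref{charpol}, so there is nothing further to check.
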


\begin{proof}
We may assume $f$ is monic and we induct on $\deg_T(f)$.  We may assume $\deg_T(f) > 0$.  Factoring $f(T)$ as a product of irreducible monic homogeneous polynomials in $K[g^{-1}T]$, we may assume $f$ is a monic irreducible and $f(0) \in K^{\times}$. Then $K' = K[g^{-1}T]/(f)$ is a graded field of finite degree over $K$ such that $f(x) = 0$ for a homogeneous $x \in {K'}^{\times}$ with $\rho(x) = g$.  Hence, $T - x \in K'[g^{-1}T]$ is homogeneous and by the Euclidean algorithm in such graded polynomial rings we have $f = (T-x)h(T)$ in $K'[g^{-1}T]$ for some monic homogeneous $h \in K'[g^{-1}T]$. Since $\deg_T(h) < \deg_T(f)$, we are done.
\end{proof}

Next, we discuss transcendental extensions of graded fields. For any extension of graded fields $K/k$, a subset $S \subseteq K^{\times}$ (with $\rho(s) = g_s \in G$ for each $s \in S$) is said to be {\em algebraically independent over $k$} if the graded $k$-algebra map $k[g_s^{-1}T_s]_{s \in S} \rightarrow K$ defined by $T_s \mapsto s$ is injective.  The following two conditions on a subset $S \subseteq K^{\times}$ are equivalent: (1) $S$ is a maximal algebraically independent set over $k$, (2) $S$ is minimal for the property that $K$ is integral over the graded $k$-subfield generated by $S$ (i.e., $K$ is integral over the graded fraction field of the graded $k$-subalgebra generated by $S$). In condition (2) it clearly suffices to check integrality for elements of $K^{\times}$.  A subset $S \subseteq K^{\times}$ satisfying (1) and (2) is called a {\em transcendence basis} for $K/k$ (and such subsets clearly exist, via condition (1)). Analogously to the classical arguments, one proves that all transcendence bases have equal cardinality, which is called the {\em transcendence degree} of $K$ over $k$ and is denoted $\trdeg_k(K)$. Also, one shows akin to the classical case that transcendence degree is additive in towers of graded fields. An extension $K/k$ of graded fields has transcendence degree 0 if and only if each $x \in K$ is integral over $k$, in which case we say that $K/k$ is {\em algebraic}.

As with degree for finite extensions of graded fields (studied in \cite[5.3.1]{ct}), the value of the transcendence degree ``splits" into a contribution from the extension of $1$-graded parts $K_1/k_1$ (ordinary fields) and a contribution from the extension of grading groups $\rho(k^{\times}) \subseteq \rho(K^{\times})$ in $G$. Namely, the following lemma holds.

\begin{lemma}\label{trdeglem}
Let $K/k$ be an extension of graded fields.  We have $$\trdeg_k(K)=\trdeg_{k_1}(K_1)+{\rm dim}_\Q\left((\rho(K^\times)/\rho(k^\times))\otimes_\Z\Q\right)$$ in the sense of cardinalities.
\end{lemma}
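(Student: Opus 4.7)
My plan is to construct an explicit transcendence basis of $K/k$ whose cardinality matches the right-hand side of the formula. Set $\Gamma = \rho(K^\times)/\rho(k^\times)$ and $d = \dim_\Q(\Gamma \otimes_\Z \Q)$. I will pick a family $\{x_i\}_{i \in I} \subseteq K^\times$ of cardinality $d$ whose classes $\overline{\rho(x_i)} \in \Gamma$ form a $\Q$-basis of $\Gamma \otimes_\Z \Q$ — so in particular they are $\Z$-linearly independent in $\Gamma$, and for every $\bar g \in \Gamma$ some positive-integer multiple lies in the subgroup they generate. I will also pick $\{y_j\}_{j \in J} \subseteq K_1^\times$, a transcendence basis of the ordinary field extension $K_1/k_1$. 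The goal is to show that $S := \{x_i\} \sqcup \{y_j\}$ is a transcendence basis of $K/k$, after which additivity of the cardinalities $|I| = d$ and $|J| = \trdeg_{k_1}(K_1)$ yields the formula.

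For algebraic independence of $S$ over $k$, I will examine a hypothetical homogeneous polynomial relation $\sum_{\alpha,\beta} c_{\alpha,\beta}\, x^\alpha y^\beta = 0$ with homogeneous $c_{\alpha,\beta} \in k$. Since each $y_j$ sits in the grade-$1$ part, the grade of a nonzero term is $\rho(c_{\alpha,\beta})\prod_i \rho(x_i)^{\alpha_i}$, independent of $\beta$. Homogeneity forces this to be constant across all nonzero terms, so two nonzero terms with distinct exponent vectors $\alpha \neq \alpha'$ in the $x$'s would give $\prod_i \rho(x_i)^{\alpha_i - \alpha'_i} \in \rho(k^\times)$, contradicting $\Z$-linear independence of the $\overline{\rho(x_i)}$. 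Thus only one $\alpha$ occurs, reducing the relation to $x^\alpha \sum_\beta c_{\alpha,\beta}\, y^\beta = 0$ with all $c_{\alpha,\beta}$ sharing a common grade $h$. If $k_h = 0$ all coefficients vanish trivially; otherwise, dividing by any nonzero $c_0 \in k_h$ gives a polynomial relation $\sum_\beta (c_{\alpha,\beta}/c_0)\, y^\beta = 0$ in $K_1$ over $k_1$, so algebraic independence of $\{y_j\}$ in $K_1/k_1$ forces all $c_{\alpha,\beta}$ to be zero.

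For the assertion that $K$ is algebraic over $L := k(\{x_i, y_j\})$, it suffices to treat homogeneous $z \in K^\times$, since every element of $K$ decomposes as a finite sum of homogeneous parts. Let $g = \rho(z)$. The $\Q$-basis property furnishes $n \geq 1$ and integers $a_i$ with $g^n \prod_i \rho(x_i)^{-a_i} \in \rho(k^\times)$, and I choose $c \in k^\times$ of this grade. Then $w := z^n c^{-1} \prod_i x_i^{-a_i}$ has grade $1$, so $w \in K_1$, which is algebraic over $k_1(\{y_j\}) \subseteq L$. Substituting $z^n = c w \prod_i x_i^{a_i}$ into a minimal polynomial of $w$ over $L$ and clearing denominators produces a nonzero polynomial relation satisfied by $z$ over $L$, as required.

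The main obstacle is dealing with possible torsion in $\Gamma$, which is precisely why the $x_i$'s must be chosen to yield a $\Q$-basis (rather than a $\Z$-basis), and why the algebraicity step has to pass through an $n$-th power of $z$. Once these twists are absorbed — by forcing a unique $\alpha$ in the independence step via graded matching, and by raising $z$ to an appropriate power in the spanning step — everything reduces cleanly to the classical transcendence argument applied to $K_1/k_1$, and additivity of the two cardinalities in the disjoint union $S$ delivers the identity.
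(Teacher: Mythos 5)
Your proof is correct and follows essentially the same route as the paper's: choose homogeneous elements whose gradings give a $\Q$-basis of $(\rho(K^\times)/\rho(k^\times))\otimes_\Z\Q$ together with a transcendence basis of $K_1/k_1$, show algebraic independence by matching gradings to isolate a single monomial in the grading-variables and reduce to a relation over $k_1$, and show spanning by normalizing a power of a homogeneous element into $K_1$. The only differences are notational (the roles of $x$ and $y$ are swapped relative to the paper) and a slightly more direct phrasing of the grading-matching step, which the paper carries out by decomposing the relation according to the monomial gradings.
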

\begin{proof}
Let $\{x_i\}$ be a transcendence basis for $K_1/k_1$ in the usual sense, and choose elements $y_j \in K^{\times}$ such that the gradings $\rho(y_j)$ form a $\Q$-basis of $(\rho(K^{\times})/\rho(k^{\times})) \otimes_{\Z} \Q$. It suffices to show that $S = \{x_i, y_j\} \subseteq K^{\times}$ is a transcendence basis of $K/k$.

First we check that $S$ is a transcendence set. If there is a nontrivial polynomial relation over $k$ satisfied by these elements then by their homogeneity we may take the relation to have homogeneous coefficients. Any monomial $c y_{j_1}^{e_1} \cdots y_{j_r}^{e_r}$ with $r > 0$, $e_1, \dots, e_r > 0$, and $c \in k^{\times}$ has grading $\rho(c) \cdot \prod \rho(y_{j_m})^{e_m}$ whose image in $(\rho(K^{\times})/\rho(k^{\times})) \otimes_{\Z} \Q$ is nonzero and uniquely determines $r$ and the ordered $r$-tuple $(e_1, \dots, e_r)$. Thus, we can decompose a hypothetical nontrivial homogeneous polynomial relation over $k$ according to the $y$-monomial gradings to get such a relation in which the $y$-contribution to each monomial in the $x$'s and $y$'s is a common term $y_{j_1}^{e_1} \cdots y_{j_r}^{e_r}$ with $r \ge 0$ and $e_m > 0$ for $1 \le m \le r$.   This can then be cancelled, so we get a nontrivial relation $\sum c_I x_I^{e_I} = 0$ with all $c_I \in k^{\times}$ having a common grading. We can then scale by $k^{\times}$ to get to the case when all $c_I \in k_1^{\times}$, contradicting that the $x_i$'s are algebraically independent in $K_1$ over $k_1$ in the usual sense.

Finally, we check that $K$ is algebraic over the graded subfield generated over $k$ by $S$.  Any $t \in K^{\times}$ satisfies $\rho(t^e) = \rho(c) \cdot \prod \rho(y_{j_m})^{e_m}$ for some $c \in k^{\times}$ and $j_1, \dots, j_r$ with $e_1, \dots, e_r > 0$ (for some $r \ge 0$). Replacing $t$ with $t^e/(c \prod y_{j_m}^{e_m})$ then allows us to assume $t \in K_1$, so we are done by the transcendence basis property for $\{x_i\}$.
\end{proof}

\subsection*{Integral closure}
It was noted in \cite[\S1]{temkin2} that a non-zero graded ring $A$ does not contain non-zero homogeneous divisors of zero (i.e., $A$ is a graded domain) if and only if it can be embedded into a graded field: the unique minimal such graded field $\Frac_G(A)$ is obtained by localizing at the multiplicative set of all non-zero homogeneous elements (and it has the expected universal mapping property). We call $\Frac_G(A)$ the {\em graded field of fractions} of $A$.

For any injective graded map $R \rightarrow R'$ between graded rings, the {\em graded integral closure} of $R$ in $R'$ is the graded $R$-subalgebra $\widetilde{R} \subseteq R'$ consisting of elements $r' \in R'$ whose homogeneous parts are integral over $R$. Note also that if $r' \in R'_g$ satisfies $f(r') = 0$ for some monic $f \in R[X]$ with degree $n > 0$ then we may replace nonzero coefficients of $f$ with suitable nonzero homogeneous parts (depending on $g \in G$) to find such an $f$ that is homogeneous in $R[g^{-1}X]$. We say that $R$ is {\em integrally closed} (in the graded sense) in $R'$ if $\widetilde{R} = R$. Also, we say that a graded domain $R$ is {\em integrally closed} if it is integrally closed in the graded sense in $\Frac_G(R)$. Now, we will prove few results about integrally closed rings that will be used later.

\begin{lemma}\label{intpolcor}
If $L/K$ is an extension of graded fields, $A$ is a graded subring of $K$ integrally closed in $K$, and $x\in L^{\times}$ is integral over $A$ with $\rho(x) = g \in G$, then the minimal homogeneous polynomial $f_x$  of $x$ over $K$ is defined over $A$; i.e. $f_x(T)\in A[g^{-1}T]\subseteq K[g^{-1}T]$.
\end{lemma}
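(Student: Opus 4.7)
The plan is to mimic the classical proof that for an integrally closed domain, the minimal polynomial of an integral element has coefficients in the base ring, adapting each step to the graded setting using the tools just developed in \S\ref{gradsec}.

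First I would use the observation made in the paragraph preceding the lemma to produce a monic \emph{homogeneous} polynomial $h(T)\in A[g^{-1}T]$ with $h(x)=0$. Since $x\in L^\times_g$ is integral over $A$, it satisfies some monic relation in $A[T]$, and by replacing each nonzero coefficient with its component of the grading needed to make the relation homogeneous in $A[g^{-1}T]$ one obtains such an $h$.

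Next I would show that $f_x\mid h$ in $K[g^{-1}T]$ by invoking the graded Euclidean algorithm from Lemma~\ref{charpol}: divide $h$ by the monic homogeneous $f_x$ to obtain $h=f_x\,q+r$ with $q,r\in K[g^{-1}T]$ homogeneous and $\deg_T r<\deg_T f_x$; evaluating at $x$ gives $r(x)=0$, and by minimality of $f_x$ as the monic generator of $I_x\subseteq K[g^{-1}T]$ this forces $r=0$. Then by Corollary~\ref{splitcor} I pass to a finite graded extension $L'/K$ in which $f_x$ factors completely as $f_x(T)=\prod_{i=1}^n(T-x_i)$ in $L'[g^{-1}T]$, with each $x_i\in L'_g$ (and in fact in $L'^\times$, since $f_x$ is irreducible with nonzero constant term). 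Because $f_x\mid h$ inside $L'[g^{-1}T]$, every $x_i$ is a root of $h$, hence is integral over $A$.

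Finally, the coefficients of $f_x$ are, up to sign, the elementary symmetric polynomials $e_k(x_1,\dots,x_n)$. Each $e_k$ is a sum of products of $k$ of the homogeneous elements $x_i$, so it is homogeneous of grading $g^k$ in $L'$; it lies in $K$ since $f_x\in K[g^{-1}T]$; and it is integral over $A$ as a polynomial in the elements $x_i$, which are integral over $A$. Because $A$ is integrally closed in $K$ in the graded sense, each homogeneous element of $K$ that is integral over $A$ must already lie in $A$. Hence every coefficient of $f_x$ lies in $A$, which is precisely the statement $f_x(T)\in A[g^{-1}T]$.

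The only thing that could go wrong is the survival of the three classical ingredients — Euclidean division, existence of a splitting extension, and the integrality of symmetric functions in integral elements — under the $G$-grading. The first two have been set up in Lemma~\ref{charpol} and Corollary~\ref{splitcor}, while the third is literally the ungraded statement applied to the homogeneous roots $x_i$, so the argument should go through without any genuine obstacle.
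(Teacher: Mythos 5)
Your proof is correct and follows essentially the same route as the paper's: split $f_x$ over an extension, show each homogeneous root is integral over $A$, and conclude via the coefficients being homogeneous elements of $K$ integral over $A$. The only cosmetic difference is how integrality of the roots is obtained --- you use $f_x\mid h$ for a monic homogeneous $h\in A[g^{-1}T]$ annihilating $x$, while the paper uses the isomorphisms $A[x_i]\simeq A[g^{-1}T]/I_x\simeq A[x]$ to see each $A[x_i]$ is $A$-finite --- and both are valid.
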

\begin{proof}
Since $x \in L^{\times}$, minimality of $d = \deg_T(f_x)$ forces the homogeneous $f_x(0) \in K$ to be nonzero with $\rho(f_x(0)) = g^d$. Increase $L$ so that there is a homogeneous factorization $$f_x(T)=(T-x_1)\dots(T-x_d)$$ in $L[g^{-1}T]$.  Each $x_i$ is homogeneous in $L$ with $\prod x_i = \pm f_x(0) \in K^{\times}$, so $x_i \in L^{\times}$ for all $i$. By homogeneity of the factorization, $\rho(x_i) = \rho(T) = g$ for all $i$. Obviously each $x_i \in L$ has minimal homogeneous polynomial $f_x$ over $K$, so each graded subring $A[x_i]\subseteq L$ is isomorphic to $A[g^{-1}T]/(f_x) \simeq A[x]$ and hence each $A[x_i]$ is finite as an $A$-module.  Thus, the subring $A[x_1,\dots,x_d] \subseteq L$ is finite over $A$. Since the coefficients of $f_x(T)$ are contained in this latter ring, they are integral over $A$. But $A$ is integrally closed in $K$ in the graded sense and the coefficients of $f_x$ are homogeneous (or zero), so $f_x(T)\in A[g^{-1}T]$.
\end{proof}

\begin{corollary}\label{intercor}
Let $L/K$ be an extension of graded fields and let $\{A_i\}$ be a non-empty collection of graded subrings of $K$ that are integrally closed in $K$ in the graded sense. If $B_i$ denotes the graded integral closure of $A_i$ in $L$ then $\cap B_i$ coincides with the graded integral closure of $\cap A_i$ in $L$.
\end{corollary}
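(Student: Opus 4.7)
Write $A=\cap A_i$ and let $B$ denote the graded integral closure of $A$ in $L$; we want to show $B=\cap B_i$. The plan is to establish the two inclusions separately, with the substantive direction resting on Lemma~\ref{intpolcor} applied to each $A_i$ and the key observation that the minimal homogeneous polynomial $f_x$ depends only on $x$ and $K$, not on the chosen $A_i$.

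The inclusion $B\subseteq\cap B_i$ is immediate: since $A\subseteq A_i$ for every $i$, any homogeneous element of $L$ integral over $A$ is a fortiori integral over $A_i$, and hence lies in $B_i$.

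For the reverse inclusion, let $x\in\cap B_i$. Since each $B_i$ is a graded subring of $L$, so is $\cap B_i$, so we may decompose $x$ into homogeneous parts and treat each part separately; thus assume $x$ is homogeneous. If $x=0$ there is nothing to do, so we may assume $x\in L^\times$, with $\rho(x)=g\in G$. For each index $i$, the element $x$ is integral over $A_i$, and $A_i$ is integrally closed in $K$ in the graded sense; hence Lemma~\ref{intpolcor} applies and yields that the minimal homogeneous polynomial $f_x(T)\in K[g^{-1}T]$ of $x$ over $K$ lies in $A_i[g^{-1}T]$. But $f_x$ is intrinsic to the extension $K\subseteq K[x]$ and does not depend on the choice of $i$, so its coefficients lie in $\cap_i A_i=A$. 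Thus $x$ is a root of a monic polynomial over $A$, i.e., $x$ is integral over $A$, so $x\in B$.

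The only point that requires any care is the reduction to the homogeneous case together with the observation that the ``$f_x$'' provided by Lemma~\ref{intpolcor} for different $A_i$ is literally the same polynomial (the minimal homogeneous polynomial of $x$ over $K$), so there is no consistency issue when intersecting the coefficient conditions over $i$. Beyond this, the argument is formal.
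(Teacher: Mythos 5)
Your proof is correct and follows essentially the same route as the paper: the easy inclusion is handled identically, and the substantive direction is exactly the paper's argument of applying Lemma~\ref{intpolcor} to each $A_i$ and using that the minimal homogeneous polynomial $f_x$ over $K$ is independent of $i$, so its coefficients land in $\cap A_i$. Your explicit remarks on the reduction to homogeneous elements and on the well-definedness of $f_x$ only make more visible what the paper leaves implicit.
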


Note that the assumption on integral closedness of $A_i$'s is critical for the corollary to hold.

\begin{proof}
Obviously, $\cap B_i$ contains the integral closure of $\cap A_i$. Conversely, let $x$ be a nonzero homogeneous element of $\cap B_i$, with $\rho(x) = g$. Clearly, $K[x]$ has finite $K$-rank (since $x$ satisfies a monic homogeneous relation over any $A_i \subseteq K$), and the coefficients of $f_x \in K[g^{-1}T]$ lie in each $A_i$ by Lemma~\ref{intpolcor}. Hence $x$ is integral over $\cap A_i$, as claimed.
\end{proof}

\begin{lemma}\label{finiteint}
If $k$ is a graded field and $A$ is a finitely generated $k$-algebra that is a graded domain then the graded integral closure $\overline{A}$ of $A$ in $K = {\rm{Frac}}_G(A)$ is finite over $A$ $($and so it is a finitely generated $k$-algebra$)$.
\end{lemma}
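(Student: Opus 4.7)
My plan is to adapt the classical proof that the normalization of a finitely generated domain over a field is a finite module, relying on three graded analogs of standard ingredients: graded Noether normalization, graded-factoriality of graded polynomial rings, and the trace-form argument.

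First, I would establish a graded Noether normalization: there exist $y_1,\dots,y_n \in A^{\times}$ algebraically independent over $k$ (with gradings $g_i = \rho(y_i)$) such that $A$ is module-finite over the graded polynomial subring $B := k[g_1^{-1}y_1,\dots,g_n^{-1}y_n]$. The proof runs by induction on the number of homogeneous $k$-algebra generators of $A$, performing the classical Nagata-style substitution trick but with homogeneous scalars in $k^{\times}$ chosen so that every substitution respects the grading (enlarging $k$ within its graded algebraic closure if necessary to secure the required homogeneous coefficients, then descending). Since $A$ is integral over $B$, the graded integral closures of $A$ and $B$ inside $K := \mathrm{Frac}_G(A)$ coincide, so it suffices to show that the graded integral closure $\overline{B}$ of $B$ in $K$ is $B$-finite, where $K/L$ is now a finite extension of graded fields with $L := \mathrm{Frac}_G(B)$.

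Next, iterated application of Lemma \ref{charpol} shows that $B$ is graded-factorial, hence integrally closed in $L$ in the graded sense. When $K/L$ is separable I would run the classical trace-form argument: choose a homogeneous $L$-basis $e_1,\dots,e_m$ of $K$ contained in $\overline{B}$ (obtained by clearing denominators, which is possible because each homogeneous element of $K$ satisfies a monic homogeneous polynomial over $L$ whose coefficients can be cleared into $B$), let $e_1^{*},\dots,e_m^{*}$ be the dual basis for the non-degenerate graded trace pairing $(x,y)\mapsto \mathrm{Tr}_{K/L}(xy)$, and observe that for any $x\in\overline{B}$ the expansion $x=\sum \mathrm{Tr}_{K/L}(xe_i)\,e_i^{*}$ has coefficients $\mathrm{Tr}_{K/L}(xe_i)\in L$ that are integral over $B$, hence in $B$. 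This gives $\overline{B}\subseteq \sum B\cdot e_i^{*}$, which is $B$-finite. Since $B$ is graded-noetherian by the graded Hilbert basis theorem, $\overline{B}$ is itself a finitely generated $B$-module.

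The main obstacle is the inseparable case, occurring when $\mathrm{char}(k_1) = p > 0$ and $K/L$ is not separable. The standard resolution is to pass to a finite purely inseparable extension $L'/L$ (obtained by adjoining $p^{e}$-th roots of elements encoding the inseparability of $K/L$) so that $KL'/L'$ becomes separable, together with a compatible enlargement $B\subseteq B'$ where $B'$ is a graded polynomial ring over a slightly enlarged graded field $k'/k$ (this may require enlarging the grading group $G$ to accommodate $p^{e}$-th roots of some $g_i$). One verifies by hand that $B'$ is a free finite $B$-module, applies the separable case to the extension $B'\subseteq \overline{B'}$ inside $KL'$, and concludes that $\overline{B}\hookrightarrow \overline{B'}$ sits inside a finite $B$-module, whence it is $B$-finite by graded noetherianity. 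Making this inseparable reduction rigorous within the $G$-graded category---in particular, controlling the grading-group enlargement and verifying that the enlarged ring $B'$ remains graded-factorial---is the most delicate part of the argument.
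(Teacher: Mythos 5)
Your plan founders at the very first step: graded Noether normalization, in the form you need it (a \emph{homogeneous} system of parameters $y_1,\dots,y_n\in A^\times$ over the given $k$), is false in general. Take $G=\R^{\times}_{>0}$, let $k$ be trivially graded (so $\rho(k^\times)=\{1\}$) and let $A=k[g^{-1}T,gT^{-1}]$ be the graded Laurent ring with $g$ of infinite order. Every nonzero homogeneous element of $A$ is $cT^n$ with $c\in k^\times$, $n\in\Z$; no single such element can serve as a normalizing parameter ($T^{-1}$ is not integral over $k[cT^n]$ for $n\ge 0$, and any two such elements with $n$ of opposite signs are algebraically dependent), while the classical normalizer $T+T^{-1}$ is not homogeneous. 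Your proposed remedy --- enlarging $k$ inside its graded algebraic closure --- cannot repair this: algebraic extensions of a graded field only enlarge $\rho(k^\times)$ inside its saturation $\{h\in G: h^m\in\rho(k^\times)\}$, which for trivially graded $k$ and torsion-free $G$ is still trivial, so no new gradings of scalars become available. The enlargement that actually works is \emph{transcendental}: pass to $k'=\Frac_G(k[g^{-1}T_g]_{g\in G})$ so that $\rho(k'^\times)=G$. But then you must also descend finiteness of the integral closure through $k'/k$, and once you are over $k'$ the equivalence $R_1\mapsto k'\otimes_{k'_1}R_1$ between $k'_1$-algebras and graded $k'$-algebras lets you quote the classical (ungraded) finiteness of integral closure outright --- so the Noether normalization and trace-form machinery never needs to be rebuilt in the graded category. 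That is exactly what the paper does: it reduces to $\rho(k^\times)=G$ via the free extension $k'/k$ (using graded noetherianity of $A'=A\otimes_k k'$ to transfer finiteness back), and then rules out an infinite strictly increasing chain of $A$-finite graded subalgebras of $\oA$ by transporting it to a chain of $A_1$-finite domains over $A_1$, contradicting the classical theorem.

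Two further points would need attention even after fixing the above. Non-degeneracy of the graded trace pairing is not automatic: the trace of multiplication by a homogeneous $z$ of grading $h$ lands in $L_h$, which may vanish, so the pairing can degenerate on whole graded pieces unless $\rho(L^\times)$ is large enough --- again something cured only by the transcendental base extension. And in the inseparable step, ``enlarging the grading group'' is not an available move: $G$ is fixed in the statement, and $p^e$-th roots of gradings need not exist in $G$. Both difficulties evaporate under the paper's reduction, which is why that route is the efficient one here.
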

\begin{proof}
Let $k' = {\rm{Frac}}_G(k[g^{-1}T_g]_{g \in G})$, so $k'/k$ is a graded extension field with $\rho({k'}^{\times}) = G$. {Since $k'$ is a graded localization of $k[g^{-1}T_g]_{g \in G}$, the finitely generated graded $k'$-algebra $A' = A \otimes_k k'$ is a graded localization of the domain $A[g^{-1}T_g]_{g \in G}$. In particular, $A'$ is a graded domain.} The graded integral closure $\overline{A'}$ of $A'$ in $K' := {\rm{Frac}}_G(A')$ contains $\overline{A} \otimes_k k'$.  Since $k'$ is a free $k$-module, $\overline{A}$ is a finite $A$-module if and only if $\overline{A} \otimes_k k'$ is a finite $A'$-module. But $A'$ is a graded noetherian ring, so such finiteness holds if $\overline{A'}$ is $A'$-finite.  Hence, we may replace $k$ with $k'$ to reduce to the case $\rho(k^{\times}) = G$.

To prove that $\overline{A}$ is $A$-finite, it suffices (by integrality of $\overline{A}$ over $A$) to show that there cannot be an infinite strictly increasing sequence of $A$-finite graded subalgebras in $\overline{A}$. Since $\rho(k^{\times}) = G$, \cite[1.1]{temkin2} gives that $R_1 \mapsto k \otimes_{k_1} R_1$ is an equivalence of categories  between $k_1$-algebras and graded $k$-algebras, and by chasing gradings we see that a map of $k_1$-algebras is integral if and only if the corresponding map of graded $k$-algebras is integral.   It is likewise clear that this equivalence respects finiteness of morphisms in both directions, and it also respects the property of being a graded domain or being finitely generated (over $k$ or $k_1$) in both directions. Hence, if there is an infinite strictly increasing sequence of $A$-finite graded domains over $A$ with the same graded fraction field as $A$ then we get an infinite strictly increasing sequence of $A_1$-finite graded domains over $A_1$ with the same ordinary fraction field as $A_1$. But $A_1$ is a domain finitely generated over $k_1$, so we have a contradiction (as the $A_1$-finiteness of the integral closure of $A_1$ is classical).
\end{proof}

\begin{lemma}\label{interlem}
Let $L/k$ be a graded field extension, $\{A_j\}$ a collection of $k$-subalgebras of $L$,
$T=\{T_i\}_{i\in I}\in L$ a set of homogeneous elements algebraically independent over each $A_j$, and
$F=\Frac_G(k[T])$. Then $\cap (F A_j)=F(\cap A_j)$.
\end{lemma}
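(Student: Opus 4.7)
The inclusion $F(\cap A_j)\subseteq\cap(FA_j)$ is automatic; setting $R=\cap A_j$, I focus on the reverse containment. The main tool is the flatness of $F$ over the graded field $k$ (every graded $k$-module being free). Combined with the algebraic independence of $T$ over each $A_j$---and therefore over $R$---this yields injective multiplication maps $A_j\otimes_k F\hookrightarrow L$ and $R\otimes_k F\hookrightarrow L$ with images $FA_j$ and $FR$, obtained by localizing the injections $A_j[T]\hookrightarrow L$ and $R[T]\hookrightarrow L$ at the multiplicative set of nonzero homogeneous elements of $k[T]$ (these are units in $L$).

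Given $x\in\cap_j FA_j$, the plan is to produce a single nonzero homogeneous $q\in k[T]$ such that $qx\in A_j[T]$ for every $j$; by uniqueness of polynomial expansion over each $A_j$, the coefficients of the common expansion will then lie in $\cap_j A_j=R$, giving $x\in FR$. Concretely, fix an index $j_0$ and choose $q$ with $qx\in A_{j_0}[T]$, writing this uniquely as $\sum_\beta c_\beta T^\beta$ with $c_\beta\in A_{j_0}$. For an arbitrary $j$, pick nonzero homogeneous $q_j\in k[T]$ with $q_j x=\sum_\beta d_\beta T^\beta\in A_j[T]$. The element $qq_j x\in L$ then admits two polynomial-in-$T$ expansions: one with $A_{j_0}$-coefficients (multiply the $c_\beta$-expansion by $q_j\in k[T]$) and one with $A_j$-coefficients (multiply the $d_\beta$-expansion by $q$). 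The algebraic-independence hypothesis applied jointly to $A_{j_0}$ and $A_j$ forces these two expansions to coincide termwise, confining each such coefficient to $A_{j_0}\cap A_j$. A divisibility argument in the graded polynomial domain $A_{j_0}[T]$---where $q_j\in k[T]$ is a non-zero-divisor because $A_{j_0}$ is a graded domain as a subring of the graded field $L$---propagates this constraint back to the $c_\beta$ themselves, yielding $c_\beta\in A_{j_0}\cap A_j$. Letting $j$ vary gives $c_\beta\in R$ for every $\beta$, so $qx\in R[T]$ and $x=(qx)/q\in FR$.

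The main obstacle is the coefficient-matching step across $A_{j_0}$ and $A_j$: the algebraic-independence hypothesis naively only provides uniqueness of polynomial expansion inside each $A_j[T]$ separately, and coupling the two expansions for $qq_j x$ requires applying the hypothesis in a combined form, together with the domain structure of $A_{j_0}[T]$ and the non-zero-divisor property of $q_j$ in it. All the other steps---producing common denominators, exploiting flatness of $F/k$, and reading off polynomial coefficients---are essentially bookkeeping once this coupling is in place.
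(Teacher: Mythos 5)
Your argument is, at bottom, the same as the paper's: the paper identifies $FA_j$ with $A_j\otimes_k F$ (using that $T$ is algebraically independent over $A_j$ and that the nonzero homogeneous elements of $k[T]$ are invertible in $L$) and then invokes the general fact that $-\otimes_k F$ commutes with intersections of $k$-submodules because $F$ is free over $k$; your clearing of denominators and reading off of polynomial coefficients is the concrete form of exactly that identification. The difficulty you flag at the end is, however, a genuine gap and not mere bookkeeping. Matching the two expansions of $qq_jx$ termwise requires that the map $(A_{j_0}+A_j)\otimes_k k[T]\to L$ be injective, i.e.\ that $T$ be algebraically independent over $A_{j_0}$ and $A_j$ \emph{jointly}; the hypothesis only gives independence over each $A_j$ separately, and the joint statement does not follow. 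Indeed, with the trivial grading take $L=k(u,v)$, $A_1=k[u]$, $A_2=k[v]$ and $T=\{u+v\}$: then $u+v$ is transcendental over each of $k(u)$ and $k(v)$, $F=k(u+v)$, and $FA_1=FA_2=k(u+v)[u]$ because $v=(u+v)-u$, whereas $A_1\cap A_2=k$ and $F(A_1\cap A_2)=k(u+v)$, which does not contain $u$. The element $u$ has the expansion $u\cdot T^0$ over $A_1$ and $1\cdot T+(-v)\cdot T^0$ over $A_2$, and these visibly do not coincide termwise; so your coupling step fails, and in fact the lemma is false as literally stated.

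To be fair, the paper's own proof hides the same point: after identifying each $FA_j$ with $A_j\otimes_k F$, the appeal to Lemma~\ref{submodlem} requires computing all the intersections inside a single module $N\otimes_k F$ that maps injectively to $L$ and contains every $A_j\otimes_k F$, and such an $N$ exists precisely when $T$ is algebraically independent over a common $k$-submodule containing all the $A_j$. In the one place the lemma is used (the proof of Theorem~\ref{birdescent}) the $A_j$ all lie in a common graded subfield $K\subseteq L$ over which $T$ is algebraically independent by Theorem~\ref{psilem}(1), so one may take $N=K$ and both the paper's argument and yours go through. The honest conclusion is that the statement needs the stronger hypothesis (independence of $T$ over the subalgebra of $L$ generated by all the $A_j$, or at least over each pair $A_{j_0},A_j$ jointly); under that hypothesis your proof closes up --- the joint expansion is unique, the coefficients land in $A_{j_0}\cap A_j$, and the division by $q_j$ (whose coefficients are invertible since $k$ is a graded field) is unproblematic --- but from the hypotheses as written the step you single out cannot be repaired.
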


\begin{proof}
Since $FA_j$ is the localization of $A_j[T]$ at the multiplicative set of
nonzero homogeneous elements of $F[T]$, the natural graded map $A_j\otimes_k F \rightarrow FA_j$
is an isomorphism of rings (and hence is a graded isomorphism). By the same reason,
$(\cap A_j)\otimes_k F\simeq F(\cap A_j)$, and since $F$ is a free $k$-module by \cite[1.2]{temkin2}, the lemma
now follows from the following lemma.
\end{proof}

\begin{lemma}\label{submodlem}
Let $M$ and $N$ be modules over a commutative ring $R$, with $M$ projective over $R$. Let $\{N_i\}$ be a set of
$R$-submodules of $N$. The inclusion $(\cap N_i) \otimes_R M \subseteq \cap (N_i \otimes_R M)$ inside of $N
\otimes_R M$ is an equality.
\end{lemma}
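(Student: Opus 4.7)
The plan is to first handle the case when $M$ is free and then bootstrap to the projective case via a direct summand argument. Note that projective modules are flat, so each inclusion $N_i \hookrightarrow N$ induces an injection $N_i \otimes_R M \hookrightarrow N \otimes_R M$, and the intersection $\bigcap (N_i \otimes_R M)$ is taken inside $N \otimes_R M$ in a well-defined way. The containment $(\bigcap N_i)\otimes_R M \subseteq \bigcap (N_i \otimes_R M)$ is automatic, so only the reverse containment requires work.

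Suppose first that $M = F = \bigoplus_{\alpha} R e_\alpha$ is free with basis $\{e_\alpha\}$. Then the map $N \otimes_R F \to \bigoplus_\alpha N$ sending $\sum n_\alpha \otimes e_\alpha \mapsto (n_\alpha)$ is an isomorphism, and under this identification the submodule $N_i \otimes_R F$ corresponds to $\bigoplus_\alpha N_i \subseteq \bigoplus_\alpha N$. Thus an element $x = \sum n_\alpha \otimes e_\alpha$ lies in $\bigcap_i (N_i \otimes_R F)$ if and only if every coordinate $n_\alpha$ lies in $\bigcap_i N_i$, which is exactly the condition $x \in (\bigcap_i N_i) \otimes_R F$. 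This settles the free case.

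For general projective $M$, choose a module $M'$ such that $F := M \oplus M'$ is free over $R$. Tensoring is additive in each variable, so for any submodule $P \subseteq N$ we have $P \otimes_R F = (P \otimes_R M) \oplus (P \otimes_R M')$, and these decompositions are compatible with the inclusions into $N \otimes_R F$ because $M$, $M'$, and $F$ are all flat. Given $x \in \bigcap_i (N_i \otimes_R M)$, the element $(x,0) \in N \otimes_R F$ lies in $N_i \otimes_R F$ for every $i$, so by the free case $(x,0) \in (\bigcap_i N_i) \otimes_R F = \big((\bigcap_i N_i) \otimes_R M\big) \oplus \big((\bigcap_i N_i) \otimes_R M'\big)$. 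Projecting onto the first summand yields $x \in (\bigcap_i N_i) \otimes_R M$, as desired.

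No serious obstacle is expected; the only point requiring care is the identification of $N_i \otimes_R M$ as an actual submodule (rather than merely the image of a map) of $N \otimes_R M$, which is justified by flatness of projective modules.
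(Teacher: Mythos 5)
Your proof is correct and follows essentially the same route as the paper: reduce the projective case to the free case via a direct summand, and settle the free case by identifying $N\otimes_R F$ with a direct sum of copies of $N$ and arguing coordinate-wise. The paper additionally passes to $N/(\cap N_i)$ to assume $\cap N_i=0$ before projecting, but this is only a cosmetic difference from your direct coordinate computation.
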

\begin{proof}
Since $M$ is a direct summand of a free module, we can assume $M$ is free. We may replace $N$ with $N/(\cap
N_i)$, so $\cap N_i = 0$.  In this case we want $\cap (N_i \otimes_R M) = 0$ inside of $N \otimes_R M$.   We
have $M \simeq \oplus_j Re_j$, so using linear projection to the factors reduces us to the trivial case $M = R$.
\end{proof}

\subsection*{Graded valuation rings}
Recall that a graded domain $\calO$ is called a {\em graded valuation ring} if for any non-zero homogeneous element $f\in\Frac_G(A)$, at least one of the elements $f,f^{-1}$ lies in $A$. As in the classical case, a graded valuation ring $\calO_v$ of a graded field $K$ gives rise to a graded valuation $|\ |_v\:K^\times\to H=K^\times/\calO^\times$ augmented by $|0|_v=0$, the divisibility relation induces an ordered group structure on $H$ and $|\ |_v$ satisfies the strong triangle inequality on elements of the same grading. Our next aim is to extend some results about valuation rings to the graded setting.

\begin{lemma}\label{flat} Let $M$ be a graded module over a graded valuation ring $\calO$, and assume $M$ is torsion-free in the graded sense, which is to say that for each nonzero homogeneous $a \in \calO$, the self-map $m \mapsto am$  of the $\calO$-module $M$ is injective. Then $M$ is $\calO$-flat, and if $M$ is finitely generated it admits a homogeneous basis.
\end{lemma}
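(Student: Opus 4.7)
My plan is to handle the finitely generated case first (which also produces the homogeneous basis) and then deduce flatness in general by a filtered-colimit argument.

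For the finitely generated case, I would pick a minimal homogeneous generating set $m_1, \dots, m_n$ of $M$ with $\rho(m_i) = g_i$, and show it is a basis. Suppose we have a nontrivial graded relation $a_1 m_1 + \cdots + a_n m_n = 0$; by working in a single homogeneous component we may assume all the nonzero $a_i$ are homogeneous elements of $\calO$, with $\rho(a_i) = g \cdot g_i^{-1}$ for a common $g \in G$. The crucial observation is that in the graded valuation ring $\calO$ the divisibility relation is a total preorder on \emph{all} nonzero homogeneous elements, not only on those sharing a grading: for any two nonzero homogeneous $a, b \in \calO$ the ratio $a/b \in \Frac_G(\calO)$ is again homogeneous, so either $a/b$ or $b/a$ lies in $\calO$. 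Thus among the nonzero $a_i$ I can pick one, say $a_1$, that divides all others in the graded sense: $a_i = c_i a_1$ with $c_i \in \calO$ homogeneous for $i \geq 2$. Then $a_1(m_1 + c_2 m_2 + \cdots + c_n m_n) = 0$, and since $a_1 \neq 0$ graded torsion-freeness forces $m_1 = -\sum_{i\geq 2} c_i m_i$, contradicting minimality of the generating set. Hence $\{m_i\}$ is a homogeneous basis and $M$ is free.

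For the general flatness assertion, I would express $M$ as the filtered union of the graded submodules $M_\alpha \subseteq M$ generated by finite sets of homogeneous elements of $M$. Each such $M_\alpha$ is finitely generated and graded torsion-free (as a submodule of $M$), hence free by the previous paragraph, hence in particular flat. Since flatness is preserved by filtered colimits, $M = \varinjlim M_\alpha$ is flat over $\calO$.

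The main obstacle is really the observation that in a \emph{graded} valuation ring, homogeneous elements of distinct gradings still enjoy a dichotomy $a \mid b$ or $b \mid a$; everything else is a mechanical adaptation of the classical proof that torsion-free modules over a valuation ring are flat (and finitely generated ones are free). No compatibility with $G$-grading is lost in the filtered-colimit step, because the submodules $M_\alpha$ are themselves graded.
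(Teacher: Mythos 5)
Your proof is correct, and it reaches the finitely generated case by a genuinely different route than the paper. The paper also reduces to the finitely generated case via direct limits, but then proves freeness by induction on the rank of $V=K\otimes_{\calO}M$ over $K=\Frac_G(\calO)$: the rank-one case reduces (after a grading shift) to the fact that a finitely generated graded ideal of $\calO$ is principal, and the inductive step splits off a graded direct summand by passing to the image of $M$ in $V/L$ for a line $L\subseteq V$. You instead take a minimal homogeneous generating set and kill any hypothetical homogeneous relation directly, which avoids the induction and the splitting argument entirely. The two arguments ultimately rest on the same key input, which you correctly isolate and justify: since $a/b$ is a homogeneous element of $\Frac_G(\calO)$ for any nonzero homogeneous $a,b\in\calO$ (of possibly different gradings), divisibility is a total preorder on all nonzero homogeneous elements of $\calO$; in the paper this same fact is what makes the rank-one case work. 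Your version is arguably the more elementary and self-contained of the two, at the cost of not exhibiting the graded direct-sum decomposition of $M$ that the paper's induction produces along the way; for the purposes of this lemma nothing is lost.
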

\begin{proof}
By consideration of direct limits we may assume that $M$ is finitely generated. Let $K = {\rm{Frac}}_G(\calO)$, and let $V = K \otimes_{\calO} M$. The natural map $M \rightarrow V$ is injective, and $V$ admits a finite homogeneous $K$-basis. We prove the existence of a homogeneous basis of $M$ by induction on the $K$-rank of $V$ (which we may assume to be positive).  If the $K$-rank is 1 then by shifting the grading on $M$ we can assume that $V = K$ as graded $K$-modules and that $M$ is a finitely generated graded ideal in $\calO \subseteq K$.  Hence, this case is settled since $\calO$ is a graded valuation ring.  If the $K$-rank $n$ is larger than 1, let $L \subseteq V$ be the graded $K$-submodule spanned by a member of a homogeneous $K$-basis of $V$, so the image $M''$ of $M$ in $V/L$ admits a homogeneous $\calO$-basis. In particular, $M''$ splits off as a graded direct summand of $M$, so $M \cap L$ is identified with a complement and thus is also finitely generated over $\calO$.  By the settled rank-1 case we are done.
\end{proof}

\begin{theorem}\label{valint}
Let $K'/K$ be an integral extension of graded fields and let $\calO$ be a graded valuation ring of $K$. Let $\calO'$ denote the integral closure of $\calO$ in $K'$. Each graded prime ideal $m'$ of $\calO'$ over $m_{\calO}$ is a graded maximal ideal and the graded localization $\calO'_{m'}$ is a graded valuation ring, with $\calO'_{m'} \cap \calO' = m'$. Every graded valuation ring of $K'$ dominating $\calO$ arises in this way.
\end{theorem}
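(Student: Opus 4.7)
The theorem is the graded analog of the classical Chevalley--Krull--Matsumura theorem describing the integral closure of a valuation ring in an algebraic extension of fields, and the plan is to transcribe that argument into the graded setting using the graded commutative algebra developed in this section. Throughout, ``integral'', ``prime'', ``maximal'', ``localization'', and ``dominate'' are all understood in the graded sense. The three key graded ingredients are: (a) the graded integral closure $\calO'$ is itself graded integrally closed in $K'$ (from the definition); (b) Zorn's lemma applied to graded local subrings of $K'$ that dominate a given one produces, at a maximal chain, a graded valuation ring of $K'$; and (c) a graded valuation ring of a graded field $F$ is maximal for graded domination among graded local subrings of $F$, since any homogeneous $x\in F^\times$ yields either $x\in V^\times$ or $x^{-1}\in\mathfrak m_V$.

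The first step is to show $m'$ is graded maximal. The quotient $\calO'/m'$ is a graded domain that is graded integral over the graded field $\calO/m_\calO$; for any nonzero homogeneous $\bar y\in\calO'/m'$, a monic graded integrality relation $\bar y^n+\bar c_1\bar y^{n-1}+\cdots+\bar c_n=0$ of minimal positive degree must satisfy $\bar c_n\ne 0$ (otherwise dividing by the non-zero-divisor $\bar y$ lowers the degree), and then $\bar c_n\in(\calO/m_\calO)^{\times}$ together with the relation supplies a graded inverse of $\bar y$. The displayed identity in the statement is then the standard equality $\mathfrak m_{\calO'_{m'}}\cap\calO'=m'$ for graded localization at a graded maximal ideal. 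Next, I would record that $\calO'_{m'}$ is graded integrally closed in $K'$: a monic graded integrality relation over $\calO'_{m'}$ for a homogeneous $y\in K'$ becomes, after clearing a common homogeneous denominator $s\in\calO'\setminus m'$, a monic graded integrality relation for $sy$ over $\calO'$, so $sy\in\calO'$ and $y=(sy)/s\in\calO'_{m'}$.

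The heart of the argument is to show that every graded valuation ring $W$ of $K'$ dominating $\calO$ equals $\calO'_{m'_W}$, where $m'_W:=\mathfrak m_W\cap\calO'$. Applying this to a $W$ that dominates $\calO'_{m'}$ (and hence $\calO$), produced by graded Zorn, forces $W=\calO'_{m'}$, which simultaneously gives that $\calO'_{m'}$ is a graded valuation ring and the ``in this way'' clause of the theorem. Granting the claim, the inclusion $\calO'_{m'_W}\subseteq W$ is immediate ($W$ contains the graded integral closure $\calO'$ because $W$ is graded integrally closed in $K'$, and elements of $\calO'\setminus m'_W$ are units in $W$). For the reverse inclusion, one uses $W\cap K=\calO$, which follows from ingredient (c) applied to the graded valuation ring $W\cap K$ of $K$ (it dominates $\calO$ and $\calO$ is itself a graded valuation ring of $K$), so that $v_W(a)=v_\calO(a)$ for every homogeneous $a\in K$. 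Then for a homogeneous $x\in W$ with minimal graded polynomial $f(T)=T^n+a_1T^{n-1}+\cdots+a_n\in K[\rho(x)^{-1}T]$, the classical Chevalley--Matsumura Newton-polygon substitution writes $x=\alpha/s$ with $\alpha\in\calO'$ (integral over $\calO$) and $s\in\calO'\setminus m'_W$.

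The main obstacle is this last Newton-polygon substitution, which is the technical heart of the theorem already in the ungraded case. The graded setting introduces no new difficulty because $f(T)$ is homogeneous: each monomial $a_iT^{n-i}$ carries the single grading $\rho(x)^n$, so their $v_W$-values all lie in one grading component of the value group of $v_W$ and the Newton polygon is a well-posed one-dimensional combinatorial object. The classical combinatorial construction of the integral element $\alpha$ and the non-vanishing denominator $s$ then transports verbatim into the graded framework, completing the argument.
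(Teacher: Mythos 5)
Your proposal is correct and takes essentially the same route as the paper: the paper's own proof handles the maximality of $m'$ exactly as you do (a graded domain integral over a graded field is a graded field) and then simply cites Zariski--Samuel's classical theorem on extensions of a valuation ring to an algebraic field extension, asserting that its method adapts verbatim to the graded case once one knows that the graded integral closure of $\calO$ in $K'$ is the intersection of the graded valuation rings of $K'$ containing $\calO$. Your more explicit skeleton --- domination-maximality of graded valuation rings, $W\cap K=\calO$, and the Newton-polygon/partial-sum substitution producing $x=\alpha/s$ (whose integrality over $\calO$ rests on the determinant-trick lemma for $\calO[x]\cap\calO[x^{-1}]$, which carries over to graded modules, and whose ultrametric estimates are legitimate because all monomials $a_ix^{n-i}$ share the grading $\rho(x)^n$) --- is a faithful and correct unwinding of that same classical argument.
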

\begin{proof}
The maximality of $m'$ reduces to the fact that a graded domain that is integral over a graded field $F$ must be a graded field, the proof of which goes almost exactly as in the classical ungraded case (by using integrality to reduce to considering a graded domain that is finitely generated over $F$ as a graded $F$-module, for which there is a homogeneous $F$-basis). As for the description of the graded valuation rings extending $\calO$, in the ungraded case this is \cite[Thm.~12,~\S7]{zs} and its corollaries.  The method of proof there (including the proof of \cite[\S5,~Lemma]{zs}) adapts nearly verbatim to the graded case, due to the fact \cite[1.4(i)]{temkin2} (where the ground field $k$ plays no role) that the integral closure of $\calO$ in $K'$ is the intersection of all graded valuation rings of $K'$ containing $\calO$.
\end{proof}

Finally, we later need the following result.

\begin{lemma}\label{oint}
Let $B$ be an integrally closed graded domain, $T=\{T_i\}_{i\in I}$ a set of variables and $g=\{g_i\}_{i\in I}$ a set of elements of $G$ $($where some $g_i$'s may be equal$)$. The $G$-graded Laurent polynomial ring $B[g^{-1}T,gT^{-1}]$ is an integrally closed graded domain.
\end{lemma}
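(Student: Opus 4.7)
First, observe that $A := B[g^{-1}T,gT^{-1}]$ is a graded domain: setting $K := \Frac_G(B)$, the ring $K[g^{-1}T]$ is a graded domain (iterated graded polynomial algebras over a graded field are graded domains, and the general case is handled as a filtered union over finite subsets of $I$), so its graded subring $B[g^{-1}T]$ is a graded domain, and $A$ is its graded localization at the multiplicative system of monomials $T^{\alpha}$. To prove $A$ is integrally closed in $L := \Frac_G(A)$, I plan to reduce to the following single-variable claim: \emph{if $R$ is an integrally closed graded domain and $T_0$ is one graded indeterminate of grading $g_0 \in G$, then $R[g_0^{-1}T_0]$ is an integrally closed graded domain.} Granting this claim, iteration settles $B[g^{-1}T_J]$ for every finite $J \subseteq I$; for arbitrary $I$, one writes $B[g^{-1}T]$ as the filtered union of the $B[g^{-1}T_J]$ over finite $J$ and uses that any integrality relation in $L$ involves only finitely many indeterminates. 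Finally, $A$ is the graded localization of $B[g^{-1}T]$ at $\{T^{\alpha}\}$, and graded localization at a multiplicative system of homogeneous elements preserves integral closedness via the standard clearing-denominators argument (if $sx$ is integral over $B[g^{-1}T]$ for some $s \in \{T^{\alpha}\}$, then $sx \in B[g^{-1}T]$ and $x \in A$).

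To prove the single-variable claim, set $K_0 := \Frac_G(R)$ and $L_0 := \Frac_G(K_0[g_0^{-1}T_0])$, and let $x \in L_0$ be homogeneous and integral over $R[g_0^{-1}T_0]$. A fortiori $x$ is integral over $K_0[g_0^{-1}T_0]$, which is graded-factorial by Lemma~\ref{charpol} and hence integrally closed in $L_0$; so $x \in K_0[g_0^{-1}T_0]$ and we may write $x = \sum_{i=0}^{n} c_i T_0^i$ with homogeneous $c_i \in K_0$. By \cite[1.4(i)]{temkin2} (applied as in the proof of Theorem~\ref{valint}), the integrally closed $R$ coincides with $\bigcap_v \calO_v$, where $v$ ranges over graded valuation rings of $K_0$ containing $R$. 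For each such $v$, I define the Gauss extension $\widetilde v$ of $v$ to $L_0$ by $\widetilde v(\sum_i a_i T_0^i) = \min_i v(a_i)$ on homogeneous elements of $K_0[g_0^{-1}T_0]$, extended to $L_0$ multiplicatively. The graded valuation ring $\calO_{\widetilde v} \subseteq L_0$ then contains $\calO_v[g_0^{-1}T_0] \supseteq R[g_0^{-1}T_0]$ and is integrally closed in $L_0$, so $x \in \calO_{\widetilde v}$; this forces $v(c_i) \geq 0$ for every $i$. Varying $v$ yields $c_i \in R$ for every $i$, whence $x \in R[g_0^{-1}T_0]$.

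The main technical obstacle is verifying that $\widetilde v$ is indeed a graded valuation, i.e., the graded Gauss lemma $\widetilde v(fh) = \widetilde v(f) + \widetilde v(h)$ for homogeneous $f,h \in K_0[g_0^{-1}T_0]$. I will handle this by the classical reduction: every class of the value group $K_0^{\times}/\calO_v^{\times}$ is represented in $K_0^{\times}$, so rescaling $f$ and $h$ by homogeneous elements of $K_0^{\times}$ of appropriate valuations reduces to the case $\widetilde v(f) = \widetilde v(h) = 0$. Reducing coefficients modulo the graded maximal ideal $m_v$ of $\calO_v$ then maps $f$ and $h$ to nonzero homogeneous polynomials in $(\calO_v/m_v)[g_0^{-1}T_0]$, a graded polynomial ring over the graded field $\calO_v/m_v$ and hence a graded domain; their product is nonzero, giving $\widetilde v(fh) = 0 = \widetilde v(f) + \widetilde v(h)$, and unscaling recovers the general case. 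The remaining pieces of the argument (iteration, filtered unions, and Laurent localization) are purely formal.
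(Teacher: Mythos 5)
Your proof is correct and follows essentially the same route as the paper's: reduce to one variable, use graded-factoriality of $K[g^{-1}T]$ (Lemma~\ref{charpol}) to place the coefficients in $K$, then write $B$ as an intersection of graded valuation rings and conclude via the Gauss extension. The only differences are organizational — you treat the polynomial ring first and pass to the Laurent ring by localization, and you spell out the multiplicativity of the Gauss valuation that the paper declares obvious.
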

\begin{proof}
Since $B[g^{-1}T,gT^{-1}]$ is a filtered union of its Laurent subalgebras in finitely many variables, we may assume that $I=\{1,\dots,n\}$. By induction on
the number of variables we can furthermore assume $n = 1$, so in the sequel $T$ is a single variable.

Consider the graded fraction field $K=\Frac_G(B)$. First, let us establish the case when $B=K$. By Lemma~\ref{charpol} $K[g^{-1}T]$ is graded-factorial, hence the same is true for its homogeneous localization $K[g^{-1}T,gT^{-1}]$. It remains to note that the usual argument shows that any graded-factorial graded domain $A$ is integrally closed: if $a,b$ are homogeneous in $A$ and $b\nmid a$, then dividing by the gcd we can assume that $(a,b)=1$, and in the latter case there exists a prime $p$ such that $p|b$ and $p\nmid a$. Using such a $p$ one easily sees that $a/b$ does not satisfy a monic polynomial equation over $A$, and hence $A$ is integrally closed.

The above case reduces our task to proving the following claim: if a Laurent polynomial $f=\sum_i a_iT^i\in K[g^{-1}T,gT^{-1}]$ is integral over $B[g^{-1}T,gT^{-1}]$, then each $a_i$ lies in $B$. Furthermore, $B$ is the intersection of the graded valuation rings $\calO_v\subseteq K$ containing it, hence $B[g^{-1}T,gT^{-1}]=\cap_v\calO_v[g^{-1}T,gT^{-1}]$ and it suffices to prove that $a_i\in\calO_v$ for each $v$. This reduces us to the case when $B=\calO_v$ is  a graded valuation ring.

Let $|\ |_v$ be the graded valuation on $K$ corresponding to $\calO_v$ and let $|\ |_w$ be its Gauss extension to $K[g^{-1}T,gT^{-1}]$, that is, $|\sum_ia_iT^i|_w=\max_i|a_i|_v$. Obviously, it is multiplicative and hence extends to a valuation on $\Frac_G(K[g^{-1}T,gT^{-1}])$ with a graded valuation ring $\calO_w$. Note that $f$ is integral over $\calO_w$ because $B[g^{-1}T,gT^{-1}]\subseteq\calO_w$. But $\calO_w$ is integrally closed, hence $f\in\calO_w$ and by the definition of $|\ |_w$ we obtain that $|a_i|_v\le 1$ for any $i$. The latter means that $a_i\in B$, as required.
\end{proof}

\begin{corollary} \label{ointcor}
Let $L/k$ be a graded field extension, $A\subseteq  L$  an integrally closed
graded $k$-subalgebra, $T=\{T_i\}_{i\in
I}\in L$ a set of
homogeneous elements algebraically independent over $A$, and $F=\Frac_G(k[T])$. Then the graded
subring $FA\subseteq L$ generated by $F$ and $A$ is integrally closed.
\end{corollary}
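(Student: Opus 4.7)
The plan is to realize $FA$ as a graded localization of a Laurent polynomial ring over $A$ and then invoke Lemma~\ref{oint} together with the standard observation that graded localization preserves the property of being integrally closed.

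First I would set up the algebraic presentation. Since the $T_i \in L$ are homogeneous (say of grading $g_i$) and algebraically independent over $A$, the graded $A$-subalgebra $A[T] \subseteq L$ is isomorphic to the graded polynomial ring $A[g^{-1}T]$. Writing $S \subseteq k[T]$ for the multiplicative set of nonzero homogeneous elements, we have $F = \Frac_G(k[T]) = S^{-1}k[T]$, and chasing the definition of the subring $FA \subseteq L$ generated by $F$ and $A$ one finds $FA = S^{-1}(A[T])$. Because every variable $T_i$ itself lies in $S$, this localization coincides with $S^{-1}\bigl(A[g^{-1}T,\,gT^{-1}]\bigr)$, so $FA$ is a graded localization of the Laurent polynomial ring $A[g^{-1}T,\,gT^{-1}]$ at $S$.

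Second, since $A$ is integrally closed by hypothesis, Lemma~\ref{oint} gives that the Laurent polynomial ring $A[g^{-1}T,\,gT^{-1}]$ is an integrally closed graded domain (for infinite $I$ one reduces to the finite case exactly as in the proof of Lemma~\ref{oint}, by writing the Laurent ring as a filtered union of integrally closed graded subdomains inside a common graded fraction field).

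Finally I would invoke the general fact that a graded localization $S^{-1}R$ of any integrally closed graded domain $R$ is integrally closed in its graded fraction field: if a homogeneous element $\alpha \in \Frac_G(R)$ satisfies a monic polynomial relation with coefficients in $S^{-1}R$, then clearing denominators produces some $s \in S$ for which $s\alpha$ satisfies a monic relation over $R$, so $s\alpha \in R$ and thus $\alpha \in S^{-1}R$. Applied with $R = A[g^{-1}T,\,gT^{-1}]$, this yields that $FA$ is integrally closed. The only step requiring any real care is the identification $FA = S^{-1}A[g^{-1}T,\,gT^{-1}]$, which uses the algebraic independence of $T$ over $A$ to identify $A[T]$ with a graded polynomial ring and the fact that each $T_i$ is already a homogeneous unit in the localization; after that, Lemma~\ref{oint} does essentially all of the work.
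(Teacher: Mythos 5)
Your proof is correct and follows essentially the same route as the paper: both identify $FA$ as a graded localization (at nonzero homogeneous elements) of the Laurent polynomial ring $A[g^{-1}T,gT^{-1}]$, apply Lemma~\ref{oint} to see that ring is integrally closed, and conclude via preservation of graded integral closedness under graded localization. The only cosmetic difference is that the paper localizes the subring generated by $A$ and the $T_i^{\pm 1}$ at the homogeneous elements of $k[T^{\pm 1}]$ rather than of $k[T]$, which yields the same ring.
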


\begin{proof}
Obviously $F$ is the graded fraction field of its graded subring $B$ generated by $k$ and the elements $T_i^{\pm
1}$, so $F$ coincides with the graded localization ring $B_R$, where $R$ is the set of non-zero homogeneous
elements of $B$. It follows that $FA=C_R$, where $C$ is the graded algebra generated by $A$ and the elements
$T_i^{\pm 1}$. Since graded integral closedness is preserved by graded localization (similarly to the ungraded
case), it suffices to prove that $C$ is integrally closed. Now, it remains to notice that by our assumption on
$T_i$'s, $C \simeq A[g^{-1}S,gS^{-1}]$, where $g=\{\rho(T_i)\}_{i\in I}$ and $S=\{S_i\}_{i\in I}$ is a
corresponding set of graded indeterminates. Hence, the corollary follows from Lemma \ref{oint}.
\end{proof}

\section{The category $bir_G$}\label{birgsec}

\subsection*{The absolute birational category}
In the sequel, we will have to simultaneously consider several $G$-graded birational spaces with different $G$-graded ground fields. {Here is a key example of a natural map between birational spaces with different fields of definition. Assume that $\chi\:l\otimes_kK\to L$ is a homogeneous homomorphism, where $k,l,K,L$ are $G$-graded fields. Then a commutative diagrams of $G$-graded fields arises
$$\xymatrix{
{l} \ar[r] & {L} \\
{k} \ar[u] \ar[r] & {K} \ar[u]}$$
and restriction of graded valuation rings induces a continuous map $\psi_{L/l,K/k}\:\bfP_{L/l}\to\bfP_{K/k}$. We do not include the diagram or the homomorphism $\chi$ in the notation $\psi_{L/l,K/k}$ because it will always be clear from the context.}  Note that $\psi_{L/l,K/k}$ is the composition of the obvious topological embedding $\psi_{L/l,L/k}\:\bfP_{L/l}\into\bfP_{L/k}$ and the natural map $\psi_{L/K/k}\:\bfP_{L/k}\to\bfP_{K/k}$ of birational spaces over $k$. The maps $\psi_{L/l,K/k}$ were used in \cite[5.3]{temkin2} to establish a connection between reduction of germs of the analytic spaces $X$ over $k$ and $X_K = X\wtimes_k K$ over $K$. A deeper study of these maps in this and the next sections will be used later to prove that certain properties of analytic spaces (e.g. goodness) descend from $X_K$ to $X$.

We now introduce the category $\bir=\bir_G$ of all $G$-graded birational spaces. On the level of objects, $\bir$ is just the disjoint union of all categories $\bir_l$. A morphism $f\:\varcalY\to\varcalX$ of birational spaces corresponding to respective local homeomorphisms $Y\to\bfP_{L/l}$ and $X\to\bfP_{K/k}$ is a pair of compatible graded embeddings $k\into l$ and $K\into L$ and a continuous map $Y\to X$ compatible with $\psi_{L/l,K/k}$. We naturally extend the properties of objects and morphisms that were defined in \cite[\S2]{temkin2}:

\begin{definition}\label{defbir}
In the category ${\rm{bir}}_G$ of birational spaces over $G$-graded fields, $\varcalY = (Y \rightarrow \P_{L/l})$ is {\em affine} if $Y$ maps bijectively onto an affine subset of $\bfP_{L/l}$, a morphism $(Y \rightarrow \P_{L/l}) \rightarrow (X \rightarrow \P_{K/k})$ is {\em separated} if the natural map $\phi\:Y\to X\times_{\bfP_{K/k}}\bfP_{L/l}$ is injective, and such a morphism is {\em proper} if $\phi$ is bijective and $\psi_{L/l,K/k}$ is onto.

Furthermore, we say that $\varcalX = (X \rightarrow \P_{K/k})$ is  {\em separated} (resp. {\em proper}) if $X\to\bfP_{K/k}$ is injective (resp. bijective), which is to say that the canonical morphism $\varcalX \rightarrow \bfP_{K/k}$ in ${\rm{bir}}$ is separated (resp. proper).
\end{definition}

\begin{lemma}\label{composlem}
For birational spaces $\mathcal{Z} = (Z\to\bfP_{M/m})$, $\mathcal{Y} = (Y\to\bfP_{L/l})$, and $\mathcal{X} = (X\to\bfP_{K/k})$ and morphisms $h\:\mathcal{Z}\to \mathcal{Y}$, $g\:\mathcal{Y}\to \mathcal{X}$, and $f=g\circ h$, the following properties hold.

$(1)$ If $g$ and $h$ are separated, then $f$ is separated.

$(2)$ If $f$ is separated then $h$ is separated.

$(3)$ If $g$ and $h$ are proper, then $f$ is proper.

$(4)$ If $f$ and $h$ are proper, then $g$ is proper.

$(5)$ If $f$ is proper, $g$ is separated, and $\psi_{M/m,L/l}$ is surjective $($e.g., $m=l$$)$ then $h$ is proper.

$(6)$ If $f$ is separated and $h$ is proper then $g$ is separated.
\end{lemma}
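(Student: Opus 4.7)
The strategy is to reduce all six assertions to elementary set-theoretic manipulations of a single factorization. Form the base change $\phi_g'\: Y\times_{\bfP_{L/l}}\bfP_{M/m} \to X\times_{\bfP_{K/k}}\bfP_{M/m}$ of $\phi_g\:Y\to X\times_{\bfP_{K/k}}\bfP_{L/l}$ along $\psi_1 := \psi_{M/m,L/l}$; concretely, using the obvious identification of the iterated fiber product, $\phi_g'(y,z) = (g(y),z)$. Unwinding the definitions gives $\phi_f = \phi_g'\circ\phi_h$, while on the level of graded valuations one has $\psi_{M/m,K/k} = \psi_{L/l,K/k}\circ\psi_1$. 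All the implications then reduce to reading off statements about injectivity or bijectivity of the composite $\phi_g'\circ\phi_h$ and about surjectivity of composites of $\psi$'s.

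The one technical input I would record first is a descent principle: base change along $\psi_1$ automatically preserves injectivity and bijectivity of $\phi_g$, and when $\psi_1$ is surjective it also reflects them. The second half is an elementary pointwise argument: if $\phi_g(y_1) = \phi_g(y_2)$, lift the common projection in $\bfP_{L/l}$ to an element $z\in\bfP_{M/m}$ via $\psi_1$ and compare $\phi_g'(y_1,z)$ with $\phi_g'(y_2,z)$; the surjectivity half is entirely analogous (lift the $\bfP_{L/l}$-coordinate of a given element of $X\times_{\bfP_{K/k}}\bfP_{L/l}$ to $\bfP_{M/m}$, solve in the base-changed picture, and project back).

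With these preliminaries in hand, (1) and (3) follow because composites of injections (resp.\ bijections) are injections (resp.\ bijections), with (3) also using that a composite of surjective $\psi$'s is surjective. For (2), $\phi_h$ is the first factor in the injective composite $\phi_g'\circ\phi_h$, hence injective. For (4), bijectivity of $\phi_h$ and $\phi_f$ forces bijectivity of $\phi_g'$, and since $\psi_1$ is surjective the descent principle gives bijectivity of $\phi_g$; surjectivity of $\psi_{L/l,K/k}$ follows from surjectivity of $\psi_{M/m,K/k} = \psi_{L/l,K/k}\circ\psi_1$. Item (6) is entirely parallel, replacing ``bijective'' by ``injective'' and appealing to the injective case of the descent principle. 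For (5), injectivity of $\phi_h$ is automatic from injectivity of $\phi_f$; for surjectivity I observe that $\phi_g'$ is injective (base change of the injective $\phi_g$) while $\phi_f = \phi_g'\circ\phi_h$ is surjective, so $\phi_g'$ is bijective and hence $\phi_h = (\phi_g')^{-1}\circ\phi_f$ is bijective, while surjectivity of $\psi_1$ is a hypothesis.

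The only real obstacle is bookkeeping: ``proper'' bundles together two independent conditions (bijectivity of the $\phi$-part and surjectivity of the $\psi$-part) that must be tracked separately through each implication. Item (5) is the subtlest one, in that separatedness of $g$ rather than its properness is what provides injectivity of $\phi_g'$, and this is precisely why the extra hypothesis that $\psi_{M/m,L/l}$ be surjective is built into its statement.
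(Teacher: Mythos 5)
Your proof is correct and follows essentially the same route as the paper: the paper factors $\phi_f$ as $Z\to Y\times_{\bfP_{L/l}}\bfP_{M/m}\to X\times_{\bfP_{K/k}}\bfP_{M/m}$ with the middle arrow the base change of $\phi_g$ along $\psi_{M/m,L/l}$, and reads off all six claims from injectivity/bijectivity of the factors together with the fact that a surjective base change reflects these properties. Your only additions are to spell out the pointwise lifting argument behind that reflection principle (which the paper treats as obvious) and to note explicitly the surjectivity of $\psi_{L/l,K/k}$ in item (4).
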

\begin{proof}
The proof is based on the same set-theoretic argument as we used in the proof of Lemma \ref{57lemma}. Consider the following diagram $$Z {\stackrel{\gamma}{\rightarrow}} Y \times_{\mathbf{P}_{L/l}} {\mathbf{P}}_{M/m} {\stackrel{\beta'}{\rightarrow}} X\times_{\mathbf{P}_{K/k}} \mathbf{P}_{M/m}$$ where $\beta'$ is the base change by $\psi_{M/m,L/l}$ of the natural map $\beta\:Y\to X\times_{\mathbf{P}_{K/k}} \mathbf{P}_{L/l}$. Separatedness/properness of $f$, $g$, and $h$ are related to injectivity/bijectivity of the maps $\alpha:=\beta'\circ\gamma$, $\beta$, and $\gamma$ respectively. In (1), we are given that $\beta$ and $\gamma$ are injective. Hence the base change $\beta'$ is injective, and so $\alpha$ is also injective, proving (1). Obviously, injectivity of $\alpha$ implies injectivity of $\gamma$, thus proving (2). In (3), we are given that the maps $\psi_{M/m,L/l}$ and $\psi_{L/l,K/k}$ are surjective, hence their composition $\psi_{M/m,K/k}$ is also surjective. Also, since $\beta$ and $\gamma$ are bijective, so are $\beta'$ and $\alpha$ in this case. In particular, $f$ is proper. In (4), we deduce bijectivity of $\beta'$ from the bijectivity of $\alpha$ and $\gamma$. Since $\psi_{M/m,L/l}$ is surjective by the properness of $h$, the bijectivity of $\beta'$ implies bijectivity of $\beta$. Thus, $g$ is proper. It remains to establish (5) and (6).  In either case the map $\psi_{M/m,L/l}$ is surjective, so $g$ is separated if and only if $\beta'$ is injective. Thus, (5) is the obvious claim that $\gamma$ is bijective when $\alpha$ is bijective and $\beta'$ is injective, and (6) is the obvious claim that $\beta'$ is injective when $\alpha$ is injective and $\gamma$ is bijective.
\end{proof}

{
\subsection*{Surjectivity of $\psi$}
We, now, turn to studying the maps $\psi_{L/l,K/k}$. Our main goal is to describe when such a map is surjective, since such surjectivity is an ingredient of our definition of properness in $bir_G$. For this we will need the following notion: assume that $L/k$ is a graded field extension and $l,K\subseteq L$ are two graded subfields containing $k$. We say that $l$ and $K$ are in {\em general position} with respect to $k$ if for any pair of graded subfields $l'\subseteq l$ and $K'\subseteq K$ containing $k$ and finitely generated over it, the equality $\trdeg_k(l'K')=\trdeg_k(l')+\trdeg_k(K')$ holds. It will be convenient to express this condition in the following asymmetric form: $l$ and $K$ are in general position if and only if any algebraically independent set over $k$ in $l^{\times}$ is algebraically independent over $K$. Moreover, it suffices to check this condition for a single transcendence basis for $l$ over $k$, and if $\trdeg_k(l)<\infty$ this happens if and only if the inequality $\trdeg_K(lK)\le\trdeg_l(k)$ is an equality.

\begin{remark}\label{genrem}
(1) For $l$ and $K$ to be in general position it is insufficient to assume the weaker condition that $l \cap K$ is algebraic over $k$. For example, in the ungraded case there are infinite subgroups $\Gamma \subseteq {\rm{PGL}}_2(k)$ generated by a finite collection of non-trivial elements $\gamma_1, \dots, \gamma_n$ with finite order (using $n = 2$ if ${\rm{char}}(k) = 0$ and $n = 3$ otherwise), so there exists a pair of finite subgroups $H_1, H_2 \subseteq {\rm{PGL}}_2(k)$ that generate an infinite group. Thus, $L = k(t)$ is a finite Galois extension of rational subfields $l = k(t)^{H_1} = k(x_1)$ and $K = k(t)^{H_2} = k(x_2)$, yet $l \cap K = k$.

(2) It is easy to see that for any extensions $l/k$, $K/k$ there exist $k$-embeddings of $l$ and $K$ into a graded $k$-field $L$ such that $l$ and $K$ are in general position in $l$ over $k$. For example can take $L$ to be the algebraic closure of a field whose transcendence basis over $k$ is the union of transcendence bases of $l$ and $K$. Also, decreasing $L$ we can achieve that $L=lK$.

(3) In fact, one can show that $l$ and $K$ are in general position if and only if the kernel of the homomorphism $l\otimes_kK\to L$ is a minimal homogeneous prime ideal, or, that is equivalent, the image of $\Spec_G(L)$ in $\Spec_G(l\otimes_kL)$ is a generic point. In the ungraded case, this is a simple exercise on the dimension theory of varieties, and the graded case is dealt with similarly.
\end{remark}
}

\begin{theorem}\label{psilem}
Assume that $L$ is a graded field with graded subfields $l$, $K$ and $k\subseteq l\cap K$, and consider the map associated map $\psi_{L/l,K/k}$. Then,

$(1)$ $\psi_{L/l,K/k}$ is surjective if and only if $l$ and $K$ are in general position with respect to $k$.

$(2)$ $\psi_{L/l,K/k}$ has finite fibers $($resp. is injective$)$ if the extension $L/K$ is finite $($resp. trivial$)$;

$(3)$ $\psi_{L/l,K/k}$ is bijective if $l/k$ is algebraic, $L_1=K_1$, and $\rho(L^\times)/\rho(K^\times)$ is a torsion group.
\end{theorem}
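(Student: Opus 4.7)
The plan is to prove the three claims in order of increasing difficulty: (2), (3), (1).

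Part (2) is essentially an application of graded valuation theory already developed in the excerpt. For $L/K$ finite, the extension is integral, so by Theorem~\ref{valint} the graded valuation rings of $L$ restricting to $\calO_v$ on $K$ correspond to the graded maximal ideals of its graded integral closure $\overline{\calO_v}\subseteq L$ lying over $\m_v$; a standard degree bound (a graded analog of the classical fact that a valuation has at most $[L:K]$ extensions to $L$) shows there are only finitely many such ideals. For $L=K$, the map $\psi$ is manifestly the inclusion of the subset $\P_{K/l}\subseteq\P_{K/k}$, hence injective.

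For (3), I plan to first show $\P_{L/l}=\P_{L/k}$ using that $l/k$ is algebraic. Given $\calO_{v'}\in\P_{L/k}$ and a homogeneous $y\in l^\times$, the minimal homogeneous polynomial $y^n+a_{n-1}y^{n-1}+\cdots+a_0=0$ of $y$ over $k$ has coefficients with $v'(a_i)=0$, and applying the graded ultrametric rules out both $v'(y)>0$ and $v'(y)<0$ (in each case the minimum among the values $v'(a_i y^i)=iv'(y)$ is uniquely attained, forcing a non-infinite value for the vanishing sum). Hence $l\subseteq\calO_{v'}$ and $\psi$ becomes the restriction $\P_{L/k}\to\P_{K/k}$. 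The remaining hypotheses $L_1=K_1$ and torsion of $\rho(L^\times)/\rho(K^\times)$ imply that $y^n\in K^\times$ for every $y\in L^\times$ and some $n$, so $L/K$ is integral; existence of an extension of $\calO_v$ follows from Theorem~\ref{valint}, while uniqueness follows from the identity $nv'(y)=v(y^n)$ in the torsion-free value group.

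Part (1) is the main content. For $(\Leftarrow)$ I would adapt the going-up argument used already in the proof of Lemma~\ref{birlem2}. Choose a homogeneous transcendence basis $S\subseteq l^\times$ of $l/k$; by general position, $S$ is algebraically independent over $K$, so $R_1:=\calO_v[S]\simeq\calO_v\otimes_k k[S]\subseteq L$ has $R_1/\m_v R_1\simeq(\calO_v/\m_v)[S]$, a graded domain, making $\m_v R_1$ a proper graded prime. Localizing at the nonzero homogeneous elements of $k[S]$ produces $R_2:=\calO_v\otimes_k k(S)\subseteq L$, in which $\m_v R_2$ remains proper. Each $y\in l^\times$ is algebraic over $k(S)\subseteq R_2$ and hence integral over $R_2$ (after clearing a leading coefficient in $k(S)$), so $R_0:=l\cdot\calO_v\subseteq L$ is integral over $R_2$. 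Graded going-up lifts $\m_v R_2$ to a graded prime of $R_0$, and dominating an appropriate localization of $R_0$ by a graded valuation ring $\calO_{v'}\subseteq L$ via graded Zorn yields $\calO_{v'}\supseteq l$ with $\calO_{v'}\cap K=\calO_v$ (by maximality of graded valuation rings in the domination order), giving $\psi(\calO_{v'})=\calO_v$.

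The main obstacle is the converse direction in (1). Arguing by contrapositive, given $T\subseteq l$ algebraically independent over $k$ but dependent over $K$, I plan to exhibit a graded valuation $v$ of $K$ that fails to extend to $\P_{L/l}$. Applying Lemma~\ref{trdeglem} to the composite subfield $lK\subseteq L$ yields $\trdeg_l(lK)<\trdeg_k K$, and after choosing a homogeneous transcendence basis $\{s_1,\dots,s_r\}$ of $K/k$ together with a maximal subset $\{s_1,\dots,s_{r'}\}$ algebraically independent over $l$ (so $r'<r$), the element $s:=s_{r'+1}\in K^\times$ is transcendental over $k$ but algebraic over $l(s_1,\dots,s_{r'})$. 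I would construct $v$ on $K$ with $v(s)>0$ and $v(s_j)=0$ for $j\le r'$ by dominating the local graded ring $k[s,s_1^{\pm 1},\dots,s_{r'}^{\pm 1}]_{(s)}\subseteq K$, and then, supposing some $v'\in\P_{L/l}$ extends $v$, derive a contradiction from a minimal-$\deg_Y$ homogeneous relation $g(s,s_1,\dots,s_{r'})=0$ with $g=\sum c_{i\beta}Y^iZ^\beta\in l[Y,Z]$. The key point is that $v'(c_{i\beta}s^i s_1^{\beta_1}\cdots s_{r'}^{\beta_{r'}})=i\cdot v(s)$ depends only on $i$, so the graded ultrametric (applied grade-by-grade) should force the lowest-$i$ Newton block $\sum_\beta c_{i_0\beta}s_1^{\beta_1}\cdots s_{r'}^{\beta_{r'}}=0$, contradicting algebraic independence of $\{s_j\}$ over $l$. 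The delicate step I expect to cost the most work is ensuring that the residues $\overline{s_1},\dots,\overline{s_{r'}}$ remain algebraically independent over the image of $l$ in the residue field $\calO_{v'}/\m_{v'}$, so that no hidden cancellation permits $v'(g_i(s_1,\dots,s_{r'}))>0$; this may require refining the choice of $v$ (e.g.\ using a composite valuation with a sufficiently generic residue) and careful tracking of graded components when $l$ itself carries a nontrivial $G$-grading.
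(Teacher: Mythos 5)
Parts (2) and (3) of your proposal match the paper's proof in substance: (3) via the observation that every homogeneous element of $L^\times$ has a power in $K^\times$ together with $l\subseteq\calO$ for integrally closed $\calO$ (the paper does the two reductions in the opposite order, which changes nothing), and (2) via Theorem~\ref{valint} and the bound of $[L:K]$ on the number of extensions. Your proof of the ``if'' direction of (1) is a genuinely different and valid route: the paper first uses (3) to reduce to $l=\Frac_G(k[T])$ purely transcendental and then writes down an explicit Gauss extension of $v$ to $K(T)$ with $|T_i|=1$, whereas you keep $l$ general and run a lying-over/domination argument through $\calO_v\otimes_k k(S)\subseteq l\cdot\calO_v$. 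Your version trades the explicit construction for standard integrality machinery; both are fine, and your identification $\calO_{v'}\cap K=\calO_v$ by domination is correct.

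The genuine gap is in the ``only if'' direction of (1), and it is exactly the step you flag as delicate. Your Newton-block contradiction requires that the residues $\overline{s_1},\dots,\overline{s_{r'}}$ be algebraically independent over $l$ inside the residue field of the hypothetical extension $v'$; otherwise the vanishing of $\sum_\beta c_{i_0\beta}\overline{s}^{\,\beta}$ in the residue field is not a relation among $s_1,\dots,s_{r'}$ over $l$ in $L$ and contradicts nothing. But the extension $v'$ is chosen by the adversary, not by you: dominating $k[s,s_1^{\pm1},\dots,s_{r'}^{\pm1}]_{(s)}$ only pins down $v(s)>0$ and $v(s_j)=0$, and a generic extension to $K$ (let alone to $L$) can make the $\overline{s_j}$ algebraic over $k$. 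Residual transcendence degree is not monotone under extension of the valuation, so no refinement of $v$ on $K$ alone can force the needed independence in the residue field of $v'$ on $L$. The paper avoids this by putting the genericity into the \emph{value group} instead of the residue field: it assigns to all $n$ elements $t_1,\dots,t_n$ values $r_i$ that are $\Q$-linearly independent in $\R^\times_{>0}\otimes\Q$, so that $|l(t)^\times|\otimes\Q$ already has dimension $\ge n$ over the trivial group $|l^\times|$, while $\trdeg_l(l(t))<n$; Abhyankar's inequality then rules out any extension of $v$ to $L$ trivial on $l$. The point is that value groups only grow under field extension, so this obstruction survives passage from $k(t)$ to $K$ to $L$, whereas your residue-field obstruction does not. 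If you try to repair your argument by also making the values of $s_1,\dots,s_{r'}$ multiplicatively independent, you are in effect reproving this case of Abhyankar's inequality and you recover the paper's proof.
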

\begin{proof}
We start with the third claim. Note that any element of $L^\times$ is a power of an element of $K^\times$. Hence any graded valuation ring $\calO$ in $\bfP_{K/k}$ uniquely extends to an element $\calO'$ of $\bfP_{L/k}$ via the following rule: $f$ is a homogeneous element of $\calO'$ if and only if $f^n\in\calO$ for some $n>0$. This proves that $\bfP_{L/k}=\bfP_{K/k}$. Choose an element $\calO'\in\bfP_{L/k}$. Since $l$ is algebraic over $k$ and $\calO$ is integrally closed in $L$ in the graded sense, we obtain that $l\subseteq\calO$, and hence $\bfP_{L/l}=\bfP_{L/k}$.

The second claim follows from the fact that any graded valuation ring $\calO$ on $K$ admits at most $[L:K]$ extensions to a graded valuation ring on $L$. To prove this fact, we adapt the classical ungraded argument. By Theorem \ref{valint}, it is equivalent to check that the graded integral closure $\widetilde{\calO}$ of $\calO$ in $L$ has at most $[L:K]$ graded prime ideals over the graded maximal ideal of $\calO$. For this it suffices to show that each graded $\calO$-subalgebra $R \subseteq L$ that is $\calO$-finite and spans $L$ over $K$ has at most $[L:K]$ graded prime ideals dominating the graded maximal ideal of $\calO$. The $\calO$-module $R$ is free by Lemma \ref{flat} so the graded algebra $R/m_{\calO}R$ over the graded residue field $\calO/m_{\calO}$ is a free module of rank $[L:K]$.  We are therefore reduced to checking that if $F$ is a graded field and $A$ is a graded finite $F$-algebra whose underlying $F$-module has rank $n$ then $A$ has at most $n$ graded prime ideals.  Note that all such graded prime ideals are maximal, since a graded domain of finite rank over $F$ is necessarily a graded field.  Thus, if $F'/F$ is any graded extension field and $A' = A \otimes_F F'$ then $\Spec_G(A')$ maps onto $\Spec_G(A)$, so it suffices to treat the pair $(A', F')$.  By choosing $F'$ such that $\rho({F'}^{\times}) = G$ (e.g., the graded fraction field of the graded polynomial ring $F[g^{-1}T_{g}]_{g \in G}$) we can thereby reduce to the case that each homogeneous nonzero $a \in A$ satisfies $\rho(a) \in \rho(F^{\times})$. Consequently, by \cite[1.1(ii)]{temkin2} the natural map $F \otimes_{F_1} A_1 \rightarrow A$ is an isomorphism and the operation $I \mapsto I \cap A_1$ identifies the ideal theory of $A$ with that of $A_1$ respecting primality.  Since $A_1$ is of rank $n$ over the ordinary field $F_1$, we are reduced to the obvious classical case.

The first claim is the most difficult one. Assume the fields are not in general position. Then there exists a finite subset $t=\{t_1\..t_n\}\in K^\times$ which is algebraically independent over $k$ and is algebraically dependent over $l$. Choose linearly independent elements $r_1\..r_n\in\bfR_{>0}^\times$ and extend the trivial valuation on $k$ to $k(t)$ as the generalized Gauss valuation such that $|t_i|=r_i$ (thus a homogeneous element $\sum_{j\in J}a_jt^j$ with non-zero $a_j$'s is of valuation $\max_{j\in J}r^j$). Extending this valuation from $k(t)$ to $K$ in an arbitrary way yields an element $|\ |\in\bfP_{K/k}$, which is easily seen to be not in the image of $\psi_{L/l,K/k}$. Indeed, otherwise $|\ |$ can be extended to a valuation on $L$ trivial on $l$, which restricts to a valuation on $l(t)/l$ violating the Abhyankar inequality: $(|l(t)^\times|/|l^\times|)\otimes\Q=|l(t)^\times|\otimes\Q$ is of dimension at least $n$, while $\trdeg_l(l(t))<n$. The classical proof of Abhyankar's inequality (for example, see \cite[Lemma~2.1.2]{temst}) applies in this case without changes, though it even suffices to check the particular case that $t_1\..t_n$ are algebraically independent over $l$ because $|t_1|\..|t_n|$ are linearly independent over $|l^\times|=1$. So we skip details.

Let us prove the converse implication. Assume that {$l$ and $K$ are in general position over $k$.} Choose a transcendence basis $T=\{T_i\}$ of $l$ over $k$, so $T \subseteq L^{\times}$ is algebraically independent over $K$. For $l_0=\Frac_G(k[T]) \subseteq L$ we have {$\psi_{L/l,K/k} = \psi_{L/l_0,K/k} \circ \psi_{L/l,L/l_0}$, and $\psi_{L/l,L/l_0}$ is bijective by (3).} Thus, by replacing $l$ with $l_0$ we can assume that $l=\Frac_G(k[T])$ with $T$ an algebraically independent set over $K$.   The inclusion of graded subfields $k \subseteq K \cap l$ in $K$ must be an equality, since any $x \in (K \cap l)^{\times}$ not in $k$ is transcendental over $k$ (from the explicit description of $l$) yet is also visibly algebraic over $K$.

We can now lift a graded valuation ring $\calO\in\bfP_{K/k}$ to an element of $\bfP_{L/l}$, as follows. Consider the corresponding valuation $|\ |$ on $K$. By our assumption $K[g^{-1}T]$ with $g=\rho(T)$ is a graded polynomial subalgebra of $L$, hence the Gauss valuation provides an extension of $|\ |$ to $K(T):=\Frac(K[g^{-1}T])$ such that $|k^\times|=1$ and $|T_i|=1$. It follows that this valuation on $K(T)$ restricts to the trivial valuation on $l$, hence extending it from $K(T)$ to $L$ in an arbitrary way, we obtain a lift of $|\ |$ to an element of $\bfP_{L/l}$.
\end{proof}

\subsection*{$H$-strict birational spaces}
We conclude this section with a brief discussion of $H$-strictness of birational spaces. Let $\varcalX$
correspond to $X\to\bfP_{K/k}$ and let $H\supseteq\rho(k^\times)$ be a subgroup of $G$. Then $\varcalX$ is
called {\em $H$-strict} if it admits a proper morphism to a birational space $\varcalX_H =
(X_H\to\bfP_{K_H/k})$,
where $K_H$ denotes the $G$-graded subfield $K_H := \oplus_{g\in H}K_g \subseteq K$ over $k$.
Thus, a separated $\varcalX$ is $H$-strict if and only if the corresponding open subset of $\bfP_{K/k}$ is the
exact preimage of its image in $\bfP_{K_H/k}$. It is proved in \cite[2.6,~2.7]{temkin2} that for any $H$-strict
$\varcalX$, the space $\varcalX_H$ and the proper morphism $\varcalX\to\varcalX_H$ are unique up to unique
isomorphism. A given $\varcalX$ is $H$-strict if and only if it admits an {\em $H$-strict structure}, which is
an open covering of $\varcalX$ by $H$-strict separated subspaces whose pairwise intersections are also
$H$-strict.  (This corresponds to choosing an open covering of $\varcalX_H$ and forming its preimage in
$\varcalX$.) By the uniqueness up to unique isomorphism, any two $H$-strict structures on $\varcalX$ are
equivalent in the sense that the pairwise intersections among their members are $H$-strict.

\begin{remark}\label{Hrem}
Unfortunately, a simple criterion of $H$-strictness is missing. Given a birational space $\varcalX$ corresponding to $X\to\bfP_{K/k}$ let $S_n$ denote the set of all points of $\bfP_{K/k}$ that have precisely $n$ preimages in $X$. Clearly, $S_n$ form a stratification of $\bfP_{K/k}$, and it is easy to see that each $S_n$ is constructible, i.e. it is an element of the boolean algebra generated by open quasi-compact subsets of $\bfP_{K/k}$. If $\varcalX$ is $H$-strict then each $S_n$ is $H$-strict, i.e. it is the preimage of a constructible subset of $\bfP_{K_H/k}$. We do not know the answer to the natural question if the converse is also true, but the positive answer would provide a nice criterion of $H$-strictness. Currently, it is only available in the trivial case when $\varcalX$ is separated and so $S_n=\emptyset$ for $n>1$.
\end{remark}

\section{Descent for birational spaces}\label{descsec}

We study descent on $G$-graded birational spaces in this section, and later our results will
be applied via the
reduction functor to study descent on analytic spaces.

\begin{theorem}\label{birdescent}
Let $\varcalY=(h\:Y\to\bfP_{L/l})$ and $\varcalX=(g\:X\to\bfP_{K/k})$ be two birational spaces
equipped with a proper
morphism $f\:\varcalY\to\varcalX$ and let $\bfP$ be any of the following properties
of objects in ${\rm{bir}}$: separated, affine,
separated and $H$-strict for $\rho(l^\times)\subseteq H\subseteq G$. Then $\varcalY$ satisfies $\bfP$ if and
only if $\varcalX$ satisfies $\bfP$.
\end{theorem}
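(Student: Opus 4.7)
The plan is to treat the three properties separately. In each case the forward implication from $\varcalX$ to $\varcalY$ follows directly from the definitions together with the properness bijection $\phi\:Y \toisom X \times_{\bfP_{K/k}} \bfP_{L/l}$ and the surjectivity of $\psi_{L/l,K/k}$. If $\varcalX$ is separated, then $X\into\bfP_{K/k}$ pulls back along $\psi_{L/l,K/k}$ to $X\times_{\bfP_{K/k}}\bfP_{L/l}\into\bfP_{L/l}$, so $Y\into\bfP_{L/l}$. If $X=\bfP_{K/k}\{A\}$ with $A\subseteq K$ a finitely generated graded $k$-subalgebra, then $\psi_{L/l,K/k}^{-1}(X)=\bfP_{L/l}\{l[A]\}$ for the finitely generated graded $l$-subalgebra $l[A]\subseteq L$, and this coincides with $Y$ via $\phi$. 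For separated $H$-strictness, since $K_H\subseteq L_H$ the structure $X=\psi_{K/K_H}^{-1}(X_H)$ pulls back to $Y=\psi_{L/L_H}^{-1}(Y_H)$ with $Y_H:=\psi_{L_H/l,K_H/k}^{-1}(X_H)$.

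For descent of separatedness: if $\varcalY$ is separated but some distinct $x_1,x_2\in X$ share an image $v\in\bfP_{K/k}$, then surjectivity of $\psi_{L/l,K/k}$ lifts $v$ to $w\in\bfP_{L/l}$, and $(x_1,w)\ne(x_2,w)$ pull back under $\phi$ to two distinct points of $Y$ over $w$, a contradiction. Descent of separated $H$-strictness I would deduce after the affine case, by covering $\varcalY$ with separated affine $H$-strict opens (an $H$-strict structure in the sense of \cite{temkin2}), pushing each forward via the restriction of $f$ (which remains proper between birational spaces), applying the affine descent to produce affine $H$-strict opens covering $\varcalX$, and then gluing via the uniqueness of $H$-strict structures.

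Descent of affineness is the main content, and the approach mirrors the ungraded Lemma \ref{propersurj} with graded integral closures replacing normalizations (Example \ref{deJongexample} shows why this substitution cannot be avoided). Writing $Y=\bfP_{L/l}\{B\}$ with $B$ a finitely generated graded $l$-subalgebra of $L$, I would first enlarge $B$ to its graded integral closure in $\Frac_G(B)$ via Lemma \ref{finiteint}. Then I would define $A:=\widetilde{B}\cap K$, where $\widetilde{B}$ denotes the graded integral closure of $B$ in $L$; since $X=\psi_{L/l,K/k}(Y)$ by properness, this is equivalently the intersection $\bigcap_{v\in X}\calO_v$, so $X\subseteq\bfP_{K/k}\{A\}$ tautologically and Corollary \ref{intercor} identifies $A$ as integrally closed in $K$. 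The equality $X=\bfP_{K/k}\{A\}$ would then follow by showing, using Theorem \ref{valint} and Lemma \ref{flat}, that for each $v\in\bfP_{K/k}$ with $A\subseteq\calO_v$ the graded compositum $\calO_v\cdot B\subseteq L$ is dominated by a graded valuation ring of $L$ containing $B$ and trivial on $l$.

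The main obstacle is finite generation of $A$ over $k$, which is the graded analogue of the characteristic polynomial argument in Lemma \ref{propersurj}. I would factor $\psi_{L/l,K/k}$ using Theorem \ref{psilem}(1): surjectivity forces any transcendence basis $T$ of $l/k$ to be algebraically independent over $K$, so introducing $K':=\Frac_G(K[T])\subseteq L$ and $l_0:=\Frac_G(k[T])\subseteq l$ splits the finite-generation problem into a purely transcendental step (handled by Lemma \ref{oint}, Corollary \ref{ointcor}, and Lemma \ref{interlem}, which describe the behavior of integral closure and intersections under compositum with $\Frac_G(k[T])$) and an algebraic-type step (ultimately reducing via Lemma \ref{finiteint} to finite generation of a graded integral closure). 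Producing the finitely generated $A\subseteq K$ from $B$ finitely generated over $l$ is precisely the point of the machinery developed in \S\ref{gradsec}.
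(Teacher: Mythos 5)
Your treatment of separatedness matches the paper's (pure set theory plus surjectivity of $\psi_{L/l,K/k}$), and for affineness you have assembled the right toolbox, but two steps do not go through as written. First, the $H$-strict case: you propose to cover $\varcalY$ by separated affine $H$-strict opens $\varcalY_i$ and apply affine descent to the restrictions $f|_{\varcalY_i}$. But the restriction of a proper morphism of birational spaces to an open subspace of the source is not proper onto its image: properness requires $Y_i$ to be the \emph{full} preimage of its image in $X\times_{\bfP_{K/k}}\bfP_{L/l}$, which fails for a general open piece of an $H$-strict structure (and if you instead take $\varcalY_i=\psi^{-1}(\varcalX_i)$ to keep properness, there is no reason these are $H$-strict). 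The paper avoids this entirely: it proves that $\bfP_{L/l}\to\bfP_{K/k}\times_{\bfP_{K_H/k}}\bfP_{L_H/l}$ is surjective (by producing a graded prime of $\calO'\otimes_{\calO}\calO''$ dominating all three maximal ideals, using Lemma \ref{flat}) and then argues directly: two points $x\in X$, $x'\notin X$ with equal image in $\bfP_{K_H/k}$ lift to $y\in Y$, $y'\notin Y$ with equal image in $\bfP_{L_H/l}$, contradicting $H$-strictness of $\varcalY$.

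Second, in the affine case the sentence ``for each $v$ with $A\subseteq\calO_v$ the ring $\calO_v\cdot B$ is dominated by a graded valuation ring of $L$ containing $B$ and trivial on $l$'' is not a step one can carry out with Theorem \ref{valint} and Lemma \ref{flat} alone --- it is essentially a restatement of the inclusion $\bfP_{K/k}\{A\}\subseteq X$ that you are trying to prove. The domination argument only works once you know that $B$ is integral over $lA$ (then $\calO_vBl$ is integral over $\calO_vl$, which is faithfully flat over $\calO_v$ by general position, so a graded prime over $m_v$ exists), and establishing that integrality is the real content of the proof. The paper gets it by covering $X$ by finitely many affines $\bfP_{K/k}\{A_i\}$ with each $A_i$ integrally closed and $\Frac_G(A_i)=K$, setting $B_i$ equal to the graded integral closure of $lA_i$ in $L$ (so $\varcalY_i=\psi^{-1}(\varcalX_i)=\bfP_{L/l}\{B_i\}$), and then computing $B=\cap B_i=$ integral closure of $\cap lA_i=l(\cap A_i)=lA$ via Corollary \ref{intercor}, Lemma \ref{interlem} and Corollary \ref{ointcor} --- all of which require first reducing $l$ to be purely transcendental over $k$ with a transcendence basis algebraically independent over $K$ (Theorem \ref{psilem}). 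Your sketch omits the covering-by-affines step, so the intersection lemmas have nothing to bite on. Note also that the paper does not prove finite generation of $A$ directly (it may a priori fail, as for $\H^0(X,\calO_X)$ of a variety); it instead finds a finitely generated $A_0\subseteq A$ whose integral closure in $L$ already contains the finitely generated $B$, concludes $Y=\psi^{-1}(\bfP_{K/k}\{A_0\})$, and gets $X=\bfP_{K/k}\{A_0\}$ from surjectivity of $\psi$ --- so the ``main obstacle'' is the integrality of $B$ over $lA$, not finite generation of $A$.
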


It seems probable that $H$-strictness descends without the separatedness assumption, but it is not clear how to attack this problem. If the answer to the question in Remark~\ref{Hrem} is positive then our proof below extends to the non-separated case straightforwardly.

\begin{proof}
Only the descent implications (from $\varcalY$ to $\varcalX$) require a proof.
Descent of separatedness is
purely set-theoretic:  given that $Y=X\times_{\bfP_{K/k}}\bfP_{L/l}$,
so $h$ is the base change of
$g$ under the surjective map $\bfP_{L/l}\to\bfP_{K/k}$, clearly $g$ is injective if (and only if) $h$
is.

We switch now to descent of $H$-strictness in the separated case. Consider the commutative diagram
$$
\xymatrix{
\bfP_{L/l} \ar[r] \ar[d]  & \bfP_{K/k} \ar[d] \\
\bfP_{L_H/l} \ar[r] & \bfP_{K_H/k}}
$$
in which the bottom side is surjective since the top and right sides are surjective.
We claim that the natural map $\psi\:\bfP_{L/l}\to\bfP_{K/k}\times_{\bfP_{K_H/k}}\bfP_{L_H/l}$ is
surjective.   Choose graded valuation rings
$\calO'$, $\calO$, and $\calO''$ corresponding to compatible points
in $\bfP_{K/k}$, $\bfP_{K_H/k}$, and $\bfP_{L_H/l}$ respectively.
The natural graded map $K \otimes_{K_H} L_H \rightarrow L$
is clearly injective (by consideration of graded parts).  Moreover,
the natural map $\calO' \otimes_{\calO} \calO'' \rightarrow K \otimes_{K_H} L_H$
is injective because $K \otimes_{K_H} L_H = K \otimes_{\calO} L_H$
and $\calO \rightarrow \calO''$ and $\calO \rightarrow K$ are flat (Lemma \ref{flat}).
The corresponding tensor product of graded residue fields
is nonzero, so by choosing a graded prime ideal of this latter tensor product
we get a graded prime ideal $P$ of $\calO' \otimes_{\calO} \calO''$
that dominates $m_{\calO'}$, $m_{\calO}$, and $m_{\calO''}$.
The graded localization $(\calO' \otimes_{\calO} \calO'')_P$
is a graded-local subring of $L$ that contains $l$, so it is dominated
by a graded valuation ring $R \in \bfP_{L/l}$.
Clearly $\psi(R) = (\calO', \calO'')$, establishing the surjectivity of $\psi$.

Since $\varcalY$ is assumed to be separated, $\varcalX$ is separated by descent of separatedness.
Thus, we can
identify $\varcalX$ with an open subspace $X$ in $\bfP_{K/k}$. If $\varcalX$ is not $H$-strict
 then we can find two points
$x\in X$ and $x'\in\bfP_{K/k}\setminus X$ sitting over a point $x_H\in\bfP_{K_H/k}$. Choose $y_H\in\bfP_{L_H/l}$
over $x_H$, so by surjectivity of $\psi$ we can find points $y,y'\in\bfP_{L/l}$ sitting over $(y_H,x)$ and
$(y_H,x')$, respectively. Then $y\in Y$ and $y'\notin Y$, but their images in $\bfP_{L_H/l}$ coincide.
Thus,
$\varcalY$ is not $H$-strict.  This establishes descent of $H$-strictness in the separated case.

The deepest and most useful property is being affine, and dealing with it makes use of much of
the
preliminary work done in \S\ref{gradsec}. Assume that $\varcalY$ is affine,
so we can identify $\varcalY$ with an
affine subset of $\bfP_{L/l}$, and we can
identify $\varcalX$ with a subset of $\bfP_{K/k}$ using the established descent of
separatedness. The first step is
to reduce to the case when the extensions
$K/k$ and $L/l$ are finitely generated. Choose a finite open covering of
the quasi-compact $\varcalX$ by open affine subsets
$\varcalX_i=\bfP_{K/k}\{A_i\}$ with each $A_i$ a finitely generated graded $k$-subalgebra
of $K$.  Let $K_0$ be the graded subfield of
$K$ generated by the $A_i$'s.
Clearly $\varcalX$ is the preimage of an open set $\varcalX_0\subseteq\bfP_{K_0/k}$,
so $\varcalX$ is affine if $\varcalX_0$ is affine.
The natural morphism $\varcalX\to\varcalX_0$ is proper, so it induces a proper
morphism $\varcalY\to\varcalX_0$.
We can therefore replace $\varcalX$ with
$\varcalX_0$, achieving that $K/k$ is finitely generated. At this stage we
may and do rechoose the affine
sets $\varcalX_i$'s so that $K$ is the graded fraction field of each $A_i$.
(This is done by choosing a finite set of elements $t_1, \dots, t_n \in K^{\times}$
such that $K = {\rm{Frac}}_G(k[t_1,\dots,t_n])$ and adjoining to the $A_i$'s
various homogeneous elements $t_1^{\epsilon_1}, \dots, t_n^{\epsilon_n}$ with
$\epsilon_j = \pm 1$.) Moreover, we can replace each $A_i$
with its graded integral closure $\oA_i$ in $K$ because this procedure does not affect $\varcalX_i$
and $\oA_i$ is
finitely generated over $k$ by Lemma \ref{finiteint}.

Similarly, $\varcalY$ is the preimage in $\bfP_{L/l}$ of an affine subset $\varcalY_0\subseteq\bfP_{L_0/l}$ for a
finitely generated subextension $L_0/l$,
so by replacing $L_0$ with the composite $L_0K$ (which is also finitely
generated over $l$) we achieve that the morphism $\varcalY\to\varcalX$ factors through $\varcalY_0$,
and by Lemma \ref{composlem}(4) the resulting morphism $\varcalY_0 \rightarrow
\varcalX$ is necessarily proper.
Thus, we can assume that $L/l$ is finitely generated as well, and then
by finiteness of graded integral closures (Lemma \ref{finiteint}) we have
$\varcalY=\bfP_{L/l}\{B\}$ for an integrally closed finitely generated graded
$l$-subalgebra $B$ of $L$ (but $\Frac_G(B)$ can be smaller than $L$).

Next, we choose any transcendence basis $\{T_j\}_{j\in J}$ of $l$ over $k$, and let $l_0$ be the
graded subfield of $l$
generated by $k$ and the $T_j$'s. Since $l$ is algebraic over $l_0$, the map
$\psi_{L/l,L/l_0}\:\bfP_{L/l}\to\bfP_{L/l_0}$ is a bijection by Theorem \ref{psilem}(3). Moreover, if
$B=l[b_1,\dots,b_m]$ with homogeneous $b_1, \dots, b_m \in L^{\times}$
then the image of $\varcalY$ in $\bfP_{L/l_0}$ is the affine set
$\varcalY_0=\bfP_{L/l_0}\{b_1,\dots,b_m\}$. We
again get a natural proper morphism $\varcalY_0\to\varcalX$, so once
again we can replace $\varcalY$ with $\varcalY_0$, this time achieving that $l$ is purely transcendental over
$k$.

Note that
the $T_j$'s are algebraically independent over $K$ because of Theorem
\ref{psilem}(1), so
Corollary \ref{ointcor} applies to the $A_i$'s and $l$, giving that the graded rings $lA_i$ are integrally
closed and have a common graded fraction field $\Frac_G(lK)$. If $B_i$ denotes the integral closure of $lA_i$ in
$L$ then $\cap B_i$ is the integral closure of $\cap lA_i$ in $L$ by Corollary \ref{intercor}. Now we make a few
observations: $\varcalY_i=\bfP_{L/l}\{B_i\}$ is the preimage of $\varcalX_i$ in $\bfP_{L/l}$; $B=\cap B_i$
because $B$ is an integrally closed graded $l$-subalgebra
and $\varcalY = \bfP_{L/l}(B)$ is equal to $\cup\varcalY_i =
\cup \bfP_{L/k}(B_i) \subseteq \bfP_{L/l}(\cap B_i)$
(so $B = \cap_{\calO \in \varcalY} \calO \subseteq \cap_{\calO \in \varcalY_i} \calO = B_i$
for all $i$, and the containment $B \subseteq \cap B_i$ is an
equality due to integrality of $\cap B_i$ over $B$ that follows from
$\bfP_{L/l}(B)$ lying in $\bfP_{L/l}(\cap B_i)$);
by Lemma \ref{interlem}, $\cap lA_i=lA$ for $A := \cap A_i$. Summarizing this,
we obtain that $B$ is integral over $lA$.

We have to be careful when working with $A = \cap A_i$:
it could a priori happen (without taking $\varcalY$ into account)
that $A$ is not finitely generated over $k$ (e.g., for ungraded $k$ one can construct such an example using that
there exists a $k$-variety $X$ with ${\rm{H}}^0(X,\calO_X)$ not a finitely generated $k$-algebra). However, since the
graded $l$-subalgebra $B$ in $L$ is finitely generated, we can find a finitely generated graded $k$-subalgebra
$A_0\subseteq A$ such that the integral closure of $A_0$ in $L$ contains $B$, hence coincides with it. Thus,
$\varcalY=\bfP_{L/l}\{B\}$ is the preimage in $\bfP_{L/l}$ of $\bfP_{K/k}\{A_0\}$, so we must have
$\varcalX=\bfP_{K/k}\{A_0\}$ by the surjectivity of the map $\bfP_{L/l}\to\bfP_{K/k}$ (in particular $A$, which
is the integral closure of $A_0$ in $K$, is finitely generated over $k$).
\end{proof}

\section{$H$-strict analytic spaces}\label{Hstrsec}
In principle, the material of this section should have been given in \cite{temkin2}, in particular, it is logically independent of \S\ref{birsec1}--\S\ref{descsec}. {Note also that $H$-strict spaces were first introduced in the first version of this paper, so we decided to keep this section despite the fact that nowadays most of its results are also proved in \cite[Chapter~3]{flat}.} From now on and until the end of the paper, we consider only $\R^{\times}_{>0}$-gradings and $H$ denotes a subgroup of $\R^{\times}_{>0}$ that contains $|k^{\times}|$.

\subsection*{Separated $H$-strict spaces}
We say that a $k$-affinoid algebra $\calA$ is {\em
$H$-strict} if the spectral radius of any its element either vanishes or belongs to the group $\surd{H}$. This
is equivalent to either of the following conditions: (1) there exists an admissible epimorphism
$k\{r_1^{-1}T_1,\dots ,r_n^{-1}T_n\}\twoheadrightarrow \calA$ with $r_1,\dots ,r_n\in H$; (2) there exists an admissible
epimorphism $k\{r_1^{-1}T_1,\dots ,r_n^{-1}T_n\}\twoheadrightarrow \calA$ with $r_1,\dots ,r_n\in\surd{H}$. Obviously (1)
implies (2), and that (2) implies $H$-strictness is well-known for strictly $k$-affinoid algebras (i.e.,
$H\subseteq\surd{|k^\times|}$). The general case is reduced to this one by making a ground field extension $K/k$
so that $|K^\times|$ contains all $r_i$'s.  Conversely, $H$-strictness implies (2) because any admissible
epimorphism $k\{r^{-1}T\} \twoheadrightarrow \calA$ with $T_i \mapsto a_i \in \calA$ where $a_i$ has spectral
radius $s_i$ factors through an admissible epimorphism $k\{s^{-1}T\} \twoheadrightarrow \calA$.  Finally, to see
that (2) implies (1), given (2) with $r_i^N = h_i \in H$ and $T_i \mapsto a_i \in \calA$ we get a finite
admissible map $k\{h^{-1}X\} \twoheadrightarrow \calA$ with $X_i \mapsto a_i^N$. Since proper affinoid maps are finite
(admissible) maps, we then easily deduce (1) via integrality and properness considerations.

A $k$-affinoid space $X=\calM(\calA)$ is called {\em $H$-strict} if the $k$-affinoid algebra $\calA$ is $H$-strict. Then for any
point $x\in X$, $H$-strict affinoid neighborhoods of $x$ form a basis of its neighborhoods
provided that $H \ne 1$. Note that
$H$-strictness is inherited by direct products and closed subspaces, so the intersection of finitely many
$H$-strict affinoid domains in any separated $k$-analytic space is $H$-strict. Also, an affinoid space $X$ is $H$-strict if and only if it admits a finite covering by $H$-strict affinoids $X_i$ because $\rho_X=\max_i(\rho_{X_i})$. Thus, the following definition makes sense: a separated $k$-analytic space is {\em
$H$-strict} if it admits a covering (for the $G$-topology on $k$-analytic spaces) by $H$-strict affinoid
domains. As in the affinoid case, a finite intersection of $H$-strict
analytic domains in a separated $k$-analytic space is $H$-strict.

\begin{lemma} \label{Hredlem}
Let $H \subseteq \R^{\times}_{>0}$ be a non-trivial subgroup containing $|k^{\times}|$.
A point $x$ in a separated $k$-analytic space $W$ has an $H$-strict (resp. $H$-strict affinoid) neighborhood $W'$ if and only if the reduction
$\tilW_x$ is an $H$-strict (resp. $H$-strict affine) birational space.
\end{lemma}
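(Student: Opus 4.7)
The plan is to combine the bijective correspondence between analytic subdomains of $W$ containing $x$ and open birational subspaces of $\tilW_x$ from \cite[Theorem~4.5]{temkin2} (under which $W'\subseteq W$ is a neighborhood of $x$ iff $\tilW'_x=\tilW_x$) with direct computations of reductions of $H$-strict affinoids and explicit Laurent-shrinking constructions. Note that since $W$ is separated, $\tilW_x$ is separated by \cite[4.8(iii)]{temkin2}, so all its subspaces inject into $\bfP_{\wHx/\tilk}$ and the ``affine'' notation $\bfP_{\wHx/\tilk}\{A\}$ is unambiguous. I handle the affinoid/affine case first and then bootstrap to the general case.

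For the affinoid case, the forward direction is a direct calculation: if $W'=\calM(\calB)$ is $H$-strict affinoid, choose generators of $\calB$ with spectral radii in $\surd{H}$. Then $\tilcalB$ is generated over $\tilk$ by homogeneous elements with gradings in $\surd{H}$, hence its image $B_x$ in $\wHx$ is a finitely generated graded $\tilk$-subalgebra of $\wHx_H$, yielding $\tilW_x=\tilW'_x=\bfP_{\wHx/\tilk}\{B_x\}$ which is $H$-strict affine. For the reverse direction, the $H$-strict affine hypothesis on $\tilW_x$ already implies $\tilW_x$ is affine, so $(W,x)$ is good by \cite[5.1]{temkin2} and admits an affinoid neighborhood $V=\calM(\calB)$ with $\calB=k\{s^{-1}U\}/I$. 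Since $H$ is non-trivial, $\surd{H}$ is dense in $\R^\times_{>0}$. Choose $r_j\in\surd{H}$ with $r_j>|U_j(x)|$, and $r'_j\in\surd{H}$ with $0<r'_j<|U_j(x)|$ whenever $U_j(x)\neq 0$; then the Laurent subdomain $V\{r^{-1}U,r'U^{-1}\}$ is an $H$-strict affinoid subdomain containing $x$ in its topological interior, hence an $H$-strict affinoid neighborhood of $x$ in $V$ and thus in $W$.

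For the general $H$-strict case, the forward direction goes as follows: an $H$-strict neighborhood $W'$ has a $G$-cover by $H$-strict affinoid subdomains, so by the quasi-net axiom one extracts a finite subcollection $V_1,\dots,V_n$ of $H$-strict affinoids containing $x$ whose union is a neighborhood of $x$. Each $\tilV_{i,x}$ is $H$-strict affine by the affinoid case, each $V_i\cap V_j$ is $H$-strict affinoid by separatedness of $W$ (with $H$-strict affine reduction), and $\bigcup\tilV_{i,x}=\tilW_x$ by the correspondence; this provides an $H$-strict structure on $\tilW_x$. For the reverse direction, refine a given $H$-strict structure on $\tilW_x$ to a finite open cover $\{\varcalU_i\}$ by $H$-strict affines (using that $H$-strict affine subspaces form a basis in each $H$-strict separated birational subspace, and that in $\tilW_x$ pairwise intersections of $H$-strict affines remain $H$-strict affine because each $\varcalU_i$ injects into $\bfP_{\wHx/\tilk}$). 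Each $\varcalU_i$ corresponds to an analytic subdomain $W_i\subseteq W$ containing $x$ with $\tilW_{i,x}=\varcalU_i$, and the reverse direction of the affinoid case applied inside $W_i$ furnishes an $H$-strict affinoid neighborhood $V_i$ of $x$ in $W_i$ with $\tilV_{i,x}=\varcalU_i$. Then $W':=\bigcup V_i$ satisfies $\tilW'_x=\bigcup\tilV_{i,x}=\bigcup\varcalU_i=\tilW_x$, hence is a neighborhood of $x$, and the pairwise intersections $V_i\cap V_j$ are $H$-strict affinoid by separatedness of $W$, so $W'$ is $H$-strict.

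The principal technical ingredient is the density of $\surd{H}$ in $\R^\times_{>0}$, which powers the shrinking from an arbitrary affinoid neighborhood to an $H$-strict one via Laurent subdomains. Apart from this, the argument is a systematic translation between $k$-analytic subdomains and their birational reductions through \cite[Theorem~4.5]{temkin2} and \cite[5.1]{temkin2}, so I do not anticipate a deep obstacle in any single step; the main care is in matching the combinatorics of $G$-coverings on the analytic side with that of $H$-strict structures on the birational side.
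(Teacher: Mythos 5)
Your overall architecture --- settle the affinoid/affine case first, then globalize through the germ--reduction correspondence of \cite[Theorem~4.5]{temkin2} --- is the same as the paper's, and your forward direction and both globalization steps are essentially the paper's argument. But the reverse affinoid step has a genuine gap: the Laurent domain $V\{r^{-1}U,r'U^{-1}\}$ need not be $H$-strict. The algebra of that domain is still built over $\calB=k\{s^{-1}U\}/I$ with the original radii $s_j$: the condition $|U_j|\le r_j$ only cuts the domain down to $|U_j|\le\min(r_j,s_j)$, and the natural map $k\{r^{-1}T,r'S\}\to\calO(V\{r^{-1}U,r'U^{-1}\})$ fails to be surjective when $r_j>s_j$. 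Concretely, if $|U_j(x)|=s_j$ with $s_j\notin\surd{H}$, then keeping $x$ in the interior forces $r_j>s_j$, the $j$-th Weierstrass condition becomes vacuous, and $U_j$ has spectral radius $s_j\notin\surd{H}$ on the resulting domain, which is therefore not $H$-strict. For a concrete failure take $W=V=\calM(k\{s^{-1}U\})$ a disc of radius $s\notin\surd{H}$ and $x$ its maximal (Gauss) point: the germ is good, yet $x$ has no $H$-strict neighborhood whatsoever, and your recipe outputs the non-$H$-strict annulus $\{r'\le|U|\le s\}$.

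The structural symptom is that you use the hypothesis ``$\tilW_x$ is $H$-strict affine'' only to extract affineness (hence goodness of the germ); if the rest of your argument went through, it would prove that every good point admits an $H$-strict affinoid neighborhood, which the example above refutes. The missing step is precisely where the paper's proof invokes $H$-strictness of the reduction: writing $\tilW_x=\bfP_{\wHx/\tilk}\{\tilf_1,\dots,\tilf_n\}$ for a presentation with radii $r_i$, one shows that for each $i$ with $r_i\notin\surd{H}$ the degree-$r_i$ reduction $\tilf_i$ must vanish --- a nonzero $\tilf_i$ would be integral over ${\wt{\calH(x)}}_H$, and the constant term of its minimal homogeneous polynomial would force $r_i^n\in H$. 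Vanishing of $\tilf_i$ is exactly the strict inequality $|f_i(x)|<r_i$, and only after securing it can the density of $\surd{H}$ be used to choose $s_i\in\surd{H}$ with $|f_i(x)|<s_i<r_i$ and pass to the Weierstrass domain $W''\{s_1^{-1}f_1,\dots,s_m^{-1}f_m\}$, whose algebra is an admissible quotient of a Tate algebra with all radii in $\surd{H}$; no lower Laurent bounds are needed. You should insert this reduction-theoretic vanishing argument before any shrinking of radii.
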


\begin{remark}\label{trivh}
{The inverse implication of the} lemma is false if $H$ is trivial, but the following weaker version
still holds:  if $U$ is separated and $H$-strict and $x\in U$ is a point, then any $H$-strict
open birational subspace $\tilW\subseteq\tilU_x$ can be obtained as the reduction at $x$ of an $H$-strict
$k$-analytic domain
$W\subseteq U$ containing $x$.
{This is proved similarly to the lemma, and one has only to modify
the direct analysis in the affine case.}
\end{remark}

\begin{proof}
We start with the direct implications. Assume first that $x$ has an $H$-strict affinoid neighborhood $W'$. So,
$W'=\calM(\calA)$ and there exists an
admissible epimorphism $k\{r_1^{-1}T_1,\dots,r_n^{-1}T_n\}\twoheadrightarrow \calA$ with $r_i\in H$. We have that
$\tilW_x=\tilW'_x=\bfP_{\wt{\calH(x)}/\tilk}\{\tilf_1,\dots,\tilf_n\}$ by \cite[\S4]{temkin2}, where $\tilf_i$ is the image of
$f_i$ in $\wt{\calH(x)}$ in degree $r_i$,
so for each $i$ either $\tilf_i$ vanishes or $\rho(\tilf_i)=r_i\in H$.  We conclude that
$\tilW_x$ is $H$-strict affine, as stated.

In general, if $x$ possesses an $H$-strict neighborhood $W'$ then we cover $W'$ by $H$-strict affinoid domains $W_i$ and note that the reduction of each $W_i$ at $x$
is $H$-strict by the above case. These reductions provide an $H$-strict covering of the separated graded birational space $\tilW_x$, hence we obtain that $\tilW_x$ is $H$-strict.

To prove the converse implications we, again, start with the affine case, so assume that $\tilW_x$ is $H$-strict and affine. By \cite[5.1]{temkin2},
$x$ possesses an affinoid neighborhood $W''=\calM(\calA)$. Fix an
admissible epimorphism $k\{r_1^{-1}T_1,\dots ,r_n^{-1}T_n\}\twoheadrightarrow \calA$ satisfying $T_i \mapsto f_i \in \calA$, and
without loss of generality assume that $r_i\in\surd{H}$ if and only if $i>m$ for some $m$. Then
$\tilW_x=\tilW''_x=\bfP_{\wt{\calH(x)}/\tilk}\{\tilf_1,\dots,\tilf_n\}$, where $\tilf_i \in \wt{\calH(x)}$ is the image of
$f_i$ in degree $r_i$, so each $\tilf_i$ either vanishes or satisfies $\rho(\tilf_i)=r_i\in\surd{H}$ by
$H$-strictness of $\tilW_x$. {(We use that if $\tilf_i \ne 0$ then the
$H$-strictness implies that $\tilf_i$ is integral over the $H$-graded field $\widetilde{\calH(x)}_H$,
and if $n$ is the degree of the minimal homogeneous polynomial for $\tilf_i$ over $\widetilde{\calH(x)}_H$
then the constant term of this polynomial is nonzero with grading $r_i^n$, and hence $r_i^n \in H$.)} It follows
that $\tilf_i=0$ for $1\le i\le m$; i.e. $r_i>|f_i(x)|$ for those $i$. Since $H \ne 1$, $\surd{H}$ is dense in
$\R^{\times}_{>0}$.  Thus, for each $1 \le i \le m$ there exist $s_i\in\surd{H}$ such that $r_i>s_i>|f_i(x)|$.
Then $W'=W''\{s_1^{-1}f_1,\dots,s_m^{-1}f_m\}$ is an $H$-strict affinoid neighborhood of $x$, as required.

Finally, if $\tilW_x$ is $H$-strict then we cover it by $H$-strict affine subspaces $\tilW_1,\dots,\tilW_n\subseteq\tilW_x$.
By the affine case, each $\tilW_i$ is the reduction at $x$ of an $H$-strict affinoid domain $W_i$, and it remains to note
that the $H$-strict domain $W'=\cup_{i=1}^nW_i$ is a neighborhood of $x$ by \cite[4.5]{temkin2}.
\end{proof}

\subsection*{$H$-strict structures}
Recall that $k$-analytic spaces are defined using a topological notion of {\em nets}, see \cite[\S1.1]{berihes}.
In general (for possibly non-separated $k$-analytic spaces),
$H$-strictness may not be preserved by intersections of separated $H$-strict
$k$-analytic domains, so we are led to
the following definition in case $H \ne 1$:
by an {\em $H$-strict structure} on an arbitrary $k$-analytic space $X$ we mean a net
$\{X_i\}$ of compact separated $H$-strict $k$-analytic domains. (The stronger condition that $X_i$'s are
$k$-affinoid leads to an equivalent definition.) We say that two $H$-strict structures $\{X_i\}_{i\in I}$ and
$\{X'_j\}_{j\in J}$ are {\em equivalent} if their union is an $H$-strict structure. This condition is equivalent
to all intersections $X_i\cap X'_j$ (which are separated but possibly non-compact
$k$-analytic domains in $X$) being $H$-strict.
This really is an equivalence relation:  if $\{X''_l\}_{l \in L}$ is a third $H$-strict
structure on $X$ with each $X'_j \cap X''_l$ also $H$-strict then for each pair $(i,l)$
the separated $k$-analytic space $X_i \cap X''_l$ is covered by the
$H$-strict overlaps $X_i \cap X'_j \cap X''_l = (X_i \cap X'_j) \cap (X'_j \cap X''_l)$
in the $H$-strict spaces $X'_j$ for varying $j$.

\begin{remark}\label{hstrrem}  Let $H \subseteq \R^{\times}_{>0}$ be a non-trivial subgroup containing
$|k^{\times}|$.

(1) The notion of $H$-strictness depends only on the group $\surd{H}$.

(2) If $H\subseteq\surd{|k^{\times}|}$
 then $H$-strictness is the usual $k$-analytic strictness.

(3) Berkovich defined in \cite[\S1.2]{berihes}
a general notion of $\Phi$-analytic space, where $\Phi$ is a (suitable)
family of $k$-affinoid spaces. His definition was mainly motivated by the case of strictly
$k$-analytic spaces, but
one checks immediately that, more generally, the class $\Phi_H$ of all $H$-strict $k$-affinoid spaces satisfies
the conditions (1)--(5) of loc.cit., and the corresponding $\Phi_H$-analytic spaces are exactly the
$k$-analytic
spaces with an $H$-strict structure.
\end{remark}

\begin{remark}\label{hstrrem2}
(1) We exclude the case $H=1$, which can only happen for trivially-valued $k$,
because $1$-strict affinoids do not satisfy the density
condition from \cite[\S1]{berihes}, so they do not form a net
in the sense of \cite[\S1.1]{berihes}. (Briefly, the trivial group
$\surd{H}$ is too small to provide a sufficiently large
collection of positive real numbers in the definition of
$H$-strict $k$-analytic subdomains.)  However, one can weaken our definition by
removing the density condition in the definition of a net.
The resulting definition of $H$-strictness then makes sense and
becomes the usual notion of strict $k$-analyticity from \cite[1.2]{berihes} in the case of
a trivially-valued field $k$.

(2) The case of $H=1$ seems to be much less important. For completeness, we will indicate in remarks which results hold
true in this case, and which modifications in the proofs are required.

(3) One can, more generally, define $H$-strictness for any submonoid $H\subseteq\R^{\times}_{>0}$ containing
$|k^{\times}|$. We do not study this case because some basic results are proved in \cite{temkin2} only when $H$
is a group. However, it seems very probable that every result stated for a group $H$ (resp. a non-trivial group
$H$) holds true for submonoids (resp. submonoids with an element $r<1$).
\end{remark}

\begin{theorem} \label{Hstrth}
Let $H \subseteq \R^{\times}_{>0}$ be a non-trivial subgroup containing $|k^{\times}|$.
If a $k$-analytic space $X$ admits an $H$-strict structure then the intersection of any two separated
$H$-strict $k$-analytic domains $($not assumed to be  compatible with the structure
on $X$$)$
is $H$-strict. In particular, all
$H$-strict structures on $X$ are equivalent and the maximal such structure consists of all compact separated
$H$-strict $k$-analytic domains.
\end{theorem}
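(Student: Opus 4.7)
The first assertion --- that the intersection $W := U \cap V$ of any two separated $H$-strict $k$-analytic domains $U, V \subseteq X$ is itself $H$-strict --- is the crux of the theorem, and the remaining statements are formal consequences. Indeed, once it is known, two $H$-strict structures $\{X_i\}$ and $\{X'_j\}$ on $X$ are equivalent in the sense recalled just before the theorem, since each $X_i\cap X'_j$ is $H$-strict. Moreover, the collection of all compact separated $H$-strict $k$-analytic domains in $X$ obviously contains every $H$-strict structure, covers $X$, and satisfies the net intersection axiom: for any two such members $Y_1, Y_2$ and any $x\in Y_1\cap Y_2$, the first assertion applied to $Y_1, Y_2$ gives that $Y_1\cap Y_2$ is $H$-strict and so admits an $H$-strict affinoid neighborhood of $x$. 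It is therefore itself an $H$-strict structure, manifestly maximal.

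For the first assertion, fix $x \in W$; it suffices to produce an $H$-strict affinoid neighborhood of $x$ inside $W$, since the totality of such neighborhoods at varying $x$ furnishes the required net of compact separated $H$-strict domains on $W$: the intersection axiom within $W$ follows because for two such neighborhoods $Y_1, Y_2$ the intersection $Y_1 \cap Y_2 \subseteq Y_1$ sits inside a separated $H$-strict space, hence is $H$-strict by the generalities preceding the theorem, and therefore admits $H$-strict affinoid neighborhoods at each of its points. Choose a member $X_i$ of the given $H$-strict structure of $X$ containing $x$ and consider the separated $k$-analytic domain $W' := U \cap V \cap X_i \subseteq X_i$. Since $U$, $V$, and $X_i$ are separated and $H$-strict, the direct implication of Lemma~\ref{Hredlem} gives that $\tilU_x$, $\tilV_x$, and $\tilX_{i,x}$ are $H$-strict affine open subspaces of $\bfP_{\wHx/\tilk}$, say of the form $\bfP_{\wHx/\tilk}\{A_U\}$, $\bfP_{\wHx/\tilk}\{A_V\}$, $\bfP_{\wHx/\tilk}\{A_i\}$, respectively, each subalgebra being finitely generated over $\tilk$ by homogeneous elements with gradings in $\surd{H}$.

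Separatedness of these three germs ensures that germ reduction commutes with intersections in the common ambient $\bfP_{\wHx/\tilk}$, giving
\[
\widetilde{W'}_{x} \;=\; \tilU_x \cap \tilV_x \cap \tilX_{i,x} \;=\; \bfP_{\wHx/\tilk}\{A_U\cdot A_V \cdot A_i\}.
\]
The composed $\tilk$-subalgebra $A_U\cdot A_V \cdot A_i$ is again finitely generated by homogeneous elements with gradings in $\surd{H}$, so $\widetilde{W'}_x$ is $H$-strict affine, and the converse direction of Lemma~\ref{Hredlem}, applied to the separated space $W'$, produces the desired $H$-strict affinoid neighborhood of $x$ inside $W' \subseteq W$.

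The main obstacle is not deep but lies in the bookkeeping at the reduction step: one must know that germ reduction commutes with intersections of separated analytic subdomains inside their common ambient $\bfP$, and that a product of finitely generated $\surd{H}$-graded $\tilk$-subalgebras of $\wHx$ is again finitely generated by homogeneous elements with gradings in $\surd{H}$. Both facts are routine from Lemma~\ref{Hredlem} and the framework of reduction of germs developed in \cite{temkin2}, but they are the reason the argument must first cut $W$ with an element of the ambient net $\{X_i\}$ in order to land inside a separated space before invoking the reduction-of-germs dictionary.
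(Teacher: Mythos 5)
There is a genuine gap at the central step. You assert that ``separatedness of these three germs ensures that germ reduction commutes with intersections in the common ambient $\bfP_{\wHx/\tilk}$,'' but this is exactly where the argument breaks down when $X$ itself is not separated at $x$. Germ reduction identifies subdomains of $(X,x)$ with open subspaces of $\tilX_x$ and commutes with intersections computed \emph{inside $\tilX_x$}, not inside $\bfP_{\wHx/\tilk}$. Since $(X,x)$ need not be separated, the local homeomorphism $\tilX_x\to\bfP_{\wHx/\tilk}$ can fail to be injective, and then $\tilU_x\cap\tilV_x\cap\tilX_{i,x}$ (taken in $\tilX_x$) maps onto a possibly \emph{proper} open subset of $\phi(\tilU_x)\cap\phi(\tilV_x)\cap\phi(\tilX_{i,x})$: a point of the latter may be hit by three distinct points of $\tilX_x$, one in each of the three subspaces. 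A general open subset of an $H$-strict open subset of $\bfP_{\wHx/\tilk}$ need not be $H$-strict, so your identification $\wt{W'}_x=\bfP_{\wHx/\tilk}\{A_UA_VA_i\}$ is unjustified; in fact the claim that $\wt{(U\cap X_i)}_x$ is an $H$-strict subset of $\bfP_{\wHx/\tilk}$ is already an instance of the theorem you are trying to prove. Note also that you use the hypothesis only to find \emph{one} member $X_i$ of the structure containing $x$; that is strictly weaker than the existence of an $H$-strict structure, and the weaker hypothesis does not suffice. (A secondary slip: Lemma~\ref{Hredlem} gives that $\tilU_x$ is $H$-strict \emph{affine} only when $x$ has an $H$-strict \emph{affinoid} neighborhood in $U$, i.e.\ when $U$ is good at $x$; a separated $H$-strict domain need not be good, so $\tilU_x$ is in general only covered by $H$-strict affine opens.)

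The paper's proof supplies precisely the missing ingredient: the whole family $\{\wt{(X_j)}_x\}_{j}$ coming from the given $H$-strict structure induces an $H$-strict structure on the (possibly non-separated) birational space $\tilX_x$, so $\tilX_x$ is $H$-strict in the sense of admitting a proper morphism to a birational space over the $H$-graded part $\wHx_H$; then \cite[2.7]{temkin2} guarantees that $H$-strictness of subspaces of an $H$-strict birational space is preserved under intersections, whence $\tilW_x=\tilU_x\cap\tilV_x$ is $H$-strict and Lemma~\ref{Hredlem} converts this back to an $H$-strict neighborhood of $x$ in $W$. Your formal deductions of the equivalence of structures and the description of the maximal structure from the first assertion are fine, but the first assertion itself requires the descent to $\tilX_{x,H}$, not just a computation of images in $\bfP_{\wHx/\tilk}$.
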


Using Remark \ref{trivh}, this theorem and its proof can be extended to the case $H = 1$.

\begin{proof}
Given separated $H$-strict $k$-analytic domains $U$ and $V$ in $X$, we have to prove that $W=U\cap V$ is $H$-strict, and by Lemma \ref{Hredlem} this reduces to showing that $\tilW_x$ is $H$-strict for a point $x\in W$. By the same lemma, the reductions $\tilU_x$ and $\tilV_x$ are $H$-strict and the $H$-strict structure $\{X_j\}_{j\in J}$ on $X$ induces an $H$-strict structure $\{\wt{(X_j)_x}\}_{j\in J}$ on $\tilX_x$. Since $\tilX_x$ is $H$-strict, \cite[2.7]{temkin2} asserts that $H$-strictness of its subspaces is preserved by intersections, and hence $\tilW_x=\tilU_x\cap\tilV_x$ is $H$-strict.
\end{proof}

Theorem \ref{Hstrth} excludes any ambiguity from the following definition
for any (possibly trivial) subgroup $H \subseteq \R^{\times}_{>0}$ containing
$|k^{\times}|$: a $k$-analytic space is {\em
$H$-strict} if it admits an $H$-strict structure.

\subsection*{$H$-strict germs}
If $H \ne 1$ then a germ $(X,x)$ is called {\em $H$-strict} if $x$ admits an
$H$-strict neighborhood in $X$. (The latter definition makes no sense for trivial $H$ since
in that case the existence of such a neighborhood does not imply the existence of a base
of such neighborhoods, so the concept is not intrinsic to the germ $(X,x)$.) Now, since the
notion of an $H$-strict germ is defined, we can generalize Lemma \ref{Hredlem} as follows.

\begin{theorem} \label{Hredth}
Fix a non-trivial subgroup $H \subseteq \R^{\times}_{>0}$ containing $|k^{\times}|$, then

(1) A $k$-analytic space $X$ is $H$-strict if and only if the germ $(X,x)$ is $H$-strict for any $x\in X$.

(2) A germ $(X,x)$ is $H$-strict if and only if its reduction $\tilX_x$ is $H$-strict.
\end{theorem}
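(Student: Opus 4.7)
My plan is to establish (2) first and then bootstrap (1) from it, with Lemma~\ref{Hredlem} serving as the translation between $H$-strictness of a neighborhood of $x$ and $H$-strictness of the reduction $\tilX_x$.

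For (2), the forward direction is routine: an $H$-strict neighborhood $W$ of $x$ carries an $H$-strict structure, from whose quasinet property I would extract a compact separated $H$-strict affinoid $W' \subseteq W$ that is a neighborhood of $x$; then $\tilW'_x = \tilX_x$ since $W'$ and $X$ have the same germ at $x$, and Lemma~\ref{Hredlem} forces this reduction to be $H$-strict. For the converse, the obstacle is that Lemma~\ref{Hredlem} requires a separated ambient space whereas $X$ is arbitrary. I would pick a separated $k$-analytic neighborhood $V$ of $x$ in $X$ furnished by the defining net on $X$ (choosing a finite family $V_1,\dots,V_n$ of affinoid charts from the net that contain $x$ and whose intersection is a neighborhood of $x$, which then lies inside the separated affinoid $V_1$). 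Then $\tilV_x = \tilX_x$ is $H$-strict by hypothesis, and Lemma~\ref{Hredlem} applied to the separated $V$ produces an $H$-strict affinoid neighborhood $W'$ of $x$ in $V$; a short interior-chase shows that $W'$ is also a neighborhood of $x$ in $X$ since $V$ is.

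The forward implication of (1) is immediate from part (2) (or directly from the definition): any $H$-strict structure on $X$ places $x$ in the union of finitely many compact separated $H$-strict elements forming a neighborhood of $x$, which inherits an $H$-strict structure.

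The crux is the backward implication of (1): assuming every germ is $H$-strict, the plan is to exhibit the collection $\mathcal{F}$ of all compact separated $H$-strict $k$-analytic subdomains of $X$ as an $H$-strict net. The quasinet property at each $x$ follows by localizing to any $H$-strict neighborhood of $x$ supplied by the hypothesis and invoking the quasinet property of its $H$-strict structure. The hard part will be the net axiom at a point $x \in U \cap V$ with $U, V \in \mathcal{F}$. Here $U \cap V$ is separated (lying inside $U$), and its reduction at $x$ equals $\tilU_x \cap \tilV_x$ as subspaces of $\tilX_x$ by compatibility of the reduction functor with intersections of analytic domains. Part (2) gives $H$-strictness of $\tilX_x$, and $\tilU_x, \tilV_x$ are then $H$-strict open subspaces of an $H$-strict ambient, so \cite[2.7]{temkin2} (preservation of $H$-strictness under intersection of open subspaces in an $H$-strict birational space) yields that $\widetilde{(U\cap V)}_x$ is $H$-strict. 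A final application of Lemma~\ref{Hredlem} to the separated space $U \cap V$ supplies the needed $H$-strict affinoid neighborhood in $\mathcal{F}$ contained in $U \cap V$, completing the verification of the net axioms and hence of the $H$-strictness of $X$.
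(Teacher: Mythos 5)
The overall architecture of your proposal (prove (2) first, then bootstrap (1) via Lemma~\ref{Hredlem} and \cite[2.7]{temkin2}) is the same as the paper's, and your treatment of part (1) essentially matches it. However, the converse direction of (2) has a genuine gap. You claim to find a separated $k$-analytic neighborhood $V$ of $x$ by taking finitely many charts $V_1,\dots,V_n$ of the net containing $x$ ``whose intersection is a neighborhood of $x$.'' The quasinet axiom only guarantees that the \emph{union} $V_1\cup\dots\cup V_n$ is a neighborhood of $x$; the intersection generally is not, and the union need not be separated. More fundamentally, a general $k$-analytic space need not be locally separated at all --- local separatedness is a nontrivial property in this theory, equivalent to injectivity of $\tilX_x\to\bfP_{\wHx/\tilk}$, and it is precisely one of the properties whose descent is studied elsewhere in the paper. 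So there is no separated neighborhood $V$ to which Lemma~\ref{Hredlem} could be applied, and your argument collapses exactly in the case the theorem is designed to cover. The paper instead covers the \emph{germ} $(X,x)$ by finitely many separated germs $(X_i,x)$ with $H$-strict reductions (lifting a covering of $\tilX_x$), then shrinks the $X_i$ twice using Lemma~\ref{Hredlem} so that each $X_i$ is $H$-strict and each overlap $X_i\cap X_j$ lands inside a prechosen $H$-strict domain $X'_{ij}$, and finally verifies that $\bigcup X_i$ is $H$-strict by checking the pairwise intersections are $H$-strict via intersections taken inside the separated ambient spaces $X_i$. Some such gluing argument is unavoidable here.

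A secondary issue: in the forward direction of (2) you extract ``a compact separated $H$-strict affinoid $W'$ that is a neighborhood of $x$.'' Such an affinoid neighborhood exists only when $X$ is good at $x$, which is not assumed. The correct move (and the paper's, via the proof of Theorem~\ref{Hstrth}) is to take the finitely many charts of the $H$-strict structure whose union is a neighborhood of $x$; their reductions at $x$, together with the reductions of their pairwise intersections, form an $H$-strict structure on $\tilX_x$, which is what makes $\tilX_x$ $H$-strict. This gap is fixable by the same device you already use in part (1), but as written the step is incorrect.
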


The theorem is of local nature, so it does not make sense for the trivial $H$.

\begin{proof}
We start with (2). We saw in the proof of Theorem \ref{Hstrth} that an $H$-strict germ has an $H$-strict reduction, so
now assume that $\tilX_x$ is $H$-strict. Find a finite covering of $(X,x)$ by separated germs $(X_i,x)$
such that their reductions $\tilX_i\subseteq\tilX_x$ are $H$-strict (one finds such a covering of $\tilX_x$ and
then lifts it to the germ). By \cite[4.8]{temkin2} we can shrink each $X_i$
to make them separated. Set
$X_{ij}=X_i\cap X_j$, so each germ $(X_{ij},x)$ has $H$-strict reduction $\tilX_i\cap\tilX_j$, and by Lemma
\ref{Hredlem} we can find an open $X'_{ij} \subseteq X_{ij}$
around $x$ that is $H$-strict.  Again using Lemma \ref{Hredlem}, we can shrink
the $X_i$'s once again so that all
$X_i$'s are $H$-strict and $X_i\cap X_j\subseteq X'_{ij}$ for any choice of $i,j$.

We claim that $X'=\cup X_i$ is an $H$-strict neighborhood
of $x$. Only $H$-strictness needs a proof, and
by definition it suffices to check that each $X_i\cap X_j$ is $H$-strict. Notice that by the
construction $X_i\cup X'_{ij}$ is separated, so $H$-strictness is inherited by the intersection $X_i\cap
X'_{ij}$. By similar reasoning,
$X_i\cap X'_{ij}\cap X_j$ is $H$-strict, but we have chosen $X_i$'s so that the
latter intersection is just $X_i\cap X_j$.

Now, let us prove (1). The direct implication is obvious, so assume that all germs are $H$-strict. Then $X$ can be covered by $H$-strict domains, so we should only prove that if $U,V\subseteq X$ are separated $H$-strict domains then the intersection $W=U\cap V$ is $H$-strict. For any point $x\in W$, the reduction $\tilW_x=\tilU_x\cap\tilV_x$ is $H$-strict by the $H$-strictness of $\tilX_x$, $\tilU_x$ and $\tilV_x$. So, (2) implies that the germ $(W,x)$ is $H$-strict, that is, $x$ possesses an $H$-strict neighborhood inside $W$. Since $W$ is separated, this implies that it is $H$-strict.
\end{proof}

Finally, for any (possibly trivial) subgroup $H \subseteq \R^{\times}_{>0}$
containing $|k^{\times}|$, one can define a suitable notion of morphism to
make a category of $H$-strict $k$-analytic spaces similar to the category of strictly
$k$-analytic spaces. One possibility for $H \ne 1$ is to apply Berkovich's definition of
morphism of $\Phi$-analytic spaces with
$\Phi=\Phi_H$ being the class of all $H$-strict $k$-affinoid spaces. We prefer a more ad hoc equivalent
definition (which has the merit of ``working'' for $H = 1$ as well):
an {\em $H$-strict morphism} $Y\to X$ between $H$-strict $k$-analytic spaces is a $k$-analytic morphism
for which the preimage of any
$H$-strict $k$-analytic subdomain of $X$ is $H$-strict in $Y$.
Note that in case $H = 1$ this recovers the notion of a strictly $k$-analytic morphism
for a trivially-valued field.

\begin{theorem} \label{Hfulth}
If $H \subseteq \R^{\times}_{>0}$ is a subgroup containing $|k^{\times}|$ then
the subcategory of $H$-strict $k$-analytic spaces with $H$-strict analytic morphisms is full in the category of
all $k$-analytic spaces.
\end{theorem}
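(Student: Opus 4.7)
The plan is to pass to germs and reduce the question to a statement in the birational category where the structural results of \S\ref{Hstrsec} apply directly.

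First I would set up the reduction. Let $f\:Y\to X$ be a $k$-analytic morphism between $H$-strict spaces and let $U\subseteq X$ be an $H$-strict analytic domain; I need to show $f^{-1}(U)$ is $H$-strict. By the definition of $H$-strict structure, $U$ admits an $H$-strict structure by $H$-strict $k$-affinoid domains $U_i$, and since $f^{-1}(U_i)\cap f^{-1}(U_j)=f^{-1}(U_i\cap U_j)$ and $U_i\cap U_j$ is $H$-strict by Theorem~\ref{Hstrth}, it suffices to treat each $f^{-1}(U_i)$ separately. So I may assume $U$ is $H$-strict $k$-affinoid.

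Second, I would reduce to germs. By Theorem~\ref{Hredth}(1), $f^{-1}(U)$ is $H$-strict if and only if every germ $(f^{-1}(U),y)$ with $y\in f^{-1}(U)$ is $H$-strict, and by Theorem~\ref{Hredth}(2) this is equivalent to $\widetilde{(f^{-1}(U))}_y$ being $H$-strict as an $\R_{>0}^\times$-graded birational space. Setting $x=f(y)\in U$ and using the standard identification from \cite[\S4]{temkin2}, we have
\[
\widetilde{(f^{-1}(U))}_y=\widetilde{f}_y^{-1}(\widetilde{U}_x)
\]
as open subspaces of $\widetilde{Y}_y$, where $\widetilde{f}_y\:\widetilde{Y}_y\to\widetilde{X}_x$ is the induced morphism of reductions. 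By the $H$-strictness hypotheses on $Y$ and $U$ together with Theorem~\ref{Hredth}(2), both $\widetilde{Y}_y$ and the open subspace $\widetilde{U}_x\subseteq\widetilde{X}_x$ are $H$-strict birational spaces. The theorem therefore reduces to the following purely birational claim: if $\phi\:\varcalY'\to\varcalX'$ is a morphism in $\mathrm{bir}_{\widetilde{k}}$ with $\varcalY'$ $H$-strict and $\varcalV'\subseteq\varcalX'$ is an $H$-strict open subspace, then $\phi^{-1}(\varcalV')$ is $H$-strict in $\varcalY'$.

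The main obstacle is this birational claim, which I would handle first in the separated case and then globalize. When $\varcalY'$ is separated and $H$-strict, it is identified with an open subspace of $\P_{\widetilde{\calH(y)}/\widetilde{k}}$ that is the preimage of an open in $\P_{\widetilde{\calH(y)}_H/\widetilde{k}}$, and similarly $\varcalV'$ corresponds to an open in $\P_{\widetilde{\calH(x)}_H/\widetilde{k}}$. Since the restriction map $\P_{\widetilde{\calH(y)}/\widetilde{k}}\to\P_{\widetilde{\calH(x)}/\widetilde{k}}$ is the pullback of $\P_{\widetilde{\calH(y)}_H/\widetilde{k}}\to\P_{\widetilde{\calH(x)}_H/\widetilde{k}}$ along the natural $H$-refinement map, the preimage of $\varcalV'$ in $\P_{\widetilde{\calH(y)}/\widetilde{k}}$ is again $H$-strict, and $\phi^{-1}(\varcalV')$ is the intersection of this preimage with the $H$-strict $\varcalY'$, which is $H$-strict by \cite[2.7]{temkin2}. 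For the non-separated case I would fix an $H$-strict structure on $\varcalY'$ by separated $H$-strict open subspaces, apply the separated case to each piece, and observe that the overlaps remain $H$-strict by the uniqueness (Theorem~\ref{Hstrth}) of $H$-strict structures, so the union $\phi^{-1}(\varcalV')$ inherits an $H$-strict structure. Combining this birational claim with the germwise criterion from Theorem~\ref{Hredth} finishes the proof.
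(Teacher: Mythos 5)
Your reduction to germs via Theorem~\ref{Hredth} and the identification $\widetilde{(f^{-1}U)}_y=\widetilde{f}_y^{-1}(\widetilde{U}_x)$ are fine, but the ``purely birational claim'' you reduce to is false as stated, and the step where you prove it is exactly where things break: you compute $\phi^{-1}(\varcalV')$ as $\varcalY'\cap\psi^{-1}(V')$, where $V'$ is the image of $\varcalV'$ in $\P_{\widetilde{\calH(x)}/\widetilde{k}}$, i.e.\ you pull $\varcalV'$ back through the Riemann--Zariski space rather than through $\varcalX'=\widetilde{X}_x$ itself. These two preimages agree only when $\varcalX'\to\P_{\widetilde{\calH(x)}/\widetilde{k}}$ is injective over $V'$, and $\widetilde{X}_x$ may well be non-separated; in general $\phi^{-1}(\varcalV')$ is a strictly smaller open set, and an arbitrary open subset of an $H$-strict birational space need not be $H$-strict. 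Concretely, pick $r\notin\surd{H}$, let $X$ be the closed unit disc with the open subdisc of radius $r$ doubled (two copies $D_1,D_2$ of $D=\calM(k\{t\})$ glued along the annulus $A=\{r\le|t|\le 1\}$), let $U=D_1$ and let $f\:Y=D_2\to X$ be the second chart. Then $Y$ and $U$ are $H$-strict, and at the Gauss point $y$ of radius $r$ (with $x=f(y)$) both $\widetilde{Y}_y$ and $\widetilde{U}_x$ are the full, $H$-strict, spaces $\P_{\widetilde{\calH(y)}/\widetilde{k}}$ and $\P_{\widetilde{\calH(x)}/\widetilde{k}}$, so $\varcalY'\cap\psi^{-1}(V')$ is everything; yet $\phi^{-1}(\varcalV')=\widetilde{A}_y$ and $f^{-1}(U)=A$ are not $H$-strict. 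This does not contradict the theorem, because this $X$ is not $H$-strict --- which is precisely the point: your argument only ever uses the $H$-strictness of $Y$ and of $U$, never that of $X$, and the example shows the conclusion genuinely fails without that hypothesis.

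The missing idea is to bring in the $H$-strictness of $\widetilde{X}_x$ via the canonical proper morphism $\pi\:\widetilde{X}_x\to(\widetilde{X}_x)_H$ of \cite[2.6--2.7]{temkin2}: by its uniqueness/universality, the $H$-strict open $\widetilde{U}_x$ is the \emph{full} $\pi$-preimage of an open subset of $(\widetilde{X}_x)_H$ and $\widetilde{f}_y$ descends to the $H$-graded models, after which the preimage computation you want does go through. (This is essentially the argument of \cite[4.10]{temkin2}, which is all the paper itself offers: it gives no independent proof, asserting that Temkin's proof for $H=|k^\times|$ carries over verbatim.) Two smaller defects: your claim that $\P_{\widetilde{\calH(y)}/\widetilde{k}}\to\P_{\widetilde{\calH(x)}/\widetilde{k}}$ is the \emph{pullback} of the map of $H$-graded spaces is also not true --- in the proof of Theorem~\ref{birdescent} the paper only establishes surjectivity of the comparison map to the fibre product, and commutativity of the square is all that step needs --- and your use of Theorem~\ref{Hredth} silently excludes $H=1$, a case the statement allows and for which the paper substitutes Remark~\ref{trivh}.
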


The particular case $H=|k^{\times}|$ (including the case $H = 1$)
was proved in \cite[4.10]{temkin2}. That proof applies verbatim to the more
general situation in Theorem \ref{Hfulth}
as soon as one replaces strict analyticity (i.e., $|k^{\times}|$-strictness) with $H$-strictness.
The special case $H=1$ for good $k$-analytic spaces is a part of GAGA over a trivially-valued field
\cite[3.5.1(v)]{berbook}.

{
\subsection*{$H$-graded reduction and $H$-strict $G$-topology}
In view of Theorem \ref{Hfulth}, when working within the category of $H$-strict analytic spaces it is natural to provide each $H$-strict analytic space $X$ with the $G$-topology of $H$-strict analytic subdomains. We call the latter the {\em $H$-strict $G$-topology} and denote the associated $G$-topological space by $X_H$. In particular, the usual $G$-topological space $X_G$ coincides with $X_H$ for $H={\bfR_{>0}^\times}$. We will freely view $X_H$ as a Grothendieck category. By a {\em point} of $X_H$ we mean a prime filter of $H$-strict subdomains of $X$ as defined in \cite[p. 83, (p1)--(p3),(p4)']{points}. Intuitively, these are all domains "containing" the point. Conditions (p1)--(p3) define a filter; they are standard. The last condition means that if $V$ is in the filter and $V=\cup_i V_i$ is a {\em Tate covering} then at least one $V_i$ is in the filter. This is a subtle condition making use of the $G$-topology. We will not need this, but it is easy to see that this definition agrees with the classical one from \cite{sga4}, i.e., there is a one-to-one correspondence between the set of points of $X_H$ and the set of isomorphism classes of points of the topos associated with $X_H$.

By $|X_H|$ we denote the set of all points of $X_H$ topologized as follows. For any $H$-strict analytic subdomain $V\subseteq X$ one can identify $|V_H|$ with the subset of $|X_H|$ consisting of points $x$ such that $V\in x$. In particular, if $U$ is another $H$-strict subdomain then $|(U\cap V)_H|=|U_H|\cap|V_H|$ and we obtain that the sets $|W_H|$, where $W$ runs through all $H$-strict subdomains, form a topology base for $|X_H|$. We call $|X_H|$ the topological realization of $X_H$, and our next aim is to describe it in terms of germ reductions.

\begin{remark}
In fact, $|X_H|$ is the topological realization of the topos of $X_H$ as defined in \cite{sga4}. In addition, it follows easily from Deligne's theorem (see \cite[\S2.2]{temkin3} for details) that the topoi of $X_H$ and $|X_H|$ are canonically equivalent. Thus, it is safe and often natural to work with $|X_H|$ instead of $X_H$. It is deduced from Deligne's theorem in \cite[\S2.2]{temkin3} that $|X_H|=\coprod_x \wt{(X,x)}_H$, but we will give below an elementary direct proof of this fact, not using Deligne's theorem. The fact that $X_H$ has enough points will be obtained as a corollary.
\end{remark}

If $X$ is an $H$-strict analytic space and $x\in X$ is a point then $\tilX_x$ is an $H$-strict birational space with associated graded field $\wt{\calH(x)}$, hence it is the pullback of a uniquely defined birational space with associated graded field $\wt{\calH(x)}_H=\oplus_{g\in H}\wt{\calH(x)}_g$. We denote this latter space $\wt{(X,x)}_H$ and call it the {\em $H$-graded reduction} of the germ $(X,x)$. Note that this reduction can be constructed directly from $(X,x)$ as follows: if $(X,x)$ is good we can assume that $X$ is an $H$-strict $k$-affinoid space, say $X=\calM(\calA)$, and we define $\wt{(X,x)}_H$ to be the affine birational space associated with the image of the $H$-graded reduction $\tilcalA_H=\oplus_{g\in H}\tilcalA_g$ in $\wt{\calH(x)}_H$. In general, the reduction is defined through gluing.

\begin{theorem}\label{Hgermth}
Assume that $X$ is an $H$-strict $k$-analytic space and $x\in X$ is a point. Then the $H$-graded reduction functor establishes a one-to-one correspondence between the $H$-strict subdomains of $(X,x)$ and $H$-graded birational subspaces of $\wt{(X,x)}_H$.
\end{theorem}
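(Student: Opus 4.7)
The strategy is to factor the claimed bijection through the known bijection from \cite[4.5]{temkin2} for the full $\R^\times_{>0}$-graded reduction. That result gives, without any $H$-strictness condition, a bijection $V \leftrightarrow \tilV_x$ between analytic subdomains $V$ of $(X,x)$ and $\R^\times_{>0}$-graded birational subspaces of $\tilX_x$. The plan is to show that this bijection restricts to one between $H$-strict subdomains of $(X,x)$ and $H$-strict birational subspaces of $\tilX_x$, and then to identify the latter with $H$-graded birational subspaces of $\wt{(X,x)}_H$ by pullback along the canonical proper morphism $\tilX_x \to \wt{(X,x)}_H$.

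For the restriction, I would verify separately the two directions. If $V$ is $H$-strict, then $\tilV_x$ is $H$-strict: choosing an $H$-strict affinoid neighborhood of $x$ inside $V$ and invoking the direct half of Lemma \ref{Hredlem} exhibits $\tilV_x$ as $H$-strict affine, and the general $H$-strict case is then reduced to this one by finite $H$-strict affinoid covers. Conversely, if $\tilV_x$ is $H$-strict, then $V$ is $H$-strict as a germ: this is precisely Theorem \ref{Hredth}(2) applied to $(V,x)$, whose proof invokes Remark \ref{trivh} to upgrade an $H$-strict reduction datum to an $H$-strict analytic neighborhood inside a separated affinoid.

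For the identification of $H$-strict $\R^\times_{>0}$-graded birational subspaces of $\tilX_x$ with $H$-graded birational subspaces of $\wt{(X,x)}_H$, I would use the definition of $H$-strictness in $\bir$: by Theorem \ref{Hredth}(2) the space $\tilX_x$ is itself $H$-strict, so it comes equipped with a canonical proper morphism $\tilX_x \to \wt{(X,x)}_H$, and by \cite[2.6,~2.7]{temkin2} this factorization is unique up to unique isomorphism. Pullback along this proper morphism sends an $H$-graded open birational subspace of $\wt{(X,x)}_H$ to an $H$-strict open birational subspace of $\tilX_x$, and the uniqueness of the proper factorization (together with the openness of the proper morphism) shows this is a bijection. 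Composing the two bijections yields the theorem, with the canonical $H$-graded reduction $V \mapsto \wt{V}_H$ of the statement identified as the composition $V \mapsto \tilV_x \mapsto (\text{image in } \wt{(X,x)}_H)$.

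The main obstacle — and the only step whose substance is not pure bookkeeping — is the ``if'' direction in the first restriction claim: producing an $H$-strict analytic subdomain out of an $H$-strict birational subspace of $\tilX_x$. All the genuine content has been absorbed into Theorem \ref{Hredth}(2) and Remark \ref{trivh}, so once those are in hand the present proof is essentially formal. I would take care to note explicitly that the functoriality in the statement (an $H$-strict morphism of germs pulls $H$-graded birational subspaces back to $H$-graded birational subspaces) follows immediately from the analogous functoriality in \cite[4.5]{temkin2} together with compatibility of the proper morphism $\tilX_x \to \wt{(X,x)}_H$ with morphisms of germs.
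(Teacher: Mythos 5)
Your proposal is correct and follows essentially the same route as the paper: the paper's proof simply cites the $H=\R^\times_{>0}$ case from \cite[Theorem~4.5]{temkin2} and then invokes Theorem~\ref{Hredth}(2) to see that the bijection restricts to $H$-strict subdomains on one side and $H$-strict birational subspaces (equivalently, open subspaces of $\wt{(X,x)}_H$) on the other. Your version merely makes explicit the two directions of the restriction step and the final pullback identification, which the paper leaves implicit.
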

\begin{proof}
By \cite[Theorem~4.5]{temkin2}, this is true when $H=\bfR_{>0}^\times$. The general case follows since a subdomain of $(X,x)$ is $H$-strict if and only if its reduction is $H$-strict, see Theorem~\ref{Hredth}(2).
\end{proof}

For any $H$-strict analytic space $X$ consider the set $\hatX_H:=\coprod_{x\in X}\wt{(X,x)}_H$, and topologize $\hatX_H$ as follows. For any $H$-strict analytic subdomain $V\subseteq X$ we can identify $\hatV_H$ with a subset of $\hatX_H$, and if $U$ is another $H$-strict subdomain and $W=U\cap V$ then $\hatW_H=\hatU_H\cap\hatV_H$ because the germ reduction functor is compatible with finite intersections. In particular, the sets $\hatW_H$ with $W$ running through all $H$-strict subdomains of $X$ form a topology base.

Given a point $(x\in X,y\in\wt{(X,x)}_H)$ of $\hatX_H$ consider the set $F_x$ of all subdomains $W\subseteq X$ such that $x\in W$ and $y\in\wt{(W,x)}_H$. It is easy to see that $F_x$ is a prime filter and hence we obtain a map $\phi_X\:\hatX_H\to|X_H|$.

\begin{theorem}\label{Htop}
Assume that $X$ is an $H$-strict $k$-analytic space, then

(1) For any point $x\in X$, the map $\wt{(X,x)}_H\into\hatX_H$ is a topological embeddings.

(2) The topological space $\hatX_H$ is sober.

(3) The map $\phi_X\:\hatX_H\to|X_H|$ is a homeomorphism.

(4) The inclusion $X\into\hatX_H$ induces equivalence of the topoi associated with $X_H$ and $\hatX_H$. In particular, $X_H$ has enough points.
\end{theorem}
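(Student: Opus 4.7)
I will establish (1), (3), (2), (4) in that order, with the surjectivity of $\phi_X$ in (3) being the principal obstacle.

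For (1), the basic opens of $\hatX_H$ are the sets $\hatW_H$ with $W$ ranging over $H$-strict analytic subdomains of $X$, and their intersection with the fiber $\wt{(X,x)}_H$ equals $\wt{(W,x)}_H$ when $x\in W$ and is empty otherwise. By Theorem~\ref{Hgermth}, as $W$ ranges over $H$-strict subdomains containing $x$, these $\wt{(W,x)}_H$'s exhaust the open birational subspaces of $\wt{(X,x)}_H$, so the subspace topology coincides with the natural topology of $\wt{(X,x)}_H$ as a birational space.

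For (3), continuity and openness of $\phi_X$ are immediate from the identity $\phi_X^{-1}(|V_H|)=\hatV_H$. Injectivity splits into two cases: if $\phi_X(x_1,y_1)=\phi_X(x_2,y_2)$ with $x_1\neq x_2$, then local Hausdorffness of $X$ yields a compact $H$-strict neighborhood of $x_2$ missing $x_1$ that distinguishes the filters; if $x_1=x_2$ but $y_1\neq y_2$, then sobriety of $\wt{(X,x_1)}_H$ combined with Theorem~\ref{Hgermth} produces a distinguishing $H$-strict subdomain. The main obstacle is surjectivity. Given a prime filter $F$, pick a compact $W_0\in F$ and replace $X$ by $W_0$ (and $F$ by its restriction) to assume $X$ is itself compact. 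The members of $F$ then form compact subsets of $X$ with the finite intersection property, so $S:=\bigcap_{V\in F}V$ is non-empty; pick any $x\in S$. The key intermediate claim is that every compact $H$-strict neighborhood $V'$ of $x$ lies in $F$: the compact set $X\setminus\Int(V')$ misses $x$, so local Hausdorffness of $X$ yields finitely many compact $H$-strict subdomains $W_1,\dots,W_n\subseteq X$, each avoiding $x$, that cover it; the resulting finite Tate cover $X=V'\cup W_1\cup\cdots\cup W_n$ of $X\in F$ combined with primality forces some member into $F$, and $x\in\bigcap F$ rules out each $W_j$.

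Next, define $\wt F$ as the collection of open birational subspaces $\wt U\subseteq\wt{(X,x)}_H$ containing $\wt{(V,x)}_H$ for some $V\in F$; this is visibly a filter avoiding the empty set. To verify primality, given a finite cover $\wt{(W,x)}_H=\wt U_1\cup\cdots\cup\wt U_m$ with $W\in F$, lift each $\wt U_i$ via Theorem~\ref{Hgermth} to an $H$-strict subdomain $U_i$ of a common compact $H$-strict neighborhood $V'$ of $x$ with $V'\subseteq W$ on which $V'=\bigcup(U_i\cap V')$ is a Tate cover; the key claim gives $V'\in F$, and primality of $F$ forces some $U_i\cap V'\in F$, whence $\wt U_i\in\wt F$. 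Quasi-compactness of $\wt{(X,x)}_H$ reduces the general infinite case to this finite one. Sobriety of $\wt{(X,x)}_H$ then yields a unique point $y\in\wt{(X,x)}_H$ whose filter of open neighborhoods is $\wt F$, and the equality $\phi_X(x,y)=F$ follows by unwinding the definitions and reusing the key claim on membership of $H$-strict neighborhoods of $x$ in $F$.

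For (2), $|X_H|$ is sober as the topological realization of the points of a topos (equivalently, irreducible closed subsets of $|X_H|$ correspond to prime filters, and every such filter is realized by a unique point), so the homeomorphism in (3) transports sobriety to $\hatX_H$. For (4), the site $X_H$ is coherent (every $H$-strict subdomain is quasi-compact, and admissible covers refine to finite ones), so Deligne's theorem on coherent topoi ensures that the topos of $X_H$ has enough points and is equivalent to the sheaf topos on $|X_H|$; composition with $\phi_X$ yields the desired equivalence with sheaves on $\hatX_H$, realized on basic opens by $\calF\mapsto(\hatW_H\mapsto\calF(W))$, and the inclusion $X\into\hatX_H$ sending $x$ to the generic point of $\wt{(X,x)}_H$ implements the comparison at the level of underlying point-sets.
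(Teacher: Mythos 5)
Your proof is essentially correct, but it reverses the paper's logical architecture and is therefore a genuinely different argument. The paper proves sobriety of $\hatX_H$ first, by a direct two-step argument (an irreducible closed set cannot meet two distinct fibers $\wt{(X,x)}_H$ and $\wt{(X,x')}_H$, because disjoint open neighborhoods of $x,x'$ in $X$ give a decomposition into two proper closed subsets; then sobriety of the single fiber finishes), and then uses sobriety to identify points of $\hatX_H$ with prime filters of its opens, so that bijectivity of $\phi_X$ becomes a formal matching of prime filters via the observation that Tate coverings in $X_H$ correspond exactly to honest coverings in $\hatX_H$. You instead prove the homeomorphism (3) head-on — constructing, for a given prime filter $F$, the point $x\in\bigcap_{V\in F}V$, establishing the key claim that every compact $H$-strict neighborhood of $x$ lies in $F$, transporting $F$ to a prime filter on the fiber $\wt{(X,x)}_H$ and invoking sobriety/spectrality of that birational space — and only afterwards deduce (2) from sobriety of $|X_H|$. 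This buys a more explicit and self-contained description of the inverse of $\phi_X$ (the paper's "bijective correspondence of filters" is rather terse), at the cost of a longer argument. For (4) you invoke Deligne's theorem on coherent topoi, which is exactly the route the paper announces it wants to avoid; once (3) is in hand the paper's direct observation that $X_H$ is cofinal in the opens of $\hatX_H$ with the same coverings gives the equivalence of topoi (and enough points) with no coherence input, so I would drop Deligne here.

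Two imprecisions to repair, neither fatal. First, in the surjectivity step the members of $F$ are \emph{not} all compact even after shrinking $X$ to a compact domain ($H$-strict subdomains of a compact space can be non-compact); you must first use primality of $F$ against a quasi-net of compact $H$-strict subdomains of each member to see that every member of $F$ contains a compact member of $F$, and then apply the finite intersection property to the compact members only (this also shows $\bigcap_{V\in F}V$ equals the intersection over compact members). Second, the justification for coherence of the site $X_H$ — ``every $H$-strict subdomain is quasi-compact'' — is false for the same reason; only the compact ones form a generating family. Since you do not actually need Deligne, the cleanest fix is to delete that dependence.
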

\begin{proof}
(1) The topology $\hatX_H$ induces on $\wt{(X,x)}_H$ is generated by the sets $\hatW_x\cap\wt{(X,x)}_H=\tilW_x$, where $W$ runs through the $H$-strict domains containing $x$. By Theorem~\ref{Hgermth}, it coincides with the topology of $\wt{(X,x)}_H$.

(2) Assume that $Z$ is an irreducible closed subset in $\hatX_H$. We should prove that it has a single generic point. Assume first that $Z$ contains points $(x,y)$ and $(x',y')$ with $x\neq x'$. Let $U$ and $V$ be disjoint open neighborhoods of $x$ and $x'$ in $X$. Then $T=\hatX_H\setminus\hatV_H$ and $T'=\hatX_H\setminus\hatU_H$ are closed subsets of $\hatX_H$ such that $T\cup T'=\hatX_H$, $(x,y)\in T$ and $(x',y')\in T'$. Therefore, $Z\cap T$ and $Z\cap T'$ are non-empty closed sets whose union is $Z$, and this contradicts the irreducibility of $Z$. This proves that $Z\subset\wt{(X,x)}_H$ for a single $x\in X$, and it remains to use that the space $\wt{(X,x)}_H$ is sober as was recalled in the Terminology and notation.

(3) First, we claim that a family $\{X_i\}_{i\in I}$ of $H$-strict subdomains of $X$ form a Tate covering of $X$ if and only if $\cup_{i\in I}\widehat{(X_i)}_H=\hatX_H$. Indeed, if they form a Tate covering then for any $x\in X$ there exists a finite subset $J\subseteq I$ such that $x\in\cap_{i\in J}X_i$ and $\cup_{i\in J}X_i$ is a neighborhood of $x$. In this case, $\cup_{i\in J}\wt{(X_i,x)}_H=\wt{(X,x)}_H$ and therefore the spaces $\widehat{(X_i)}_H$ with $i\in I$ cover $\hatX_H$. Conversely, assume that $\widehat{(X_i)}_H$ cover $\hatX_H$. For any point $x\in X$ we have that $\cup_{i\in I}\wt{(X_i,x)}_H=\wt{(X,x)}_H$ and by the quasi-compactness of $\wt{(X,x)}_H$, we already have that $\cup_{i\in J}\wt{(X_i,x)}_H=\wt{(X,x)}_H$ for a finite subset $J\subseteq I$. But then $\cup_{i\in J}X_i$ form a neighborhood of $x$ by Theorem~\ref{Hgermth}, and we obtain that $X_i$ form a Tate covering.

Next, we claim that $\phi_X$ is a bijection. Since $\hatX_H$ is sober, its points are the prime filters of open subsets of $\hatX_H$. So, we should show that prime filters $\calF$ on $X_H$ correspond to prime filters $\hatcalF$ on $\hatX_H$. The sets of the form $\hatW_H$, where $W$ runs through $H$-strict analytic domains, form a base of topology of $\hatX_H$. Therefore, each $\hatcalF$ is determined by all sets $\hatW_H$ it contains, and using that Tate coverings in $X_H$ correspond to the usual coverings in $\hatX_H$, we obtain the bijective correspondence of filters.

Finally, the topologies of $\hatX_H$ and $|X_H|$ are generated by the bases $\{\hatW_H\}$ and $\{|W_H|\}$, where $W$ runs through $H$-strict analytic domains, hence the bijection $\phi_X$ is a homeomorphism.

(4) By the definition, the category $X_H$ is cofinal in the category of open subsets of $\hatX_H$. We have shown above that the coverings in $X_H$ and $\hatX_H$ are the same, i.e. the Grothendieck topologies agree. It follows immediately, that the topoi are naturally equivalent.
\end{proof}

\begin{remark}\label{adicrem}
In the extreme cases one obtains the following spaces: if $H=\sqrt{|k^\times}|$ then $X$ is strictly analytic and $|X_H|$ is (the topological space of) the Huber's adic space, see \cite{hub}, corresponding to $X$, if $G=\bfR_{>0}^\times$ then $|X_G|$ is the reified adic space of Kedlaya, see \cite{reified}.
\end{remark}

\section{Applications to boundaryless descent}
As a first application of the theory of graded birational spaces we can now handle descent of goodness and $H$-strictness (allowing the case $H=1$) through morphisms with surjective interior.

\begin{theorem}\label{goodcase}
Let $f\:X\to Y$ be a surjective $k$-analytic morphism without boundary , and let $H \subseteq \R^{\times}_{>0}$ be a subgroup containing $|k^{\times}|$. The following properties hold for $Y$ if and only if they hold for $X$: (i) good, (ii) locally separated, (iii) locally separated and $H$-strict, (iv) locally separated and strictly $k$-analytic.
\end{theorem}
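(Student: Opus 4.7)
The plan is to reduce each property to its germ-reduction avatar in the category $\bir_\tilk$ and then play the descent result \ref{birdescent} off against the boundaryless hypothesis. Recall the dictionary: goodness of the germ $(X,x)$ corresponds to affineness of $\tilX_x$ via \cite[5.1]{temkin2}, local separatedness to separatedness of $\tilX_x$ via \cite[4.8(iii)]{temkin2}, and $H$-strictness of the germ (for $H\neq 1$) to $H$-strictness of $\tilX_x$ via Theorem~\ref{Hredth}(2); the case $H=1$, only relevant for trivially-valued $k$, is handled by a routine adjustment using Remark~\ref{trivh}. The bridge between the two spaces is the fact that for each $y\in Y$ and each $x\in f^{-1}(y)$, the induced map $\tilf_x\:\tilX_x\to\tilY_y$ in $\bir_\tilk$ is proper, because $f$ is boundaryless at $x$ (\cite[5.2]{temkin2}); surjectivity of $f$ guarantees that for every $y\in Y$ such an $x$ exists.

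For the forward direction ($X$ has property $\bfP$ implies $Y$ has $\bfP$) I would take any $y\in Y$, invoke surjectivity of $f$ to pick $x\in f^{-1}(y)$, and apply Theorem~\ref{birdescent} to the proper map $\tilf_x$: affineness of $\tilX_x$ forces affineness of $\tilY_y$, separatedness forces separatedness, and separatedness-plus-$H$-strictness forces the same. Translating each conclusion back via the dictionary above gives the property for the germ $(Y,y)$, and since the properties in (i)--(iv) are local on the target, this yields the property for $Y$ globally.

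For the converse ($Y$ has $\bfP$ implies $X$ has $\bfP$), boundarylessness of $f$ is the key ingredient and each case is handled in isolation. Goodness of $X$ is immediate: given an affinoid neighborhood $V$ of $y=f(x)$ in $Y$, the morphism $f^{-1}(V)\to V$ is boundaryless with affinoid target, which by the very definition of ``without boundary'' makes $f^{-1}(V)$ good and hence produces an affinoid neighborhood of $x$. For local separatedness of $X$, properness (and hence separatedness) of $\tilf_x$ combined with the assumed separatedness of the structural morphism $\tilY_y\to\bfP_{\wHy/\tilk}$ gives separatedness of the composite $\tilX_x\to\bfP_{\wHy/\tilk}$ by Lemma~\ref{composlem}(1); since this composite factors through $\tilX_x\to\bfP_{\wHx/\tilk}\to\bfP_{\wHy/\tilk}$, Lemma~\ref{composlem}(2) then yields separatedness of $\tilX_x\to\bfP_{\wHx/\tilk}$. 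For $H$-strictness one observes directly in $\bir_\tilk$ that a proper morphism $\vcalY\to\vcalX$ with $H$-strict target has $H$-strict source: writing the underlying set $X$ as the preimage of some $X_H\subseteq\bfP_{K_H/\tilk}$, the commuting square of field inclusions and the associated restriction maps identifies $Y = X\times_{\bfP_{K/\tilk}}\bfP_{L/\tilk}$ with the preimage of $X_H\times_{\bfP_{K_H/\tilk}}\bfP_{L_H/\tilk}$ under $\bfP_{L/\tilk}\to\bfP_{L_H/\tilk}$, witnessing $\vcalY$ as $H$-strict. Case (iv) is the specialization $H=|k^\times|$ of (iii). I expect the only nontrivial ingredient to be the forward direction for (i), (iii), (iv), which rests squarely on the deep descent statement Theorem~\ref{birdescent}; the converses and the separatedness half are comparatively formal.
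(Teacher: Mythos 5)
Your proposal is correct and takes essentially the same route as the paper's proof: properness of the germ-reduction maps via \cite[5.2]{temkin2}, Theorem~\ref{birdescent} for the nontrivial direction from $X$ to $Y$, the definition of boundarylessness for the tautological goodness implication, and the dictionaries \cite[5.1]{temkin2}, \cite[4.8(iii)]{temkin2} and Theorem~\ref{Hredth} (with Remark~\ref{trivh} patching the case $H=1$). The only cosmetic difference is that you run the easy $Y\Rightarrow X$ implications for separatedness and $H$-strictness inside $\bir_{\tilk}$ via Lemma~\ref{composlem} and an explicit preimage computation, where the paper phrases the same facts as ``proper maps are separated, hence $f$ is locally separated'' and ``the set-theoretic base change of an $H$-strict separated birational space is $H$-strict.''
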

}
\begin{proof}
We start with (i). The very definition of a  morphism being without boundary includes the requirement
that the fiber product $X \times_Y Z$ is good whenever $Z$ is good, so
in particular if $Y$ is good and $f$ is without boundary then it is a tautology that $X$ is good.
For the more interesting descent claim, we can assume that $X$ is good.
To prove that $Y$ admits a $k$-affinoid neighborhood around an arbitrary
$y \in Y$, first choose $x\in X$ over $y$.  By \cite[5.2]{temkin2} the reduction morphism
$\tilX_{x}\to\tilY_y$ is proper in $\bir_\tilk$, so by Theorem \ref{birdescent} the birational space
$\tilX_{x}$ is affine if and only if $\tilY_y$ is so. But goodness for the germ $(X,x)$ is
equivalent to affineness for the birational space $\tilX_{x}$
by \cite[5.1]{temkin2}, and similarly for $(Y,y)$
and $\widetilde{Y}_y$, so we are done.

{It now suffices to deal with (iii), since (ii) and (iv) are the extreme special cases $H=\bfR^\times_{>0}$
and $H = |k^{\times}|$, respectively.} First we assume that $H$ is non-trivial, so Theorem
\ref{Hredth} may be invoked.  For each $x \in X$ the map $\widetilde{X}_x
\rightarrow \widetilde{Y}_{f(x)}$ in ${\rm{bir}}_{\widetilde{k}}$ is proper
by \cite[5.2]{temkin2}, so in particular it is separated.
Hence, by \cite[4.8(iii)]{temkin2} the map $f$ is separated near $x$,
so if $Y$ is locally separated then so is $X$.  If in addition
$Y$ is $H$-strict then $\widetilde{Y}_{f(x)}$ is $H$-strict by Theorem \ref{Hredth},
yet $\widetilde{X}_x = \widetilde{Y}_{f(x)} \times_{\P_{\widetilde{\calH(f(x))}/
\widetilde{k}}} \P_{\widetilde{\calH(x)}/\widetilde{k}}$ (by properness)
so $\widetilde{X}_x$ is $H$-strict too.  Hence, by Theorem \ref{Hredth}(2) we deduce
that $(X,x)$ is $H$-strict, and then $X$ itself is $H$-strict by part (1) of the same theorem.

For the converse when $H \ne 1$, we assume that $X$ is locally separated and $H$-strict,
and we wish to deduce the same two properties for $Y$.
{As above, Theorem \ref{Hredth}(1) implies that} our problem is intrinsic to each germ $(Y,y)$ for
$y\in Y$.  Pick $x\in X$ over such a $y$.  Once again the reduction morphism
$\tilX_x\to\tilY_y$ is proper in $\bir_\tilk$ and $\tilX_x$ is $H$-strict
and separated.    The $H$-strictness and separatedness of the germ $(Y,y)$ is
equivalent to $H$-strictness and separatedness of the birational space
$\widetilde{Y}_y$ (again using Theorem \ref{Hredth} for the $H$-strictness),
and this pair of properties is inherited from
$\tilX_x$ by Theorem \ref{birdescent}.   Once again, taking $H = |k^{\times}|$ settles
the case of strict $k$-analyticity when $k$ is not trivially-valued.

To handle the case $H = 1$ (so $k$ is trivially-valued and $H$-strictness means
strict $k$-analyticity), the above arguments permit us to restrict attention to
the case when $X$ and $Y$ are separated.
To move the property of strict $k$-analyticity between $Y$ and $X$,
we can use the preceding arguments by replacing
Theorem \ref{Hredth} with Remark \ref{trivh}.
\end{proof}

Theorem \ref{goodcase} immediately implies the following slightly more general claim.

\begin{corollary}\label{goodcor}
The direct implications in Theorem \ref{goodcase} hold whenever $f$ is only assumed to be a morphism with a surjective interior.
\end{corollary}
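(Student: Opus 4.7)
The plan is to observe that the descent arguments in the proof of Theorem \ref{goodcase} (from $X$ to $Y$) proceed one germ at a time and use only a single preimage of each point $y \in Y$ at which $f$ is without boundary. Thus, the only role played by the hypothesis ``$f$ surjective and without boundary everywhere'' is to guarantee, for every $y \in Y$, at least one $x \in X$ over $y$ such that the germ reduction $\tilde{X}_x \to \tilde{Y}_y$ is proper in $\bir_{\tilde{k}}$. Exactly this is provided by the surjectivity of $\mathrm{Int}(X/Y) \to Y$.

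Concretely, I would fix $y \in Y$ and, by hypothesis, choose $x \in \mathrm{Int}(X/Y)$ with $f(x) = y$. After shrinking $X$ to an open neighborhood of $x$ on which $f$ is without boundary (which does not affect the germ $(X,x)$), \cite[5.2]{temkin2} yields that the reduction map $\tilde{X}_x \to \tilde{Y}_y$ is proper in $\bir_{\tilde{k}}$. With this in hand, each of the four implications $X \Rightarrow Y$ follows from the same germ-level translation used in Theorem~\ref{goodcase}.

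For (i), if $X$ is good then $(X,x)$ is good, so $\tilde{X}_x$ is affine by \cite[5.1]{temkin2}; Theorem~\ref{birdescent} transfers affineness along the proper germ morphism to $\tilde{Y}_y$, so $(Y,y)$ is good. For (ii)--(iv), if $X$ is locally separated (and $H$-strict) then $(X,x)$ is locally separated (and $H$-strict), so $\tilde{X}_x$ is separated by \cite[4.8(iii)]{temkin2} (and $H$-strict by Theorem~\ref{Hredth} for $H \ne 1$, or Remark~\ref{trivh} for $H = 1$); Theorem~\ref{birdescent} descends separatedness together with $H$-strictness to $\tilde{Y}_y$, and the reverse direction of the same germ dictionary yields that $(Y,y)$ is locally separated and $H$-strict. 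Since $y \in Y$ was arbitrary, assembling germ-level information gives the global property on $Y$ (using Theorem~\ref{Hredth}(1) in the $H$-strict case).

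There is no essential obstacle beyond noticing that the proof of Theorem~\ref{goodcase} never required $f$ itself to be without boundary globally: all uses of the without-boundary hypothesis were through the properness of a single germ reduction $\tilde{X}_x \to \tilde{Y}_y$, and ``$f$ has surjective interior'' is precisely the minimal hypothesis delivering such an $x$ above every $y$.
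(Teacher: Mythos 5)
Your proof is correct and rests on the same mechanism as the paper's: over each $y \in Y$ one only needs a single preimage $x \in \Int(X/Y)$, for which $\tilX_x \to \tilY_y$ is proper by \cite[5.2]{temkin2}, after which Theorem~\ref{birdescent} and the germ dictionary do the descent. The paper packages this more economically in one line --- since each of the properties (i)--(iv) passes to the open subspace $\Int(X/Y) \subseteq X$, it simply applies Theorem~\ref{goodcase} to the surjective boundaryless morphism $\Int(X/Y) \to Y$ --- rather than re-running the germ-level argument, but the substance is identical.
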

\begin{proof}
If $X$ satisfies one of the properties (i)--(iv), then its open subspace $\Int(X/Y)$ satisfies them too and we can apply Theorem~\ref{goodcase} to the surjective boundaryless morphism $\Int(X/Y)\to Y$.
\end{proof}

\section{Descent with respect to extensions of the ground field}\label{lastsec}
The main problem with descent through a ground field extension $K/k$ are issues related to $G$-surjectivity, and the main obstacle can be easily pointed at: it is not clear at all that the maps $X_K=X\wtimes_k K\to X$ are $G$-surjective. We will first prove a key lemma that settles the problem, and then apply it to construct the descent.

\subsection*{Test points}
The most straightforward attempt to prove $G$-surjectivity of $h\:Y=X_K\to X$ is for any point $x\in X$ to choose a lift $y\in Y$ and consider the map $\tilh_y\:\tilY_y\to\tilX_x$. Surprisingly, it often happens that $\tilh_y$ is not proper: although $\tilY_y\to\bfP_{\wt{\calH(y)}/\tilK}$ is the set-theoretic base change of $\tilX_x\to\bfP_{\wt{\calH(x)}/\tilk}$ by \cite[5.3]{temkin2}, the map $\psi_y=\psi_{\wHy/\tilK,\wHx/\tilk}$ does not have to be surjective. So, our main task is to show that there always exists a lift $y=x_K$ of $x$ such that $\psi_y$ is onto, and by Theorem~\ref{psilem}(1) this amounts to taking $y$ so that $\wHy$ is ``generic enough''. Naturally, such points $x_K$ will later be used to descent properties: we will see that $X$ satisfies a property $\bfP$ at $x$ if and only if $X_K$ satisfies $\bfP$ at $x_K$, while the descent may fail for other points of $h^{-1}(x)$. Unlike the descent results of \S\ref{setsec} based on a family of quasi-Cartesian diagrams, this time it is critical  to use a single test point $x_K$, but its existence follows easily from what we have already proved.

\begin{lemma}\label{genpointcor2}
Let $X$ be a $k$-analytic space with a point $x$, $K/k$ an analytic field extension, and $Y=X\wtimes_k K$. Then,

(1) There exists a point $y\in Y$ over $x$ such that the graded subfields $\wt{\calH(x)}$ and $\tilK$ of $\widetilde{\calH(y)}$ are in general position over $\tilk$.

(2) For any point $y$ as in (1), the induced morphism $\tilY_y\to\tilX_x$ is proper in $\bir_{\R^{\times}_{>0}}$.

(3) The map $h\:Y\to X$ is $G$-surjective.
\end{lemma}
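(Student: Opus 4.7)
My plan is to derive all three claims from Theorem~\ref{surjlem} and the characterization of $\psi$-surjectivity in Theorem~\ref{psilem}(1), using \cite[5.3]{temkin2} to transfer the conclusion about $\psi$ into properness in $\bir$.

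For (1), the topological fiber $h^{-1}(x)$ equals $\calM(\calH(x)\wtimes_k K)$, so Theorem~\ref{surjlem} applied to $\calH(x)/k$ and $K/k$ gives a surjective reduction map
$$\calM(\calH(x)\wtimes_k K)\longrightarrow\Spec_G(\wHx\otimes_\tilk \tilK).$$
I would pick any point $y$ in $h^{-1}(x)$ whose image in $\Spec_G(\wHx\otimes_\tilk\tilK)$ is a generic point, which is possible precisely because this map is surjective. By Remark~\ref{genrem}(3), hitting a generic point is equivalent to $\wHx$ and $\tilK$ being in general position over $\tilk$ inside $\wt{\calH(y)}$.

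For (2), suppose $y$ is as in (1). By \cite[5.3]{temkin2}, the reduction morphism $\tilY_y\to\tilX_x$ fits into a set-theoretic Cartesian square
$$\tilY_y\;=\;\tilX_x\times_{\bfP_{\wHx/\tilk}}\bfP_{\wHy/\tilK},$$
i.e., the canonical map $\phi\:\tilY_y\to\tilX_x\times_{\bfP_{\wHx/\tilk}}\bfP_{\wHy/\tilK}$ is a bijection. By Definition~\ref{defbir}, properness in $\bir$ requires both this bijectivity and the surjectivity of $\psi_y=\psi_{\wHy/\tilK,\wHx/\tilk}$; the latter holds by Theorem~\ref{psilem}(1) thanks to the general-position property of $y$. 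Hence $\tilY_y\to\tilX_x$ is proper in $\bir_{\R^\times_{>0}}$, as claimed.

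For (3), fix $x\in X$ and let $y=x_K$ be a point as provided by (1). By (2) the morphism $\tilh_y\:\tilY_y\to\tilX_x$ is proper in $\bir$ and in particular surjective, since its factorization as the bijection $\phi$ followed by the base change $\lambda^*\psi_y$ of the surjective $\psi_y$ is set-theoretically surjective. Thus the single image $\Im(\tilh_y)$ already covers $\tilX_x$, so by Lemma~\ref{Gsurlem}(2) the map $h$ is $G$-surjective.

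The real work is entirely concentrated in part (1); once a sufficiently generic test point $y$ is produced, parts (2) and (3) are formal consequences of \cite[5.3]{temkin2}, Theorem~\ref{psilem}(1), and Lemma~\ref{Gsurlem}. The only potential subtlety is verifying that ``general position'' in the sense used in \S\ref{birgsec} really matches the generic-point condition coming out of Theorem~\ref{surjlem}, which is exactly the content of Remark~\ref{genrem}(3).
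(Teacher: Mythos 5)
Your proof is correct and follows essentially the same route as the paper: produce a sufficiently generic point of $\Spec_G(\wHx\otimes_\tilk\tilK)$, lift it to $h^{-1}(x)=\calM(\calH(x)\wtimes_k K)$ via Theorem~\ref{surjlem}, and then combine the surjectivity criterion of Theorem~\ref{psilem}(1) with the base-change description of $\tilY_y$ from \cite[5.3]{temkin2} to get properness and hence surjectivity of $\tilY_y\to\tilX_x$. The only (harmless) divergence is in part (1): the paper invokes Remark~\ref{genrem}(2) to exhibit a point $t$ of $\Spec_G(\wHx\otimes_\tilk\tilK)$ with the general-position property directly and then lifts that $t$, whereas you lift a generic point and appeal to Remark~\ref{genrem}(3) --- a statement the paper records only with a proof sketch --- so the paper's variant is marginally more self-contained.
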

\begin{proof}
(1) By Remark~\ref{genrem}(2) there exists a point $t\in\Spec_G(\wHx\otimes_\tilk\tilK)$ such that the images of $\wHx$ and $\tilK$ in $k(t)$ are in general position over $\tilk$. By Theorem~\ref{surjlem}, $t$ possesses a lift $y$ in $\calM(\calH(x)\wtimes_k K)=h^{-1}(x)$.

(2) The map $\psi_{\widetilde{\calH(y)}/\widetilde{\calH(x)}, \widetilde{K}/\tilk}$ is surjective by Theorem~\ref{psilem}(1) and $\tilY_y\to\bfP_{\wt{\calH(y)}/\tilK}$ is the base change of $\tilX_x\to\bfP_{\wt{\calH(x)}/\tilk}$ by \cite[5.3]{temkin2}.

(3) For any $x\in X$ find $y$ as in (1) and use that $\tilY_y\to\tilX_x$ is onto by (2).
\end{proof}

It sounds plausible that in the strictly analytic case $H=\sqrt{|k^\times|}$ the analogue of claim (iii) for $|Y_H|\to|X_H|$ can be easily established using adic spaces, but we could not find such a result in the literature. Let us also give two typical examples of the fiber $h^{-1}(x)$, which show that it can be rather complicated.

\begin{example}\label{fiberexam1}
Let $X=\calM(k\{r^{-1}T\})$ be a closed disc of radius $r > 0$, let $x$ be its maximal point (corresponding to the spectral norm on $k\{r^{-1}T\}$), and let $K=\calH(x)$. The relative boundary $\partial(X/\calM(k))$ consists of the single point $x$, and the relative boundary of $X_K=\calM(K\{r^{-1}T\})$ over $\calM(K)$ consists of a single point $x_K$ lying over $x$. The fiber $Z$ of $X_K$ over $x$ is isomorphic to $\calM(K\wtimes_k K)$, so it has many points in general. For example, if $r\notin\surd{|k^{\times}|}$ then $K = k\{r^{-1}T\}$ and $Z$ is isomorphic to a closed disc over $K$, but if $r=1$ then $Z$ is large but not $K$-affinoid: as a subset of the closed unit $K$-disc  it is ``not defined over $k$". In both cases $x_K$ is a point of the fiber over $x$ that is ``as generic as possible'' and one can easily check that $\psi_{x_K}$ is proper. For any other choice of $y\in h^{-1}(x)$, the space $\tilY_y$ is proper while $\tilX_x$ is not. In view of Lemma~\ref{composlem}(4), $\tilh_y$ is not proper and hence $\psi_y$ is not surjective.
\end{example}

\begin{example}\label{fiberexam2}
Let $X=\calM(k\{r^{-1}T\})$ be a disc. Assume for simplicity that $k$ is algebraically closed and let $x\in X$ a point of type $4$ (see \cite[\S1]{berbook}) equal to the intersection of closed discs whose radii tend to some $s$ from above.  (The arithmetic of the field $\calH(x)$ depends on whether or not $s \in \surd{|k^{\times}|}$.) Let $K = \calH(x)$ over $k$. It is easy to see that $\calH(x)\wtimes_k K\simeq K\{s^{-1}S\}$ for $S=T\otimes 1- 1\otimes T$, so the fiber of $X_K$ over $x$ is a disc over $K$. Note that the graded residue fields $\tilk$ and $\tilK = \wt{\calH(x)}$ are isomorphic, but the graded field $(\calH(x)\wtimes_k K)^{\sim}$ is not algebraic over $\tilk$.
\end{example}

\subsection*{Properties of spaces}
Now, we will use the points constructed in Corollary \ref{genpointcor2} to descend properties of analytic spaces.

\begin{theorem}\label{strictK}
Let $K/k$ be an analytic field extension and $H\subseteq\R_{>0}^\times$ any $($possibly trivial$)$ subgroup containing $|K^{\times}|$. A $k$-analytic space $X$ is good $($resp. $H$-strict and locally separated$)$ if and only if the $K$-analytic space $X_K$ is good $($resp. $H$-strict and locally separated$)$. In particular, if $|K^{\times}|/|k^{\times}|$ is a torsion group then $X$ is strictly $k$-analytic and locally separated if and only if $X_K$ is strictly $K$-analytic and locally separated.
\end{theorem}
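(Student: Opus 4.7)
The forward direction (from $X$ to $X_K$) is straightforward from the definitions. Goodness is stable under ground field extension because a $k$-affinoid neighborhood $\calM(\calA)$ of $x$ base-changes to a $K$-affinoid neighborhood $\calM(\calA\wtimes_kK)$ of any preimage. Local separatedness passes through $K/k$ because, by \cite[5.3]{temkin2}, for $y\in X_K$ over $x$ the reduction $\tilY_y\to\bfP_{\wHy/\tilK}$ is the set-theoretic base change of $\tilX_x\to\bfP_{\wHx/\tilk}$, so injectivity of the latter forces injectivity of the former; now invoke \cite[4.8(iii)]{temkin2}. Finally, $H$-strictness passes from $X$ to $X_K$ because an admissible presentation $k\{r^{-1}T\}\twoheadrightarrow\calA$ with $r_i\in H\supseteq|K^{\times}|$ base-changes to an admissible $H$-strict presentation over $K$ (and then Theorem~\ref{Hfulth} promotes this to the global statement).

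The substantive content is the reverse direction. The plan is to mimic the argument of Theorem~\ref{goodcase}, but the serious obstacle is that $h\:X_K\to X$ is not boundaryless, so for an arbitrary lift $y\in h^{-1}(x)$ the induced map $\tilY_y\to\tilX_x$ in $\bir$ need not be proper. The remedy is the test-point construction of Lemma~\ref{genpointcor2}: for each $x\in X$ one chooses $y=x_K\in h^{-1}(x)$ so that $\wHx$ and $\tilK$ sit in general position in $\wHy$ over $\tilk$, guaranteeing via Theorem~\ref{psilem}(1) that $\psi_{\wHy/\wHx,\tilK/\tilk}$ is surjective, hence that $\tilY_y\to\tilX_x$ is a proper morphism in the absolute category $\bir$ developed in \S\ref{birgsec}.

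With such a proper map in hand, Theorem~\ref{birdescent} performs the descent on reductions of germs and each property then transfers to the analytic setting through a well-known dictionary. For goodness one uses \cite[5.1]{temkin2}, which identifies goodness of $(X,x)$ with affineness of $\tilX_x$; descent of affineness through $\tilY_y\to\tilX_x$ is exactly Theorem~\ref{birdescent}. For local separatedness one uses \cite[4.8(iii)]{temkin2} and descent of separatedness. For the $H$-strict case with $H\neq 1$ one combines the just-established local separatedness with Theorem~\ref{Hredth}(2), which identifies germ-level $H$-strictness with $H$-strictness of the reduction; Theorem~\ref{birdescent} descends the combined property ``separated and $H$-strict'', and Theorem~\ref{Hredth}(1) then globalizes from germs to $X$. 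The degenerate case $H=1$ (forcing both $k$ and $K$ to be trivially valued) is not directly handled by Theorem~\ref{Hredth}, but since local separatedness is part of the hypothesis we may restrict to separated $X$ and substitute Remark~\ref{trivh} for Theorem~\ref{Hredth}(2), following the end of the proof of Theorem~\ref{goodcase} verbatim. The concluding ``in particular'' follows by taking $H=\surd{|k^{\times}|}=\surd{|K^{\times}|}$.

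The main obstacle, then, is entirely absorbed into Lemma~\ref{genpointcor2}: all of the analytic-geometric work lies in producing the ``generic enough'' lift $x_K$, and Example~\ref{fiberexam1} shows that a random lift is genuinely insufficient. Once that test point exists, the descent is essentially formal, reducing via the three dictionaries above to the graded-algebraic descent theorem of \S\ref{descsec}.
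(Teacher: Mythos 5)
Your proposal is correct and follows essentially the same route as the paper: the test point $x_K$ from Lemma~\ref{genpointcor2} gives a proper map $\tilY_y\to\tilX_x$ in $\bir$, Theorem~\ref{birdescent} descends the corresponding birational property (affine, resp.\ separated and $H$-strict), and the dictionaries \cite[4.8(iii), 5.1]{temkin2} and Theorem~\ref{Hredth} (with Remark~\ref{trivh} substituted when $H=1$) translate back to the analytic germ. The only difference is that you spell out the easy forward direction, which the paper dismisses as obvious.
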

\begin{proof}
Let $\bfP$ be the property of being good (resp. $H$-strict and locally separated) and $\tilbfP$ be the property of an $\bfR_{>0}^\times$-graded birational space being affine (resp. $H$-strict and separated). If $X$ satisfies $\bfP$ then $X_K$ obviously satisfies $\bfP$ too. Conversely, assume that $X$ does not satisfy $\bfP$ locally at a point $x$. By Corollary~\ref{genpointcor2}, we can find a preimage $y\in Y:=X_K$ of $x$ so that $\tilY_y\to\tilX_x$ is proper. By Theorem \ref{birdescent}, $\tilY_y$ satisfies $\tilbfP$ if and only if $\tilX_x$ does. Also, by \cite[4.8(iii),~5.1]{temkin2} for local separatedness and goodness and Theorem \ref{Hredth} (which requires $H \ne 1$) for $H$-strictness,  $X$ (resp. $Y$) satisfies $\bfP$ locally at $x$ (resp. $y$) if and only if $\tilX_x$ (resp. $\tilY_y$) satisfies $\tilbfP$, at least if we require $H \ne 1$. Hence, assuming $H \ne 1$, $X$ satisfies $\bfP$ at $x$ if and only if $Y$ satisfies $\bfP$ at $y$, and since we assumed that $X$ is non-$\bfP$ at $x$ we conclude that $X_K$ is non-$\bfP$ at $y$, so $X_K$ does not satisfy $\bfP$. Taking $H = |k^{\times}|$ settles the case of strict analyticity when $k$ is not trivially-valued and $|K^{\times}|/|k^{\times}|$ is a torsion group.

It remains to show that if $K$ is trivially-valued and $X_K$ is strictly $K$-analytic and locally separated then $X$ is strictly $k$-analytic and locally separated.  The preceding argument with local separatedness shows that $X$ is locally separated, so we can assume $X$ is separated.  We may then replace Theorem~\ref{Hredth} with Remark~\ref{trivh} to carry over the above argument in the case of trivially-valued $k$ and $K$.
\end{proof}

\subsection*{Properties of morphisms}
{Finally, we use a similar technique to descend properties of morphisms.

\begin{theorem}\label{finited}
Let $K/k$ be an analytic field extension, let $g\:X \rightarrow Y$ be a morphism of $k$-analytic spaces, and let $g_K\:X_K \rightarrow Y_K$ be the induced $K$-analytic morphism. Each of the properties (i)--(xx) from the Introduction holds for $g$ if and only if it holds for $g_K$.
\end{theorem}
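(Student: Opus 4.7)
The forward implications from $g$ to $g_K$ hold for all twenty properties by their known stability under analytic ground field extension, which is exactly what motivates the choice of list. For the descent implications, the plan is to run an analogue of the arguments of \S\ref{setsec}--\S\ref{app}, with the projections $h_X\:X_K\to X$ and $h_Y\:Y_K\to Y$ playing the role of a covering morphism. The set-theoretic surjectivity and $G$-surjectivity of $h_X$ and $h_Y$ are supplied by Lemma~\ref{genpointcor2}(3), and a $K$-affinoid extension of any $k$-affinoid domain of $Y$ surjects onto it, so $h_Y$ is also properly surjective; the essentially new input over \S\ref{setsec}--\S\ref{app} is the test-point technology of Lemma~\ref{genpointcor2}, which yields reduction maps of germs that are proper in $\bir_{\tilk}$.

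Properties (i)--(iv) are handled exactly as in Theorem~\ref{surjth}: set-theoretic surjectivity of $h_X$ gives (i) and (ii); compatibility of $\Omega_{X_G/Y_G}$ with ground field extension gives (iii); and the identity $\Delta_{g_K}=(\Delta_g)_K$ gives (iv) by the same diagonal argument. The principal new case, which controls the rest, is (vii) boundaryless. Given $x\in X$ with $y=g(x)\in Y$, first apply Lemma~\ref{genpointcor2}(1) to $(Y,y)$ to pick a test point $y_K\in Y_K$ over $y$, so that $\wHy$ and $\tilK$ are in general position in $\wt{\calH(y_K)}$ over $\tilk$. Next apply Theorem~\ref{surjlem} to the extensions $\calH(x)/\calH(y)$ and $\calH(y_K)/\calH(y)$ to lift a generic point of $\Spec_G(\wHx\otimes_\wHy\wt{\calH(y_K)})$ to a point $x_K$ in the nonempty fibre $\calM(\calH(x)\wtimes_{\calH(y)}\calH(y_K))=g_K^{-1}(y_K)\cap h_X^{-1}(x)$. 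Splitting a transcendence basis of $\wHx/\tilk$ into a basis of $\wHy/\tilk$ plus an $\wHy$-transcendence basis and combining the general-position hypothesis for $y_K$ with the genericity of $x_K$ over $\wHy$, one verifies that $\wHx$ and $\tilK$ are in general position in $\wt{\calH(x_K)}$ over $\tilk$ as well. By Lemma~\ref{genpointcor2}(2), both vertical arrows in the commutative square
\[
\xymatrix{
\widetilde{X_K}_{x_K}\ar[r]^{\wt{g_K}_{x_K}}\ar[d]&\widetilde{Y_K}_{y_K}\ar[d]\\
\wt{X}_x\ar[r]_{\wt{g}_x}&\wt{Y}_y
}
\]
are then proper in $\bir_{\tilk}$, and the top arrow is proper by \cite[5.2]{temkin2} under the hypothesis that $g_K$ is boundaryless. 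Thus both compositions from $\widetilde{X_K}_{x_K}$ to $\wt{Y}_y$ are proper, and Lemma~\ref{birlem1} applied to the proper first factor $\widetilde{X_K}_{x_K}\to\wt{X}_x$ forces $\wt{g}_x$ to be proper, whence $g$ is boundaryless at $x$ by \cite[5.2]{temkin2} again. The main obstacle in the whole proof is precisely this joint test-point construction; with the pair $(x_K,y_K)$ in hand everything reduces to diagram-chasing in $\bir_{\tilk}$.

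From (vii) the remaining properties follow in direct parallel to Theorems~\ref{Gsurth}, \ref{compsurth}, \ref{flatth}, and \ref{Tateflatth}. In group (v)--(viii): (v) follows from the identity $h_Y\circ g_K=g\circ h_X$ and $G$-surjectivity of $h_Y$; (vi) reduces to (vii) applied to the diagonals via $\Delta_{g_K}=(\Delta_g)_K$ and \cite[4.8(iii)]{temkin2}; and (viii) combines (vii) with (ii) via \cite[3.1.10]{berihes}. For (ix)--(xiv), $h_Y$ is properly surjective, so the proof of Theorem~\ref{compsurth} carries over verbatim. For (xv)--(xvii), $h_X$ is faithfully flat and the proof of Theorem~\ref{flatth} applies. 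Finally (xviii)--(xx) follow from their defining decompositions into previously handled properties, as in the proof of Theorem~\ref{Tateflatth}.
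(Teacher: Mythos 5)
Your proposal is correct and follows the paper's overall strategy: forward implications by stability under ground field extension, descent by rerunning the Tate-flat descent arguments with $Y_K\to Y$ as the ``covering,'' and all of the genuine new work concentrated in the boundaryless case (vii), which is settled by producing a compatible pair of test points whose germ-reduction maps are proper and then cancelling proper maps in a commutative square. The one place where you genuinely diverge is the construction of that pair. The paper chooses the test point upstairs: it takes $x'\in X_K$ over $x$ with $\wHx$ and $\tilK$ in general position in $\wt{\calH(x')}$ (Lemma~\ref{genpointcor2}(1)) and then observes that its image $y'=g_K(x')$ is \emph{automatically} a good test point for $y$, because any subset of $\tilK^\times$ algebraically independent over $\wHx$ is a fortiori algebraically independent over the subfield $\wHy$ --- a one-line verification. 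You instead choose $y_K$ over $y$ first and then lift a generic point of $\Spec_G(\wHx\otimes_{\wHy}\wt{\calH(y_K)})$ via Theorem~\ref{surjlem} over the base $\calH(y)$, which forces you to prove the transitivity statement that general position of $(\wHy,\tilK)$ over $\tilk$ plus genericity of $x_K$ over $\wHy$ yields general position of $(\wHx,\tilK)$ over $\tilk$. That verification does go through (take $T\subset\tilK^\times$ algebraically independent over $\tilk$; it is independent over $\wHy$ by the choice of $y_K$, hence over $\wHx$ by the genericity of $x_K$, since $T\subset\wt{\calH(y_K)}$), so your route is valid; it is essentially the technique of the proof of Theorem~\ref{Gsurth}(vii), at the cost of an extra lifting step and an extra algebraic-independence argument that the paper's ordering renders unnecessary. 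One small correction: the vertical arrows of your square relate a birational space over $\tilK$ to one over $\tilk$, so they are morphisms in the absolute category $\bir$ rather than in $\bir_{\tilk}$, and the cancellation ``$h$ and $g\circ h$ proper $\Rightarrow g$ proper'' should be quoted from Lemma~\ref{composlem}(3),(4) rather than from Lemma~\ref{birlem1}; the set-theoretic content is the same.
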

\begin{proof}
The argument closely copies the proof of Tate-flat descent in Theorem \ref{Tateflatth} and only the $G$-set-theoretic block (v)--(viii) requires a (substantial) additional effort. Let us first briefly indicate the easier arguments. To avoid subscripts, it will be convenient to denote $g_K$ also by $g'\:X'\to Y'$.

One descends properties (i)--(iv) similarly to the proof of Theorem \ref{surjth}. This time one uses that the map $Y_K\to Y$ is surjective and diagonals and differentials are compatible with the ground field extension: $\Omega_{X_G/Y_G}\wtimes_kK=\Omega_{X'_G/Y'_G}$ and $\delta_{g'}=(\delta_g)_K$.

One descends properties (ix)--(xiv) similarly to the proof of Theorem \ref{compsurth}. This time one uses the obvious fact that the map $X_K\to X$ is properly surjective.

One descends properties (xv)--(xx) similarly to the proofs of Theorems \ref{flatth} and \ref{Tateflatth}. This time one uses that flatness descends through ground field extensions by \cite[4.5.6]{flat}.

Finally, let us consider properties (v)--(viii). As in the proof of Theorem \ref{Gsurth}, it suffices to deal with $G$-surjectivity and being without boundary. The former is clear since the map $Y_K\to Y$ is $G$-surjective by Corollary~\ref{genpointcor2}(3). So, it remains to show that if $g'$ is without boundary then so is $g$, and the latter can be checked locally at a point $x\in X$. Choose a lift $x'\in X'$ of $x$ as in Corollary~\ref{genpointcor2}(1), and set $y=g(x)$ and $y'=g'(x')$. Since the subfields $\tilK$ and $\wt{\calH(x)}$ of $\wt{\calH(x')}$ are in general position over $\tilk$, we also have that the subfields $\tilK$ and $\wt{\calH(y)}$ of $\wt{\calH(y')}\subseteq\wt{\calH(x')}$ are in general position over $\tilk$. Indeed, any subset $T\subset\tilK^\times$ algebraically independent over $\tilk$ is also algebraically independent over $\wt{\calH(x)}$, and hence also over $\wt{\calH(y)}$. This shows that the lift $y'$ of $y$ also satisfies the condition of Corollary~\ref{genpointcor2}(1), and by Corollary~\ref{genpointcor2}(2) we obtain that the reduction maps $\tilX'_{x'}\to\tilX_x$ and $\tilY'_{y'}\to\tilY_y$ are proper. The map $\tilX'_{x'}\to\tilY'_{y'}$ is proper by \cite[5.2]{temkin2}, hence the composition $\tilX'_{x'}\to\tilY'_{y'}\to\tilY_y$ is proper by Lemma~\ref{composlem}(iii), and the map $\tilX_y\to\tilY_y$ is proper by Lemma~\ref{composlem}(iv). Using \cite[5.2]{temkin2} again we obtain that $g$ has no boundary at $x$.
\end{proof}
}
\subsection*{An application}
As an application of Theorem \ref{finited}, we can use the rigid-analytic theory of ampleness \cite{relamp} to
set up a parallel theory in the $k$-analytic case (without imposing goodness requirements).  We begin with a
definition, in which $\mathbf{P}(V)$ for a finite-dimensional $k$-vector space $V$ is the $k$-analytic space
associated to the algebraic projective space ${\rm{Proj}}({\rm{Sym}}(V))$; it represents the functor of
invertible sheaves $\mathscr{L}$ for the $G$-topology (on a varying $k$-analytic space $X$) equipped with a
surjection $V \otimes_k \mathscr{O}_{X_G} \rightarrow \mathscr{L}$.

\begin{definition}
An invertible sheaf $\mathscr{L}$ for the $G$-topology $X_G$ on a proper $k$-analytic space $X$ is {\em ample}
if there exists an $n > 0$ such that the map  $\Gamma(X_G, \mathscr{L}^{\otimes n}) \otimes_k \mathscr{O}_{X_G}
\rightarrow \mathscr{L}^{\otimes n}$ of coherent $\mathscr{O}_{X_G}$-modules is surjective and the resulting
morphism $X \rightarrow \mathbf{P}(\Gamma(X_G, \mathscr{L}^{\otimes n}))$ is a closed immersion.

If $f\:X \rightarrow S$ is a proper map of $k$-analytic spaces then an invertible $\mathscr{O}_{X_G}$-module
$\mathscr{L}$ is {\em relatively ample} with respect to $f$ if $\mathscr{L}_s = \mathscr{L}|_{X_s}$ is ample on
the fibral $\mathscr{H}(s)$-analytic space $X_s$ for every $s \in S$.
\end{definition}

For a $k$-analytic space $S$ and a coherent $\mathscr{O}_{S_G}$-module $\mathscr{E}$,
we will use the $S$-proper $k$-analytic space  $\mathbf{P}(\mathscr{E})$
that classifies invertible $\mathscr{O}_{X_G}$-modules
equipped
with a surjection from $\mathscr{E} \otimes_{\mathscr{O}_{S_G}} \mathscr{O}_{X_G}$
(where $X$ is a varying $k$-analytic space over $S$),
exactly as for schemes. Via the universal property and gluing for the
$G$-topology \cite[1.3.3]{berihes}, to construct $\mathbf{P}(\mathscr{E})$ it suffices to do this for $k$-affinoid
$S$ provided that it is compatible with $k$-affinoid base change.   Relative analytification over affinoid
algebras in the sense of \cite[2.6.1]{berihes} provides such a construction over an affinoid base
$\mathscr{M}(\mathscr{A})$ by using the corresponding algebraic construction  over $\Spec(\mathscr{A})$. Via the
universal property and the behavior of relative analytification with respect to closed immersions, the formation
of $\mathbf{P}(\mathscr{E})$ commutes with any base change on $S$ and surjections $\mathscr{E}' \rightarrow
\mathscr{E}$ on $S_G$ induce closed immersions $\mathbf{P}(\mathscr{E}) \hookrightarrow
\mathbf{P}(\mathscr{E}')$ over $S$. In particular, this shows that $\mathbf{P}(\mathscr{E})$ admits a closed
immersion into a standard projective space locally over $S_G$. Hence, $\mathbf{P}(\mathscr{E})$ is $S$-proper
since this property is clear when $S$ is $k$-affinoid, so it holds locally for the
$G$-topology on $S$ in general, and properness is local for this topology \cite[5.6]{temkin2}.

\begin{corollary}\label{corample}
Let $f\:X \rightarrow S$ be a proper map of $k$-analytic spaces and $\mathscr{L}$ be an invertible
$\mathscr{O}_{X_G}$-module.
\begin{enumerate}
\item The set $U_{\mathscr{L}}$ of $s \in S$ such that $\mathscr{L}_s$ is
ample on the $\mathscr{H}(s)$-analytic space
$X_s$ is open and its formation commutes with $k$-analytic base change on $S$
and with any analytic extension of the ground field.
\item If $\mathscr{L}$ is relatively ample then locally on $S$
there exists $n_0 > 0$ such that $f^{\ast}(f_{\ast}(\mathscr{L}^{\otimes n})) \rightarrow
\mathscr{L}^{\otimes n}$ is surjective and
the natural map $\iota_n\:X \rightarrow {\mathbf{P}}(f_{\ast}(\mathscr{L}^{\otimes n}))$
is a closed immersion for all $n \ge n_0$.
\end{enumerate}
\end{corollary}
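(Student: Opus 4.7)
My plan is to reduce Corollary~\ref{corample} to its rigid-analytic counterpart established in \cite[\S3]{relamp} by passing to an analytic ground field extension $K/k$ chosen so that $X_K$ is strictly $K$-analytic with $|K^\times|\neq 1$, and then descending the conclusion from $K$ back to $k$ via Theorem~\ref{finited}. The linchpin is item (xiv) of that theorem: being a closed immersion is a property of morphisms of $k$-analytic spaces that is preserved and reflected by any analytic extension of the ground field, which is precisely the invariance needed to transfer the rigid ampleness theorem to Berkovich geometry without goodness hypotheses.

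For part~(1), the fibral definition of $U_\calL$ makes its compatibility with arbitrary $k$-analytic base change on $S$ immediate. For compatibility with an analytic extension $K/k$, a point $s'\in S_K$ over $s\in S$ yields $X_{s'}=X_s\wtimes_{\calH(s)}\calH(s')$ and $\calL_{s'}$ the pullback of $\calL_s$; since $\calL_s$ is ample iff for some $n\geq 1$ the induced map $X_s\to\bfP(\Gamma(X_{s,G},\calL_s^{\otimes n}))$ is defined and is a closed immersion, and since both the surjectivity of the generating evaluation map (by faithful flatness of $\calH(s')/\calH(s)$) and the closed-immersion property (by Theorem~\ref{finited}(xiv)) descend through $\calH(s')/\calH(s)$, we obtain $s\in U_\calL\iff s'\in U_{\calL_K}$. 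Openness of $U_\calL$ is then derived from its rigid analog: after a ground field extension $K/k$ as in the next paragraph, $U_{\calL_K}$ is open in $S_K$ by the rigid result; for $s\in U_\calL$, the restriction of $\pi\:S_K\to S$ to the preimage of any chosen compact neighborhood of $s$ is a continuous map between compact Hausdorff spaces and hence proper, so the compact fiber $\pi^{-1}(s)\subseteq U_{\calL_K}$ admits a saturated open neighborhood $\pi^{-1}(V)\subseteq U_{\calL_K}$ for some open $V\ni s$ in $S$, and the just-established ground-field-extension compatibility gives $V\subseteq U_\calL$.

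For part~(2), the statement is local on $S$, so we may assume $S=\calM(\calA)$ is $k$-affinoid. Cover the proper $X\to S$ by finitely many $k$-affinoid domains involving only finitely many radii $r_1,\dots,r_m\in\bfR_{>0}^\times$, and choose an analytic extension $K/k$ with $|K^\times|\neq 1$ and $r_1,\dots,r_m\in\surd{|K^\times|}$ (e.g.\ built iteratively from fields of the form $k_r$ as in Lemma~\ref{tatered}). Then $f_K\:X_K\to S_K$ is a proper, strictly $K$-analytic morphism with strictly $K$-affinoid target, and $\calL_K$ is relatively ample on $X_K/S_K$ by part~(1). The Berkovich-to-rigid comparison applies on this class of spaces, is compatible with formation of $\bfP(\calE)$ and of coherent pushforwards on the $G$-topology, and detects surjectivity of coherent sheaf maps and closed immersions; accordingly \cite[\S3]{relamp} applied to $X_K^{\rm rig}\to S_K^{\rm rig}$ furnishes an $n_0$ such that for all $n\geq n_0$ the evaluation $(f_K)^\ast(f_K)_\ast(\calL_K^{\otimes n})\to\calL_K^{\otimes n}$ is surjective and $(\iota_n)_K\:X_K\to\bfP((f_K)_\ast(\calL_K^{\otimes n}))$ is a closed $K$-immersion. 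Enlarging $n_0$ so that fibral Serre vanishing kills all higher direct images $\RR^i f_\ast(\calL^{\otimes n})$ with $i\geq 1$, the base-change identity $(f_K)_\ast(\calL_K^{\otimes n})\simeq f_\ast(\calL^{\otimes n})\wtimes_k K$ holds, giving $\bfP((f_K)_\ast(\calL_K^{\otimes n}))\simeq\bfP(f_\ast(\calL^{\otimes n}))_K$; faithful flatness of $K/k$ descends the surjectivity of the evaluation map, producing $\iota_n$, and Theorem~\ref{finited}(xiv) descends the closed-immersion property from $(\iota_n)_K$ to $\iota_n$.

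The main technical obstacle is the coherent base-change identity $(f_K)_\ast(\calL_K^{\otimes n})\simeq f_\ast(\calL^{\otimes n})\wtimes_k K$ for $n\gg 0$ along the faithfully flat morphism $S_K\to S$. This is a form of cohomology and base change in $k$-analytic geometry; it follows by combining Kiehl-type coherence of the higher direct images $\RR^i f_\ast$ in the Berkovich setting with fibral Serre vanishing for sufficiently positive tensor powers of the relatively ample $\calL$, reducing the claim to a flat base-change statement for a single coherent sheaf with vanishing higher direct images. All remaining steps are routine once this is in hand.
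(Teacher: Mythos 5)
Your overall strategy coincides with the paper's: reduce to the rigid-analytic results of \cite[\S3]{relamp} by an analytic extension $K/k$ making everything strictly analytic with $|K^\times|\neq 1$, and descend the closed-immersion property via Theorem~\ref{finited}. The quotient/tube argument for openness in part (1) and the reduction to compact $S$ in part (2) are fine and match the paper in substance. The problem is the point you yourself flag as ``the main technical obstacle'': the identification of $\Gamma((X_K)_G,\calF_K)$ with $K\otimes_k\Gamma(X_G,\calF)$ (and its relative version $(f_K)_\ast(\calL_K^{\otimes n})\simeq f_\ast(\calL^{\otimes n})\wtimes K$). Your proposed route --- Kiehl coherence plus fibral Serre vanishing plus ``flat base change for a single coherent sheaf with vanishing higher direct images'' --- does not close this gap. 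First, faithful flatness of $K/k$ is not enough: the base change involves \emph{completed} tensor products of Banach modules, and completed tensor products do not commute with kernels and images of a complex unless its differentials are admissible (closed image, matching subspace and quotient topologies). Second, there is a circularity risk: to get fibral Serre vanishing for $\calL^{\otimes n}$ on a possibly non-strict fiber you would pass to a ground field extension and invoke rigid GAGA, but descending the vanishing back requires injectivity of the very base-change map you are trying to establish. The same issue already infects your part (1), where you justify descent of surjectivity of the evaluation map $\Gamma(X_{s,G},\calL_s^{\otimes n})\otimes\calO\to\calL_s^{\otimes n}$ merely ``by faithful flatness of $\calH(s')/\calH(s)$'': without the $\Gamma$-level base-change isomorphism, the target projective space $\bfP(\Gamma((X_{s'})_G,\calL_{s'}^{\otimes n}))$ is not visibly the base change of $\bfP(\Gamma(X_{s,G},\calL_s^{\otimes n}))$, so the equivalence of the two ampleness conditions does not follow.

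The paper closes this gap differently and more robustly: it proves that $K\otimes_k\H^i(X_G,\calF)\to\H^i((X_K)_G,\calF_K)$ is an isomorphism for \emph{every} coherent $\calF$ and every $i\geq 0$, with no ampleness or vanishing input. The argument is that ordinary tensor products may be replaced by completed ones (finite-dimensionality of the cohomology of a proper space), and that the differentials in the \v{C}ech complex of a finite affinoid cover are admissible maps of Banach modules --- a fact due to Kiehl \cite[2.5]{kiehlproper}, valid also in the relative setting over an affinoid base --- and admissibility is preserved under completed base change. You should replace your Serre-vanishing reduction by this admissibility argument (or prove the \v{C}ech admissibility yourself); as written, the step you call ``routine once this is in hand'' is precisely the step that is not in hand.
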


\begin{proof}
The crucial fact we have to show is that if $S = \mathscr{M}(k)$ and $K/k$ is an analytic extension field then
$\mathscr{L}$ is ample on $X$ if and only if the associated coherent pullback $\mathscr{L}_K$  is ample on
$X_K$. A ground field extension does not affect whether or not a map between coherent sheaves for the
$G$-topology is surjective, and by Theorem \ref{finited} the property of a morphism being a closed immersion is
likewise unaffected.  Hence, the only problem is to show that for a coherent $\mathscr{O}_{X_G}$-module
$\mathscr{F}$ (such as $\mathscr{L}^{\otimes n}$ for a fixed $n > 0$) the natural map $K \otimes_k
\Gamma(X_G,\mathscr{F}) \rightarrow \Gamma((X_K)_G, \mathscr{F})$ is an isomorphism.   More generally, we claim
that $K \otimes_k {\rm{H}}^i(X_G, \mathscr{F}) \rightarrow {\rm{H}}^i((X_K)_G, \mathscr{F}_K)$ is an isomorphism
for any $i \ge 0$.  Observe that the ordinary tensor products here may be replaced with completed tensor
products, since the cohomology is finite-dimensional (the infinite-dimensional case is considered in \cite[Theorem~A.5]{stein}).  It suffices to prove that the maps in the \v{C}ech
complex associated to a finite affinoid covering of a proper analytic space and a coherent sheaf for the
$G$-topology are admissible (in the sense of having closed images whose subspace and quotient topologies
coincide).    This property is unaffected by a ground field extension (using completed tensor products), so by
the relationship between strictly analytic spaces and rigid spaces \cite[1.6.1]{berihes} we may pass to the
strictly analytic case and hence to coherent sheaves on proper rigid spaces (the equivalence with rigid-analytic
properness is \cite[4.5]{temkin2}). In this case the desired property of the \v{C}ech complex was proved by
Kiehl \cite[2.5]{kiehlproper} in his proof of coherence of higher direct images.

Now we prove the first part of the corollary. It follows from the invariance under a ground field extension that
the formation of the set $U_{\mathscr{L}}$ is compatible with a ground field extension $K/k$ in the sense that
$\pi^{-1}(U_{\mathscr{L}}) = U_{\mathscr{L}_K}$ where $\pi\:S_K \rightarrow S$ is the canonical map.   Since
$\pi$ is also topologically a quotient map (it is even a compact surjection), it therefore suffices to solve the
problem after a ground field extension.  The formation of $U_{\mathscr{L}} \subseteq S$ is certainly local for
the $G$-topology on $S$, so by using a compact $k$-analytic neighborhood of an arbitrary point $s \in S$ we see
that it suffices to treat the case when $S$ is compact (so $X$ is compact).  Hence, by using a ground field
extension we can assume that $|k^{\times}| \ne \{1\}$ and $X$ and $S$ are strictly $k$-analytic.  In this case
there is a proper map of quasi-compact and quasi-separated rigid spaces $f_0\:X_0\rightarrow S_0$ corresponding
to $f$ and an invertible sheaf $\mathscr{L}_0$ on $X_0$ corresponding to $\mathscr{L}$. By \cite[3.2.9]{relamp},
there is a subset $U_{\mathscr{L}_0} \subseteq S_0$ that is a Zariski-open subset in a canonical Zariski-open
subset $W_{\mathscr{L}_0} \subseteq S$ such that the points of $U_{\mathscr{L}_0}$ are exactly the $s \in S$
such that $\mathscr{L}_0$ has ample pullback to $(X_0)_s = (X_s)_0$ (in the sense of rigid geometry) and such
that the formation of $W_{\mathscr{L}_0}$ and $U_{\mathscr{L}_0}$ is compatible with arbitrary ground field
extension $K/k$.    Since ampleness on a fiber is unaffected by passage between the rigid-analytic and
$k$-analytic categories, it follows that if we let $W_{\mathscr{L}} \subseteq S$ be the Zariski-open subset
corresponding to $W_{\mathscr{L}_0} \subseteq S_0$ then the Zariski-open subset of $W_{\mathscr{L}}$
corresponding to $U_{\mathscr{L}_0}$ is equal to $U_{\mathscr{L}}$.  This establishes the openness of
$U_{\mathscr{L}}$, and so finishes the proof of the first part.

To prove the second part we may again reduce to the case when $S$ is compact.  The formation of higher direct
images with respect to $f$ (using the $G$-topology) is compatible with any ground field extension, by
essentially the same argument we used above for cohomology over a field: we may pass to the case of an affinoid
base, and we use that Kiehl's results on \v{C}ech complexes are valid in the relative setting over an affinoid
base (not just over a ground field as base). Thus, once again using Theorem \ref{finited} for the property of being a closed
immersion, we may assume $|k^{\times}| \ne \{1\}$ and that $S$ and $X$ are strictly $k$-analytic. The analogue
of our desired result was proved locally on $S_0$ in the rigid-analytic case in
\cite[3.1.4,~3.2.4,~3.2.7]{relamp}.  (The ability to get the closed immersion property for all large $n$ is
shown in the proof of \cite[3.2.7]{relamp}.)   Since $S_0$ is quasi-compact, we therefore get a single $n_0$
such that $\mathscr{L}_0^{\otimes n}$ is generated by $(f_0)_{\ast}(\mathscr{L}_0^{\otimes n}) =
(f_{\ast}(\mathscr{L}^{\otimes n}))_0$ and the resulting map $(\iota_n)_0$ is a closed immersion for all $n \ge
n_0$. Passing back to the $k$-analytic category gives the desired result over $S$.
\end{proof}

An immediate consequence of the second part of the preceding corollary is that locally on $S_G$ (or locally on
$S$ when $S$ is good) a high power of a relatively ample line bundle is the pullback by $\mathscr{O}(1)$
relative to a closed immersion into a standard projective space over the base.  In particular, locally on $S$ a
sufficiently high power of a relatively ample line bundle satisfies the familiar cohomological vanishing and
generation properties for higher direct images against a fixed coherent sheaf on $X_G$. In the rigid-analytic
case this is \cite[3.2.4]{relamp}, but note that the present approach does not reprove this result in the
rigid-analytic  case since that result is a crucial part of the rigid-analytic ingredients used in the proof of
Corollary \ref{corample}.

\bibliographystyle{amsalpha}
\bibliography{descent}

\providecommand{\bysame}{\leavevmode\hbox to3em{\hrulefill}\thinspace}
\providecommand{\MR}{\relax\ifhmode\unskip\space\fi MR }
\providecommand{\MRhref}[2]{%
  \href{http://www.ams.org/mathscinet-getitem?mr=#1}{#2}
}
\providecommand{\href}[2]{#2}
\begin{thebibliography}{vdPS95}

\bibitem[AM16]{am}
M.~F. Atiyah and I.~G. Macdonald, \emph{Introduction to commutative algebra},
  economy ed., Addison-Wesley Series in Mathematics, Westview Press, Boulder,
  CO, 2016, For the 1969 original see [ MR0242802]. \MR{3525784}

\bibitem[Ber90]{berbook}
Vladimir~G. Berkovich, \emph{Spectral theory and analytic geometry over
  non-{A}rchimedean fields}, Mathematical Surveys and Monographs, vol.~33,
  American Mathematical Society, Providence, RI, 1990. \MR{1070709 (91k:32038)}

\bibitem[Ber93]{berihes}
\bysame, \emph{\'{E}tale cohomology for non-{A}rchimedean analytic spaces},
  Inst. Hautes \'Etudes Sci. Publ. Math. (1993), no.~78, 5--161 (1994).
  \MR{1259429 (95c:14017)}

\bibitem[Ber94]{bervanish}
\bysame, \emph{Vanishing cycles for formal schemes}, Invent. Math. \textbf{115}
  (1994), no.~3, 539--571. \MR{1262943 (95f:14034)}

\bibitem[BGR84]{bgr}
S.~Bosch, U.~G{\"u}ntzer, and R.~Remmert, \emph{Non-{A}rchimedean analysis},
  Grundlehren der Mathematischen Wissenschaften [Fundamental Principles of
  Mathematical Sciences], vol. 261, Springer-Verlag, Berlin, 1984, A systematic
  approach to rigid analytic geometry. \MR{746961 (86b:32031)}

\bibitem[Con06]{relamp}
Brian Conrad, \emph{Relative ampleness in rigid geometry}, Ann. Inst. Fourier
  (Grenoble) \textbf{56} (2006), no.~4, 1049--1126. \MR{2266885}

\bibitem[CT09]{ct}
Brian Conrad and Michael Temkin, \emph{Non-{A}rchimedean analytification of
  algebraic spaces}, J. Algebraic Geom. \textbf{18} (2009), no.~4, 731--788.
  \MR{2524597}

\bibitem[Duc18]{flat}
Antoine Ducros, \emph{Families of {B}erkovich spaces}, Ast\'{e}risque (2018),
  no.~400, vii+262, online erratum available at
  https://webusers.imj-prg.fr/$\sim$antoine.ducros/Errata-FOBS.pdf.
  \MR{3826929}

\bibitem[GL73]{semiaffine}
Jacob~Eli Goodman and Alan Landman, \emph{Varieties proper over affine
  schemes}, Invent. Math. \textbf{20} (1973), 267--312. \MR{327772}

\bibitem[Hub94]{hub}
Roland Huber, \emph{A generalization of formal schemes and rigid analytic
  varieties}, Math. Z. \textbf{217} (1994), no.~4, 513--551. \MR{1306024
  (95k:14001)}

\bibitem[Ked15]{reified}
Kiran~S. Kedlaya, \emph{Reified valuations and adic spectra}, Res. Number
  Theory \textbf{1} (2015), Paper No. 20, 42. \MR{3501004}

\bibitem[Kie67]{kiehlproper}
Reinhardt Kiehl, \emph{Der {E}ndlichkeitssatz f\"{u}r eigentliche {A}bbildungen
  in der nichtarchimedischen {F}unktionentheorie}, Invent. Math. \textbf{2}
  (1967), 191--214. \MR{210948}

\bibitem[Liu88]{liu}
Qing Liu, \emph{Un contre-exemple au ``crit\`ere cohomologique
  d'affino\"{\i}dicit\'{e}''}, C. R. Acad. Sci. Paris S\'{e}r. I Math.
  \textbf{307} (1988), no.~2, 83--86. \MR{954265}

\bibitem[MP17]{stein}
Marco Maulan and J\'{e}r\v{o}me Poineau, \emph{{Notions of Stein spaces in
  non-archimedean geometry}}, 2017, \url{http://arxiv.org/abs/1711.04008}.

\bibitem[sga72]{sga4}
\emph{Th\'{e}orie des topos et cohomologie \'{e}tale des sch\'{e}mas. {T}ome 1:
  {T}h\'{e}orie des topos}, Lecture Notes in Mathematics, Vol. 269,
  Springer-Verlag, Berlin-New York, 1972, S\'{e}minaire de G\'{e}om\'{e}trie
  Alg\'{e}brique du Bois-Marie 1963--1964 (SGA 4), Dirig\'{e} par M. Artin, A.
  Grothendieck, et J. L. Verdier. Avec la collaboration de N. Bourbaki, P.
  Deligne et B. Saint-Donat. \MR{0354652}

\bibitem[{Sta}]{stacks}
The {Stacks Project Authors}, \emph{{\itshape Stacks Project}},
  \url{http://stacks.math.columbia.edu}.

\bibitem[Tem00]{temkin1}
Michael Temkin, \emph{On local properties of non-{A}rchimedean analytic
  spaces}, Math. Ann. \textbf{318} (2000), no.~3, 585--607. \MR{1800770
  (2001m:14037)}

\bibitem[Tem04]{temkin2}
\bysame, \emph{On local properties of non-{A}rchimedean analytic spaces. {II}},
  Israel J. Math. \textbf{140} (2004), 1--27. \MR{2054837 (2005c:14030)}

\bibitem[Tem10]{temst}
\bysame, \emph{Stable modification of relative curves}, J. Algebraic Geom.
  \textbf{19} (2010), no.~4, 603--677. \MR{2669727 (2011j:14064)}

\bibitem[Tem16]{temkin3}
\bysame, \emph{Metrization of differential pluriforms on {B}erkovich analytic
  spaces}, Nonarchimedean and tropical geometry, Simons Symp., Springer,
  [Cham], 2016, pp.~195--285. \MR{3702313}

\bibitem[vdPS95]{points}
M.~van~der Put and P.~Schneider, \emph{Points and topologies in rigid
  geometry}, Math. Ann. \textbf{302} (1995), no.~1, 81--103. \MR{1329448}

\bibitem[ZS60]{zs}
Oscar Zariski and Pierre Samuel, \emph{Commutative algebra. {V}ol. {II}}, The
  University Series in Higher Mathematics, D. Van Nostrand Co., Inc.,
  Princeton, N. J.-Toronto-London-New York, 1960. \MR{0120249}

\end{thebibliography}

\end{document}